\newtheorem{theorem}{Theorem}[section]
\newtheorem{proposition}[theorem]{Proposition} 
\newtheorem{definition}[theorem]{Definition}
\newtheorem{lemma}[theorem]{Lemma}
\newtheorem{coro}[theorem]{Corollary}
\newtheorem{cor}[theorem]{Corollary}
\newtheorem{remark}[theorem]{Remark}
\newcounter{compte}
\newenvironment{enum-i}{\begin{list}{\roman{compte})} {\usecounter{compte}
\topsep=1mm \itemsep=0.2mm \leftmargin=5mm  }}{\end{list}}
\newenvironment{enum-1}{\begin{list}{\arabic{compte})} {\usecounter{compte}
\topsep=1mm \itemsep=0.2mm \leftmargin=5mm  }}{\end{list}}
\newenvironment{enum-a}{\begin{list}{\alph{compte})} {\usecounter{compte}
\topsep=1mm \itemsep=0.2mm \leftmargin=5mm  }}{\end{list}}
\renewcommand{\dim}{\operatorname{dim}}
\renewcommand{\ker}{\operatorname{Ker}}
\def\N{\mathbb{N}}
\def\f{\frac}
\def\R{\mathbb{R}}
\def\la{\langle}
\def\ra{\rangle}
\def\calH{\mathcal{H}}
\def\eps{\varepsilon}
\def\I{\infty}
\def\p{\partial}
\def\embed{\hookrightarrow}
\def\calM{\mathcal{M}}
\def\calL{\mathcal{L}}
\numberwithin{equation}{section}
\begin{document}

\author{N.\ Burq, G.\ Raugel, W.\ Schlag}

\address{ Nicolas Burq: Univ Paris-Sud, Laboratoire de Math\'ematiques d'Orsay,
Orsay Cedex, F-91405; CNRS, Orsay cedex, F-91405, France}
\address{ Genevi\`eve Raugel: CNRS, Laboratoire de Math\'{e}matiques d'Orsay, Orsay Cedex, 
F-91405; Univ Paris-Sud, Orsay cedex, F-91405, France} 
\address{Wilhelm Schlag: University of Chicago, Department of Mathematics, 5734 South University Avenue, Chicago, IL 60636, U.S.A.}

\title[Long time dynamics for damped Klein-Gordon equations] 
{ Long time dynamics for weakly damped nonlinear Klein-Gordon equations}

\begin{abstract}
We continue our study of damped nonlinear Klein-Gordon equations. In~\cite{BRS1} we considered fixed positive damping and proved a form of the soliton resolution conjecture for radial solutions. In contrast, here we consider damping which decreases in time to~$0$. In the class of radial data we again establish soliton resolution provided the damping goes to $0$ sufficiently slowly.  While~\cite{BRS1} relied on invariant manifold theory, here we use the {\L}ojasiewicz-Simon inequality applied to a suitable Lyapunov functional.  \\ \\
 {Keywords : Klein-Gordon equation with dissipation, subcritical focusing nonlinearity,  convergence to an equilibrium, soliton resolution,  {\L}ojasiewicz-Simon inequality, Ambrosetti-Rabinowitz condition, observation inequalities, Strichartz estimates}.
 \end{abstract}

\thanks{ The first author was partially supported by the ANR through ANR-13-BS01-0010-03 (ANA\'E) and ANR-16-CE40-0013 (ISDEEC), the second author  was partially supported by the ANR through ANR-16-CE40-0013 (ISDEEC) and the third author was partially supported by the NSF through  DMS-1500696. The third author thanks the Institute for Advanced Study, Princeton, for its hospitality during the 2017-18 academic year. }
 \subjclass{35BXX, 35B40, 35L05, 35L71, 37L10, 37L50, 37L45}

\maketitle

\section{Introduction}

A central question in the theory of nonlinear dispersive evolution equations concerns the long-term behavior of solutions. 
For completely integrable evolution equations the inverse scattering transform yields explicit multi-soliton solutions and the asymptotic shape of solutions (but under
quite restrictive conditions on the data) is known. In absence of complete integrability the theory is much less developed, and largely remains in its infancy. 

For the class of semi-linear equations the last five years have witnessed the influx of new ideas. Around 2012 Duyckaerts, Kenig, and Merle~\cite{DKM12} obtained the complete 
description of radial energy solutions to the three-dimensional critical wave equation
\begin{equation*}
u_{tt} - \Delta u - u^5=0.
\end{equation*}
The data are radial and belong to $\dot H^1\times L^2(\R^3)$, which is the natural space from the perspective of wellposedness. The main result in~\cite{DKM12} states that 
global solutions asymptotically decouple into radiation (possibly of large energy) and finitely many rescaled solitons $W_{\lambda_j(t)}(x)$ where $W(x) = (1+|x|^2/3)^{-\frac12}$ and $W_\lambda := \lambda^{\frac12}W(\lambda \cdot)$. The positive parameters $\lambda_j(t)>0$ depend continuously on time and their ratios either tend to $0$ or~$\infty$ as $t\to\infty$. In other words, the energy decouples into the radiation plus the energy of $W$ counted with the multiplicity of the number of solitons. 
A particular consequence of this result is that global solutions are uniformly bounded in time in the norm of~$\dot H^1\times L^2(\R^3)$. 
For subcritical equations Cazenave~\cite{Caz85} obtained such a bound many years ago, but under a restrictive condition on the nonlinearity. For example, in three dimensions, powers between $3$ and~$5$ are not covered by Cazenave's method. It is unknown at this point if global solutions to focusing nonlinear Klein-Gordon equations remain bounded in $H^1\times L^2(\R^3)$ for powers in that regime. 

For solutions which are not global, \cite{DKM12} establishes a corresponding representation into a fixed pair of functions instead of the radiation, and a sum of solitons. Thus, only type-II blowup can occur.  For nonradial solutions, a partial characterization of an analogous nature was obtained in~\cite{DJKM} along a sequence of times. 

 The channel of energy method, which was pioneered in~\cite{DKM12}, has found a number of applications over the past few years that we will not describe here in detail. 
 It does not apply in the subcritical regime due to the fact that the free radial Klein-Gordon equation does not exhibit nonzero asymptotic energy outside a backward or forward light cone; the energy at times $\pm\infty$ in the region $|x|\ge |t|$ is vacuous. This is due to the fact that for Klein-Gordon the group velocity is $<1$ whereas for the free wave equation it is exactly~$1$, leading to a fixed percentage of the aforementioned exterior energy either in forward or backward times.  The transition from wave to Klein-Gordon is due to passing  from critical to subcritical equations: indeed, in contrast to the critical equation, subcritical wave equations do not exhibit stationary soliton solutions due to the Pohozaev identity. It is therefore necessary to add the mass term for a natural formulation of the {\em subcritical soliton resolution problem}. 

In view of a lack of any techniques currently known to attack the subcritical Hamiltonian problem, the authors of this paper set out in~\cite{BRS1} to study the dissipative case. The idea underlying the addition of a damping term is the availability of methods originating in dynamical systems. In~\cite{BRS1} we relied on results from invariant manifolds and center dynamics, whereas here we employ Lyapunov functionals and the {\L}ojasiewicz-Simon inequality. We now describe the contents of this paper in more detail. 

We consider the damped Klein-Gordon equation
\begin{equation*}
\tag*{$(KG)_\alpha$}
\label{KGalpha}
\begin{split}
&u_{tt}+2\alpha(t) u_t-\Delta u+u-f(u)=0, \cr
&(u(0),u_t(0))=(\varphi_0,\varphi_1)\in\mathcal{H}_{rad},
\end{split}
\end{equation*}
where
$$
\calH=H^1(\mathbb{R}^d)\times L^2(\mathbb{R}^d),
$$
and 
$$
\calH_{rad}=H^1_{rad}(\mathbb{R}^d)\times L^2_{rad}(\mathbb{R}^d),
$$
where $1 \leq d \leq 6$. We shall denote $\vec u(t) = (u(t), u_t(t))$ the solution.
In \cite{BRS1}, we assumed that $\alpha$ is a positive constant. Here we assume that $\alpha(t)>0$ is a  positive function of  class $C^1$ in $t \geq 0$, which converges to $0$ as $t$ goes to $ \infty$, see below for more details.
The nonlinearity  $f: y \in  \mathbb{R} \mapsto f(y) \in  \mathbb{R}$ is an odd $C^1$-function which satisfies $f'(0)=0$. Moreover,  it satisfies a condition of Ambrosetti-Rabinowitz type, i.e.,   there exists 
$\gamma >0$ such that 
\begin{equation*}
\tag*{$(H.1)_f$}
\label{H1f}
\int_{\mathbb{R}^d} \big(2(1+ \gamma) F(\varphi(x)) - \varphi(x) f( \varphi(x))\big)   dx \leq 0, \quad \forall \varphi \in H^1(\mathbb{R}^d),
\end{equation*}
where $F(y) = \int_0^y f(s)  ds$.  In dimensions $d\ge2$ we impose 
  the following growth condition on~$f$ 
{\begin{equation*}
\tag*{$(H.2)_f$}
\label{H2f}
\begin{aligned}
|f'(y)| &\leq  C \max\big( |y|^\beta, |y|^{\theta -1}\big), \quad   \forall  y \in \mathbb{R},  \\
|f'(y_1) - f'(y_2)| &\leq { C |y_1 - y_2 |^{\beta} \big( 1+ |y_1 |^{\theta -1-\beta} + | y_2 |^{\theta -1-\beta} \big)}, \quad   \forall  y_1, y_2
 \in \mathbb{R},
\end{aligned}
\end{equation*}}
where  $1 < \theta < \theta^* $, $ 0<\beta<\theta-1$, {$\beta \leq 1$}, $\theta^* = 2^*-1$ and where $2^*=\infty$ if $d=1,2$ and $2^*=\frac{2d}{d-2}$ if $d\ge3$. 
We notice that, when $d \geq 3$, $ \theta^* = \frac{d+2}{d-2}$. 
 In other words, the growth of $f$ is energy subcritical for large $y=0$,  and we also assume that $f'$ is  
 $\beta$-H\"older continuous. 
 For sake of simplicity in the proofs below, we may assume, without loss of generality, that 
 {$ 0 < \beta < \min(1, \theta -1, \frac{2}{d})$.}
 
{Classical examples of a function $f$ satisfying  hypotheses \ref{H1f} and \ref{H2f} are as follows 
 \begin{equation}
\label{fExample2}
f(u) =  \sum_{i=1}^{m_1} a_i |u |^{p_i-1}u - \sum_{j=1}^{m_2}b_j |u |^{q_j -1}u, 
\text{ with } \begin{cases} & 1< q_j < p_i < \frac{d+2}{d-2}, \forall i,j  \\
& a_i, b_j \geq 0, a_{m_1}>0. \end{cases}
\end{equation}
 }

 If $\alpha(t)$ decays to $0$ too quickly as $t$ goes to $ \infty$, then the equation becomes a perturbation of the conservative case. See~\cite{Wirth1} and \cite{Wirth2} for a discussion in the context of linear equations.  
 Minimal assumptions on the dissipation rate are 
 \begin{equation*}
\tag*{$(H.1)_{\alpha}$}
\label{H1alpha}
\alpha(t)>0,\quad \lim_{t\to\infty} \alpha(t)=  0,\quad \int_{0}^{ \infty} \alpha(s)  ds = \infty,  \quad
{ \alpha(t) }\hbox{ {  non-increasing}}.
\end{equation*}
We will assume more, namely 
\begin{equation*}
\tag*{$(H.2)_{\alpha}$}
\label{H2alpha}
\alpha(t) = \frac{1}{(1 + t)^a}, \quad 0 \leq a < \frac13.
\end{equation*}

 The main results of the paper are as follows.
 
 \begin{theorem} \label{ThBRS2}
Under the conditions \ref{H1f}, \ref{H2f}, and \ref{H2alpha}, any solution $\vec u(t) $ of \ref{KGalpha}  
\begin{enumerate}
\item  either   blows-up in finite time,
\item or   exist globally and converges strongly to an equilibrium point $({Q},0)$ of \ref{KGalpha}, as $t\to \infty$.  {More precisely, $\exists c_0, c_1$;  $\forall t\geq 0$, 
\begin{equation}
\label{eq:convsubexp}
\| \vec u(t) - (Q,0)\|_{\mathcal{H}} \leq c_0 \exp \big( - c_1(1 +t)^{1-a}\big).
\end{equation}}
\end{enumerate}
\end{theorem}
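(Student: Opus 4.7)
The plan is to treat the global-existence alternative by dissipative dynamical systems methods, using the natural Lyapunov functional
\[
E(\vec u(t)) = \frac12 \int_{\R^d}\!\big(|u_t|^2 + |\nabla u|^2 + u^2\big)\,dx - \int_{\R^d}\! F(u)\,dx,
\]
whose derivative along a solution of \ref{KGalpha} is $\frac{d}{dt}E(\vec u(t)) = -2\alpha(t)\|u_t(t)\|_{L^2}^2 \le 0$. After recalling the local Cauchy theory in $\calH_{rad}$ and the standard blow-up alternative, I may assume $\vec u$ is global. The Ambrosetti-Rabinowitz hypothesis \ref{H1f} combined with the monotonicity of $E$ gives $\sup_{t\geq 0}\|\vec u(t)\|_\calH < \infty$ and $\int_0^\infty \alpha(s)\|u_s(s)\|_{L^2}^2\,ds \leq \tfrac12 E(\vec u(0))$, the second bound being the key quantitative consequence of the dissipation.

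Next I would show that $\{\vec u(t)\}_{t \geq 0}$ is pre-compact in $\calH_{rad}$ and that its $\omega$-limit set consists of equilibria of the form $(Q,0)$ with $-\Delta Q + Q - f(Q) = 0$. Radiality restores compactness of the Sobolev embedding away from the origin, and subcritical Strichartz estimates for the free Klein-Gordon flow---combined with the observation inequalities advertised in the abstract---absorb the time-dependent damping $2\alpha(t)u_t$ as a perturbative term; this is where the quantitative constraint $a < \tfrac13$ in \ref{H2alpha} is expected to enter. Because $\int_0^\infty \alpha\, ds = \infty$ while $\int_0^\infty \alpha\,\|u_s\|_{L^2}^2\, ds < \infty$, any subsequential limit $(Q_\infty, v_\infty)$ must satisfy $v_\infty \equiv 0$, and a standard pass-to-the-limit in the PDE identifies $Q_\infty$ as a critical point of the elliptic functional $J(Q) = \tfrac12 \int(|\nabla Q|^2 + Q^2)\,dx - \int F(Q)\,dx$.

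The final step is quantitative convergence via the \L{}ojasiewicz-Simon inequality. Since $f \in C^{1,\beta}$ is subcritical, for each equilibrium $Q$ the infinite-dimensional \L{}ojasiewicz-Simon theorem of Simon, Chill, and Haraux-Jendoubi applies: in a neighborhood of $Q$ in $H^1_{rad}$ one has, for some $\theta \in (0, \tfrac12]$,
\[
|J(v) - J(Q)|^{1-\theta} \leq C\,\|-\Delta v + v - f(v)\|_{H^{-1}}.
\]
Matching the stated rate \eqref{eq:convsubexp} forces $\theta = \tfrac12$, which one expects to obtain when the Hessian of $J$ at $Q$ has trivial kernel on $H^1_{rad}$ (polynomial ground states of the form \eqref{fExample2} are the standard test case). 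Combining the inequality with the dissipation identity and using $\|u_t\|_{L^2}$ as the ``missing'' gradient term produces a differential inequality of the form
\[
-\frac{d}{dt}\big(E(\vec u(t)) - J(Q)\big) \geq c\,\alpha(t)\,\big(E(\vec u(t)) - J(Q)\big),
\]
so $E(\vec u(t)) - J(Q) \lec \exp\!\big(-c \int_0^t \alpha(s)\,ds\big) = \exp\!\big(-c'(1+t)^{1-a}\big)$. A classical Simon-Haraux-Jendoubi trapping argument converts this decay of the energy gap into integrability of $\|u_t\|_{L^2}$ in time at the same rate, which promotes subsequential convergence to convergence of the full orbit in $\calH$, yielding \eqref{eq:convsubexp}.

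The main obstacle is the compactness/regularity step together with the uniformity in the \L{}ojasiewicz-Simon step. It is delicate to establish pre-compactness in $\calH$ when the only dissipative quantity $\alpha(t)\|u_t\|_{L^2}^2$ itself degenerates, and equally delicate to obtain a \L{}ojasiewicz-Simon neighborhood of $Q$ in the natural energy topology whose radius does not shrink as $t \to \infty$. Threading the Strichartz estimates and observation inequalities through the non-autonomous damping, with error terms absorbed at a scale compatible with the \L{}ojasiewicz exponent, is likely where the quantitative constraint $a < \tfrac13$ is dictated and where the majority of the technical effort resides.
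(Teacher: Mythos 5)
Your high-level roadmap — Lyapunov functional, $\omega$-limit analysis on a radial trajectory, and {\L}ojasiewicz-Simon to upgrade to full convergence — is the right one and matches the paper's strategy. However, the proposal contains two essential gaps, both of which are exactly what the paper identifies as the main technical difficulty and which its argument is designed to overcome.

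First, your claim that Hypothesis~\ref{H1f} plus monotonicity of $E$ gives $\sup_{t\ge 0}\|\vec u(t)\|_{\calH}<\infty$ is false, and the paper explicitly flags this. The coercivity estimate (Lemma~\ref{SolBorne}) reads
\begin{equation*}
\gamma\big(\|\varphi\|_{H^1}^2+\|\psi\|_{L^2}^2\big)\le 2(1+\gamma)E((\varphi,\psi))-K_0(\varphi),
\end{equation*}
so boundedness of $E$ alone gives nothing unless one also controls $K_0(u(t))$ from below, and this is not available a priori for a global solution. What can be shown a priori (Proposition~\ref{propL2}, following Cazenave) is only boundedness of $\|u(t)\|_{L^2}$, not of $\|\nabla u(t)\|_{L^2}$ or $\|u_t(t)\|_{L^2}$. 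The paper's whole architecture — the special time sequences of Section~4, the trapping Lemma~\ref{lem-key}, and the stopping-time intervals $\tilde I_m$ bounded by the first exit from a ball of radius $R_1$ — is precisely a bootstrap designed to establish boundedness rather than assume it. Starting from ``I may assume $\vec u$ is global, hence bounded'' is not a simplification; it is a circular assumption.

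Second, the differential inequality $-\frac{d}{dt}\big(E(\vec u(t))-J(Q)\big)\ge c\,\alpha(t)\big(E(\vec u(t))-J(Q)\big)$ you propose does not follow from {\L}ojasiewicz-Simon plus the dissipation identity. Indeed $-\frac{d}{dt}E=2\alpha(t)\|u_t\|_{L^2}^2$ sees only the velocity, while the {\L}ojasiewicz bound controls $E(\vec u(t))-J(Q)$ by $\|-\Delta u+u-f(u)\|_{H^{-1}}^2+\|u_t\|_{L^2}^2$; there is no pointwise lower bound of $\|u_t\|_{L^2}^2$ by the elliptic residual. This is why the paper introduces the augmented functional
\begin{equation*}
H_{\nu}(t)= E(\vec u(t)) - E(Q,0) + \frac{\varepsilon_0}{(1+t)^{a\nu}}\,\big\langle -\Delta u(t)+u(t)-f(u(t)),\,u_t(t)\big\rangle_{H^{-1}},
\end{equation*}
whose time derivative produces a genuine $-\|-\Delta u+u-f(u)\|_{H^{-1}}^2$ term (\eqref{eq:5ex8}), at the cost of having to justify that the correction is small enough, that $H_\nu$ remains decreasing only while $\|\vec u\|_{\calH}\le R_1$, and that the sign of $H_\nu$ requires a separate case analysis (Proposition~\ref{prop:H-} vs.\ Section~\ref{subsec:LS}). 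You also do not address the $d=3$, $4<\theta<5$ regime, where the pointwise estimate of $\langle f'(u)u_t,u_t\rangle_{H^{-1}}$ fails and one must pass to time-averaged bounds via Strichartz and the observation inequality (Section~6). These are the components where the constraint $a<\tfrac13$ is actually used (via $a\nu<\tfrac13$ and via $\tfrac32 a+\tfrac12\le1$ in Lemma~\ref{lem:suite3}), and none of them can be bypassed by the naive energy inequality you wrote.
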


%
%
 \begin{theorem} \label{ThBRS3}Under the additional assumption   that every equilibrium point $(\varphi, 0) \in \mathcal{H}_{rad}$ of \ref{KGalpha} (with energy smaller than $\ell$) is isolated,  the same conclusions as in Theorem~\ref{ThBRS2} hold (for initial data with energy smaller than $\ell$) with  the assumption~\ref{H2alpha} relaxed to
\begin{equation*}
\tag*{$(H.3)_{\alpha}$}
\label{H3alpha}
\alpha(t) = \frac{1}{(1 + t)^a}, \quad 0 \leq a < \frac12 .
\end{equation*}
 
 \end{theorem}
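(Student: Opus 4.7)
The strategy mirrors the proof of Theorem \ref{ThBRS2}, with the isolation hypothesis entering at one decisive point: it upgrades the Łojasiewicz–Simon inequality near the limiting equilibrium $(Q,0)$ from a generic exponent $\theta\in(0,\tfrac12]$ to the optimal value $\theta=\tfrac12$. This upgrade converts the governing dissipation inequality for a suitable modified energy $\Psi$ from a superlinear one ($\dot\Psi\le-c\al(t)\Psi^{2(1-\theta)}$ with $2(1-\theta)>1$) into a linear one ($\dot\Psi\le-c\al(t)\Psi$), yielding the subexponential rate \eqref{eq:convsubexp} and, as a side effect, tolerating the weaker decay condition $a<\tfrac12$ in place of $a<\tfrac13$.

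First, I would recycle the preliminary analysis of Theorem \ref{ThBRS2}. The Ambrosetti–Rabinowitz condition \ref{H1f} together with the energy identity $\frac{d}{dt}E(\vec u(t))=-2\al(t)\|u_t(t)\|_{L^2}^2$ establishes the blow-up versus bounded-trajectory dichotomy; this step only uses that $\al$ is positive and bounded, not its decay rate. A standard LaSalle argument in the non-blow-up case identifies $\omega(\vec u)$ as a compact connected subset of the equilibrium set at energy $<\ell$. Under the new hypothesis that such equilibria are isolated, connectedness collapses $\omega(\vec u)$ to a single point $(Q,0)$, so $\vec u(t)\to(Q,0)$ in $\calH$ without any quantitative rate yet.

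Next comes the sharp Łojasiewicz–Simon inequality. Writing $\mathcal V(\varphi)=\tfrac12\int(|\na\varphi|^2+\varphi^2)dx-\int F(\varphi)dx$, the linearization $L_Q=-\Delta+1-f'(Q)$ is a self-adjoint Fredholm operator of index $0$ on $H^1_{rad}(\R^d)$. Because $Q$ is an isolated critical point in the radial class (which also suppresses translational kernels), $\ker L_Q=\{0\}$, and a Morse-lemma type argument produces
\begin{equation*}
|\mathcal V(\varphi)-\mathcal V(Q)|^{1/2}\le C\,\|\mathcal V'(\varphi)\|_{H^{-1}},\qquad \|\varphi-Q\|_{H^1}\le\eta_0.
\end{equation*}
Once $\vec u(t)$ stays inside the $\eta_0$-tube around $(Q,0)$, I would introduce the corrected Lyapunov functional
\begin{equation*}
\Psi(t)=E(\vec u(t))-E(Q)+\ka\,\al(t)\,\la u(t)-Q,\,u_t(t)\ra_{L^2}
\end{equation*}
with $\ka>0$ small. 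Differentiating along \ref{KGalpha}, substituting $u_{tt}=\Delta u-u+f(u)-2\al(t) u_t$, and using the Łojasiewicz–Simon bound above to convert $-\ka\al(t)\la u-Q,\mathcal V'(u)\ra$ into a lower bound of order $\al(t)\Psi(t)$, one arrives at a linear differential inequality $\dot\Psi(t)\le -c\,\al(t)\,\Psi(t)$ for $t$ large. Integration gives $\Psi(t)\le C\exp(-c'\int_0^t\al(s)\,ds)$, and $\int_0^t\al(s)\,ds\asymp(1+t)^{1-a}/(1-a)$ delivers \eqref{eq:convsubexp}.

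The main obstacle is the delicate absorption of the cross terms produced by the correction $\ka\al(t)\la u-Q,u_t\ra$. Its time derivative generates a term $\ka\al'(t)\la u-Q,u_t\ra$, and the $-2\al u_t$ contribution in $u_{tt}$ generates a term of order $\al(t)^2\la u-Q,u_t\ra$; both must be beaten by the gain $c\,\al(t)\,\Psi(t)$. In the isolated case all relevant quantities scale linearly in $\Psi$, and an optimisation in $\ka$ shows that the absorption closes in the regime \ref{H3alpha}, i.e.\ for $a<\tfrac12$. In the non-isolated setting underlying Theorem \ref{ThBRS2}, the Łojasiewicz exponent drops below $\tfrac12$, so the useful part of $-\dot\Psi$ acquires a power $\Psi(t)^{2(1-\theta)}$ with $2(1-\theta)>1$; this smaller dissipation (for $\Psi$ near $0$) pitted against the same cross terms forces the stricter bound $a<\tfrac13$.
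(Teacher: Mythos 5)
The central mechanism you propose is not the one the paper uses, and in fact cannot be, because the paper's {\L}ojasiewicz--Simon inequality (Theorem~\ref{th:Loja}) already delivers the quadratic estimate $|J(u)-J(Q)|\le C_0\|\mathcal{M}(u)\|_{H^{-1}}^2$ (i.e.\ exponent $\theta=\tfrac12$) without any isolation assumption. This is because the linearization $L=-\Delta+1-f'(Q)$ is a compact perturbation of an isomorphism, and the Simon/Haraux--Jendoubi theorem (Theorem~\ref{thm:SHJ}) handles a nontrivial finite-dimensional kernel $N$ by projecting it away; the kernel being zero (isolated $Q$) is not required. Consequently, the restriction $a<\tfrac13$ in Theorem~\ref{ThBRS2} is not a symptom of a smaller {\L}ojasiewicz exponent, and isolation does not enter through the exponent.

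The actual role of the isolation hypothesis, and the reason it relaxes $a<\tfrac13$ to $a<\tfrac12$, is in the \emph{trapping} step, not in the {\L}ojasiewicz estimate. The Lyapunov functional $H_\nu$ of Section~\ref{sec:Main1} is only monotone as long as $\|\vec u(t)\|_{\calH}\le R_1$, and since boundedness of the trajectory is precisely what one is trying to prove, one must first trap $\vec u$ near an equilibrium on intervals $I_m$ of a suitable polynomial length. Without isolation, the trapping (Proposition~\ref{prop:H3estconv}) rests on the $L^1_t$-smallness $\int_{I_m}\|u_t\|_{L^2}\,ds\to0$ coming from the weighted inequality of Lemma~\ref{lem:suite3}; that lemma requires the arithmetic condition $\tfrac32 a+\tfrac12\le1$, i.e.\ $a<\tfrac13$, with the intervals $[n_m^{3/2},(n_m+1)^{3/2}]$. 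With isolation, Proposition~\ref{prop:H4Hyperb} provides a stronger trapping mechanism on the \emph{longer} intervals $[n_m^2,(n_m+1)^2]$: if $\vec u(t)$ were to escape the $\rho$-ball of $(Q,0)$ at some time $s_{n_m}$ inside $I_m^2$, then the same $\omega$-limit analysis from Theorem~\ref{ThBRS1} shows that $\vec u(s_{n_m})$ must be $\rho$-close to another equilibrium $(Q_2,0)$; the triangle inequality forces $\|Q_2-Q\|_{\calH}<2\rho<\delta$, contradicting isolation unless $Q_2=Q$. The associated arithmetic in Lemma~\ref{lem:suite4} uses the intervals $[n^2,(n+1)^2]$ and only needs $2a\le1$, giving $a<\tfrac12$. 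The remaining change is choosing $\nu>1$ with $a\nu<\tfrac12$ in the definition of $H_\nu$.

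Your sketch also elides the main technical obstacle: you assume up front that the trajectory is bounded, so that $\omega(\vec u)$ is nonempty, compact, connected, and consists of equilibria at energy $<\ell$, after which connectedness plus isolation gives convergence to a single $(Q,0)$. But the paper explicitly flags that under weak damping it is \emph{a priori unclear} that global trajectories are bounded in $\calH$, so this LaSalle-style step is unavailable as a starting point; establishing the boundedness is the heart of Sections 5 and 6, via the $H_\nu$ argument combined with the trapping propositions. Similarly, the closing of the differential inequality $\dot\Psi\le -c\,\alpha(t)\Psi$ presupposes the solution stays in a $\rho_0$-tube around $(Q,0)$ on which the {\L}ojasiewicz inequality applies, and you cannot assume this tube-confinement without the trapping argument that isolation is actually needed for.
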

%
We remark that our arguments below do not depend on the existence or uniqueness of a ground state solution, which in any case is {\em not} guaranteed by 
  Hypothesis \ref{H1f} alone.
We further note that  Hypothesis \ref{H1f} may actually be replaced by the following weaker one 
\begin{equation*}
\tag*{$(H.1bis)_f$}
\label{H1bisf}\int_{\mathbb{R}^d} \big(2(1+ \gamma) F(\varphi(x)) - \varphi(x) f( \varphi(x))\big)  dx \leq 0, \quad  \text{ for } \| \varphi\|_{H^1} \text{ large enough},
\end{equation*}
but, for sake of simplicity, we assume  \ref{H1f}  throughout.

 We denote by  $S_{\alpha}(t,s)$, $\alpha \geq 0$, the local non-autonomous system generated by the equation \ref{KGalpha} on $\mathcal{H}$ as well as on $\mathcal{H}_{rad}$, when the initial data are considered at time $s$.
 We introduce the energy functional (also called Lyapunov functional in the case of positive damping $\alpha (t)>0$)
 \begin{equation}
\label{Eenergie}
E(\varphi_0,\varphi_1) = \int_{\mathbb{R}^d}\left(\frac{1}{2}|\nabla\varphi_0|^2+\frac{1}{2}\varphi_0^2+\frac{1}{2}\varphi_1^2- F(\varphi_0)\right)   dx.
\end{equation}
We recall that, as long as $\vec u(s)=(u(s), u_t(s))$ exists, for $t \geq t_0  \geq0$, we have,
\begin{equation}
\label{Et0t}
E(\vec u(t)) - E(\vec u(t_0))=-2  \int_{t_0}^{t}\alpha(s)  \|u_t(s)\|_{L^2( \mathbb{R}^d)}^2   ds. 
\end{equation}
In the case of constant positive damping, the relation  easily implies that every element in the $\omega$-limit set of a global trajectory is necessarily an equilibrium.  Under the aforementioned properties on the damping, it clearly follows from Theorem \ref{ThBRS2}.
Under weaker damping assumptions,  we do not  {\em apriori} know that  global trajectories are bounded in~$\calH$. Hence,  it is completely unclear whether or not the $\omega$-limit set of a global trajectory is nonempty and whether it contains only equilibria. The following result which requires  only the much weaker~\ref{H1alpha} is thus perhaps surprising.

\begin{theorem} \label{ThBRS1}
 Suppose $\alpha(t)>0$ is continuous and satisfies  Hypothesis \ref{H1alpha}. Let $\vec u$ be a forward global solution of \ref{KGalpha}. Then, for  any $r>0$, there exist a sequence of times $t_n \to \infty$ as $n$ goes to infinity, and an equilibrium point 
 $(Q,0)$ such that
 \begin{equation}
 \lim_{n\rightarrow + \infty}  \| \vec u(t_n) - (Q,0)\|_{\mathcal{H}_{\text{rad}}}=0,\qquad \lim_{n\rightarrow + \infty}\int_{t_n -r}^{t_n +r} \| u_t(s)\|_{L^2( \mathbb{R}^d)}^2   ds=0.
 \end{equation}
 
 \end{theorem}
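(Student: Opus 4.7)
The approach is to combine the dissipation identity \eqref{Et0t} with a pigeonhole argument exploiting the divergence of $\int_0^\infty \alpha$, the radial Strauss compactness, and a virial-type identity to produce the desired sequence $t_n$ together with the limiting equilibrium $(Q,0)$.

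Since $\alpha>0$, \eqref{Et0t} makes $E(\vec u(t))$ non-increasing. I first argue that $E(\vec u(t))\to E_\infty \in \mathbb{R}$ and $\sup_{t\ge 0} \|\vec u(t)\|_{\mathcal{H}} <\infty$. The pointwise Ambrosetti--Rabinowitz inequality $yf(y)\ge 2(1+\gamma)F(y)$ (a consequence of \ref{H1f}), inserted into a Levine-style concavity computation on $G(t):=\|u(t)\|_{L^2}^2$ and using $\alpha(t)\to 0$ to control the damping cross-term, gives a differential inequality $GG''\ge (1+\sigma)(G')^2$ (for some $\sigma>0$) whenever $E(\vec u(t))$ dips sufficiently below $0$; this forces blowup of $G$ in finite time and contradicts forward global existence. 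Consequently $\int_0^\infty \alpha(s)\|u_t(s)\|_{L^2}^2\,ds<\infty$. I next claim
\[
\liminf_{t\to\infty}\int_{t-2r}^{t+2r}\|u_t(s)\|_{L^2}^2\,ds=0.
\]
If not, this integral is $\ge c>0$ for all large $t$; since $\alpha$ is non-increasing, $\int_{t-2r}^{t+2r}\alpha\|u_t\|^2\,ds\ge c\alpha(t+2r)$. Summing over the disjoint windows centered at $t_k=(4k-1)r$ gives $c\sum_k\alpha(4kr)\le\int_0^\infty\alpha\|u_t\|^2<\infty$, whereas monotonicity and $\int_0^\infty\alpha=\infty$ force $\sum_k\alpha(4kr)$ to diverge, a contradiction. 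Pick $\tau_n\to\infty$ realizing the liminf.

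Set $w_n(t,x):=u(\tau_n+t,x)$ on $[-2r,2r]\times\mathbb{R}^d$. The sequence is bounded in $L^\infty_t(H^1_{\text{rad}})$, satisfies $\partial_tw_n\to 0$ in $L^2_tL^2_x$, and solves $\partial_{tt}w_n+2\alpha(\tau_n+t)\partial_tw_n-\Delta w_n+w_n=f(w_n)$. Weak-$\ast$ compactness plus the vanishing of $\partial_tw_n$ in $L^2_{t,x}$ produce (along a subsequence) a $t$-independent weak limit $Q\in H^1_{\text{rad}}$. The radial Strauss compact embedding $H^1_{\text{rad}}\hookrightarrow L^{\theta+1}(\mathbb{R}^d)$ (with $\theta+1<2^*$ for $d\ge 2$, trivial if $d=1$) combined with an Aubin--Lions argument upgrades this to strong convergence $w_n\to Q$ in $L^{\theta+1}_{t,x}$, and \ref{H2f} then gives $f(w_n)\to f(Q)$ in $L^{(\theta+1)/\theta}_{t,x}$. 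Passing to the limit in the equation (using $\alpha(\tau_n+t)\to 0$), $Q$ satisfies $-\Delta Q+Q=f(Q)$, so $(Q,0)$ is an equilibrium of \ref{KGalpha}.

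To upgrade to strong $H^1$ convergence at a specific time, pick $t_n^-\in(-2r,-r)$ and $t_n^+\in(r,2r)$ by the mean value theorem so that $\|\partial_tw_n(t_n^\pm)\|_{L^2}\to 0$ (possible since the $L^1$-mass of $\|\partial_tw_n\|_{L^2}^2$ on each sub-window tends to $0$); after a subsequence, $t_n^\pm\to T^\pm$. Testing the equation against $w_n$ and integrating by parts in time over $[t_n^-,t_n^+]$ yields
\[
\int_{t_n^-}^{t_n^+}\|w_n\|_{H^1}^2\,dt = \int_{t_n^-}^{t_n^+}\!\!\int f(w_n)w_n\,dx\,dt + \int_{t_n^-}^{t_n^+}\|\partial_tw_n\|_{L^2}^2\,dt - \Bigl[\int w_n\partial_tw_n\,dx\Bigr]_{t_n^-}^{t_n^+} - 2\int_{t_n^-}^{t_n^+}\alpha(\tau_n+t)\!\int w_n\partial_tw_n\,dx\,dt.
\]
The last three terms vanish as $n\to\infty$ (by $\partial_tw_n\to 0$ in $L^2_{t,x}$, the choice of $t_n^\pm$, and boundedness of $\alpha$); the first converges to $(T^+-T^-)\int Qf(Q)\,dx=(T^+-T^-)\|Q\|_{H^1}^2$ by Strauss compactness and the equilibrium equation for $Q$. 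Hence $\|w_n\|_{L^2((T^-,T^+);H^1)}\to\|Q\|_{L^2((T^-,T^+);H^1)}$, and weak plus norm convergence in the Hilbert space $L^2_tH^1_x$ upgrades to strong convergence $w_n\to Q$ in $L^2((T^-,T^+);H^1_{\text{rad}})$. A further subsequence gives pointwise $H^1_{\text{rad}}$-convergence $w_n(t^*)\to Q$ at some $t^*\in(T^-,T^+)$, and a final MVT refinement ensures also $\|u_t(\tau_n+t^*)\|_{L^2}\to 0$. Setting $t_n:=\tau_n+t^*$ concludes the proof. The hardest step is the a priori bound $\sup_t\|\vec u(t)\|_{\mathcal{H}}<\infty$: unlike \cite{BRS1} (where constant damping makes this trivial), with $\alpha(t)\to 0$ one must rely on \ref{H1f} and a concavity/blowup dichotomy to exclude $E\to-\infty$; a secondary technical point is the endpoint-adapted virial identity, required because only $L^2_t$-smallness, not pointwise smallness, of $\partial_tw_n$ is available.
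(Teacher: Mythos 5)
Your proof has a genuine gap at its foundation: the claim $\sup_{t\ge 0}\|\vec u(t)\|_{\mathcal{H}}<\infty$ does not follow from the argument you give, and indeed it is precisely the a priori bound that the paper emphasizes is \emph{not} available in this regime. Your Levine-type concavity argument shows only that $E(\vec u(t))$ cannot become negative (this is Corollary~\ref{coro-blow} in the paper), hence $0\le E(\vec u(t))\le E(\vec u(0))$. But this does \emph{not} control $\|\vec u(t)\|_{\mathcal{H}}$: the energy contains the negative focusing contribution $-\int F(u)$, so $\|\nabla u\|_{L^2}$ may be large while $E$ stays bounded. The paper is explicit on this point: immediately before Theorem~\ref{ThBRS1} it states that ``Under weaker damping assumptions, we do not \emph{apriori} know that global trajectories are bounded in $\calH$,'' and the introduction recalls that even in the conservative case such uniform bounds are open for a range of powers. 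Everything downstream in your argument (boundedness of $w_n$ in $L^\infty_tH^1_{\text{rad}}$, weak-$*$ compactness, Aubin--Lions) relies on this unavailable bound.

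The paper's route circumvents this by a more careful choice of the time sequence. It is not enough to pick $\tau_n$ along which the windowed kinetic energy vanishes; one must also arrange that $K_0(u(t_n^*))\to 0$. This is the content of Lemma~\ref{lem:GEUseq} (the construction of $t_n^*$ with $K_0(u(t_n^*))\to0$ via a sandwich argument using the second derivative of $y(t)=\tfrac12\|u(t)\|_{L^2}^2$ and the $L^2$-bound of Proposition~\ref{propL2}). Then Lemma~\ref{SolBorne} yields
\[
\gamma\bigl(\|u(t_n^*)\|_{H^1}^2+\|u_t(t_n^*)\|_{L^2}^2\bigr)\le 2(1+\gamma)E(\vec u(t_n^*))-K_0(u(t_n^*)),
\]
and since $E$ is bounded and $K_0(u(t_n^*))\to0$, the sequence $\vec u(t_n^*)$ is bounded in $\mathcal{H}$ \emph{without} any global bound on the trajectory. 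Your windowed-pigeonhole argument for finding good times is fine as far as it goes, and your Strauss-compactness and virial-type steps are compatible in spirit with the paper's, but without the $K_0\to0$ information the hypotheses of those steps are not satisfied. A secondary remark: you invoke a pointwise Ambrosetti--Rabinowitz inequality $yf(y)\ge 2(1+\gamma)F(y)$, whereas hypothesis~\ref{H1f} is stated in integrated form; the two are equivalent here only after an additional (if standard) localization argument, which you should at least acknowledge. Finally, the paper's upgrade of $u_t(t_n^*)\to0$ in $L^2$ is not a mean value theorem refinement but an observability inequality for the free Klein--Gordon equation (Lemma~\ref{lem:obs}, imported from \cite{BRS1}); your MVT-based selection gives times where $\|\partial_t w_n\|_{L^2}$ is small, which is enough, but it is worth noting this structural difference.
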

 The plan of the paper is as follows.  In Section 2.1, we state the basic properties of the non-autonomous damped Klein-Gordon equation \ref{KGalpha}, in particular the local existence and uniqueness of mild solutions. We also recall  Strichartz inequalities satisfied by the corresponding linear damped Klein-Gordon equation. In Section 2.2, we recall spectral properties of the linearized elliptic equation around an equilibrium point of \ref{KGalpha}. In Section 3, we introduce the functional $K_0$, which plays a central role in the proofs of Theorems 1.1 to 1.3. We also prove, by  generalizing arguments of \cite{Caz85}, that the
 $L^2$-norm of global solutions is bounded. In Lemma \ref{eqBlowup}, we give a sufficient condition on $K_0$ for blow-up in finite time of the solutions of \ref{KGalpha}. Section 4 is devoted to the construction of special time sequences, which are useful in the proofs of 
our main results. In particular, these time sequences allow to construct a time sequence $t_n^*$ so that $K_0(u(t_n^*))$ tends to $0$ as $n$ goes to infinity (see Section \ref{sub:K0utn}). In Section \ref{sub:PreuveThBRS1}, we use this time sequence $t_n^*$ in order to prove Theorem \ref{ThBRS1}, namely that, if a solution $\vec u(t)$ does not blow up in finite positive time, then the $\omega$-limit set $\omega(\vec u(0))$ contains at least one equilibrium point $(Q,0)$ under the weak hypothesis \ref{H1alpha}. Sections 5 and 6 are the core of this paper. In Section 5, we prove Theorem \ref{ThBRS2} when $1 \leq d \leq 6$ and $d \ne 3$  or when  $d=3$ and the nonlinearity $f$ satisfies the condition $1 < \theta \leq 4$, that is, that, under the more restrictive hypothesis \ref{H2alpha}, any solution $\vec u(t)$ of \ref{H2alpha} either blows-up in finite time or strongly converges 
to the above mentioned equilibrium point $(Q,0)$ as $t$ goes to infinity. { In Section 6, we prove Theorem \ref{ThBRS2} in the more delicate case, where $d=3$ and the nonlinearity 
$f$ satisfies the condition $4 < \theta <5$. In this case, we need to work with averaged quantities in order to exploit (integral) Strichartz estimates.} In this case, we also need to use an observability inequality proved in \cite{BRS_OBS}.
The  short Section 7 is devoted to the proof of Theorem~\ref{ThBRS3}. Finally in the Appendix A, we indicate the proof of the {\L}ojasiewicz-Simon inequality in the framework  of this paper. Our results are true in dimensions $d\leq 6$. It might be possible to extend them to higher dimensions modulo technical complications. For concision, most of the proofs in the paper  will be written only for $3\leq d \leq 6$ (the cases $d=1,2$ being {\em a priori} simpler).

\section{Basic properties} \label{Basic}
\subsection{Local existence results} \label{re:LocalEx}
Let us first consider the  following linear perturbed Klein-Gordon equation, written as a  first order system
\begin{equation}\label{KGalphalin}
\begin{gathered}
\p_t\binom{u}{u_t} = \left(\begin{matrix} 0 & 1\\ \Delta -1 & 0\end{matrix}\right) \binom{u}{u_t} + \binom{0}{-2\alpha (t) u_t} \equiv A \vec u + \binom{0}{-2\alpha (t) u_t}, \\
 \vec u(t_0) = \begin{pmatrix}\varphi_0\\\varphi_1\end{pmatrix} \in \mathcal{H},
 \end{gathered}
\end{equation}
where  $t_0 \geq 0$ and $\vec u:=\binom{u}{u_{t}}$. It is well-known that $A$ generates a linear $C^0$-group on $\mathcal{H}$ and on $\mathcal{H}_{rad}$. Since $\alpha(t)$ is a continuous function of $t \geq 0$ and that $-2 \alpha(t) u_t$ is a uniformly Lipschitz continuous function on $L^2(\R^d)$, it follows from \cite[Theorem 6.1.2]{Pazy} that, for every 
$\vec u_0 \equiv (\varphi_0,\varphi_1) \in \mathcal{H}$, the system \eqref{KGalphalin} has a unique mild solution $\vec u \in C^0([t_0, +\infty), \mathcal{H})$. Moreover, the mapping $\vec u_0 \to \vec u$ is Lipschitz continuous from $\mathcal{H}$ into 
$C^0([t_0, +\infty), \mathcal{H})$ (resp. from  $\mathcal{H}_{rad}$ into 
$C^0([t_0, +\infty), \mathcal{H}_{rad})$). In addition, since $\alpha(t)$ is a $C^1$ function of $t\geq 0$, then  the solution  $\vec u(t)$ of \eqref{KGalphalin} with $\vec u_0 \in H^2(\R^d) \times H^1(\R^d)$ is a classical solution of \eqref{KGalphalin}, that is, the solution $\vec u(t)$ belongs to $C^0([t_0, +\infty), 
H^2(\R^d) \times H^1(\R^d)) \cap C^1([t_0,+\infty); H^1(\R^d) \times L^2(\R^d))$ and the equation \eqref{KGalphalin} is satisfied for any $t \in [t_0,+\infty)$.

Let $\Sigma_0(.): \mathcal{H} \to \mathcal{H}$ be the linear $C^0$-group generated by the linear conservative Klein-Gordon equation. By \cite[Corollary 4.2.2]{Pazy},  the quantity 
$$
\int_{t_0}^{t} \Sigma_0(t-s) (0,2 \alpha(s) u_t(s))   ds
$$
is a continuous function from $[t_0,+\infty)$ to $\mathcal{H}$ and the solution $\vec u(t)$ of the linear equation \eqref{KGalphalin} can be written as
\begin{equation}
\label{eq:mildKGalphalin}
\vec u(t) = \Sigma_0(t-t_0) \vec u(t_0) + \int_{t_0}^{t} \Sigma_0(t-s) (0, -2 \alpha(s) u_t(s))  ds.
\end{equation}

Arguing exactly as Martinez (see \cite{Martinez1} for example) and introducing the following energy functional on $\mathcal{H}$,
\begin{equation}
\label{eq:energyMart}
\mathcal{E}(\vec v) = \frac{1}{2}\int_{\R^d}( v_2^2 + |\nabla v_1|^2 +v_1^2)   dx , \quad \forall \vec{v}=(v_1,v_2) \in \mathcal{H},
\end{equation}
we may prove the following subexponential decay of the  $\mathcal{H}$-norm of the solution $\vec u(t)$ of \eqref{KGalphalin}.

\begin{theorem} \label{th:Martinez1} Assume that the function $\alpha(t)$ satisfies the hypothesis \ref{H1alpha}. Then, there exist positive constants $\omega$ and $K \geq 1$ such that, if 
$\vec u(t) \in \mathcal{H}$ is a solution of the equation \eqref{KGalphalin}, then
we have, for any $s \geq 0$, for any $t \geq s$,
\begin{equation}
\label{eq:decroitexp}
\mathcal{E}(\vec u)(t) \leq \mathcal{E}(\vec u)(s) \exp (1 - \omega \int_{s}^{t} 2 \alpha(\tau)   d\tau)
\leq \mathcal{E}(\vec u)(s) K \exp(- 2\omega \int_{s}^{t} \alpha(\tau)   d\tau).
\end{equation}
\end{theorem}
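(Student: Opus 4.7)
The plan is to follow the Martinez--Haraux--Komornik approach for dissipative equations with variable damping: first, produce an integrated dissipation estimate
$$ \int_s^\infty \alpha(\tau)\, \mathcal{E}(\vec u)(\tau)\, d\tau \leq C\, \mathcal{E}(\vec u)(s), \qquad \forall s \geq s_0, $$
from a multiplier identity; second, change clock to $\sigma = A(t) := \int_0^t \alpha(\sigma)\,d\sigma$, in which this becomes an integral inequality $\int_\sigma^\infty \tilde{\mathcal{E}}(\rho)\,d\rho\leq C\tilde{\mathcal{E}}(\sigma)$ for the non-increasing function $\tilde{\mathcal{E}}(\sigma) := \mathcal{E}(\vec u)(A^{-1}(\sigma))$; and third, invoke the Haraux--Komornik integral lemma to get pointwise exponential decay in $\sigma$, which translates back to \eqref{eq:decroitexp} in the original time.

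For the multiplier step, testing \eqref{KGalphalin} against $u$ and integrating yields the Pohozaev-type identity $\frac{d}{dt}\int u u_t\,dx = 2\|u_t\|_{L^2}^2 - 2\mathcal{E}(\vec u) - 2\alpha(t)\int u u_t\,dx$. Multiplying by $\alpha(t)$ and rearranging produces
\begin{equation*}
2\alpha(t)\, \mathcal{E}(\vec u)(t) = -\frac{d}{dt}\!\Big[\alpha(t) \int u u_t\,dx\Big] + \alpha'(t) \int u u_t\,dx + 2\alpha \|u_t\|_{L^2}^2 - 2\alpha(t)^2 \int u u_t\,dx.
\end{equation*}
Integrating this on $[s,T]$, using $|\int u u_t|\leq \mathcal{E}(\vec u)$, the energy identity $2\int_s^T \alpha\|u_t\|^2\,d\tau = \mathcal{E}(\vec u)(s) - \mathcal{E}(\vec u)(T) \leq \mathcal{E}(\vec u)(s)$, and the monotonicity of $\alpha$ (which gives $\int_s^T |\alpha'|\,d\tau = \alpha(s)-\alpha(T) \leq \alpha(s)$, with $\alpha'$ interpreted as a Lebesgue--Stieltjes measure if $\alpha$ is only continuous), I obtain
$$ (2-2\alpha(s))\int_s^T \alpha(\tau)\, \mathcal{E}(\vec u)(\tau)\, d\tau \leq (3\alpha(s)+1)\, \mathcal{E}(\vec u)(s).$$
Choosing $s_0$ so that $\alpha(s_0)\leq 1/2$ (possible since $\alpha\to 0$) and sending $T\to\infty$ yields the desired integrated inequality with, say, $C=5$, valid for all $s\geq s_0$.

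For the Haraux--Komornik step, set $g(\sigma) := \int_\sigma^\infty \tilde{\mathcal{E}}(\rho)\,d\rho$. The integrated inequality reads $g\leq -Cg'$, hence $g(\sigma)\leq g(A(s_0))\, e^{-(\sigma - A(s_0))/C}$. Since $\tilde{\mathcal{E}}$ is non-increasing, $g(\sigma)\geq C\tilde{\mathcal{E}}(\sigma + C)$, which gives $\tilde{\mathcal{E}}(\sigma)\leq e\, \tilde{\mathcal{E}}(A(s_0))\, e^{-(\sigma - A(s_0))/C}$ for all $\sigma \geq A(s_0)$. Translating back to the original time variable and absorbing the bounded interval $[0,s_0]$ into the multiplicative constant (via monotonicity of $\mathcal{E}(\vec u)$ and boundedness of $\alpha$) yields \eqref{eq:decroitexp} with $\omega = 1/(2C)$ and some $K\geq 1$.

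The main obstacle is closing the integrated dissipation inequality uniformly in $T$. The monotonicity of $\alpha$ in \ref{H1alpha} is used essentially in two places: to control $\int_s^T |\alpha'|\,d\tau$ by $\alpha(s)$, and to absorb the $2\alpha^2\int u u_t$ term on the left-hand side once $\alpha(s)$ is small. Without the non-increasing assumption on $\alpha$, the term $\alpha'\int u u_t$ has no sign information and could overwhelm the dissipation, causing the scheme to break down; the argument genuinely relies on both the decay \emph{and} the monotonicity built into \ref{H1alpha}.
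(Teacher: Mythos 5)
Your proof follows essentially the same route as the paper, which delegates the estimate to Martinez's method \cite{Martinez1}: the multiplier identity $\frac{d}{dt}\int u u_t = 2\|u_t\|^2 - 2\mathcal{E} - 2\alpha \int uu_t$, the resulting integrated dissipation bound $\int_s^\infty \alpha\,\mathcal{E}\,d\tau \le C\,\mathcal{E}(s)$, the change of clock $\sigma = A(t)$, and the Haraux--Komornik integral lemma are exactly the steps Martinez carries out (and the constants $e$ and $\omega$ come out as you computed). The one point worth spelling out more carefully is obtaining the integrated estimate with a constant $C$ uniform over all base times $s\ge 0$, not only $s\ge s_0$ where $\alpha(s)\le \tfrac12$; your "absorb the bounded interval $[0,s_0]$" fix yields the weaker second inequality with a possibly larger $K$ but not literally the prefactor $\exp(1)$ in the first inequality for $s < s_0$. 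This is easily repaired either by splitting $\int_s^T\alpha^2\mathcal{E}$ at $s_0$ and using $\alpha \le \alpha(0)$ and $\mathcal{E}(\tau)\le \mathcal{E}(s)$ on $[s, s_0]$ (which gives a uniform $C$ depending on $\alpha(0)$ and $A(s_0)$), or, as the paper does, by proving the case $s=0$ first and then time-shifting $\alpha\mapsto\alpha(s+\cdot)$, which preserves hypothesis \ref{H1alpha} with $\alpha(s+\cdot)(0) = \alpha(s)\le\alpha(0)$ and hence gives $s$-independent constants.
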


\begin{proof}
We first prove the above inequalities for $s=0$.
We do not repeat the proof here since it is the same, mutatis mutandis, as in  \textcolor{red}{\cite[pages 301--304]{Martinez1}}. Afterwards, we prove it for $s>0$ by doing a simple change of time variable, replacing $\vec u(t)$ and $\alpha (t)$ by $\vec u(s+t^*)$ and $\alpha(s+t^*)$. Notice that the constants $\omega$ and $K$ do not depend of the initial time $s$.
\end{proof}

%
%
The above considerations, Theorem \ref{th:Martinez1} together with, for example, \cite[Sections 5.2 and 5.3]{Pazy} (see in particular  \textcolor{red}{\cite[Theorem 5.3.1]{Pazy}}) imply that the equation 
\eqref{KGalphalin} defines a unique evolution process, 
$\Sigma_{\alpha}(t,s)$, $0 \leq s \leq t \leq +\infty$,  and $\vec u(t) = \Sigma_{\alpha}(t, t_0) \vec u(t_0)$.

We now want to show that the solution of \eqref{KGalphalin} also satisfies the classical Strichartz inequalities for the wave equation. First we recall these inequalities in the case of the  affine conservative Klein-Gordon equation for $0 \leq t \leq T$,
\begin{equation}
\label{KG0Affine}
v_{tt}(t) - \Delta v(t) + v(t) = h(t) , \quad ~ \vec v(0) = (v_0, v_1) \in \mathcal{H},
\end{equation}
where $h(t) \in L^1((0,\infty), L^{2}(\R^d))$.

The following proposition is well-known 
\begin{proposition}
\label{prop:free1}
 In all dimensions $d\ge1$, the solution $v$ of \eqref{KG0Affine} satisfies the following energy bounds,
\begin{equation}
\label{eq:venergy}
\sup_{t\ge 0}  \| (v, \partial_t v )(t) \|_{H^1\times L^2} \leq C_0 \big [ 
\| (v_{0},v_{1}) \|_{\mathcal{H}}  +  \int_0^\infty \| h(s)\|_{L^2}   ds \big],
\end{equation}
as well as the Strichartz  estimates, in dimensions $d\ge2$,
\begin{equation}
\label{eq:Strich2} 
\|  v\|_{L^{q}_t L^p( \mathbb{R}^d)} \leq C_0^*\big[ \| (v_{0},v_{1}) \|_{\mathcal{H}} + 
\|  h\|_{L^{\tilde q'}_t L^{\tilde p'}_x   }  \big],
\end{equation}
where $\frac{1}{q}+\f{d}{p}=\f{d}{2}-1= \frac{1}{\tilde q'}+\frac{d}{\tilde p'}-2$, $2\le p,\tilde p<\infty$, $2\le q,\tilde q$, and $\frac{1}{q}+\frac{d-1}{2p}\leq \frac{d-1}{4}$,  
$\frac{1}{\tilde q}+\frac{d-1}{2\tilde p}\leq \frac{d-1}{4}$ and where $C_0^* = C_0^*(p,q, p',q')$ is a positive constant.
\end{proposition}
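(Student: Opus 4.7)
\smallskip
\noindent\textbf{Proof plan for Proposition~\ref{prop:free1}.} Both estimates are classical, so the plan is simply to assemble the standard ingredients and indicate why the stated admissibility range is the correct one.

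\smallskip
For the energy bound~\eqref{eq:venergy}, I would multiply \eqref{KG0Affine} by $\partial_t v$, integrate over $\mathbb{R}^d$, and apply Cauchy--Schwarz to obtain
\begin{equation*}
\frac{d}{dt}\mathcal{E}(\vec v(t)) = \int_{\mathbb{R}^d} h(t)\,\partial_t v(t)\,dx \leq \|h(t)\|_{L^2}\sqrt{2\mathcal{E}(\vec v(t))},
\end{equation*}
whence $\frac{d}{dt}\sqrt{\mathcal{E}(\vec v(t))} \leq \tfrac{1}{\sqrt 2}\|h(t)\|_{L^2}$. Integration in $t$ and the equivalence of $\mathcal{E}(\vec v)$ with the squared $H^1\times L^2$ norm give~\eqref{eq:venergy}, first for classical solutions and then by density.

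\smallskip
For the Strichartz estimate~\eqref{eq:Strich2}, I would split $v = v^{\rm h} + v^{\rm inh}$, where $v^{\rm h}$ solves the homogeneous problem with data $(v_0,v_1)$ and $v^{\rm inh}$ is the Duhamel term with zero data and forcing $h$. Expressing $v^{\rm h}$ through the half--Klein-Gordon propagators $e^{\pm it\sqrt{1-\Delta}}$, the plan is to use a Littlewood--Paley decomposition: on high frequencies $\sqrt{1+|\xi|^2}\sim |\xi|$, and stationary phase for the wave phase yields the usual dispersive decay $|t|^{-(d-1)/2}$; on low frequencies the symbol is smooth and one gets the faster Schr\"odinger-type decay $|t|^{-d/2}$. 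Interpolating these dispersive bounds with the trivial $L^2$ conservation and feeding the result into the abstract Keel--Tao $TT^*$ machinery produces the homogeneous estimate for every wave-admissible pair $(q,p)$ with $2\leq q, 2\leq p<\infty$, $\frac{1}{q}+\frac{d-1}{2p}\leq \frac{d-1}{4}$ at the $\dot H^1$ regularity level, which is exactly the scaling $\frac{1}{q}+\frac{d}{p}=\frac{d}{2}-1$ appearing in the statement. This level is what absorbs the full $H^1$-norm of $v_0$ (and, after one derivative, the $L^2$-norm of $v_1$).

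\smallskip
The inhomogeneous bound then follows by pairing the homogeneous estimate with its dual (applied to the admissible pair $(\tilde q,\tilde p)$) and using the Christ--Kiselev lemma to restrict the time integral to $s<t$; this is precisely where the hypothesis $2\leq q$ (rather than $q=2$) is used, so that Christ--Kiselev applies. The main obstacle, if one were to carry this out in full, is the careful verification of the dispersive estimate for the Klein-Gordon propagator in the transition regime between low and high frequencies; however, this is thoroughly documented in standard references (Ginibre--Velo, Keel--Tao, Brenner), so I would simply cite them rather than reproduce the computation.
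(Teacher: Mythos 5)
The paper does not prove this proposition: it states it as ``well-known'' and implicitly leans on the standard references in its bibliography (Keel--Tao, Lindblad--Sogge, Pazy), so there is no paper argument to compare against. Your sketch is a correct outline of the standard proof. The energy bound via $\frac{d}{dt}\sqrt{\mathcal{E}} \le \tfrac{1}{\sqrt2}\|h\|_{L^2}$ and integration is exactly right. For the Strichartz estimate the decomposition into homogeneous plus Duhamel, Littlewood--Paley, frequency-dependent dispersive decay ($|t|^{-(d-1)/2}$ at high frequency from the wave phase, $|t|^{-d/2}$ at low frequency from the Schr\"odinger-type phase), Keel--Tao $TT^*$, and Christ--Kiselev for the retarded estimate is the standard route.

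Two small points worth flagging. First, your phrase ``at the $\dot H^1$ regularity level'' hides a step: the $TT^*$ argument produces $L^2$-level estimates, and passing to $\dot H^1\times L^2$ data requires inserting $|\nabla|$ and summing the Littlewood--Paley pieces; moreover, the low-frequency block is not on the $\dot H^1$ scaling line, so one should verify (via Bernstein: a low-frequency-localized function can be moved to a smaller Lebesgue exponent) that the $|t|^{-d/2}$ decay suffices for all wave-admissible pairs with $\frac{1}{q}+\frac{d}{p}=\frac{d}{2}-1$. The computation shows it does precisely because $q\ge 2$, but it is not automatic and deserves a sentence. Second, for the specific application in the paper, only $(\tilde q',\tilde p')=(1,2)$ is used; in that case the retarded estimate follows directly from Minkowski's integral inequality applied to the Duhamel formula, so Christ--Kiselev is not actually needed for the paper's purposes, though it is needed for the full range of dual exponents stated in the proposition.
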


We next apply the previous proposition to the solution $\vec u(t)$ of the linear equation \eqref{KGalphalin}. Let $(p,q)$ be a pair satisfying the conditions of Proposition \ref{prop:free1}. We denote by $P_1: \vec v \in  \mathcal{H} \to v\in H^1(\R^d)$ the projection onto the first component. Since $\vec u(t)$, $t\geq t_0$, satisfies the integral equality \eqref{eq:mildKGalphalin}, we deduce from the Strichartz estimate \eqref{eq:Strich2} with $h=-2\alpha(t) u_t$ and $(\tilde{p}',\tilde{q}')= (2, 1)$  that, 
\begin{equation}
\label{eq:StrichKGalphalin2}
\|  u\|_{L^{q}((t_0, +\infty), L^p( \mathbb{R}^d))} 
 \leq  C_0^* \big( \| \vec u(t_0)\|_{\mathcal{H}} +  \int_{t_0}^{+\infty} 
 \| 2 \alpha(s) u_t(s) \|_{L^2}   ds \big).
\end{equation}
Finally, we apply Theorem \ref{th:Martinez1} to the inequality \eqref{eq:StrichKGalphalin2} to get, 
\begin{equation}
\label{eq:StrichKGalphalin3}
\begin{aligned}
\|  u\|_{L^{q}((t_0, +\infty), L^p( \mathbb{R}^d))} & \leq C_0^* \| \vec u(t_0) \|_{\mathcal{H}}
\big( 1 +  K^{1/2}  \int_{t_0}^{+\infty} \alpha(s) \exp (- \omega \int_{t_0}^{s} \alpha(\sigma) d\sigma )   ds  \big) \\
 & \leq C_0^* \| \vec u(t_0) \|_{\mathcal{H}}  (1 +   K^{1/2} \omega^{-1} ). 
\end{aligned}
\end{equation}
The decay of the norm of $\vec u(t)$ in $\mathcal{H}$ is subexponential, whereas the upper bound of the $L^q_tL^p( \mathbb{R}^d)$-norm of $u(t)$, obtained in the above inequality, is a simple constant. 

By the above properties and \cite[Theorems 5.2.2, Theorem 5.2.3 and 5.3.1]{Pazy}, the system \eqref{KGalphalin} generates a unique evolution process (or evolution system) 
$\Sigma_{\alpha}(t,s), 0 \leq s\leq t $ in $\mathcal{H}$, and \eqref{KGalphalin}  has a unique solution $\vec u(t) = \Sigma_{\alpha}(t,t_0) \vec u(t_0)$ in $\mathcal{H}$, satisfying the properties of Theorem 5.3.1 of \cite{Pazy}.
And we can state the following result..

\begin{proposition}\label{prop:StirchKGalphalin}
Let $t_0>0$ and $\vec u(t_0) \in \mathcal{H}$. System \eqref{KGalphalin} has a unique solution $\vec u(t) = \Sigma_{\alpha}(t,t_0) \vec u(t_0)$ in $C^0([t_0,+\infty),\mathcal{H})$, satisfying the properties of Theorem 5.3.1 of \cite{Pazy}. And, for any pair $(p,q)$ satisfying the conditions of Proposition \ref{prop:free1}, there exists a positive constant $C_0= C_0(p,q)$ so that, 
\begin{equation}
\label{eq:StrichKGalphalin4}
\|  u\|_{L^{q}((t_0, \infty), L^p( \mathbb{R}^d))}  \leq C_0 \| \vec u(t_0) \|_{\mathcal{H}} .
\end{equation}
\end{proposition}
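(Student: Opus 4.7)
The plan is to assemble three ingredients already in place: the Duhamel formula \eqref{eq:mildKGalphalin}, the inhomogeneous free Klein-Gordon Strichartz estimate from Proposition \ref{prop:free1} at the energy endpoint, and the subexponential decay of $\mathcal{E}(\vec u)(t)$ provided by Theorem \ref{th:Martinez1}. The proposition is in essence a restatement of what was derived in \eqref{eq:StrichKGalphalin2}--\eqref{eq:StrichKGalphalin3}; the proof consists in packaging those steps cleanly.

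First I would establish existence, uniqueness, and the evolution-process structure $\vec u(t) = \Sigma_{\alpha}(t, t_0) \vec u(t_0) \in C^0([t_0, \infty), \mathcal{H})$ via Theorem 5.3.1 of \cite{Pazy}. Its hypotheses are met because $A$ generates a $C^0$-group on $\mathcal{H}$ (and on $\mathcal{H}_{rad}$), while the perturbation $B(t)\vec v = (0, -2\alpha(t) v_2)$ is a uniformly bounded, continuously differentiable family of bounded operators on $\mathcal{H}$, using $\alpha \in C^1([0,\infty))$ together with the boundedness and monotonicity guaranteed by \ref{H1alpha}.

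Next I would apply the inhomogeneous estimate \eqref{eq:Strich2} to the mild formulation \eqref{eq:mildKGalphalin} with forcing $h(s) = -2\alpha(s) u_t(s)$, choosing the energy-endpoint dual pair $(\tilde p', \tilde q') = (2, 1)$, i.e.\ $(\tilde p, \tilde q) = (2, \infty)$. This pair saturates the scaling identity and the wave-admissibility condition $\frac{1}{\tilde q} + \frac{d-1}{2 \tilde p} \le \frac{d-1}{4}$, hence lies within the range of Proposition \ref{prop:free1}. This immediately yields \eqref{eq:StrichKGalphalin2} and reduces the proof to bounding $\int_{t_0}^\infty \alpha(s) \|u_t(s)\|_{L^2}\, ds$ by $\|\vec u(t_0)\|_{\mathcal{H}}$.

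For the latter I would invoke Theorem \ref{th:Martinez1}: $\|u_t(s)\|_{L^2}^2 \le 2\mathcal{E}(\vec u)(s) \le 2K\, \mathcal{E}(\vec u)(t_0)\exp\!\bigl(-2\omega \int_{t_0}^{s}\alpha(\tau)\, d\tau\bigr)$. Substituting and performing the change of variable $v = \omega \int_{t_0}^{s}\alpha(\tau)\, d\tau$ (legitimate since $\alpha>0$) collapses the time integral to at most $\omega^{-1}$, and combining with $\mathcal{E}(\vec u)(t_0)^{1/2} \lesssim \|\vec u(t_0)\|_{\mathcal{H}}$ produces \eqref{eq:StrichKGalphalin4} with $C_0 = C_0^*(1 + K^{1/2}\omega^{-1})$. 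The only point of care is verifying admissibility of the $(2,\infty)$ endpoint within Proposition \ref{prop:free1}, which is standard since $\tilde p = 2 < \infty$ and the sharp inequality is met with equality; there is no substantive obstacle otherwise.
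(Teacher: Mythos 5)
Your proposal matches the paper's own derivation almost line for line: the existence and evolution-process structure via Pazy's Theorem 5.3.1, the application of the free Strichartz estimate \eqref{eq:Strich2} to the Duhamel formula \eqref{eq:mildKGalphalin} with the dual energy pair $(\tilde p',\tilde q')=(2,1)$, and the use of Theorem \ref{th:Martinez1} together with the substitution $v=\omega\int_{t_0}^s\alpha$ to reduce the forcing integral to the constant $K^{1/2}\omega^{-1}$ — exactly the chain \eqref{eq:StrichKGalphalin2}--\eqref{eq:StrichKGalphalin3} which the paper records immediately before stating the proposition. No gaps; this is the intended argument.
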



We finally turn to the affine damped Klein-Gordon equation, for $ t_0 \leq t \leq T$,
\begin{equation}
\label{KGalphaAffine}
u_{tt}(t) + 2 \alpha(t) u_{t}(t) - \Delta u(t) + u(t) = G(t) , \quad ~ \vec u(t_0) = (\varphi_0,\varphi_1) \in \mathcal{H},
\end{equation}
where $G(t) \in L^1((0,T), L^{2}(\R^d))$, for $T >0$ (or even $T=+\infty$). Again, by 
\cite[Section 4.2.]{Pazy}, the quantity
$$
g(t) \equiv  \Sigma_0(t-t_0) \vec u(t_0)   +  \int_{t_0}^{t} \Sigma_0(t-s) (0, G(s))    ds
$$
is a continuous function from $[t_0,T]$ into $\mathcal{H}$.  By \cite[Corollary 6.1.3]{Pazy},
the integral equation 
\begin{equation}
\label{eq:solintKGalAffine}
\vec u(t) =  \Sigma_0(t-t_0) \vec u(t_0)   +  \int_{t_0}^{t} \Sigma_0(t-s) (0, G(s))    ds +
\int_{t_0}^{t} \Sigma_0(t-s) (0, -2 \alpha(s) u_t(s))   ds,
\end{equation}
has a unique solution $\vec u(t) \in C^0([t_0,T], \mathcal{H})$. This integral solution coincides actually with the mild solution (in the non-autonomous sense) of the equation \eqref{KGalphaAffine}, that is,
\begin{equation}
\label{eq:coincidSol}
\vec u(t) =  \Sigma_{\alpha}(t,t_0) \vec u(t_0)   +  \int_{t_0}^{t} \Sigma_{\alpha}(t,s) (0, G(s))   ds.
\end{equation}
Using the above integral formula for the solution $\vec u(t)$ of Equation \eqref{KGalphaAffine}  and applying Theorem \ref{th:Martinez1} and Proposition \ref{prop:StirchKGalphalin}, we obtain the following result.
\begin{proposition} \label{th:StrichKGalAffine}
There exists $c_0$ such that, for any  $t_0>0$ and any $\vec u(t_0) \in \mathcal{H}$, System \eqref{KGalphaAffine} has a unique mild solution $\vec u(t)$ in $C^0([t_0,T],\mathcal{H})$ and the following estimate holds, 
\begin{equation}
\label{eq:EnergKGalAffine}
\| \vec  u(t)\|_{\mathcal{H}}  \leq c_0 \big[\| \vec u(t_0) \|_{\mathcal{H}}
\exp (- \omega \int_{t_0}^{t} \alpha(s)   ds) +  \int_{t_0}^{t}  \exp (- \omega \int_{s}^{t} \alpha(\tau) d\tau)\| G(s)\|_{L^2}  ds \big],
 \end{equation}
 where $\omega>0$ has been defined in Theorem \ref{th:Martinez1}. \\
 Moreover, for any pair $(p,q)$ satisfying the conditions of Proposition \ref{prop:free1}, there exists $C>0$ such that for any $t_0< T$,
\begin{equation}
\label{eq:StrichKGalAffine1}
\|  u \|_{L^{q}((t_0, T), L^p( \mathbb{R}^d))}  \leq C [ \| \vec u(t_0) \|_{\mathcal{H}} + 
\int_{t_0}^{T} \| G(s)\|_{L^2( \mathbb{R}^d)}  ds].
\end{equation}
\end{proposition}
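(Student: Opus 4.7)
\medskip

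\noindent\textbf{Proof plan for Proposition \ref{th:StrichKGalAffine}.}

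The strategy is to regard \eqref{KGalphaAffine} as a perturbation of the homogeneous damped linear equation \eqref{KGalphalin} via the non-autonomous Duhamel formula \eqref{eq:coincidSol}, and then to apply separately to each piece the a priori bounds already established for the evolution process $\Sigma_\alpha(t,s)$, namely the subexponential energy decay of Theorem \ref{th:Martinez1} and the global Strichartz bound of Proposition \ref{prop:StirchKGalphalin}. Existence and uniqueness of the mild solution in $C^0([t_0,T],\mathcal{H})$ follow directly from the variation-of-constants formula \eqref{eq:coincidSol} (equivalently from \cite[Corollary 6.1.3]{Pazy}) since $G\in L^1((0,T),L^2(\mathbb{R}^d))$ and $\Sigma_\alpha$ is strongly continuous, so I will focus on the two estimates.

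For the energy inequality \eqref{eq:EnergKGalAffine}, I would take the $\mathcal{H}$-norm of \eqref{eq:coincidSol} and use the triangle inequality to write
\begin{equation*}
\|\vec u(t)\|_{\mathcal{H}} \leq \|\Sigma_\alpha(t,t_0)\vec u(t_0)\|_{\mathcal{H}} + \int_{t_0}^{t} \|\Sigma_\alpha(t,s)(0,G(s))\|_{\mathcal{H}}\,ds.
\end{equation*}
Since $\mathcal{E}(\cdot)$ from \eqref{eq:energyMart} is comparable (with constants $1/2$ and $1$) to $\|\cdot\|_{\mathcal{H}}^2$, Theorem \ref{th:Martinez1} applied to the initial time $t_0$ yields the decay factor $K^{1/2}\exp(-\omega\int_{t_0}^t\alpha(\tau)d\tau)$ on the first term, and applied starting from time $s$ with datum $(0,G(s))$ (whose $\mathcal{H}$-norm is $\|G(s)\|_{L^2}$) gives the factor $K^{1/2}\exp(-\omega\int_{s}^t\alpha(\tau)d\tau)\|G(s)\|_{L^2}$ under the integral. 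Setting $c_0 = K^{1/2}$ produces \eqref{eq:EnergKGalAffine}.

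For the Strichartz bound \eqref{eq:StrichKGalAffine1}, I project \eqref{eq:coincidSol} on the first component and again use the triangle inequality, but now in the mixed norm $L^q((t_0,T),L^p(\mathbb{R}^d))$. The homogeneous piece $P_1\Sigma_\alpha(\cdot,t_0)\vec u(t_0)$ is controlled by $C_0\|\vec u(t_0)\|_{\mathcal{H}}$ via Proposition \ref{prop:StirchKGalphalin} directly. For the inhomogeneous piece
\begin{equation*}
\int_{t_0}^{t} P_1\Sigma_\alpha(t,s)(0,G(s))\,ds,
\end{equation*}
I apply Minkowski's integral inequality, which legitimately pulls the $L^q_t L^p_x$ norm inside the $ds$ integral (after extending by zero for $t<s$). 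For each fixed $s$, the function $\tau\mapsto \Sigma_\alpha(\tau,s)(0,G(s))$ is a solution of the homogeneous equation \eqref{KGalphalin} issued from time $s$ with datum $(0,G(s))$, hence Proposition \ref{prop:StirchKGalphalin} gives
\begin{equation*}
\|P_1\Sigma_\alpha(\cdot,s)(0,G(s))\|_{L^q((s,\infty),L^p(\mathbb{R}^d))} \leq C_0\|G(s)\|_{L^2}.
\end{equation*}
Integrating over $s\in(t_0,T)$ produces the contribution $C_0\int_{t_0}^{T}\|G(s)\|_{L^2}\,ds$, and combining with the homogeneous piece concludes the proof with $C=C_0$.

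The only delicate point is the justification of Minkowski's integral inequality in the non-autonomous, operator-valued setting, but this is routine once one observes that $(s,t)\mapsto P_1\Sigma_\alpha(t,s)(0,G(s))$ is strongly measurable with values in $L^p(\mathbb{R}^d)$ on the triangle $\{t_0\leq s\leq t\leq T\}$, which follows from the strong continuity of $\Sigma_\alpha$ and the measurability of $G$.
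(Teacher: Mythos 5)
Your proposal is correct and follows essentially the same route as the paper: both decompose via the non-autonomous Duhamel formula \eqref{eq:coincidSol}, derive the energy bound \eqref{eq:EnergKGalAffine} directly from Theorem \ref{th:Martinez1}, and obtain the Strichartz bound \eqref{eq:StrichKGalAffine1} by projecting on the first component, applying Minkowski's integral inequality, and invoking Proposition \ref{prop:StirchKGalphalin} on the inhomogeneous piece. You spell out a few details the paper leaves implicit (the exact relation $\mathcal{E}=\tfrac12\|\cdot\|_{\mathcal{H}}^2$ and the measurability needed for Minkowski), but the argument is the same.
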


\begin{proof}
1) Due to the integral form of the solution $\vec u(t)$ of \eqref{KGalphaAffine}, the inequality is a direct consequence of Theorem \ref{th:Martinez1}. \\
2) {}Using the integral form \eqref{eq:coincidSol} of the solution $\vec u(\tau)$, $t_0 \leq \tau\leq t$, of \eqref{KGalphaAffine} and Proposition \ref{prop:StirchKGalphalin}, we obtain, by applying the Minkowski inequality, that 
\begin{equation*}
\begin{aligned}
\|  u \|_{L^{q}((t_0, T), L^p( \mathbb{R}^d))} & \leq C \| \vec u(t_0)\|_{\mathcal{H}} 
+  \| \int_{t_0}^{T}| P_1\Sigma_{\alpha}(\tau,s)  1_{\tau \geq s}
(0, G(s)) |   ds \|_{L^{q}((t_0, T), L^p( \mathbb{R}^d))}, \\
& \leq C \| \vec u(t_0)\|_{\mathcal{H}} 
+   \int_{t_0}^{T} \| P_1\Sigma_{\alpha}(\tau,s)  1_{\tau \geq s}
(0, G(s))  \|_{L^{q}((t_0, T), L^p( \mathbb{R}^d))}  ds ,\\
& \leq C \| \vec u(t_0)\|_{\mathcal{H}} + C  \int_{t_0}^{T} \| G(s)\|_{L^2( \mathbb{R}^d)}   ds.
\end{aligned}
\end{equation*}
and we are done. 
\end{proof}
%
We are now able to state the theorem of local existence of solutions of the equation \ref{KGalpha}. This local existence theorem is analogous to the local existence theorem, given in \cite{BRS1} (see Theorem 2.3 there) in the case of a constant damping 
$\alpha>0$.
The proof of the following theorem follows the lines of the proof of 
\cite[Theorem 2.3]{BRS1} if one replaces the Strichartz estimates given in  
\cite[Lemma 2.2]{BRS1} by the Strichartz estimates given in the above 
Theorem \ref{th:StrichKGalAffine}. 

The theorem below does not only give the local existence of solutions to  Equation \ref{KGalpha}, but also to the slightly more general non-autonomous damped Klein-Gordon equation for $t_0 \geq 0$, 
\begin{equation*}
\tag*{$(KG)_{\alpha,t_0}$}
\label{KGalphat0}
\begin{split}
&u_{tt}+2\alpha(t) u_t-\Delta u+u-f(u)=0, \cr
&(u(t_0),u_t(t_0))= \vec u_0\in\mathcal{H}_{rad}.
\end{split}
\end{equation*}

\begin{theorem} \label{thm:LocalExist}
Let $d \leq 6$. 
Under  assumptions \ref{H2f} and  \ref{H1alpha},  for any $r>0$ there exists $T\geq C/ r^\delta$, $\delta >0$, such that for any $t_0$ and any  $\vec u_0\in \calH=H^1(\R^d) \times L^2(\R^d), \| \vec u_0 \|_{\mathcal{H}} \leq r$  
  the equation \ref{KGalphat0} has a unique  solution  (which is radial if the initial data are radial) 
  $$\vec u(t) \in \{ \vec v(t)\equiv (v(t),\partial_t v(t)) \in 
  C^0([t_0, t_0 +T], \mathcal{H}), v(t) \in  L^q((t_0, t_0+T); L^p( \mathbb{R}^d))\},$$
   with $(q,p) $ as in Proposition~\ref{prop:StirchKGalphalin} suitably chosen. This solution is bounded by $2 \|\vec u_0\|_{\calH}$ in 
    \begin{multline*}
   X_{t_0,T} =\big\{ \vec u = (u_0, u_1)(t)  \in C([t_0, t_0 +T], H^1(\R^d))\cap C^1([t_0,t_0 +T], L^2(\R^d)),\\
    u(t)\in  L^q((t_0, t_0+T); L^p( \mathbb{R}^d))\big\}.
\end{multline*}
 
   In particular,  if $3 \leq d \leq 6$,  we can take $q= \theta^*, p= 2\theta^*$.  
 Furthermore, the following properties hold.
 
{\bf 1)} If the above solution $\vec u (t) \equiv S_{\alpha}(t,t_0) \vec u_0$  with initial data $\vec u_0 \in \mathcal{H}$  exists for $t \in [t_0,t_0 +\tilde{T}]$, then there exists a neighborhood $\mathcal{V}$ in 
 $\mathcal{H}$ such that, for every $\vec v_0 \in \mathcal{V}$, the equation \ref{KGalphat0} has a unique solution 
 $S_{\alpha}(t,t_0) \vec v_0 \equiv \vec v(t) \equiv (v(t), \partial_t v(t))$  with $v \in X_{t_0,\tilde{T}}$. And  the solution 
  $$
  (t, \vec v_0) \in [t_0,\tilde{T}] \times \mathcal{V} \mapsto  S_{\alpha}(t,t_0) \vec v_0 
  \in \mathcal{H}
  $$ 
  is jointly continuous.
  
{\bf 2)}  For any $t_0 \leq \tau \leq t_0+\tilde{T}$, the map  $ \vec v_0 \in  \mathcal{V} \mapsto S_{\alpha}(t_0,\tau) \vec v_0 \in
  \mathcal{H}$ is Lipschitz continuous on the bounded sets of $\mathcal{V}$.

{\bf 3)}   The map  $\vec v_0  \in \mathcal{V} \mapsto v(t) \in X_{t_0,\tilde{T}} \cap
 L^{\theta^*}((t_0, t_0+ \tilde{T}), L^{2\theta^*}(\mathbb{R}^d))$ is a $C^1$-map. 

{\bf 4)} Let $T^* +t_0> T +t_0 >t_0$ be the maximal time of existence. If $T^*<\infty$, then 
  $$
  \limsup_{t\to T^*} \|\vec u(t)\|_{\calH}=+\infty.
  $$
  
{\bf 5)} If  
  $\vec u_{0}\in H^{2}(\R^d)\times H^1(\R^d)$, then 
  \begin{equation*}
  u \in C([t_0,t_0 + T), H^{2}(\R^d))\cap C^1([t_0, t_0+T), H^1(\R^d)).
  \end{equation*}
  
{\bf 6)} The energy~\eqref{Eenergie} decreases: for any $t_2 \geq t_1 \geq t_0$, we have,
\begin{equation}
\label{Et1t2}
E(\vec u(t_2)) - E(\vec u(t_1))=-2\int_{t_1}^{t_2} \alpha(s) \| \partial_t u (s)\|_{L^2( \mathbb{R}^d)}^2  ds ,
\end{equation}
and in particular,
\begin{equation}
\label{Et2}
E(\vec u(t_2)) \leq   E(\vec u(t_0)).
\end{equation} 

{\bf 7)}  If $\|\vec u(t_0)\|_{\mathcal{H}}\ll 1$, then the solution exists globally, and if furthermore
\begin{equation}
\label{H2alphaa}
\alpha(t) = \frac{1}{(1 + t)^a}, \quad 0 \leq a < 1.
\end{equation}
 $\| \vec u(t)\|_{\calH}$ converges sub-exponentially to
$0$ as $t\to\infty$, 
\begin{equation}
\label{eq:2bornesubexpH}
\exists c_0, \omega >0; \| \vec u_0 \|_{\calH} \leq r_0 \Rightarrow \forall t>t_0, \| \vec u(t)\|_{ \mathcal{H}} \leq c_0  \exp (- \omega \int_{t_0}^{t_0 +t} \alpha(s) ds) \|\vec u_0 \|_{\calH}.
\end{equation}
 \end{theorem}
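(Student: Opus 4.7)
The approach mimics the proof of~\cite[Theorem 2.3]{BRS1}, substituting the non-autonomous Strichartz and energy bounds of Proposition~\ref{th:StrichKGalAffine} for the constant-damping ones used there. First I would rewrite~\ref{KGalphat0} in Duhamel form for the evolution $\Sigma_\alpha$,
\begin{equation*}
\vec u(t)=\Sigma_\alpha(t,t_0)\vec u_0+\int_{t_0}^{t}\Sigma_\alpha(t,s)\bigl(0,f(u(s))\bigr)\,ds,
\end{equation*}
and run a contraction argument on the ball of radius $2\|\vec u_0\|_{\mathcal H}$ in $X_{t_0,T}$, equipped with the norm $\|\vec v\|_{X_{t_0,T}}=\sup_{t}\|\vec v(t)\|_{\mathcal H}+\|v\|_{L^q_t L^p_x}$. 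Combining~\eqref{eq:EnergKGalAffine} and~\eqref{eq:StrichKGalAffine1} applied to the affine problem with source $G=f(u)$ reduces the whole issue to estimating $\int_{t_0}^{t_0+T}\|f(v(s))\|_{L^2}\,ds$ and its difference analogue.

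\textbf{Nonlinear estimates.} From~\ref{H2f} one extracts $|f(y)|\lesssim|y|^{1+\beta}+|y|^{\theta}$ together with the Lipschitz bound $|f(y_1)-f(y_2)|\lesssim|y_1-y_2|\bigl(1+|y_1|^{\theta-1}+|y_2|^{\theta-1}\bigr)$. The low power is harmless by Sobolev embedding (the assumption $\beta<2/d$ is precisely what makes $2(1+\beta)$ subcritical). For the top power, the choice $(q,p)=(\theta^*,2\theta^*)$ is admissible in the sense of Proposition~\ref{prop:free1}, and a Hölder interpolation between $L^q_tL^p_x$ and $L^\infty_tH^1_x$ gives $\|v\|_{L^\theta_tL^{2\theta}_x}\lesssim T^{\delta}\|\vec v\|_{X_{t_0,T}}$ for some $\delta=\delta(\theta,d)>0$. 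Putting this together,
\begin{equation*}
\int_{t_0}^{t_0+T}\|f(v)\|_{L^2}\,ds\lesssim T\|\vec v\|_{X_{t_0,T}}^{1+\beta}+T^{\delta}\|\vec v\|_{X_{t_0,T}}^{\theta},
\end{equation*}
so that the Duhamel map is a self-map and a contraction for $T\simeq r^{-\delta}$, where $r=\|\vec u_0\|_{\mathcal H}$.

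\textbf{Items 1)--6).} Continuous and Lipschitz dependence on data (items 1) and 2)) follow from the contraction with constants uniform on bounded sets. The $C^1$ statement 3) comes from differentiating the Duhamel map, the linearized equation having the same structure with forcing $f'(u)\cdot$ which is tame by the Hölder clause of~\ref{H2f} combined with Proposition~\ref{th:StrichKGalAffine}. The blow-up alternative 4) is the standard continuation argument: boundedness of $\|\vec u(t)\|_{\mathcal H}$ near $T^*$ permits restarting the local theory past $T^*$. Persistence 5) is obtained by differentiating the equation and applying the same analysis to $\partial_t\vec u$, whose forcing $f'(u)u_t$ lies in $L^1_tL^2_x$ by~\ref{H2f} and the Strichartz control on $u$. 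The energy identity 6) is first proved at the $H^2$-level via formal multiplication by $u_t$ and then extended to $\mathcal H$-data by density and items 1)--2).

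\textbf{Item 7) (small-data global existence and subexponential decay).} This is the delicate step, and where the restriction $a<1$ plays the central role. For $\|\vec u\|_{\mathcal H}\leq 1$ one has $|y|^{\theta}\leq|y|^{1+\beta}$, so Sobolev gives $\|f(u)\|_{L^2}\leq C\|\vec u\|_{\mathcal H}^{1+\beta}$. Inserting this into~\eqref{eq:EnergKGalAffine} and letting $\phi(t)=\exp\bigl(\omega\int_{t_0}^{t}\alpha\bigr)\|\vec u(t)\|_{\mathcal H}$ turns the estimate into
\begin{equation*}
\phi(t)\leq c_0\|\vec u_0\|_{\mathcal H}+c_0\int_{t_0}^{t}\phi(s)^{1+\beta}\exp\Bigl(-\beta\omega\int_{t_0}^{s}\alpha(\tau)\,d\tau\Bigr)\,ds.
\end{equation*}
Under~\eqref{H2alphaa} with $a<1$, one has $\int_{t_0}^{s}\alpha\simeq (1-a)^{-1}s^{1-a}$, so the weight $\exp\bigl(-\beta\omega\int_{t_0}^{s}\alpha\bigr)$ is integrable on $[t_0,\infty)$. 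A standard bootstrap then confines $\phi$ to $2c_0\|\vec u_0\|_{\mathcal H}$ for every $t\geq t_0$ provided $\|\vec u_0\|_{\mathcal H}$ is small enough, which yields simultaneously the global existence and the claimed bound~\eqref{eq:2bornesubexpH}. The main obstacle is precisely the integrability of that weight, which is exactly what the range $a<1$ delivers; beyond this threshold the argument collapses.
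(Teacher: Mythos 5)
Your treatment of the contraction argument and items 1)--6) matches the paper's approach (Duhamel in the $\Sigma_\alpha$ evolution, fixed point in a ball of $X_{t_0,T}$, nonlinear estimates via the Strichartz norm, energy identity by density), and is fine.

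The gap is in item 7). You assert that for $\|\vec u\|_{\mathcal H}\leq 1$ one may bound $\|f(u)\|_{L^2}\leq C\|\vec u\|_{\mathcal H}^{1+\beta}$, using ``$|y|^\theta\leq|y|^{1+\beta}$''. That pointwise inequality holds only for $|y|\leq 1$, and smallness of $\|u\|_{H^1}$ does not give $\|u\|_{L^\infty}\leq 1$ in dimensions $d\geq 2$. More to the point, $\|\,|u|^\theta\,\|_{L^2}=\|u\|_{L^{2\theta}}^\theta$ cannot be controlled by $\|u\|_{H^1}^\theta$ once $\theta>d/(d-2)$ (e.g.\ $3\leq\theta<5$ when $d=3$), which is exactly the range where Strichartz estimates become indispensable. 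So the inequality feeding your Gronwall bootstrap is false for a genuine part of the subcritical regime, and the continuous-in-time Gronwall scheme you set up cannot close: the available substitute is the integrated bound $\int_{t_0}^{t_0+T}\|f(u)\|_{L^2}\,ds\lesssim T\|\vec u\|_X^{1+\beta}+T^\delta\|\vec u\|_X^\theta$, involving the Strichartz norm, which grows polynomially in $T$ (cf.~\eqref{borne2}), and this growth is incompatible with a pointwise ODE comparison.

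The paper's proof of 7) circumvents this by a discrete iteration. After establishing a uniform $\mathcal H$-bound and the polynomial Strichartz growth $\|u\|_{L^q((t_0,T);L^p)}\leq C(1+T)^K\|\vec u_0\|_{\mathcal H}$ for small data, it applies the linear decay \eqref{eq:EnergKGalAffine} together with the nonlinear estimate on a carefully chosen sequence of intervals $[t_n,t_{n+1}]$, $t_n=\kappa n^{\beta}$ with $\beta=\frac{2-a}{1-a}>1$ (note $a<1$ ensures $\beta$ is finite). On each such interval the subexponential linear decay $\exp(-\omega\int_{t_n}^{t_{n+1}}\alpha)$ is bounded by a small fixed constant, while the polynomial Strichartz growth is only $\sim n^{(\beta-1)K}$; a straightforward induction then propagates smallness and yields geometric decay along $t_n$, hence the claimed subexponential rate \eqref{eq:2bornesubexpH}. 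You would need to replace your Gronwall step with this interval-by-interval iteration (or some equivalent device that exploits the integrated Strichartz estimate rather than a pointwise-in-time bound on $\|f(u)\|_{L^2}$).
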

\begin{proof}

The proofs of this result follows the lines of the proof of the similar result~\cite[Theorem 2.3]{BRS1}.
The points 1) to 6) are proved by using Duhamel formulation and performing a fixed point in a ball of the space 
\begin{equation*}
Y \equiv  Y_{t_0,T} \equiv \{ \vec u \in L^{\infty}((t_0, t_0+ T) , \mathcal{H}) \},
\end{equation*}
in dimension $1;2$ or
\begin{equation}
\label{spaceY}
Y \equiv  Y_{t_0,T} \equiv \{ \vec u= (u_0, u_1)  \in L^{\infty}((t_0, t_0+ T) , \mathcal{H}), 
  u_0 \in L^{\theta^*}((t_0, t_0 +T),L^{2\theta^*}(\mathbb{R}^d)) \},
\end{equation}
in dimensions $3\leq d \leq 6$,
which is possible if $0<T$ is small enough depending on the size of the initial data, due to the subcriticality of the problem.
We  turn to the property (7), that is, we consider the case of small data. We proceed in two steps. We first recall that small initial data generate global solutions
\begin{lemma}[Small data {\em a priori} estimates]
There exists  $r_0>0$  such that for any  $t_0\geq 0 $ and any initial data satisfying
\begin{equation}\label{small} \| \vec u_0\|_{\mathcal{H}} \leq r_0,
\end{equation}
the solution of~\eqref{KGalpha}
exists on $[t_0, + \infty)$. and satisfies  for any $(q,p)$ satisfying the conditions of Proposition~\ref{prop:free1} there exists $C, K>0$ such that for any initial data satisfying~\eqref{small}, the solution satisfies 
\begin{equation}
\label{borne}
 \| \vec u\|_{L^\infty((t_0, T); \mathcal{H})}  \leq C \| \vec u_0\|_{\mathcal{H}} ,
\end{equation}
\begin{equation}
\label{borne2}
 \| u\|_{L^q((t_0, T); L^p( \mathbb{R}^d)) }\leq C(1+T)^K  \| \vec u_0\|_{\mathcal{H}}.
\end{equation}
 \end{lemma}
\begin{proof} According to~\eqref{H2f}, and Sobolev embedding, for the potential part of the energy~\eqref{Eenergie} we have 
$$ \int_{\R^d} |F(u_0|dx \leq C \bigl( \| \vec u_0\|_{\calH} ^{\beta+ 2} + \|\vec  u_0\|_{\calH} ^{\theta+1}\bigr). $$
We deduce, since $2< \beta +2 < \theta+1$,  that for $\epsilon >0$ small enough, on $\mathcal{H}_\epsilon^0$, the connected component of $(0,0)$ in the set 
$$\mathcal{H}_\epsilon= \{ \vec u \in \calH, E(\vec u_0) \leq \epsilon\}, $$
 the energy and $\calH$ norm are equivalent
$$ \forall ( \vec u_0) \in \mathcal{H}_\epsilon^0,  \| \vec u_0 \|^2_{\calH}\leq 2 E(\vec u_0)\leq 3 \| \vec u_0 \|^2 _{\calH}.$$

 The global existence with the bound~\eqref{borne} follows for small initial data from the fact that  the energy decays (and hence solutions starting in $\calH_\epsilon^0$ remain in $\calH_\epsilon^0$). 
 To prove~\eqref{borne2}, we use that from the local existence theory,  for $t_1\geq t_0$, the $L^q(t_1, t_1 + Cr_0^{ - \delta}; L^p$-norm is bounded by $2 \| (u(t_1, u_t (t_1))\|_{\calH}$ norm which in turn is bounded by  $6 \| (u(t_0, u_t (t_0))\|_{\calH}$, and the $T^K$ estimate in~\eqref{borne2} follows from gluying these estimates together. \end{proof}
 
 In a second step we prove the subexponential decay. From Proposition~\ref{eq:EnergKGalAffine}, we have for any $t>s \geq t_0$,
 \begin{equation}\label{subexp}
  \| (\vec u(t)\|_{\calH}\leq c_0 \big[\exp (- \omega \int_{s}^{t} \alpha(s) ds)\| \vec u(s) \|_{\mathcal{H}}.
 + \|f(u) \|_{L^1( s, t); L^2( \mathbb{R}^d))} \big]
\end{equation}
From~\ref{H2f}~\eqref{borne2} (starting from $t_1$) and Sobolev embeddings to control the $L^{1+ \beta}_x$-norm by the $H^1$-norm, we get
\begin{multline}\label{subexp2}
 \|f(u) \|_{L^1((s, t); L^2( \mathbb{R}^d))} \leq C \bigl( \| u\|_{L^{1+ \beta}(( s, t); H^1)}^{1+ \beta} + \|u \|_{L^\theta(( s, t); L^{2\theta})} ^\theta\bigr)\\
 \leq C (1+ |t-s|)^K \bigl( \| \vec u (s) \|_{\calH} ^{1+ \beta} + \|  \vec u (s) \|_{\calH} ^{\theta}\bigr).
 \end{multline}
We are now going to apply~\eqref{subexp} and~\eqref{subexp2} on sequences of intervals 
$$ t_{n} = \kappa n^{\beta}, \beta = \frac {2-a} {1-a} >1, n \geq 1, $$
with $\kappa$ sufficiently large so that 
$$ c_0 \exp ( - \omega\int_{t_n}^{t_{n+1}} a(s) ds \sim_{n\rightarrow + \infty} c_0 \exp {(- \frac \omega {1-a} \beta \kappa}) \leq  \frac 1 8.$$
Let $n_0$ such that for any $n\geq n_0$,
$$  c_0 \exp ( - \omega\int_{t_n}^{t_{n+1}} a(s) ds)\leq \frac 1 4.$$
We get for $n \geq n_0$,
$$ 
 \| (\vec u(t_n)\|_{\calH}\leq  \big[ \frac 1 4 
 + C ( 1+ n^{(\beta-1) K}) \bigl( \| \vec u (t_{n-1}) \|_{\calH} ^{ \beta} + \|  \vec u (t_{n-1}) \|_{\calH} ^{\theta-1}\bigr)\big]\| \vec u(t_{n-1}) \|_{\mathcal{H}}.
 $$
 Fix $N>n_0$ so that for all $p\geq N$, 
 \begin{equation}\label{petit}
  C ( 1+ p^{\beta K})2^{-\beta (p- n_0)}  \leq \frac 1 8 .
  \end{equation}
 Using~\eqref{borne}, we can choose the size of the initial data $r_0>0$ small enough so that
 $$\forall  n_0 \leq n \leq N,  \quad  C ( 1+ n^{\beta K}) \bigl( \| \vec u (t_{n-1}) \|_{\calH} ^{ \beta} + \|  \vec u (t_{n-1}) \|_{\calH} ^{\theta-1}\leq C' ( 1+ N^{\beta K}) \bigl(r_0^{ \beta} +r_0 ^{\theta-1}\bigr)\leq  \frac 1 4.$$
 This will guarantee that for all $n_0 \leq n\leq N$
 $$ 
 \| (\vec u(t_n)\|_{\calH}\leq   \frac 1 2 \|\vec u(t_{n-1}) \|_{\mathcal{H}}\leq {2^{- (n- n_0)} } r_0.$$
 Then if $N$ is large enough and $r_0 \leq 1$, using~\eqref{petit} and a straightforward induction argument shows that for any $n \geq N$
 $$ C ( 1+ n^{\beta K}) \bigl( \| \vec u (t_{n-1}) \| ^{ \beta} + \|  \vec u (t_{n-1}) \| ^{\theta-1}  \leq \frac 1 4, $$

and 
$$ 
 \| (\vec u(t_n)\|_{\calH}\leq   \frac 1 2 \vec u(t_{n-1}) \|_{\mathcal{H}}\Rightarrow \| (\vec u(t_n)\|_{\calH}\leq  C' {2^{- \beta (n- n_0)} } r_0.$$

  Since $t_n \sim n^{\beta}$, this shows the sub-exponential decay along the sequence of times~$t_n$. To get the full decay, we just use~\eqref{borne} between $t_n$ and $t_{n+1}$.  
 \end{proof}

\begin{remark}
 Since here we are mainly interested in the behavior of the solutions $\vec u(t)$ of \ref{KGalphat0} when the time $t>t_0$ goes to $+\infty$, we stated and proved the local existence and continuity properties on time intervals $[t_0+0, t_0+T]$, with $T>0$. Of course, one shows in the same way that there exists $0 < T \leq t_0$ so that the properties (1) to (6) of the above Theorem \ref{thm:LocalExist} also hold on time intervals $[-T + t_0, t_0]$.
\end{remark}

\subsection{Spectral properties} \label{subsec:Spectral}
 Let us recall first that the kernel of the linearized elliptic operator around an equilibrium point is at most of dimension one in the radial setting (see \cite{BRS1} for a proof).  
 Suppose now that we have a  stationary solution $\varphi_0\in H^1_{rad}(\R^d)$ to  \ref{KGalpha}, namely,
\begin{equation*}
-\Delta \varphi_0 + \varphi_0  - f(\varphi_0)=0.
\end{equation*}
By elliptic theory,  these solutions are exponentially decaying, and lie in $C^{3,\beta^*}$ for some $\beta^*>0$. 
Solving \ref{KGalpha} for $u= \varphi_0+v$ yields
\begin{equation}\label{eq:vN}
v_{tt} + 2\alpha(t) v_t -\Delta v + v -f'(\varphi_0) v = N(\varphi_0,v),
\end{equation}
where $N(\varphi_0,v) = f(\varphi_0+v)-f(\varphi_0)-f'(\varphi_0) v$. Set $\calL=-\Delta + I - f'(\varphi_0)$. 

This leads us to consider the linearized equation below
\begin{equation}
\label{eq:vnonauto}
v_{tt} + 2\alpha(t) v_t -\Delta v + v -f'(\varphi_0) v = 0, \quad t \geq s, \quad \vec v(s)= \vec v_s .
 \end{equation}
We denote  $\Sigma_{\alpha}^*(t,s)$ the evolution operator associated with \eqref{eq:vnonauto}, that is,
$\vec v(t) = \Sigma_{\alpha}^*(t,s)\vec v_s$.

We next recall the 
 spectral properties of $\calL$ (see \cite[Section 2.3]{BRS1}).
\begin{proposition}\label{lem:Lspec}
The operator $\calL$ is self-adjoint with domain $H^{2}(\R^{d})$. The spectrum $\sigma(\calL)$ consists of an essential part $[1,\infty)$, which is absolutely continuous,  
and finitely many eigenvalues of finite multiplicity all of which fall into $(-\infty, 1]$. The eigenfunctions are $C^{2,\beta^*}$ with $\beta^*>0$ and the ones associated with eigenvalues below $1$ are exponentially decaying.  Over the radial functions, all eigenvalues are simple. 
\end{proposition}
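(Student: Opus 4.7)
The plan is to treat $\calL = -\Delta + I - f'(\varphi_0)$ as a bounded self-adjoint perturbation of $-\Delta + I$ on $L^2(\R^d)$ and to exploit the exponential decay of $\varphi_0$ (hence of $f'(\varphi_0)$, using $f'(0)=0$ and \ref{H2f}) to extract the finer spectral information.

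\textbf{Self-adjointness and essential spectrum.} Because $\varphi_0$ is bounded and decays at infinity and $f'$ is continuous with $f'(0)=0$, the potential $V:=-f'(\varphi_0)$ is a bounded continuous function tending to $0$ at infinity. Multiplication by $V$ is therefore a bounded symmetric operator on $L^2(\R^d)$, so Kato--Rellich yields self-adjointness of $\calL$ on the domain $H^2(\R^d)$ of $-\Delta + I$. Moreover, since $V(x)\to 0$ at infinity, the operator $V(-\Delta+I)^{-1}$ is compact (approximate $V$ by $V\chi_{|x|\leq R}$ and apply Rellich--Kondrachov), so $V$ is relatively compact with respect to $-\Delta + I$. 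Weyl's theorem then gives $\sigma_{\mathrm{ess}}(\calL) = \sigma_{\mathrm{ess}}(-\Delta + I) = [1,\infty)$, and $\calL$ is bounded below since $V$ is bounded.

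\textbf{Discrete spectrum and absolute continuity.} The min--max principle combined with the relative compactness of $V$ shows that below the essential spectrum $\calL$ has at most countably many eigenvalues of finite multiplicity accumulating only at $1$ from below; boundedness below forces the part in any $(-\infty, 1-\epsilon]$ to be finite, and the exponential decay of $V$ (which rules out accumulation at the threshold via Birman--Schwinger bounds) confines the eigenvalues to a finite set in $(-\infty,1]$. Absolute continuity on $[1,\infty)$ and the absence of singular continuous or embedded spectrum follow from a standard limiting absorption principle / Agmon-type argument for short-range Schr\"odinger perturbations, as $V$ is exponentially decaying; this is classical (see e.g.\ Reed--Simon vol.\ IV) and was already invoked in \cite{BRS1}.

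\textbf{Regularity and exponential decay of eigenfunctions.} If $\calL v = \mu v$ then $v$ solves $(-\Delta + 1 - \mu) v = f'(\varphi_0)v$. Iterating elliptic regularity, using the $\beta$-H\"older bound on $f'$ from \ref{H2f} composed with $\varphi_0 \in C^{3,\beta^*}$, gives $v \in C^{2,\beta^*}$. For $\mu < 1$, fix $0 < \eta^2 < 1-\mu$ and choose $R$ so large that $|f'(\varphi_0(x))| \leq \tfrac{1}{2}(1-\mu-\eta^2)$ for $|x|\geq R$. Multiplying the eigenvalue equation by $e^{\eta|x|}v$ (truncated and regularized) and integrating by parts in the Agmon fashion yields $e^{\eta|x|}v \in L^2(\R^d)$; a further elliptic bootstrap then gives pointwise exponential decay of $v$ and $\nabla v$.

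\textbf{Simplicity in the radial sector.} On $L^2_{\mathrm{rad}}(\R^d)$ the eigenvalue equation reduces to the ODE
\[
-v''(r) - \frac{d-1}{r}v'(r) + \bigl(1 - f'(\varphi_0(r)) - \mu\bigr)v(r) = 0, \qquad r>0.
\]
Any $L^2_{\mathrm{rad}}$-eigenfunction must be regular at the origin ($v'(0)=0$), and Frobenius analysis of the singular point $r=0$ shows this condition fixes the solution up to a multiplicative constant. Hence the $\mu$-eigenspace in the radial class is at most one-dimensional.

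The most delicate step is the absolute continuity of $\sigma_{\mathrm{ess}}(\calL)$ and absence of embedded eigenvalues; I would quote the limiting absorption principle for short-range Schr\"odinger operators rather than reprove it, exactly as in \cite{BRS1}.
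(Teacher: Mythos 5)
The paper does not actually supply a proof of Proposition~\ref{lem:Lspec}: it is stated as a recollection and the reader is referred to~\cite[Section~2.3]{BRS1}, where the corresponding lemma is established for the constant-damping equation (the operator $\calL$ is the same). Your reconstruction via the standard Schr\"odinger-operator toolbox is correct and is essentially the expected route: Kato--Rellich for self-adjointness on $H^2(\R^d)$; relative compactness of $V=-f'(\varphi_0)$ (decay at infinity plus Rellich--Kondrachov) and Weyl's theorem for $\sigma_{\mathrm{ess}}(\calL)=[1,\infty)$; Birman--Schwinger/CLR-type bounds together with the exponential decay of $V$ for finiteness of the discrete spectrum in $(-\infty,1]$; the limiting absorption principle and Kato's theorem on absence of positive embedded eigenvalues for the absolute continuity of $\sigma_{\mathrm{ess}}$; elliptic bootstrap combined with the H\"older continuity of $f'$ and the $C^{3,\beta^*}$ regularity of $\varphi_0$ for $C^{2,\beta^*}$ regularity of eigenfunctions; Agmon-type weighted estimates for exponential decay of sub-threshold eigenfunctions; and the ODE reduction for radial simplicity.

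Two small points could be tightened. First, for the radial simplicity step, the cleaner justification is that the second-order radial ODE has a two-dimensional solution space of which exactly one Frobenius branch at $r=0$ (the one with indicial exponent $2-d$, or the logarithmic branch for $d=2$) is excluded because it fails to lie in $H^2$ near the origin; saying merely that $v'(0)=0$ ``fixes the solution'' conflates the regularity constraint with a Cauchy datum. Second, absence of embedded eigenvalues in $(1,\infty)$ is not a consequence of the limiting absorption principle per se but of a separate Kato/Agmon unique-continuation argument for short-range potentials; you correctly flag that you are invoking standard theory, but it is worth keeping these two results distinct. Neither issue affects the validity of the argument.
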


\section{Boundedness in  \texorpdfstring{$L^2$}{L-2}  of   global solutions and   the functional  \texorpdfstring{$K_0$}{K0}.}
 \label{BorneL2K0}

\subsection{Definition of  \texorpdfstring{$K_0$}{K-0}} \label{subsec:DefK0}

We consider the functional $K_0: \varphi \in  H^1(\mathbb{R}^d) \mapsto K_0(\varphi) \in \mathbb{R}$, defined by
$$
K_0 (\varphi) = \int_{\mathbb{R}^d}( |\nabla \varphi |^2 + \varphi^2 - \varphi f(\varphi))    dx.
$$
As in~\cite{BRS1},  $K_0$ plays an important role in this paper.
The ``Ambrosetti-Rabinowitz'' hypothesis $(H.1)_f$ allows one to prove the following lemmas, which will be used throughout this paper.  

\begin{lemma}
\label{SolBorne}
For any $(\varphi, \psi)\in H^1(\mathbb{R}^d) \times L^2(\mathbb{R}^d)$, we have
\begin{equation}
\label{eqSolBorne}
\gamma ( \| \varphi\|_{H^1}^2 + \| \psi\|_{L^2}^2 ) \leq 2(1 + \gamma) E((\varphi, \psi)) -K_0(\varphi). 
\end{equation}
\end{lemma}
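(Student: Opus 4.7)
The plan is straightforward: expand the right-hand side in its integral form and show that the Ambrosetti–Rabinowitz condition \ref{H1f} is exactly what is needed to make the claimed inequality drop out.

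First I would write
\begin{equation*}
2(1+\gamma) E(\varphi,\psi) - K_0(\varphi) = \int_{\R^d}\!\Big[(1+\gamma)\big(|\nabla\varphi|^2+\varphi^2+\psi^2\big) - 2(1+\gamma)F(\varphi) - |\nabla\varphi|^2 - \varphi^2 + \varphi f(\varphi)\Big] dx .
\end{equation*}
Grouping terms, the quadratic $H^1$-part contributes $\gamma(|\nabla\varphi|^2+\varphi^2)$, the $\psi$-part contributes $(1+\gamma)\psi^2$, and the nonlinear terms combine into $\varphi f(\varphi) - 2(1+\gamma) F(\varphi)$. Thus
\begin{equation*}
2(1+\gamma)E(\varphi,\psi) - K_0(\varphi) = \gamma\|\varphi\|_{H^1}^2 + (1+\gamma)\|\psi\|_{L^2}^2 + \int_{\R^d}\!\big(\varphi f(\varphi) - 2(1+\gamma) F(\varphi)\big) dx.
\end{equation*}

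Next I would invoke hypothesis \ref{H1f}, which states exactly that $\int_{\R^d}(2(1+\gamma)F(\varphi)-\varphi f(\varphi)) dx \leq 0$ for every $\varphi\in H^1(\R^d)$, i.e.\ the last integral above is nonnegative. Dropping it yields
\begin{equation*}
2(1+\gamma)E(\varphi,\psi) - K_0(\varphi) \geq \gamma\|\varphi\|_{H^1}^2 + (1+\gamma)\|\psi\|_{L^2}^2 \geq \gamma\big(\|\varphi\|_{H^1}^2 + \|\psi\|_{L^2}^2\big),
\end{equation*}
where the last step uses $1+\gamma \geq \gamma$. This is precisely \eqref{eqSolBorne}.

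There is no real obstacle here: the lemma is essentially a bookkeeping identity packaged together with \ref{H1f}. The only point requiring a sentence of care is that the integrals involving $F$ and $\varphi f(\varphi)$ are well defined for every $\varphi\in H^1(\R^d)$, which follows from the growth assumption \ref{H2f} together with Sobolev embedding $H^1(\R^d)\hookrightarrow L^{p}(\R^d)$ for every $p\in[2,2^*]$ (in dimensions $d\geq 3$, and for all $p<\infty$ when $d=1,2$); integrating $|f(y)|\lesssim |y|^{1+\beta}+|y|^{\theta}$ and $|F(y)|\lesssim |y|^{2+\beta}+|y|^{\theta+1}$ against this embedding puts everything in $L^1$.
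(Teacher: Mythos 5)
Your proof is correct and takes essentially the same approach as the paper: the same algebraic rearrangement to isolate $\gamma\|\varphi\|_{H^1}^2 + (1+\gamma)\|\psi\|_{L^2}^2$ plus the integral $\int(\varphi f(\varphi)-2(1+\gamma)F(\varphi))\,dx$, followed by discarding that integral via $(H.1)_f$. The remark about integrability via $(H.2)_f$ and Sobolev embedding is a welcome extra sentence but otherwise the argument matches the paper's line for line.
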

\begin{proof}
We simply write
\begin{multline}
\gamma ( \| \varphi\|_{H^1}^2 + \| \psi\|_{L^2}^2 )\\
=  2(1 + \gamma) E((\varphi, \psi)) 
- K_0(\varphi) -  \| \psi\|_{L^2}^2
 + \int_{\mathbb{R}^d} \big(2(1+ \gamma) F(\varphi) - \varphi(x) f( \varphi(x))\big)    dx \\
 \leq   2(1 + \gamma) E((\varphi, \psi)) - K_0(\varphi),
\end{multline}
where the integral is nonpositive by \ref{H1f}. 
\end{proof}

\begin{cor}\label{lem:u global}
Suppose $\vec u(t)= (u(t), \partial_{t} u(t))$ is a strong solution of \ref{KGalpha} defined on the maximal interval $0\le t< T^*$. Assume 
$$
\inf_{0\le t<T^*} K_0(u(t))> -\infty .
$$ 
Then $T^*=\infty$, i.e., the solution is global. 
\end{cor}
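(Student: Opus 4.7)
The plan is to combine the a priori bound in Lemma~\ref{SolBorne} with the energy decay (property (6) of Theorem~\ref{thm:LocalExist}) and the blow-up alternative (property (4) of the same theorem). I would argue by contradiction, assuming $T^*<\infty$ and deriving an $\calH$-bound that contradicts $\limsup_{t\to T^*}\|\vec u(t)\|_{\calH}=+\infty$.

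Concretely, I would first apply Lemma~\ref{SolBorne} with $(\varphi,\psi)=\vec u(t)=(u(t),u_t(t))$ for each $t\in[0,T^*)$, obtaining
\begin{equation*}
\gamma\,\|\vec u(t)\|_{\calH}^{2}\ \leq\ 2(1+\gamma)\,E(\vec u(t))\;-\;K_0(u(t)).
\end{equation*}
Next, by \eqref{Et1t2}–\eqref{Et2} (energy decrease under positive damping $\alpha(t)>0$), we have $E(\vec u(t))\leq E(\vec u(0))$ for all $0\leq t<T^*$. Combining this with the standing hypothesis $\inf_{0\leq t<T^*}K_0(u(t))>-\infty$ yields a uniform estimate
\begin{equation*}
\gamma\,\|\vec u(t)\|_{\calH}^{2}\ \leq\ 2(1+\gamma)\,E(\vec u(0))\;-\;\inf_{0\leq s<T^*}K_0(u(s))\ =:\ M\ <\ +\infty,
\end{equation*}
valid for every $t\in[0,T^*)$.

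Finally, if one had $T^*<\infty$, the blow-up alternative (point 4 of Theorem~\ref{thm:LocalExist}) would force $\limsup_{t\to T^*}\|\vec u(t)\|_{\calH}=+\infty$, contradicting the uniform bound $\|\vec u(t)\|_{\calH}^2\leq M/\gamma$ just established. Hence $T^*=\infty$, i.e., the solution is global.

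I do not foresee any real obstacle here: every ingredient (the Ambrosetti–Rabinowitz-type coercivity of $2(1+\gamma)E-K_0$, monotonicity of $E$ along the flow, and the standard blow-up criterion) has already been set up in the preceding sections, so the proof is essentially a three-line chain of inequalities. The only minor point to check is that \eqref{Et1t2} applies on the whole maximal interval (which it does, since $\alpha(t)>0$ and $\vec u$ is a strong solution), and that one is free to take the supremum in $t$ on the right-hand side before appealing to the blow-up alternative.
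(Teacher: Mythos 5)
Your proof is correct and is exactly the argument the paper intends (the corollary is stated without proof because it follows immediately): Lemma~\ref{SolBorne} evaluated along the trajectory, combined with the energy monotonicity in point~(6) of Theorem~\ref{thm:LocalExist} and the standing hypothesis on $K_0$, gives a uniform $\calH$-bound that is incompatible with the blow-up alternative of point~(4). Nothing further is needed.
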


The proof of the next lemma uses a convexity argument. In the case of $\alpha =0$, it has been proved in  \cite{PaSat} and \cite[Corollary 2.13]{NaS}.  In the case where 
$\alpha$ is a positive constant, it has been proved  in  \cite[Lemma 2.7]{BRS1}.  For the following two results it is sufficient to impose condition~\ref{H1alpha}. 

\begin{lemma}
\label{eqBlowup}
Assume that $\vec u(t) \equiv (u(t),  \partial_t u (t))$ is a solution of \ref{KGalpha} defined on $[0,T^*)$ where $T^*\in (0,\I]$ is maximal. If  
$K_0(u(t)) \leq -\delta$ (where $\delta >0$), for $t_0\le t <T^*$, then $T^*<\infty$, i.e.,  the solution blows up in finite time.
\end{lemma}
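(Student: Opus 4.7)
The plan is to derive a second-order differential inequality for $y(t):=\|u(t)\|_{L^2}^2$ that forces $y(t)\to\infty$ on $[t_0,T^*)$, and then combine it with the $L^2$-bound for global solutions established earlier in Section~\ref{BorneL2K0} to rule out $T^*=\infty$.

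\textbf{Step 1: the differential inequality.} Pairing \ref{KGalpha} with $u$ and integrating by parts, one gets $\langle u,u_{tt}\rangle_{L^2}=-K_0(u)-\alpha(t)\,y'(t)$, so that
\begin{equation*}
y''(t)+2\alpha(t)\,y'(t)\;=\;2\|u_t(t)\|_{L^2}^2-2K_0(u(t))\;\ge\;2\delta+2\|u_t(t)\|_{L^2}^2\;\ge\;2\delta.
\end{equation*}
Introducing the integrating factor $\Phi(t):=\exp\bigl(2\int_{t_0}^{t}\alpha(s)\,ds\bigr)$, this rewrites as $(\Phi\,y')'\ge 2\delta\,\Phi$; integrating on $[t_0,t]$ and dividing by $\Phi(t)$ gives
\begin{equation*}
y'(t)\;\ge\;\Phi(t)^{-1}y'(t_0)\;+\;2\delta\,Q(t),\qquad Q(t):=\Phi(t)^{-1}\!\int_{t_0}^{t}\Phi(s)\,ds.
\end{equation*}

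\textbf{Step 2: $Q(t)\to\infty$.} The function $Q$ satisfies the scalar ODE $Q'(t)=1-2\alpha(t)\,Q(t)$ with $Q(t_0)=0$, using only that $\alpha\in C^{0}$. If $Q$ were bounded on $[t_0,\infty)$, then since $\alpha(t)\to 0$ by \ref{H1alpha} one would have $2\alpha(t)Q(t)\to 0$ and hence $Q'(t)\to 1$, which is incompatible with boundedness. Thus $Q(t)\to+\infty$, and since $\Phi\ge 1$ the remainder term $\Phi(t)^{-1}y'(t_0)$ stays bounded, so $y'(t)\to+\infty$ as $t\to\infty$. In particular $y(t)\to+\infty$.

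\textbf{Step 3: contradiction.} Assume for contradiction that $T^*=+\infty$. Then $\vec u$ is a global solution, so the $L^2$-boundedness of global solutions established earlier in Section~\ref{BorneL2K0} (obtained by generalizing the Cazenave-type argument of~\cite{Caz85}) yields $\sup_{t\ge 0}\|u(t)\|_{L^2}<\infty$, i.e., $y$ is bounded on $[0,\infty)$. This contradicts $y(t)\to+\infty$, so $T^*<\infty$.

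The only delicate input is the global $L^2$-bound: because $K_0(u(t))$ is assumed very negative, one cannot use Lemma~\ref{SolBorne} to control the $H^1$-norm of $u(t)$, which is why the Cazenave-style argument of Section~\ref{BorneL2K0} is required and constitutes the main preliminary piece of the argument; the ODE analysis in Steps 1--2 is then straightforward, and the non-increase of $\alpha$ in \ref{H1alpha} is actually not needed—only $\alpha(t)\to 0$ is used.
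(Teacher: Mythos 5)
Your Steps 1--2 correctly reproduce the first half of the paper's argument: after pairing the equation with $u$ and using the integrating factor $e^{A(t)}$ with $A(t)=2\int_{t_0}^t\alpha$, one shows (under the hypothesis $T^*=\infty$ toward a contradiction) that $\dot y(t)\to\infty$ and hence $y(t)=\|u(t)\|_{L^2}^2\to\infty$. The problem is Step~3.

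To get a contradiction from $y(t)\to\infty$, you invoke Proposition~\ref{propL2}, the $L^2$-boundedness of global solutions. But this creates a circular dependency. The proof of Proposition~\ref{propL2} begins by asserting ``By Corollary~\ref{coro-blow}, $E(\vec u(t))>0$ for any $t\ge 0$,'' and this positivity of the energy is used essentially throughout Section~\ref{L2} (e.g., the threshold $\frac{4(1+\gamma)}{\gamma}E(\vec u(t_1))$ in the definition of $g$ and in~\eqref{PropEst2}, \eqref{PropEst4} must be a positive number for the comparison argument to have any content, and $\tau_1$ in the proof of~\eqref{PropEst4} is defined using $E(\vec u(t_1))^{-1}$). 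Corollary~\ref{coro-blow}, in turn, is proved by appealing to Lemma~\ref{eqBlowup} itself: ``The case of negative energy is clear from~\eqref{eqSolBorne} and Lemma~\ref{eqBlowup}.'' So you would be proving Lemma~\ref{eqBlowup} $\Leftarrow$ Proposition~\ref{propL2} $\Leftarrow$ Corollary~\ref{coro-blow} $\Leftarrow$ Lemma~\ref{eqBlowup}. (Note also the presentation order: the $L^2$ bound is in Subsection~\ref{L2}, which comes \emph{after} Lemma~\ref{eqBlowup} in Subsection~\ref{subsec:DefK0}, precisely because of this logical ordering.) There is also no obvious way to short-circuit the loop: under the hypothesis $K_0(u(t))\le -\delta$ alone, inequality~\eqref{eqSolBorne} gives no upper bound on $\|u\|_{H^1}$, and energy decay gives no lower bound on $E$, so one cannot get the needed $L^2$ bound without first proving Lemma~\ref{eqBlowup} (or an equivalent).

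The paper instead closes the argument without any appeal to $L^2$-boundedness. Having established $y(t)\to\infty$, it returns to the identity~\eqref{eq:2.7} and, using~\ref{H1f} and the monotone decay of the energy, derives
\begin{equation*}
\ddot y(t)+2\alpha(t)\dot y(t) \ge (2+\gamma)\|\dot u(t)\|_{L^2}^2 + \frac{\gamma}{2}\|u(t)\|_{H^1}^2
\ge \frac{2+\gamma}{2}\,\frac{\dot y^2(t)}{y(t)} + \gamma\, y(t),
\end{equation*}
for $t$ large. Since $\alpha(t)\to 0$ and $y(t)\to\infty$, the term $2\alpha(t)\dot y(t)$ is absorbed via Young's inequality, leaving $\ddot y(t)\ge (1+\kappa)\dot y^2(t)/y(t)$ for some $\kappa>0$ and all $t\ge T$. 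This says $y(t)^{-\kappa}$ is concave, which is incompatible with $y(t)\to\infty$ (and $y>0$). That concavity step is the genuinely new ingredient your proof is missing, and it is self-contained: it uses only~\ref{H1f} and $\alpha(t)\to 0$ (you are right that monotonicity of $\alpha$ is not needed for this lemma).
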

\begin{coro}\label{coro-blow} 
Suppose that the initial energy $E(\vec u_0) $ is non positive (and that the solutionis not identically zero). Then the solution blows-up in finite time $T^* < \infty$. { In particular, there does not exist a non-trivial equilibrium point $\vec u_0$ satisfying $E(\vec u_0) \leq 0$.}
\end{coro}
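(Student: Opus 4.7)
The plan is to combine the coercivity estimate of Lemma~\ref{SolBorne} with the energy dissipation~\eqref{Et1t2} to produce a uniform upper bound $K_0(u(t))\le -\delta<0$ along the forward trajectory, and then invoke Lemma~\ref{eqBlowup}. Applying Lemma~\ref{SolBorne} to $(u(t),u_t(t))$ gives
\EQ{
K_0(u(t)) \le 2(1+\gamma) E(\vec u(t)) - \gamma\bigl(\| u(t)\|_{H^1}^2 + \| u_t(t)\|_{L^2}^2\bigr).
}
Since $E$ is non-increasing along the flow, $E(\vec u(t))\le E(\vec u_0)\le 0$. If we can produce some $t_0\ge 0$ and $\eps>0$ with $E(\vec u(t_0))\le -\eps$, then for $t\ge t_0$ the right-hand side is bounded above by $-2(1+\gamma)\eps$, so Lemma~\ref{eqBlowup} applied from time $t_0$ forces $T^*<\infty$.

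The only delicate case is the borderline $E(\vec u_0)=0$, where one must use the dissipation to make the energy strictly negative at some positive time. Suppose, toward a contradiction, that $E(\vec u(t))\equiv 0$ on $[0,T^*)$. The identity~\eqref{Et1t2} then forces $\alpha(s)\|u_t(s)\|_{L^2}^2\equiv 0$, hence $u_t\equiv 0$, so $u(t)\equiv u_0=:Q$ is a stationary solution of~\ref{KGalpha}, meaning $-\Delta Q+Q-f(Q)=0$. Testing this equation against $Q$ yields $K_0(Q)=0$, and then Lemma~\ref{SolBorne} applied to $(Q,0)$ gives
\EQ{
\gamma\| Q\|_{H^1}^2 \le 2(1+\gamma) E(Q,0) - K_0(Q) \le 0,
}
forcing $Q\equiv 0$ and contradicting the assumption that $\vec u$ is not identically zero. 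Consequently $u_t\not\equiv 0$ on some subinterval, so by~\eqref{Et1t2} there exists $t_0>0$ with $E(\vec u(t_0))<0$, and the previous paragraph closes the argument. When $E(\vec u_0)<0$ the choice $t_0=0$ works directly, with no appeal to dissipation.

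For the second assertion, a non-trivial equilibrium $(Q,0)$ with $E(Q,0)\le 0$ is, in particular, a global-in-time solution of~\ref{KGalpha}; running the blow-up conclusion above would contradict its globality. Alternatively (and more cleanly), the computation shown in the previous paragraph—$K_0(Q)=0$ combined with Lemma~\ref{SolBorne}—already yields $\gamma\| Q\|_{H^1}^2\le 2(1+\gamma)E(Q,0)\le 0$, hence $Q\equiv 0$. The only step requiring care is the borderline $E(\vec u_0)=0$ discussed above; everything else is a straightforward combination of the two lemmas already at hand.
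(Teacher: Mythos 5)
Your argument is correct and follows essentially the same route as the paper: Lemma~\ref{SolBorne} together with energy monotonicity gives $K_0(u(t))\le -\delta$ for negative initial energy, and in the borderline zero-energy case you split (as the paper does) into a non-stationary sub-case, where dissipation forces $E<0$ at some positive time, and a stationary sub-case, where $K_0(Q)=0$ contradicts \eqref{eqSolBorne}. The paper states the same case analysis more tersely, but there is no substantive difference.
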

\begin{proof}
The case of negative energy is clear from \eqref{eqSolBorne} and Lemma~\ref{eqBlowup}. If the initial energy is zero, and the solution is not stationary, then the energy becomes negative in finite time and we are done. 
If the solution is stationary and equal to $\phi$, then $K_0(\phi)=0$. However, this contradicts~\eqref{eqSolBorne}. 
\end{proof}

\begin{proof}[Proof of Lemma~\ref{eqBlowup}] 
We assume without loss of generality that $t_0=0$, and towards a contradiction that $T^*=\infty$. In order to show that $\vec u(t)$ blows up in finite time, we  use a standard convexity 
argument see~\cite{PaSat} or ~\cite{BRS1}.  
We set 
$$
y(t) = \frac{1}{2}\|u(t)\|_{L^2}^2.
$$
We have
\begin{equation}
\label{eq:2.4}
\begin{aligned}
\dot{y}(t)&=(u(t),\dot{u}(t)),\\
\ddot{y}(t)&=\|\dot{u}(t)\|_{L^2}^2+(u(t),\ddot{u}(t))\\
&=\|\dot{u}(t)\|_{L^2}^2+(u(t),(\Delta u-u+f(u))(t)) - 2\alpha(t) (u(t),\dot{u}(t))\\
&=\|\dot{u}(t)\|_{L^2}^2-K_0(u(t)) - 2 \alpha(t) (u(t),\dot{u}(t))\\
&= \|\dot{u}(t)\|_{L^2}^2-K_0(u(t)) - 2 \alpha(t) \dot y(t). 
\end{aligned}
\end{equation}
Hence, for all $t\ge t_0$, 
\begin{equation*}
\ddot y(t) + 2\alpha(t) \dot y(t) \ge \delta,
\end{equation*}
or with $A(t)= 2\int_{t_0}^t \alpha(\tau)   d\tau$,   
\begin{equation*}
\frac{d}{dt} \big( e^{A(t)} \dot y(t)\big) \ge \delta e^{A(t)},\qquad  \dot y(t) \ge e^{-A(t)} \dot y(t_0) + \delta \int_{t_0}^t e^{A(s)-A(t)}   ds.
\end{equation*}
By our assumptions on $\alpha(t)$, $A(t)\to\infty$ as $t\to\infty$ and for any $L\ge1$ and all $s\in [t-L,t]$ we have $A(t)-A(s)\le 1$ provided $t$ is sufficiently large. 
By the preceding remark, 
$$
\dot y(t)\ge e^{-A(t)} \dot y(t_0) + \delta L e^{-1},
$$ 
for all large $t$.  In particular, $\dot y(t)\to\infty$ as $t\to\infty$ and so $y(t)\to\infty$ as $t\to\infty$. 
Next, we note that 
\begin{align} 
\label{eq:2.7}
\ddot{y}(t) + 2\alpha(t) \dot y(t) &= \|\dot{u}(t)\|_{L^2}^2-K_0(u(t)) \\
& =(2+\gamma)\|\dot{u}(t)\|_{L^2}^2+\gamma \|u(t)\|_{H^1}^2-2(1+\gamma)E(t) \\
&\qquad - \int_{\R^d} \big(2(1+\gamma) F(u(t)) - u(t) f(u(t)) \big)   dx,
\end{align}
where we have set for simplicity $E(t)=E((u(t),\dot{u}(t)))$. {Using \ref{H1f}}, we can also write, for $t\geq t_0 $ sufficiently large,
\begin{align}
\label{eq:2.8}
\ddot{y}(t) + 2\alpha(t) \dot y(t) &\ge (2+\gamma)\|\dot{u}(t)\|_{L^2}^2+\gamma\|u(t)\|_{H^1}^2-2(1+\gamma)E(0) \\ 
&\ge (2+\gamma)\|\dot{u}(t)\|_{L^2}^2+\frac{\gamma}{2}\|u(t)\|_{H^1}^2  \\
&\ge \frac{2+\gamma}{2} \frac{\dot y^2(t)}{y(t)} + \gamma y(t).
\end{align}
Using the Young inequality, one easily shows that, for any $\varepsilon >0$, and all sufficiently large $t$, 
\begin{equation*}
\varepsilon \frac{\dot y^2(t)}{y(t)} + \gamma y(t) \ge 2\alpha(t) \dot y(t).
\end{equation*}
In conclusion, there exist $\kappa>0$ and  $T$ large enough so that 
\begin{equation*}
\ddot{y}(t) \ge (1+\kappa) \frac{\dot y^2(t)}{y(t)},   \quad \forall t\ge T.
\end{equation*}
This means that $y(t)^{-\kappa}$ is concave which contradicts that $y(t)\to\infty$ {  (for details of this last step, see~\cite{PaSat} or~\cite{BRS1}). }
\end{proof}

\subsection{Convexity argument and  \texorpdfstring{$L^2$}{L-2} bound} \label{L2}

Our aim here is to check that global (forward) solutions $\vec u(t) = (u(t), u_t(t))$ have the property that $\| u(t)\|_{L^2}$ remains uniformly bounded with respect to $t$. We will closely follow the work  of Cazenave  \cite[Proposition 3.1, Page 42]{Caz85} and~\cite[Lemma 2.4, Page 39]{Caz85}). 

Let $\vec u(t)$ be a global solution of \ref{KGalpha}, not identically~$0$. By Corollary~\ref{coro-blow}, 
$E(\vec u(t))  >0$ for any $t\geq 0$. To simplify the notation in this section, we set
$$
h(t) = \|u(t)\|_{L^2}^2 = 2 y(t),
$$
where $y(t)$ has been defined in the proof of Lemma \ref{eqBlowup}. It has been proved there that $y(t)$, and thus $h(t)$, is of class $C^2$.

Using the condition \ref{H1f}, we deduce from the equality    \eqref{eq:2.7}  that
\begin{multline}
\label{fseconde}
h''(t) \geq 
 (4 + 2\gamma) \| u_t\|_{L^2}^2 +  2\gamma \| \nabla u\|_{L^2}^2 
+  2\gamma \| u\|_{L^2}^2 - 4 \int \alpha(t) u(x,t) u_t(x,t)     dx \\
 -  2(2+  2\gamma) E(\vec u(t)).
\end{multline}
This inequality together with the estimate \eqref{Et0t} imply that, for $t \geq t_1 \geq 0$,
\begin{multline}
\label{fesecondeBis}
h''(t) \geq  (4 +  2\gamma) \| u_t\|_{L^2}^2 +  2\gamma\| \nabla u\|_{L^2}^2 
+  2\gamma \| u\|_{L^2}^2 - 4 \int \alpha(t) u(x,t) u_t(x,t)    dx \cr
 -  2(2+  2\gamma) E(\vec u(t_1)).
\end{multline}
We next choose $t_1 >0$ so that, for $t \geq t_1$, 
\begin{equation}
\label{t0condit}
2 \alpha(t) \leq \gamma.
\end{equation}
Thus, the inequalities \eqref{fesecondeBis} and \eqref{t0condit} imply that, for 
$t \geq t_1$,
\begin{equation}
\label{fesecondeTer}
h''(t) \geq \tilde{\Phi}(\vec u(t)) - 2(2+  2\gamma) E(\vec u(t_1)),
\end{equation}
where
\begin{equation}
\label{Ftilde}
 \tilde{\Phi}(u,v) \equiv (4 +\gamma) \| v\|_{L^2}^2 + 2\gamma \| \nabla u\|_{L^2}^2 + \gamma \| u\|_{L^2}^2 .
\end{equation}
Next we remark that, for any $\eta >0$ and any $B>0$, we can write 
\begin{equation}
\label{fprimeBis}
 B |h'(t)| \leq B\eta \| u(t)\|_{L^2}^2 + \frac{B}{\eta} \| u_t(t)\|_{L^2}^2
\leq \gamma \| u(t)\|_{L^2}^2 + (4 +\gamma) \| u_t(t)\|_{L^2}^2.
\end{equation}
To achieve this inequality, we set 
$$
\eta = \sqrt {\gamma(4+ \gamma)^{-1}}, \quad  B = 
\sqrt {\gamma (4+ \gamma)}.
$$ 
The inequalities \eqref{fprimeBis} and \eqref{fesecondeTer} imply the second inequality in the lemma below.
We thus have proved and analog of  \cite[Lemma 2.4]{Caz85}.

\begin{lemma} \label{Lemfseconde}
Let $\vec u(t)$ be a nonzero global solution of \ref{KGalpha}. 
Then there exists $t_1 >0$, such that for 
$t \geq t_1$, 
\begin{equation}
\label{inequa1}
h''(t) \geq \tilde{\Phi}(\vec u(t)) - 4(1+ \gamma) E(\vec u(t_1)),
\end{equation}
and 
\begin{equation}
\label{inequa2}
h''(t) \geq  \sqrt {\gamma (4+ \gamma)} \big( |h'(t)| - 
\frac{4(1 +\gamma)}{\sqrt {\gamma (4+ \gamma)}}E(\vec u(t_1)) \big).
\end{equation}
\end{lemma}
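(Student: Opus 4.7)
The proof is essentially completed by the computations sitting just above the lemma statement, so my plan is to organize those steps cleanly into a proof. The starting point is the identity $h(t) = 2y(t)$ together with the formula for $\ddot y(t)$ obtained in \eqref{eq:2.4}, namely
\[
\ddot y(t) = \|u_t\|_{L^2}^2 - K_0(u(t)) - 2\alpha(t)\dot y(t),
\]
which I would rewrite, using the definition of $E(\vec u(t))$ and of $K_0$, as in \eqref{eq:2.7}. Applying the Ambrosetti--Rabinowitz hypothesis \ref{H1f} to discard the term $\int(2(1+\gamma)F(u) - uf(u))\,dx \leq 0$ yields the lower bound
\[
h''(t) \geq (4+2\gamma)\|u_t\|_{L^2}^2 + 2\gamma\|\nabla u\|_{L^2}^2 + 2\gamma\|u\|_{L^2}^2 - 4\int \alpha(t) u\, u_t\, dx - 4(1+\gamma) E(\vec u(t)).
\]

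Next I would use the two monotonicities built into the setup. First, by the energy identity \eqref{Et0t}, $E(\vec u(t)) \leq E(\vec u(t_1))$ for any $t \geq t_1 \geq 0$, which takes care of replacing $E(\vec u(t))$ by $E(\vec u(t_1))$. Second, since $\alpha(t) \to 0$ by \ref{H1alpha}, I may choose $t_1 > 0$ large enough so that $2\alpha(t) \leq \gamma$ for $t \geq t_1$. Combined with the elementary bound $2|uu_t| \leq u^2 + u_t^2$, this gives
\[
\Bigl|4\int \alpha(t)\, u\, u_t\, dx\Bigr| \leq 2\gamma \int |u u_t|\, dx \leq \gamma \|u\|_{L^2}^2 + \gamma \|u_t\|_{L^2}^2,
\]
so absorbing into the good terms leaves exactly
\[
h''(t) \geq (4+\gamma)\|u_t\|_{L^2}^2 + 2\gamma\|\nabla u\|_{L^2}^2 + \gamma \|u\|_{L^2}^2 - 4(1+\gamma) E(\vec u(t_1)) = \tilde\Phi(\vec u(t)) - 4(1+\gamma) E(\vec u(t_1)),
\]
which is the first inequality \eqref{inequa1}.

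For \eqref{inequa2} I would start from $h'(t) = 2(u(t),u_t(t))_{L^2}$, giving $|h'(t)| \leq 2\|u\|_{L^2}\|u_t\|_{L^2}$, and apply Young's inequality $2ab \leq \eta a^2 + \eta^{-1} b^2$ with the \emph{optimal} choice $\eta = \sqrt{\gamma/(4+\gamma)}$, $B = \sqrt{\gamma(4+\gamma)}$ already displayed before the statement. This produces
\[
B|h'(t)| \leq \gamma \|u(t)\|_{L^2}^2 + (4+\gamma)\|u_t(t)\|_{L^2}^2 \leq \tilde\Phi(\vec u(t)),
\]
and plugging this into \eqref{inequa1} immediately yields \eqref{inequa2}. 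There is no real obstacle here; the only thing worth pointing out is that the coefficients of $\tilde\Phi$ are \emph{forced} by the requirement that the Young-type bound on $|h'|$ saturate the good quadratic quantities on the right-hand side of \eqref{inequa1}, leaving no slack for $\|\nabla u\|_{L^2}^2$ (which is why that term plays no role in \eqref{inequa2}).
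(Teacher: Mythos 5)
Your proof is correct and follows exactly the same route as the paper: expand $h''$ via the Ambrosetti--Rabinowitz hypothesis, use energy monotonicity to freeze $E$ at time $t_1$, absorb the damping cross-term via $2\alpha(t)\le\gamma$ for $t\ge t_1$ to obtain \eqref{inequa1}, and then apply Young's inequality to $|h'(t)|=2|(u,u_t)|$ with the optimal weights $\eta=\sqrt{\gamma/(4+\gamma)}$, $B=\sqrt{\gamma(4+\gamma)}$ to deduce \eqref{inequa2}. The closing remark about the coefficients of $\tilde\Phi$ being forced and the $\|\nabla u\|_{L^2}^2$ term playing no role in \eqref{inequa2} is a valid observation but does not change the argument.
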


With this lemma, we can now show the boundedness of the $L^2$-norm of $u(t)$ by following \cite[Section 3]{Caz85}. In this step, we only  require $\alpha(t)\to0$ in infinite time.

\begin{proposition} \label{propL2}
Let $\vec u(t)$ be a solution of \ref{KGalpha}, which is global in positive time. Let $t_1 >0$ be as in Lemma~\ref{Lemfseconde}. We have the estimates 
\begin{equation}
\label{PropEst1}
\frac{d}{dt}(|2\gamma h(u(t)) - 4(2+ 2\gamma)E( \vec u(t_1))|^{+} )\leq 0, \quad \forall t_1 \leq t <  \infty,
\end{equation}
\begin{equation}
\label{PropEst2}
h(u(t)) \leq \sup \big( h(u(t_1)), \frac{4(1+ \gamma)}{\gamma} E( \vec u(t_1)) \big), \quad \forall t \geq t_1,
\end{equation}
and there exists $\tau_1 \geq t_1$ such that, for $t \geq \tau_1$,
\begin{equation}
\label{PropEst4}
 h(u(t)) \leq \frac{8(1+ \gamma)}{\gamma} E( \vec u(t_1)).
\end{equation}
\end{proposition}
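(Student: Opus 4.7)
Set $\psi(t):=2\gamma h(t)-8(1+\gamma)E(\vec u(t_1))$, so that $\psi(t)>0\iff h(t)>\frac{4(1+\gamma)}{\gamma}E(\vec u(t_1))$, and note that global existence of $\vec u(t)$ forces $E(\vec u(t_1))>0$ by Corollary~\ref{coro-blow}. All three estimates rest on a convexity argument in the spirit of Cazenave, the key point being to detect the blow-up criterion of Lemma~\ref{eqBlowup} inside the differential inequality~\eqref{inequa1}.

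The heart of the matter is~\eqref{PropEst1}, for which I plan to prove that $h'(t_*)\le 0$ at every $t_*\ge t_1$ with $\psi(t_*)>0$. Arguing by contradiction, suppose $h'(t_*)>0$ and $\psi(t_*)>0$. Starting from~\eqref{inequa1}, using the pointwise bound $\tilde\Phi(\vec u)\ge(4+\gamma)\|u_t\|_{L^2}^2+\gamma h$ and the Cauchy--Schwarz inequality $(h'(t))^2 = 4\bigl(\int u\,u_t\,dx\bigr)^2\le 4h(t)\|u_t(t)\|_{L^2}^2$, I obtain, for all $t\ge t_1$,
\begin{equation*}
4h(t)h''(t)-(4+\gamma)(h'(t))^2\ge 4h(t)\bigl[\gamma h(t)-4(1+\gamma)E(\vec u(t_1))\bigr].
\end{equation*}
On the set $\{\psi>0\}$ the right-hand side is strictly positive, so $h\,h''>(1+\gamma/4)(h')^2$ there. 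Setting $\kappa:=\gamma/4>0$, this is precisely the condition $(h^{-\kappa})''<0$. At $t_*$ one has moreover $h''(t_*)>0$, so both $h'$ and $\psi$ keep increasing on $[t_*,\infty)$, the strict inequality is self-propagating, and $h^{-\kappa}$ is a positive strictly concave function on $[t_*,\infty)$ with $(h^{-\kappa})'(t_*)=-\kappa h(t_*)^{-\kappa-1}h'(t_*)<0$. Such a function necessarily vanishes in finite time, forcing $h\to\infty$ and contradicting global existence. Hence $h'(t_*)\le 0$, which yields~\eqref{PropEst1}.

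Estimate~\eqref{PropEst2} then follows at once by integrating~\eqref{PropEst1}: $\psi^+(t)\le\psi^+(t_1)$ for $t\ge t_1$, and a case split on the sign of $\psi(t_1)$ produces the stated bound. For~\eqref{PropEst4} I argue by contradiction that some $\tau_1\ge t_1$ must exist with $h(\tau_1)\le\frac{8(1+\gamma)}{\gamma}E(\vec u(t_1))$: otherwise $h(t)>\frac{8(1+\gamma)}{\gamma}E(\vec u(t_1))$ for all $t\ge t_1$, and combining~\eqref{inequa1} with $\tilde\Phi\ge\gamma h$ yields $h''(t)\ge 4(1+\gamma)E(\vec u(t_1))>0$; integrating twice forces $h(t)\to\infty$, in direct contradiction with~\eqref{PropEst2}. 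Once $\tau_1$ is produced, one applies~\eqref{PropEst2} at the later initial time $\tau_1$ (the hypothesis $2\alpha\le\gamma$ still holds since $\alpha$ is non-increasing, and $E(\vec u(\tau_1))\le E(\vec u(t_1))$) to conclude. The main obstacle is calibrating the Cauchy--Schwarz step and the $\tilde\Phi$ lower bound in the first part so as to land exactly on the critical exponent $(1+\gamma/4)$ demanded by Lemma~\ref{eqBlowup}; everything downstream is then routine bookkeeping.
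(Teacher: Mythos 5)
Your proof is correct, and for~\eqref{PropEst1} and~\eqref{PropEst2} it is essentially the same Cazenave-style convexity argument as the paper's: the paper also combines \eqref{inequa1} with the Cauchy--Schwarz inequality $(h')^2 \le 4h\|u_t\|_{L^2}^2$ to obtain concavity of $h^{-\gamma/4}$ on the set where $g:=h-\frac{4(1+\gamma)}{\gamma}E(\vec u(t_1))$ is positive and increasing (the paper first notes $g''\ge\gamma g$ to force $h\to\infty$, whereas you package both quadratic ingredients into the single display $4hh''-(4+\gamma)(h')^2\ge 4h[\gamma h-4(1+\gamma)E]$ and then exploit the negative initial slope of $h^{-\kappa}$ to get a finite-time contradiction; this is a harmless reorganization of the same idea). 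For~\eqref{PropEst4}, however, you take a genuinely different and somewhat cleaner route: the paper fixes an explicit $\tau_1=t_1+(4(1+\gamma)E(\vec u(t_1)))^{-1}|h'(t_1)|$ and derives a contradiction from $h'(t_2)>0$ at a hypothetical violation point $t_2>\tau_1$ (the point of the explicit choice being to overcome the contribution $h'(t_1)$ in the double integration of $h''\ge 4(1+\gamma)E$); you instead deduce the soft existence of \emph{some} $\tau_1$ with $h(\tau_1)\le\frac{8(1+\gamma)}{\gamma}E(\vec u(t_1))$ from the boundedness in~\eqref{PropEst2} (since otherwise $h''\ge 4(1+\gamma)E>0$ drives $h\to\infty$), and then re-apply~\eqref{PropEst2} with $\tau_1$ as the new base time, correctly noting that the hypothesis $2\alpha\le\gamma$ persists by monotonicity of $\alpha$ and that $E(\vec u(\tau_1))\le E(\vec u(t_1))$. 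Your version avoids the explicit bookkeeping with $|h'(t_1)|$ at the cost of invoking~\eqref{PropEst2} twice; both are valid, and neither buys more than the other in terms of generality since the constants in the conclusion are identical.
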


\begin{proof} The estimates hold trivially if the solution vanishes identically. Thus we may assume that $E( \vec u(t))>0$ for all $t\ge0$ and we prove~\eqref{PropEst1} by contradiction, as  in \cite[Page 43]{Caz85}. One sets
$$
g(t) = h(t) - \frac{4(1+ \gamma)}{\gamma}E( \vec u(t_0)).
$$
If \eqref{PropEst1} does not hold, there exists $t_2 \geq t_1$ so that 
$$
g'(t_2) >0, \quad g(t_2) >0.
$$
Moreover, from \eqref{inequa1}, we infer that
$$
g'' (t) \geq  \gamma g(t) , \quad \forall t \in [t_1,  \infty).
$$
Hence $g$ is a convex increasing function on $[t_2,  \infty)$ with $\lim_{t \to  \infty}g(t)
=  \infty$. Since $g(t) \geq 0$ for $t \geq t_2$, \eqref{inequa1} implies that, for $t \geq t_2$,
$$
h''(t) \geq (4 +\gamma) \| u_t(t)\|_{L^2}^2.
$$ 
Multiplying by $h(t)$, we get
$$
h(t) h''(t) \geq \frac{4 + \gamma}{4} (h'(t))^2.
$$
Hence $(h(t))^{-\gamma/4}$ is concave on $[t_2,  \infty)$. Since $(h(t))^{-\gamma/4}\to 0$ as $t \to  \infty$, this is impossible and \eqref{PropEst1} holds. Now~\eqref{PropEst2} is a direct consequence of \eqref{PropEst1}, and to prove~\eqref{PropEst4} we also proceed by contradiction. If \eqref{PropEst4} is not true, there exists $t_2 >\tau_1$, where,
$$\tau_1\equiv t_1 + (2+ 2\gamma)^{-1} E( \vec u(t_1))^{-1}|h'(t_1)/2|, $$ such that
\begin{equation}
\label{eq:Condht2}
h(t_2) > \frac{8(1 +\gamma)}{\gamma}E( \vec u(t_1)).
\end{equation}
The property \eqref{PropEst1} or \eqref{PropEst2} implies that 
\begin{equation}
\label{eq:htht2}
h(t) \geq h(t_2), \quad \forall t_1<t<t_2.
\end{equation} 
If we apply the inequality  \eqref{inequa1}, we deduce from  the inequalities \eqref{eq:Condht2} and \eqref{eq:htht2} that, for any $t_1 \leq t \leq t_2$,
$$ 
h''(t) \geq 8(1 +\gamma)E( \vec u(t_1)) 
- 4(1+ \gamma) E(\vec u(t_1)) \geq 4(1+ \gamma) E(\vec u(t_1)) >0.
$$
Therefore,
$$
h'(t_2)  \geq 4(1+\gamma) E(\vec u(t_1))(t_2 -t_1)  + h ' (t_1) > | h ' (t_1) | + h'(t_1) ,
$$
and hence
$$
h'(t_2) >0.
$$
This contradicts the property \eqref{PropEst1}  for $t=t_2$. Thus \eqref{PropEst4} is true. 
\end{proof}
\section{Construction of special time sequences} \label{nbetagamma}
\subsection{Preliminary lemmata} \label{sub:preliminaire}
We begin with a few  elementary lemmata, in which we use the decay property of the energy $E(\vec u(t))$ when $\vec u$ is a global solution of \ref{KGalpha}. Indeed, in this case, $E(\vec u(t)) \geq 0$ and thus,
 \begin{equation}
\label{BorneEnergie}
2 \int_0^{ \infty} \alpha(s) \|u_t(s) \|_{L^2}^2    ds \leq E(\vec u(0)).
\end{equation}

\begin{lemma}\label{lem:suite1}
Assume  \ref{H1alpha}. Then, for any global solution $\vec u(t)$ of 
\ref{KGalpha} and for any $r>0$, there exists a sequence 
$t_n \to  \infty$, as $n$ goes to infinity, so that
\begin{equation}
\label{eq:tn1}
\lim_{n \to  \infty} \int_{t_n}^{t_n+r} \| u_t(s)\|_{L^2}^2   ds  =0.
\end{equation}
Moreover, there exist  three sequences $\tilde{t}_{n}^{j}$, $j=0,1,2$ so that 
$\tilde{t}_{n}^{j} \in [t_n +jr/3, t_n +(j+1)r/3)$ and
\begin{equation}
\label{eq:tn1j}
\lim_{n \to  \infty}  \| u_t(\tilde{t}_{n}^{j})\|_{L^2}  = 0.
\end{equation}
\end{lemma}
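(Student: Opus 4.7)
\textbf{Proof plan for Lemma \ref{lem:suite1}.}

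The plan is to use the global integrability $2\int_0^\infty \alpha(s)\|u_t(s)\|_{L^2}^2\,ds\le E(\vec u(0))$ together with the monotonicity and non-summability of $\alpha$ from \ref{H1alpha} to extract a subsequence on which the time-localized $L^2_tL^2_x$ norm of $u_t$ vanishes. Partition $[0,\infty)$ into intervals $I_k=[kr,(k+1)r]$ for $k\in\N$, and set
\[
a_k=\int_{kr}^{(k+1)r}\|u_t(s)\|_{L^2}^2\,ds,\qquad b_k=\alpha((k+1)r).
\]
Since $\alpha$ is non-increasing, $\alpha(s)\ge b_k$ on $I_k$, hence $b_k a_k\le\int_{I_k}\alpha(s)\|u_t(s)\|_{L^2}^2\,ds$, which upon summation yields $\sum_{k\ge0}b_k a_k\le E(\vec u(0))/2<\infty$. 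On the other hand, by monotonicity $\int_0^\infty \alpha(s)\,ds\le r\sum_{k\ge0}b_k$, so the assumption $\int_0^\infty\alpha=\infty$ in \ref{H1alpha} gives $\sum_{k\ge0} b_k=\infty$. A standard comparison argument (if $\liminf a_k>0$ were positive, the series $\sum b_k a_k$ would diverge) furnishes a subsequence $k_n\to\infty$ along which $a_{k_n}\to 0$. Setting $t_n=k_n r$ produces the desired sequence with $\int_{t_n}^{t_n+r}\|u_t(s)\|_{L^2}^2\,ds\to 0$.

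For the second assertion, partition $[t_n,t_n+r]$ into the three consecutive subintervals $J_n^j=[t_n+jr/3,\,t_n+(j+1)r/3]$ for $j=0,1,2$. The map $s\mapsto \|u_t(s)\|_{L^2}^2$ is continuous on $J_n^j$ since $\vec u\in C^0([0,\infty),\calH)$, so the mean value theorem for integrals supplies points $\tilde t_n^j\in J_n^j$ with
\[
\|u_t(\tilde t_n^j)\|_{L^2}^2=\frac{3}{r}\int_{J_n^j}\|u_t(s)\|_{L^2}^2\,ds\le\frac{3}{r}\int_{t_n}^{t_n+r}\|u_t(s)\|_{L^2}^2\,ds\xrightarrow[n\to\infty]{}0,
\]
which is precisely the required pointwise vanishing.

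There is no significant obstacle here: the only mild subtlety is the passage from the Lyapunov bound \eqref{BorneEnergie}, which is weighted by $\alpha$, to an \emph{unweighted} local $L^2_tL^2_x$ smallness, and this is exactly where the monotonicity of $\alpha$ together with $\int_0^\infty\alpha=\infty$ is used. The remaining step for the three subsequences is a direct application of the integral mean value theorem, whose only prerequisite, continuity of $t\mapsto\|u_t(t)\|_{L^2}$, is guaranteed by the fact that $\vec u\in C^0([0,\infty),\calH)$.
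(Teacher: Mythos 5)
Your proof is correct and follows essentially the same route as the paper's: both arguments combine the bound $2\int_0^\infty \alpha(s)\|u_t(s)\|_{L^2}^2\,ds\le E(\vec u(0))$ with the monotonicity of $\alpha$ and $\int_0^\infty\alpha=\infty$ (via $\sum_k\alpha((k+1)r)=\infty$) to force some $r$-window to have small unweighted $L^2_tL^2_x$ mass, the only cosmetic difference being that the paper phrases the extraction as a direct contradiction while you phrase it through $\liminf a_k=0$, and the paper leaves the three subsequence claim as an immediate consequence whereas you spell it out via the integral mean value theorem. (A pedantic note: your inequality $\int_0^\infty\alpha\le r\sum_{k\ge0}b_k$ with $b_k=\alpha((k+1)r)$ should carry the harmless extra term $r\alpha(0)$ from the first block, since on $I_k$ the upper bound for $\alpha$ is $\alpha(kr)$, not $\alpha((k+1)r)$; this does not affect the conclusion $\sum b_k=\infty$.)
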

\begin{proof}   Assume that the property \eqref{eq:tn1} does not hold. Then there exist $\varepsilon >0$ and $t_1 >0$ so that, for $t\geq t_1$,
$$
 \int_{t}^{t+r} \| u_t(s)\|_{L^2}^2   ds  \geq \varepsilon,
$$
which implies, due to the hypothesis \ref{H1alpha}, that
\begin{equation}
\begin{split}
\label{eq:diverge1}
 \int_{t_1}^{ \infty}\alpha(s) \| u_t(s)\|_{L^2}^2   ds &\geq \sum_{n=0}^{ \infty}
 \int_{t_1 +nr}^{t_1 +(n+1)r}\alpha(s) \| u_t(s)\|_{L^2}^2   ds \cr
& \geq \varepsilon \sum_{n=0}^{ \infty} \alpha(t_1 + (n+1)r) =  \infty.
 \end{split}
 \end{equation}
 This leads to a contradiction since by \eqref{BorneEnergie} the left-hand side is bounded, and therefore, for any $n$ large enough, there exists $t_n \geq n$ so that,
\begin{equation}
\label{eq:tn1n}
\int_{t_n}^{t_n+r} \| u_t(s)\|_{L^2}^2   ds \leq \frac{1}{n},
\end{equation}
and thus \eqref{eq:tn1} holds.   The property \eqref{eq:tn1j} is an obvious consequence of \eqref{eq:tn1}.
\end{proof} 

 We will need a stronger quantitative decay.

\begin{lemma}\label{lem:suite3}
Assume that the hypothesis  \ref{H2alpha} holds. Let  $\vec u(t)$ be a global solution  of 
\ref{KGalpha}. There exists a sequence $n_m$, $m \in \N$, so that the following properties hold:
\begin{equation}
\label{eq:nmbeta}
\lim_{m \to  \infty} \int_{n_m^{3/2}}^{(n_m +1)^{3/2}}   s^{\frac13}  \| u_t(s)\|_{L^2}^2   ds \equiv \lim_{m \to  \infty}\varepsilon_m(r) = 0 ,
\end{equation}
and 
 \begin{equation}
\label{eq:ellgamma}
\int_{n_m^{3/2}}^{(n_m +1)^{3/2}}  \| u_t(s)\|_{L^2}    ds \leq C \varepsilon_m^{1/2}  .
\end{equation}
\end{lemma}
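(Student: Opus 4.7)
My plan is to treat this as a pigeonhole/divergent-series argument applied to the dissipation integral, followed by a Cauchy-Schwarz to get the $L^1$-in-time control from the weighted $L^2$-in-time one.

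Set $I_n = [n^{3/2}, (n+1)^{3/2}]$ and $a_n = \int_{I_n} s^{1/3}\|u_t(s)\|_{L^2}^2\, ds$. The first and main step is to show that $\liminf_{n\to\infty} a_n = 0$. On $I_n$, for $n$ large, the mean value theorem gives $|I_n| \sim \tfrac32 n^{1/2}$, and $s\in I_n$ satisfies $s \sim n^{3/2}$, so $s^{1/3} \leq C n^{1/2}$ while $\alpha(s) = (1+s)^{-a} \geq c\, n^{-3a/2}$. Dividing,
\begin{equation*}
s^{1/3} \leq C_1\, n^{(1+3a)/2}\,\alpha(s) \qquad \text{uniformly on } I_n,
\end{equation*}
so that $a_n \leq C_1 n^{(1+3a)/2} \int_{I_n} \alpha(s)\|u_t\|_{L^2}^2\, ds$. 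Summing in $n$ and using the global dissipation bound \eqref{BorneEnergie},
\begin{equation*}
\sum_{n\geq 1} n^{-(1+3a)/2}\, a_n \;\leq\; C_1 \int_{0}^{\infty} \alpha(s)\|u_t(s)\|_{L^2}^2\, ds \;\leq\; \tfrac{C_1}{2}\, E(\vec u(0)) \;<\; \infty.
\end{equation*}
The assumption $(H.2)_\alpha$ reads $0 \leq a < 1/3$, which gives $(1+3a)/2 < 1$ and hence $\sum_n n^{-(1+3a)/2} = \infty$. If $a_n$ were bounded below by some $\varepsilon>0$ for all large $n$, the left side would diverge — contradiction. Therefore $\liminf_n a_n = 0$, and we can extract a subsequence $n_m$ with $\varepsilon_m := a_{n_m} \to 0$, proving \eqref{eq:nmbeta}.

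For \eqref{eq:ellgamma} I apply Cauchy-Schwarz on $I_{n_m}$ by splitting $1 = s^{1/6}\cdot s^{-1/6}$:
\begin{equation*}
\int_{I_{n_m}} \|u_t(s)\|_{L^2}\, ds \leq \Bigl(\int_{I_{n_m}} s^{1/3}\|u_t(s)\|_{L^2}^2\, ds\Bigr)^{1/2}\Bigl(\int_{I_{n_m}} s^{-1/3}\, ds\Bigr)^{1/2}.
\end{equation*}
The first factor is exactly $\varepsilon_m^{1/2}$. For the second, $s\in I_{n_m}$ gives $s^{-1/3} \leq C n_m^{-1/2}$ and $|I_{n_m}| \leq C n_m^{1/2}$, so $\int_{I_{n_m}} s^{-1/3}\, ds \leq C'$ uniformly in $m$. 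This yields the desired $\int_{I_{n_m}}\|u_t\|_{L^2}\, ds \leq C\,\varepsilon_m^{1/2}$.

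The only delicate point is the exponent bookkeeping that ties the interval choice $[n^{3/2}, (n+1)^{3/2}]$ to the hypothesis $a < 1/3$: the scaling is engineered so that $n^{-(1+3a)/2}$ is a divergent weight while keeping the $L^1$-length $\int_{I_n} s^{-1/3}\, ds$ bounded. Beyond that, the argument is a standard consequence of energy dissipation plus Cauchy-Schwarz, with no substantial obstacle.
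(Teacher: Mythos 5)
Your proof is correct and is essentially the same argument as the paper's: both use the global dissipation bound \eqref{BorneEnergie} together with the estimate $s^{1/3}\lesssim n^{(1+3a)/2}\alpha(s)$ on $I_n$ to force a subsequence along which the weighted dissipation vanishes (the paper phrases this as a direct contradiction, you phrase it as a summability statement, but these are the same), and both then use the identical weighted Cauchy--Schwarz $1=s^{1/6}\cdot s^{-1/6}$ for \eqref{eq:ellgamma}.
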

\begin{proof} 
We also prove~\eqref{eq:nmbeta}  by contradiction. Assume that~ \eqref{eq:nmbeta} does not hold. Then, there exist $\delta >0$ and $n_0$, such that, for $n \geq n_0$
$$
 \int_{n^{3/2}}^{(n+1)^{3/2}} s^{\frac13} \| u_t(s)\|_{L^2}^2    ds  \geq \delta ,
$$
which implies  that
\begin{equation}
\begin{split}
\label{eq:diverge3}
 \int_{n_0^{3/2}}^{ \infty} \alpha(s) \| u_t(s)\|_{L^2}^2   ds &\geq 
 \sum_{n=n_0}^{ \infty}
 \int_{n^{3/2}}^{(n+1)^{3/2}}\alpha(s) \| u_t(s)\|_{L^2}^2    ds \cr
& \geq \delta \sum_{n=n_0}^{ \infty} \frac{1}{(1+ (n+1)^{\frac32a}){(n +1)^{\frac12}}} =  \infty,
 \end{split}
 \end{equation}
 if 
 \begin{equation}
\label{eq:auxabg1}
\frac32 a + \frac12 \leq 1,
\end{equation}
which leads to a contradiction. 
Now, using Cauchy-Schwarz inequality, we get
\begin{equation}
\label{eq:aux3}
\begin{split}
\int_{n_m^{3/2}}^{(n_m +1)^{3/2}}  \| u_t(s)\|_{L^2}   ds &\leq 
\frac{((n_m +1)^{3/2} - n_m^{3/2})^{1/2}}{n_m^{1/4}}
\big(\int_{n_m^{3/2}}^{(n_m +1)^{3/2}} s^{\frac13} \| u_t(s)\|_{L^2}^2    ds \big)^{1/2}\cr
& \leq C  \varepsilon_m^{1/2}
\frac{((n_m +1)^{3/2} - n_m^{3/2})^{1/2}}{n_m^{1/4}}\le C  \varepsilon_m^{1/2} .
\end{split}
\end{equation}
                                                                               
We notice that in what follows, we will use 
\begin{equation}\label{eq:aux3bis}
(n_{m} + 1)^{3/2} - n_{m}^{3/2} \geq n_{m}^{3a/2} .
\end{equation}
\end{proof}

Under the weaker hypothesis~\ref{H3alpha}, the same proof gives the following result. 

\begin{lemma}\label{lem:suite4}
Assume that the hypothesis  \ref{H3alpha} holds. Let  $\vec u(t)$ be a global solution  of 
\ref{KGalpha}. There exists a sequence $n_m$, $m \in \N$, so that the following property holds:
\begin{equation}
\label{eq:nmbetaBis}
\lim_{m \to  \infty} \int_{n_m^{2}}^{(n_m +1)^{2}}    \| u_t(s)\|_{L^2}^2   ds \equiv \lim_{m \to  \infty}\varepsilon_m = 0 ,
\end{equation}
\end{lemma}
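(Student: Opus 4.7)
The plan is to mimic the proof of Lemma~\ref{lem:suite3} verbatim, replacing the exponent bookkeeping to reflect the new interval lengths and the weaker decay assumption on $\alpha$. Specifically, I will argue by contradiction and extract a divergent lower bound on $\int_0^\infty \alpha(s)\|u_t(s)\|_{L^2}^2 \, ds$, which contradicts the finiteness bound \eqref{BorneEnergie} that applies to every forward global solution of \ref{KGalpha}.

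In detail, suppose the conclusion fails. Then there exist $\delta>0$ and $n_0 \in \N$ so that for every $n\geq n_0$,
\begin{equation*}
\int_{n^{2}}^{(n+1)^{2}} \|u_t(s)\|_{L^2}^2 \, ds \;\geq\; \delta .
\end{equation*}
Because $\alpha$ is non-increasing under \ref{H1alpha} and has the explicit form $\alpha(s)=(1+s)^{-a}$ under \ref{H3alpha}, on each interval $[n^{2},(n+1)^{2}]$ we have the pointwise bound $\alpha(s)\geq \alpha((n+1)^{2})\geq c(1+n)^{-2a}$ for some constant $c>0$. Splitting the integral over the dyadic-like intervals $[n^{2},(n+1)^{2}]$ and invoking this lower bound therefore gives
\begin{equation*}
\int_{n_0^{2}}^{\infty} \alpha(s)\|u_t(s)\|_{L^2}^2 \, ds \;\geq\; c\,\delta \sum_{n=n_0}^{\infty} \frac{1}{(1+n)^{2a}} .
\end{equation*}
Under the assumption $0\leq a<\tfrac12$ we have $2a<1$, so the series on the right diverges. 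This contradicts \eqref{BorneEnergie}, which states that the left-hand side is bounded by $E(\vec u(0))/2$.

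Consequently, no such $\delta$ and $n_0$ can exist, and we can select a sequence $n_m\to\infty$ along which the integrals $\int_{n_m^{2}}^{(n_m+1)^{2}} \|u_t(s)\|_{L^2}^2 \, ds$ tend to zero, proving \eqref{eq:nmbetaBis}. The only subtle point — and it is the reason the exponent shifts from $\tfrac13$ in Lemma~\ref{lem:suite3} to $\tfrac12$ here — is the threshold condition $2a<1$ that makes the harmonic-type sum $\sum (1+n)^{-2a}$ divergent; this is where the hypothesis \ref{H3alpha} is used sharply. No weighted factor of $s^{1/3}$ is needed in the statement because the interval length $(n+1)^2-n^2\sim n$ is already large enough to absorb the loss from $\alpha$ under the weaker decay rate.
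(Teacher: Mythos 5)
Your proposal is correct and follows exactly the route the paper takes: the paper's proof of Lemma~\ref{lem:suite4} is declared to be ``the same proof'' as that of Lemma~\ref{lem:suite3}, namely a contradiction argument that lower-bounds each block $\int_{n^2}^{(n+1)^2}\alpha(s)\|u_t(s)\|_{L^2}^2\,ds$ by $c\delta(1+n)^{-2a}$ and sums over $n$ to contradict~\eqref{BorneEnergie}, using $2a<1$ exactly as you do. One small misattribution in your closing comment: the weight $s^{1/3}$ in Lemma~\ref{lem:suite3} is not there to repair the divergence of the series (the unweighted analogue already diverges for $a<1/3$); it is inserted so that the Cauchy–Schwarz step in~\eqref{eq:aux3} produces the $L^1_t$-bound~\eqref{eq:ellgamma}, which is needed downstream in Section~5, and Lemma~\ref{lem:suite4} needs no such $L^1$ estimate, so the weight can be dropped.
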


\subsection{The vanishing of  \texorpdfstring{$K_0(u(t_n^*))$}{K-0(u(t-n-*))} in the limit  \texorpdfstring{$n\to\infty$}{n tends to infinity}}\label{sub:K0utn}

We have the following trichotomy for the forward evolution of \ref{KGalpha}
\begin{enumerate}
\item[(FTB)]  $ \vec u(t)$ blows up in finite positive time.  
\item[(GEB)]   $ \vec u(t)$ exists globally and the forward trajectory 
$\vec u(t)$ is bounded in $\mathcal{H}_{rad}$ uniformly in $t \geq 0$.
\item[(GEU)]   $\vec u(t)$ exists globally and is unbounded in forward time. 
 \end{enumerate}
 
 We shall later exclude the third possibility.  
 \begin{lemma}\label{lem:GEUseq} 
 Under the hypothesis ~\ref{H1alpha},  let $\vec u(t)$ be a global trajectory of~\ref{KGalpha}. And let $r>0$ be fixed. 
 There exist a sequence of times $t_n^*$ and a sequence of numbers 
 $\delta_n$, such that $t_n^* \rightarrow \infty$  
as $n \rightarrow  \infty$ and 
\begin{equation}
\label{K0n0}
\lim_{n \to  \infty} \int_{t_n^*-r}^{t_n^* +r}\| u_t(s)\|_{L^2}^2   ds =0 \qquad
\lim_{n \to  \infty} K_0(u(t_n^*)) \equiv \lim_{n \to  \infty} \delta_n =  0.
\end{equation}
\end{lemma}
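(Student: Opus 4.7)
The plan is to build $t_n^*$ via a mean-value argument based on the identity already derived in the proof of Lemma~\ref{eqBlowup}: setting $y(t)=\tfrac12\|u(t)\|_{L^2}^2$, one has $\dot y(t)=(u(t),u_t(t))$ and
\begin{equation*}
\ddot y(t) + 2\alpha(t)\dot y(t) = \|u_t(t)\|_{L^2}^2 - K_0(u(t)).
\end{equation*}
Rearranging and integrating on an interval $[\sigma_n^-,\sigma_n^+]$ yields
\begin{equation*}
\int_{\sigma_n^-}^{\sigma_n^+}K_0(u(\tau))\,d\tau = \int_{\sigma_n^-}^{\sigma_n^+}\|u_t\|_{L^2}^2\,d\tau - \bigl[\dot y(\sigma_n^+)-\dot y(\sigma_n^-)\bigr] - 2\int_{\sigma_n^-}^{\sigma_n^+}\alpha(\tau)\dot y(\tau)\,d\tau,
\end{equation*}
so the task reduces to choosing $\sigma_n^\pm$ that simultaneously kill all three terms on the right.

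First I apply Lemma~\ref{lem:suite1} with $r$ replaced by $3r$, obtaining times $s_n\to\infty$ with $\int_{s_n}^{s_n+3r}\|u_t\|_{L^2}^2\,d\tau\to 0$, together with two points $\sigma_n^-\in[s_n+r,s_n+3r/2)$ and $\sigma_n^+\in[s_n+3r/2,s_n+2r)$ at which $\|u_t(\sigma_n^\pm)\|_{L^2}\to 0$ (such points exist by the same Chebyshev estimate as in the proof of Lemma~\ref{lem:suite1}, applied to these two subintervals). Proposition~\ref{propL2} gives a uniform bound on $\|u(\tau)\|_{L^2}$, hence $|\dot y(\sigma_n^\pm)|\leq \|u(\sigma_n^\pm)\|_{L^2}\|u_t(\sigma_n^\pm)\|_{L^2}\to 0$, which kills the boundary term; and since $\alpha$ is bounded by~\ref{H1alpha}, Cauchy--Schwarz gives
\begin{equation*}
\Bigl|\int_{\sigma_n^-}^{\sigma_n^+}\alpha(\tau)\dot y(\tau)\,d\tau\Bigr| \lec \int_{\sigma_n^-}^{\sigma_n^+}\|u_t(\tau)\|_{L^2}\,d\tau \lec r^{1/2}\Bigl(\int_{\sigma_n^-}^{\sigma_n^+}\|u_t\|_{L^2}^2\,d\tau\Bigr)^{1/2}\to 0.
\end{equation*}

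Combining, $\int_{\sigma_n^-}^{\sigma_n^+}K_0(u(\tau))\,d\tau\to 0$, and since $\sigma_n^+-\sigma_n^-\geq r/2>0$ the corresponding average also tends to zero. Because $\vec u\in C^0([0,\infty),\mathcal{H}_{rad})$ and $K_0:H^1(\mathbb{R}^d)\to\mathbb{R}$ is continuous (the nonlinear term $\int\varphi f(\varphi)\,dx$ being continuous on $H^1$ by~\ref{H2f} and Sobolev embedding in the subcritical range), the map $\tau\mapsto K_0(u(\tau))$ is continuous, and the mean value theorem for integrals furnishes a point $t_n^*\in[\sigma_n^-,\sigma_n^+]$ at which $K_0(u(t_n^*))$ equals this vanishing average; set $\delta_n:=K_0(u(t_n^*))$. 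By construction $[t_n^*-r,t_n^*+r]\subset[s_n,s_n+3r]$, so the first limit in~\eqref{K0n0} is immediate from the integral bound on $\|u_t\|_{L^2}^2$ over $[s_n,s_n+3r]$. The only place I expect a minor obstacle is the interval bookkeeping: $[\sigma_n^-,\sigma_n^+]$ must sit sufficiently far inside the window of small $\|u_t\|_{L^2}^2$ integral so that the two-sided window of radius $r$ around $t_n^*$ remains inside it, which is why Lemma~\ref{lem:suite1} is applied with the inflated parameter $3r$ rather than $r$ itself.
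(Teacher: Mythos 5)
Your argument is built on the same ODE identity $\ddot y + 2\alpha(t)\dot y = \|u_t\|_{L^2}^2 - K_0(u)$ that the paper's proof uses, but you take a cleaner, more direct route through it. The paper runs a trichotomy: it rules out by contradiction that $K_0$ stays $\geq\delta$ throughout a window $I_n$, rules out by contradiction that $K_0$ stays $\leq-\delta$, and then uses the intermediate value theorem to locate a time where $K_0$ equals $\pm 1/p$ (or lies in $[-1/p,1/p]$). You instead integrate the identity once, show that each term on the right-hand side vanishes in the limit (using Lemma~\ref{lem:suite1}, the uniform $L^2$ bound from Proposition~\ref{propL2}, boundedness of $\alpha$ from monotonicity in \ref{H1alpha}, and Cauchy--Schwarz), conclude that the average of $K_0$ over $[\sigma_n^-,\sigma_n^+]$ tends to zero, and apply the mean value theorem for integrals to produce $t_n^*$ in a single step. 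In spirit this is the same mechanism the paper exploits implicitly inside its contradiction arguments, but your version packages it more efficiently and avoids the two-sided case analysis.

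There is one bookkeeping slip that you should repair. You choose $\sigma_n^-\in[s_n+r,\,s_n+3r/2)$ and $\sigma_n^+\in[s_n+3r/2,\,s_n+2r)$ and then assert $\sigma_n^+-\sigma_n^-\geq r/2$. That inequality does not follow from these memberships: the two half-open intervals abut at $s_n+3r/2$, so $\sigma_n^-$ may be arbitrarily close to $s_n+3r/2$ from below while $\sigma_n^+$ equals $s_n+3r/2$, making the gap arbitrarily small. Since passing from $\int_{\sigma_n^-}^{\sigma_n^+}K_0(u)\,d\tau\to 0$ to the average tending to zero requires $\sigma_n^+-\sigma_n^-$ to be bounded away from zero, you need to pick $\sigma_n^\pm$ from well-separated subintervals. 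For instance, take $\sigma_n^-\in[s_n+r,\,s_n+4r/3)$ and $\sigma_n^+\in[s_n+5r/3,\,s_n+2r)$, which forces $\sigma_n^+-\sigma_n^->r/3$ while still keeping $[t_n^*-r,\,t_n^*+r]\subset[s_n,\,s_n+3r]$ for $t_n^*\in[\sigma_n^-,\sigma_n^+]$. (This is precisely what the paper's choice of $\tilde t_n^0$ and $\tilde t_n^2$, separated by the full middle third of the window, accomplishes.) With that fix the proof is correct.
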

\begin{proof}
We will argue by contradiction. We assume for simplicity $r=1$.

1) By Lemma \ref{lem:suite1}, there exist sequences $t_n \to  \infty$, $\varepsilon_n \to 0$  ( $n\rightarrow \infty$), and three sequences  $\tilde{t}_{n}^{j} \in [t_n +j/3, t_n +(j+1)/3)$, $j=0,1,2$, such that
\begin{equation}
\label{eq:suitetnaux1}
\int_{t_n-1}^{t_n+2} \| u_t(s)\|_{L^2}^2   ds + \sum_{j=0}^3  \| u_t(\tilde{t}_{n}^{j})\|_{L^2}  \leq {\varepsilon}_n.
\end{equation}

  2) We argue by contradiction and assume first that there exist $\delta>0$ and an infinite number of intervals
$I_n = [t_n, t_n +1]$ so that  
 \begin{equation}
\label{eq:K0delta}
\forall t \in I_n, K_0(u(t)) \geq \delta.
\end{equation}
 By Proposition \ref{propL2}, the function
$y(s)= \| u(s)\|^2_{L^2}/2$ is bounded by a positive constant $M^2$. Using  
\eqref{eq:2.4} we get
 \begin{equation*}
\ddot y(t) = \|\dot{u}(t)\|_{L^2}^2 - K_0(u(t)) - 2 \alpha(t) (u(t),\dot{u}(t))  \leq  \frac{3}{2} \|\dot{u}(t)\|_{L^2}^2 
 - \delta +2M^2\alpha^2(t),
\end{equation*}
which implies by integration between  ${\tilde t}_{n}^{0}$ and $\tilde{t}_{n}^{2}$ 
  \begin{equation}
\label{eq:yK0delta1}
\dot y(\tilde{t}_{n}^{2}) - \dot y(\tilde{t}_{n}^{0}) \leq \frac{3}{2} \varepsilon_n -\delta/3 + 2M^2\max_{t_n\le t\le t_n+1}\alpha^2(t) \leq 
  -\delta/4.
\end{equation}
On the other hand, 
 \begin{equation}
\label{eq:yprimeK0delta1} 
  -\delta/4 \geq \dot y(\tilde{t}_{n}^{2}) - \dot y(\tilde{t}_{n}^{0}) =  (u(\tilde{t}_{n}^{2}),\dot{u}(\tilde{t}_{n}^{2})) 
- (u({\tilde{t}}_{n}^{0}),\dot{u}(\tilde{t}_{n}^{0}))  \geq - 2 M {\varepsilon}_n.
\end{equation} 
 But, for $n$ large enough $2 M \tilde{\varepsilon}_n < \delta/4$, which  lead to a contradiction and shows that~\eqref{eq:K0delta} cannot be true. 

 3) Assume next that there exist $\delta>0$ and an infinite number of intervals
$I_n $ so that  
 \begin{equation}
\label{eq:K0-delta}
\forall t \in I_n, K_0(u(t)) \leq  -\delta.
\end{equation}
In this case, there exists $n_0$  such that, for $n \geq n_0$, for any $t \in I_n$,
\begin{align}
\ddot y(t) &= \|\dot{u}(t)\|_{L^2}^2 - K_0(u(t)) -  2 \alpha(t) (u(t),\dot{u}(t))  \\
& \geq  \frac{1}{2} \|\dot{u}(t)\|_{L^2}^2 
+ \delta  -2M^2\max_{t_n\le t\le t_n+1}\alpha^2(t) \geq  \frac{3}{4} \delta,
\end{align}
which implies by integration between  $\tilde{t}_{n}^{0}$ and $\tilde{t}_{n}^{2}$ that, for $n \geq n_0$,
 \begin{equation}
\label{eq:yK0-delta2}
2 M \tilde{\varepsilon}_n \geq \dot y(\tilde{t}_{n}^{2}) - \dot y(\tilde{t}_{n}^{0}) \geq\frac{1}{4}\delta .
\end{equation}
Choosing $n$ large enough again leads to a contradiction. Hence, the property 
\eqref{eq:K0-delta} cannot be true on an infinite number of intervals $I_n$.

  4) The properties 2) and 3) imply that there exist a subsequence $n_{p,p} \to \infty$, two times $\check{t}_{n_p} \in I_{n_{p,p}}$ and $\hat{t}_{n_p} \in I_{n_{p,p}}$
so that
\begin{equation}
\label{eq:suitenpp}
K_0(u(\check{t}_{n_p})) \geq -\frac{1}{p}, \quad K_0(u(\hat{t}_{n_p})) \leq \frac{1}{p}.
\end{equation}
If 
$$
 -\frac{1}{p} \leq  K_0(u(\hat{t}_{n_p})) \leq \frac{1}{p},
 $$
 we may set $t_n^*= \hat{t}_{n_p}$ and the properties \eqref{K0n0} are  proved. 
 If $K_0(u(\hat{t}_{n_p})) <  -\frac{1}{p}$, then, by \eqref{eq:suitenpp} and the intermediate value theorem, there exists $t_n^* \in I_{n_{p,p}}$ so that $K_0(u(t_n^*))
 = -\frac{1}{p}$ and again the properties \eqref{K0n0} hold.  
\end{proof}

\begin{remark} \label{TrueNonradial}
We point out that the results of  Sections \ref{sub:preliminaire} and \ref{sub:K0utn} also hold in the non-radial case, whereas the convergence property of the next section uses the radial assumption.
\end{remark}

\subsection{Proof of Theorem~\ref{ThBRS1}} \label{sub:PreuveThBRS1}

This is  similar to the proof of~\cite[Theorem 3.3]{BRS1}. For the reader's convenience we present the argument. We recall that  the sequence $t_n^*$, $n \geq 0$, is given in Lemma \ref{lem:GEUseq} together with the constant $r>0$.
>From Lemma~\ref{SolBorne}, we conclude that
\begin{equation*}
\sup_{n\ge 0} \| (u(t_n^*),  \partial_t u (t_n^*)) \|_{\calH} <\I.
\end{equation*}
 We consider the equations
\begin{align}
\tag*{$(KG)_\alpha^n$}
\label{eq:kgn}
\begin{cases}
\partial_{tt} u_{n}(t) + 2 \alpha(t_n^* +t)  \partial_t u_{n}(t) -\Delta u_n(t) + u_n(t) -f(u_n(t))=0, \\
(u_n(0), \partial_t u_{n}(0))=(u(t_n^*), \partial_t u (t_n^*)).
\end{cases}
\end{align}
By wellposedness (see Theorem \ref{thm:LocalExist}),  there exist $T>0$ and $C>0$  such that, for any $n$,  the solution $(u_n(t), \partial_t u_{n}(t))$ exists on $[-T,T]$ and, for $-T\leq t\leq T$,
\begin{align}
\label{eq:10'}
\|(u_n(t), \partial_t u_{n}(t))\|_{\mathcal{H}}\leq C.
\end{align}
Without loss of generality, we may choose $0 <T \leq r$.

In dimensions $d=1$ or $d=2$,  \eqref{eq:10'} implies that 
$\|u_n\|_{L^{\infty}((-T,T), L^p( \mathbb{R}^d))}\leq C$, for all $2 \leq p <\infty$. For dimensions 
$3 \leq d \leq 6$, the  Strichartz estimates give 
\begin{equation}
\label{eq:Strichartz3.6}
 \| u_n\|_{L^{\theta^*}((0,T), L^{2\theta^*}(\R^d))}\leq C,
\end{equation}
where $\theta^* = \frac{d+2}{d-2}$. 
By uniqueness,  $u_n(t)= u(t_n^*+t)$.
For any $s,t\in[-T,T]$, we have the following  time equicontinuity
\begin{multline*}
\int_{\mathbb{R}^d}|u_n(t)-u_n(s)|^2   dx=\int_{\mathbb{R}^d}\left|\int_s^t \p_{t}u_{n}(\sigma)d\sigma\right|^2   dx\\
\leq|t-s|\int_{\mathbb{R}^d}\int_s^t|\p_{t}u_{n}(\sigma)|^2d\sigma   dx
\leq |t-s|\int_{s+t_n^*}^{t+t_n^*}\| \partial_t u (\sigma)\|_{L^2}^2   d\sigma,
\end{multline*}
and thus, by Lemma \ref{lem:GEUseq}, 
\begin{align}
\label{eq:11}
\|u_n(t)-u_n(s)\|_{L^2}^2& \leq |t-s|\int_{s+ t_n^*}^{t+ t_n^*}\| \partial_t u (\sigma)\|_{L^2}^2   d\sigma\\
&\leq 2r \int_{t_n-r}^{t_n+r}\| \partial_t u (\sigma)\|_{L^2}^2   d\sigma\longrightarrow0, \quad\text{ as }n\to+\infty.
\end{align}
For $s,t\in[-T,T]$, and any fixed $p\in (2,2^{*})$, we deduce from the above inequality, by using an interpolation argument, that there exist $b \equiv b(p) \in (0,1)$ and a uniform constant $\tilde{C}>0$ so that, 
\begin{equation}
\label{eq:12'}
\begin{split}
\|u_n(t)-u_n(s)\|_{L^p}&\leq \|u_n(t)-u_n(s)\|_{L^{2^{*}}}^{b}
\|u_n(t)-u_n(s)\|_{L^2}^{1- b}\\
&\leq \tilde{C} |t-s|^{\frac{1- b}{2}}\bigl(\int_{t_n^*-r}^{t_n^*+r}\| \partial_t u (\sigma)\|_{L^2}^2   d\sigma\bigr)^{\frac{1-b}{2}}
\rightarrow 0  \text{ as }
n \to +\infty.
\end{split}
\end{equation}
We choose  $2<p_{0}<p_{1}<2^{*}$ (to be fixed later) and define  $\mathcal{X}=  \mathcal{X}_{p_0,p_1} \equiv L^{p_{0}}(\R^{d})\cap L^{p_{1}}(\R^{d})$. 
We consider the family of functions $(u_n(t))_n$ in $C^0([-T,T];\mathcal{X})$. By the property \eqref{eq:10'}, we know that 
$(u_n)$ is bounded in $C^0 [-T, T]; H^1 \cap C^1[ -T,T]; L^2$, hence also  in $C^\alpha[-T,T]; H^{1- \alpha}$ and for any $2<p< 2^*$ in $C^\alpha(p) (-T, T]; L^p$. 
Due to the compact embedding of $H^1_{rad}(\mathbb{R}^d)$ into $L^q$, $2<q<2^*$, we deduce that 
the sequence $(u_n)$ is  equicontinuous in $C^0([-T,T];\mathcal{K})$, where $\mathcal(K)$ is a compact subset of $\mathcal{X}$. 
Thus, by the Ascoli's theorem, (after possibly extracting a subsequence) the sequence $(u_n)$ converges in $C^0([-T,T];\mathcal{X})$ to a function 
$u^*$ which according to~\eqref{eq:11} is constant on $[-T,T]$.  
We have proved 
\begin{itemize}
\item $u_n(t)\to u^*$ as $n\to+\infty$ in $C^0([-T,T];\mathcal{X})$ and $u^* :=  u^*(t)$
\item $\p_{t}u_{n}(t)\to0$ as $n\to+\infty$ in $L^2((-T,T);L^2(\mathbb{R}^d))$
\item $(u_n(t),\p_{t}u_{n}(t))_n$ is uniformly bounded in $L^\infty((-T,T);\mathcal{H})$ and, in particular in $L^2((-T,T);\mathcal{H})$.
\end{itemize}
 To pass from these weak convergences to strong convergences in 
 $$C([-T,T]; H^1) \cap C^1([-T,T]; L^2( \mathbb{R}^d)), $$
 we are first going to show that $u^*$ is an equilibrium point of the (undamped) Klein Gordon equation. To pass to the limit in the equation~\eqref{KGalpha} satisfied by $u_n$  it is enough to show that we can pass to the limit in the non linearity (all the linear terms passing easily to the limit in the distribution sense).  The argument here is that we can combine the slack from,  {\em sub criticality} of the non linearity with the compact embedding $H^1_{\text{radial}} \rightarrow L^p, 2< p <2^*$. 

\begin{lemma}
We can choose  $p_{0}, p_{1}$ sufficiently close to $2, 2^{*}$, respectively (depending on~\ref{H2f}) so that 

\begin{equation}\label{eq:funu^{*}}
\lim_{n\rightarrow + \infty}\sup_{t\in [-T,T]} \int_{\R^{d}}| f(u_{n}(t))u_{n}(t)-f(u^{*})u^{*} |    dx =0.
\end{equation}
\end{lemma}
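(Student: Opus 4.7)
The plan is to derive a pointwise estimate on $|f(u_n)u_n - f(u^*)u^*|$ from the growth hypothesis \ref{H2f}, then apply H\"older's inequality with two carefully chosen conjugate pairs, and finally invoke Sobolev embedding together with the uniform $H^1$ bound on $u_n$ and $u^*$ to conclude.

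First I would set $g(y) := y f(y)$. Since $g'(y) = f(y) + y f'(y)$ and integration of the bound $|f'(y)| \leq C(|y|^\beta + |y|^{\theta-1})$ yields $|f(y)| \leq C(|y|^{\beta+1} + |y|^\theta)$, we get $|g'(y)| \leq C(|y|^{\beta+1} + |y|^\theta)$. Applying the mean value theorem to $g$ on the segment from $u^*(x)$ to $u_n(t,x)$ gives the pointwise bound
$$ |f(u_n) u_n - f(u^*) u^*| \leq C |u_n - u^*| \bigl( (|u_n| + |u^*|)^{\beta+1} + (|u_n| + |u^*|)^{\theta} \bigr). $$
Integrating and applying H\"older with two conjugate pairs $(p_0, p_0')$ and $(p_1, p_1')$ produces
\begin{equation*}
\int_{\R^d} |f(u_n)u_n - f(u^*)u^*|\,dx \leq C \|u_n - u^*\|_{L^{p_0}} B_0(t) + C \|u_n - u^*\|_{L^{p_1}} B_1(t),
\end{equation*}
where $B_0(t) = \| |u_n| + |u^*| \|_{L^{(\beta+1)p_0'}}^{\beta+1}$ and $B_1(t) = \| |u_n| + |u^*| \|_{L^{\theta p_1'}}^{\theta}$. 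Provided the exponents $(\beta+1)p_0'$ and $\theta p_1'$ lie in $[2, 2^*]$, the Sobolev embedding $H^1(\R^d) \hookrightarrow L^p(\R^d)$ together with the uniform bound $\|(u_n, \partial_t u_n)\|_{L^\infty_t \mathcal{H}} \leq C$ (and the analogous bound for the weak limit $u^*$) give $\sup_{t \in [-T,T]} (B_0(t) + B_1(t)) \leq C$. Since the construction preceding the lemma yields $u_n \to u^*$ in $C^0([-T,T]; L^{p_0} \cap L^{p_1})$, each factor $\|u_n(t) - u^*\|_{L^{p_j}}$ tends to $0$ uniformly in $t \in [-T,T]$, which gives the conclusion.

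The main obstacle is to justify that $p_0 > 2$ and $p_1 < 2^*$ can indeed be chosen so that these exponent constraints hold; this is where the $L^{p_0} \cap L^{p_1}$ structure of $\mathcal{X}$ is fine-tuned. Taking $p_0$ slightly above $2$, the exponent $(\beta+1)p_0'$ is slightly below $2(\beta+1)$; the assumption $\beta < 2/d$ forces $2(\beta+1) < 2(1 + 2/d) \leq 2d/(d-2) = 2^*$, so the upper bound $(\beta+1)p_0' \leq 2^*$ holds, while the lower bound $(\beta+1)p_0' \geq 2$ amounts to requiring $p_0 \in (2, 2/(1-\beta)]$, a nonempty range since $\beta > 0$. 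For $p_1$ close to $2^*$, the upper bound $\theta p_1' \leq 2^*$ is precisely the strict subcriticality $\theta < \theta^* = 2^* - 1$, while the lower bound $\theta p_1' \geq 2$ is automatic if $\theta \geq 2$ and otherwise is secured by further restricting to $p_1 \leq 2/(2-\theta)$, which is compatible with the upper constraint since $2^*/(2^*-\theta) \leq 2/(2-\theta)$ reduces to the trivial $2 \leq 2^*$.
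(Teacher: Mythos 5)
Your proposal is correct and takes essentially the same approach as the paper: a pointwise mean-value bound using the growth condition $(H.2)_f$, followed by H\"older, Sobolev embedding, and the uniform $H^1$ bounds, with the $L^{p_0}\cap L^{p_1}$ convergence supplying the smallness. The only difference is cosmetic — you carry out the exponent bookkeeping more explicitly than the paper, which merely states the constraint $2<p_0\le\beta+2<\theta+2\le p_1<2^*$ (note the paper's $\theta+2$ is most naturally read as $\theta+1$ given the symmetric H\"older split $\int|u_n-u^*|\,|u_n|^\theta\le\|u_n-u^*\|_{L^{\theta+1}}\|u_n\|_{L^{\theta+1}}^\theta$, and for small $\theta$ the admissible $p_1$ is in fact bounded away from $2^*$, so the ``sufficiently close'' phrasing in the lemma is slightly loose).
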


\begin{proof}
Using Hypothesis \ref{H2f} and the H\"older inequality, we see that for $t \in [-T,T]$, 
\begin{multline}
\label{eq:majfunfu*}
|\int_{\R^{d}}(f(u^{*})u^{*} - f(u_{n}(t))u_{n}) (t)   dx | \\
\leq c \int_{\R^{d}} |u^{*}-u_{n}(t)| (|u_{n}(t)|^{\theta} + 
|u_{n}(t)|^{\beta +1} +  |u^{*}|^{\theta} +   |u^{*}|^{\beta + 1} ) 
   dx,
\end{multline}
and the result follows from the choice 
$$ 2< p_0 \leq \beta+2< \theta+2\leq p_1 <2^*,$$ and the convergence of $u_n$ to $u^*$ in $C([-T,T] ; L^{p_0} \cap L^{p_1})$.

\end{proof}
Since $K_0 (u_n)(0) \rightarrow 0$, $n\rightarrow + \infty$, we get 
\begin{multline}
\label{eq:13'}
\lim_{n\to+\infty}\|u_n(0)\|_{H^1}^2= \lim_{n\rightarrow + \infty}\int_{\mathbb{R}^d} f(u_n(0))u_n(0)\\
= \int_{\mathbb{R}^d} f(u^{*})u^{*}   dx=\int_{\mathbb{R}^d}(|\nabla u^*|^2+(u^*)^2)   dx = \|u^*\|_{H^1}^2,
\end{multline}
where in the third equality we used that $u^*$ is an equilibrium of \ref{KGalpha}. 

On the other hand, the sequence  $(u_n(0))$ converges to $u^*$ in $\mathcal{X}$, hence weakly in $H^1$. From ~\eqref{eq:13'} we deduce that the $H^1$ convergence is actually strong. We also have 
\begin{multline}
\label{eq4.28}u(t)= u(0) + \int_0^s \partial_s u(s)   ds \\
\Rightarrow \| u_n(t)- u^*\|_{L^\infty(-T,T); L^2(\R^d)}
\leq  \| u_n(0)- u^*\|_{L^2(\R^d)} + \| \partial_s u\|_{L^1((-T,T); L^2(\R^d))} \rightarrow_{n\rightarrow + \infty}0 .
\end{multline}

%
To finish the proof of the theorem it remains to show that
\begin{equation}\label{ii}
\p_{t}u_n(0)\to 0 \text{ in }L^{2}(\R^{d}).
\end{equation}
The difference  $\tilde u_{n}:=u_n-u^*$  satisfies 
\begin{align}
\label{eq:18}
\begin{cases}
\partial_{tt} \tilde u_n -\Delta \tilde u_n+\tilde u_n =f(u_n)-f(u^{*})-2\alpha(t + t_n^*)\partial_t \tilde u_n\\
\tilde u_n(0)=u_n(0)-u^*\to0\quad\text{ as }n\to+\infty\quad\text{ in }H^1(\mathbb{R}^d)\\
\partial_t \tilde u_n(0)=\partial_t u_{n}(0).
\end{cases}
\end{align}
One has  $u_n-u^*=w_n+v_n$ where $w_n$ and $v_n$ are solutions of the   equations
\begin{equation}
\label{eq:19}
\begin{cases}
\partial_{tt} w_{n}-\Delta w_n+w_n=f(u_n)-f(u^{*})-2\alpha(t+ t_n^*) \partial_t u_n \\
w_n(0)=u_n(0)-u^*, \qquad \partial_t w_{n}(0)=0,
\end{cases}
\end{equation}
and
\begin{align}
\label{eq:20}
\begin{cases}
\partial_{tt} v_{n}-\Delta v_n+v_n=0\\
v_n(0)=0, \qquad \partial_t v_{n}(0)= \partial_t u_{n}(0).
\end{cases}
\end{align}
By energy estimates  for all $-T\leq t\leq T$,
\begin{multline} 
\label{eq:21}
\|(w_n, \p_{t}w_{n})(t)\|_{\calH}\leq C\big[ \|u_n(0)-u^*\|_{H^1}+C\sqrt{2T}\left(\int_{-T}^T\|\partial_t u_{n}(s)\|_{L^2}^2   ds\right)^{\frac{1}{2}}\\
+\int_{-T}^T\|f(u_n)(s)-f(u^{*})\|_{L^2}   ds\big],  
\end{multline}
and since the two first terms in the r.h.s. of~\eqref{eq:21} tend to $0$, the following lemma allows to conclude that 
\begin{equation}
\label{eq.35}
lim_{n\rightarrow + \infty} \|(w_n, \p_{t}w_{n})(t)\|_{L^\infty((-T,T);\calH)}=0.
\end{equation}

\begin{lemma}
We have 
$$ \lim_{n\rightarrow + \infty} \|  f(u_n)(s)-f(u^{*}\|_{L^1((-T,T); L^2( \mathbb{R}^d))} =0.
$$
\end{lemma}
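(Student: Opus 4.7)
The plan is to split the nonlinearity into a subdominant piece with growth $|y|^\beta$ and a dominant piece with growth $|y|^{\theta-1}$, estimate each in $L^2_x$ by H\"older's inequality using \ref{H2f}, and pass to the limit by combining the spatial compactness already produced by the radial embedding $H^1_{rad}\hookrightarrow L^p(\R^d)$, $p\in(2,2^*)$, with the global space--time Strichartz bound for $u_n$ proved in Proposition~\ref{prop:StirchKGalphalin}.

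First, by the mean value theorem applied to~\ref{H2f},
$$
|f(u_n)-f(u^*)|\leq C|u_n-u^*|\bigl(|u_n|^\beta+|u^*|^\beta+|u_n|^{\theta-1}+|u^*|^{\theta-1}\bigr).
$$
H\"older in space then bounds $\|f(u_n)-f(u^*)\|_{L^2_x}$ by finitely many terms of the form $\|u_n-u^*\|_{L^{a_1}_x}\|u_n\|_{L^{\kappa a_2}_x}^{\kappa}$ with $\tfrac1{a_1}+\tfrac1{a_2}=\tfrac12$ and $\kappa\in\{\beta,\theta-1\}$ (plus the analogous ones involving $u^*$).

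For the subdominant term ($\kappa=\beta$) I take $a_1=a_2=2(\beta+1)$. The standing hypothesis $\beta<2/d$ ensures $2(\beta+1)<2^*$, so Sobolev embedding together with the uniform bound $\|\vec u_n\|_{L^\infty_t\calH}\leq C$ controls $\|u_n\|_{L^{2(\beta+1)}_x}^\beta$, while the already established convergence of $u_n$ to $u^*$ in $C^0([-T,T];L^{p_0}\cap L^{p_1})$ (with $p_0<2(\beta+1)<p_1$) gives $\|u_n-u^*\|_{L^\infty_t L^{2(\beta+1)}_x}\to 0$ by interpolation. Integrating in $t$ on $[-T,T]$ yields a vanishing contribution to the $L^1_t L^2_x$ norm.

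For the dominant term ($\kappa=\theta-1$) I choose $a_2=2\theta^*$, so that $a_1=2\theta^*/(\theta^*-\theta+1)$. Applying H\"older in time with exponents $\tfrac1{r_1}+\tfrac{\theta-1}{r_2}=1$ and $r_2=\theta^*$, the factor $\|u_n\|_{L^{r_2}_t L^{2\theta^*}_x}^{\theta-1}$ is uniformly bounded by the Strichartz estimate of Proposition~\ref{prop:StirchKGalphalin}. What remains is to show $\|u_n-u^*\|_{L^{r_1}_t L^{a_1}_x}\to 0$ with $r_1=\theta^*/(\theta^*-\theta+1)$. When $a_1\leq p_1$ (the case of moderate $\theta$) this is immediate from the $C^0_t L^{p_1}_x$ convergence and the compactness of the interval $[-T,T]$. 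When $a_1>p_1$ (which arises for $\theta$ close to $\theta^*$, typically for $d=3$ and $\theta$ near $5$), I interpolate
$$
\|u_n-u^*\|_{L^{a_1}_x}\leq\|u_n-u^*\|_{L^{p_1}_x}^{\mu}\,\|u_n-u^*\|_{L^{2\theta^*}_x}^{1-\mu},\qquad \mu\in(0,1),
$$
take $L^{r_1}_t$-norms, and apply H\"older in time once more, using Strichartz to bound $u_n$ in $L^{\theta^*}_t L^{2\theta^*}_x$ and the elliptic regularity of the equilibrium to bound $u^*$ in $L^\infty_t L^{2\theta^*}_x$. The factor $\|u_n-u^*\|_{L^\infty_t L^{p_1}_x}^\mu$ then drives the expression to zero.

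The main obstacle is precisely this last case: for $\theta$ close to $\theta^*$ no Sobolev embedding alone can dominate $\|u_n-u^*\|_{L^{a_1}_x}$ because $a_1$ exceeds $2^*$. The remedy is to combine the spatial compactness of the radial embedding (providing strong convergence in $L^p_x$ for $p<2^*$) with the Strichartz bound in $L^{\theta^*}_tL^{2\theta^*}_x$. All the exponent checks (admissibility of $\mu\in(0,1)$, finiteness of $r_1$, admissibility of the chosen $(r_2,2\theta^*)$) reduce to $\theta<\theta^*$, which is built into~\ref{H2f}.
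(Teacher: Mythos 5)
Your proof is correct and uses essentially the same ingredients as the paper's: the mean-value/H\"older decomposition from~\ref{H2f}, the strong $C^0_t L^2_x$ convergence of $u_n$ to $u^*$, the Strichartz control of $u_n$ in $L^{\theta^*}_t L^{2\theta^*}_x$, and interpolation. The paper streamlines the bookkeeping by bounding $|u|^\beta\le 1+|u|^{\theta-1}$ (so the subdominant terms never need a separate estimate), handling $|u^*|^{\theta-1}$ directly via $u^*\in L^\infty$, and placing both factors in $L^{2\theta}_x$ interpolated between $L^2_x$ and $L^{2\theta^*}_x$ with weight $\eta_1/\theta$, which avoids your case split on whether $a_1$ exceeds $2^*$.
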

\begin{proof} In view of Hypothesis \ref{H2f} and the fact that $0 < \beta < \theta -1$, one has,
\begin{multline}
\label{eq:32}
\int_{-T}^T\| f(u_n)(s)-f(u^{*})\|_{L^2}   ds\\
\le C \int_{-T}^T\| |u_n(s)-u^{*}|(|u_{n}|^{\beta}  + |u^{*}|^{\beta} + |u_{n}|^{\theta-1}  + |u^{*}|^{\theta-1} ) \|_{L^2}  ds  \\
\le 2C \int_{-T}^T\| |u_n(s)-u^{*}|(1 + |u_{n}|^{\theta-1}  + |u^{*}|^{\theta-1} ) \|_{L^2}   ds  \\
\le 2C \big[ \| u_n- u^{*}\|_{L^{\infty}((-T,T),L^2( \mathbb{R}^d))} +  \int_{-T}^T\| |u_n(s)-u^{*}|( |u_{n}|^{\theta-1}  + |u^{*}|^{\theta-1} ) \|_{L^2(\R^d)}  ds.
 \end{multline}
Since $u_{n}\to u^{*}$ in $C^{0}(I,L^2(\R^{d}))$,  the first term on the right-hand side of~\eqref{eq:32}
vanishes in the limit $n\to\I$. Since, by elliptic regularity, $u^{*}$ is uniformly bounded in $L^{\infty}(\R^d)$, using~\eqref{eq4.28}, we have 
\begin{multline}
\label{eq:thetaLinfiniAux}
 \int_{-T}^T \| (u_n-u^{*}) |u^{*}|^{\theta-1} ) \|_{L^2}   ds  \leq 2T 
\| u_n-u^{*} \|_{L^{\infty}(L^2( \mathbb{R}^d))} \|u^{*} \|^{\theta-1}_{L^{\infty}(L^{\infty})} \\
\leq 
C \| u_n-u^{*} \|_{L^{\infty}(L^2( \mathbb{R}^d))} \rightarrow 0, (n \rightarrow + \infty).
\end{multline}
To bound the remaining term in~\eqref{eq:32}, we argue as in the proof of Theorem  \ref{thm:LocalExist}, by using the H\"{o}lder and Strichartz inequalities (see the estimates \eqref{eq:Strich2}), and we obtain
\begin{multline}
\label{eq:Xpq}
 \int_{-T}^T \|(u_n(s)-u^{*})|u_{n}(s)|^{\theta-1}\|_{L^2}   ds \leq   \| u_n(s)\|_{L^{\infty}(I,L^2( \mathbb{R}^d))}^{\frac{(\theta -1)\eta_1}{\theta}} 
 \| u_n(s)-u^{*}\|_{L^{\infty}(I,L^2( \mathbb{R}^d))}^{\frac{\eta_1}{\theta}} \\
 \times (2T)^{\eta_1}\big( \int_{-T}^{T} \| u_n(s)- u^{*} \|_{L^{2\theta^*}}^{\theta^*}    ds\big)^{\frac{1-\eta_1}{\theta}} 
  \times \big( \int_{-T}^{T} \| u_n(s) \|_{L^{2\theta^*}}^{\theta^*}    ds\big)^{\frac{(\theta -1)(1-\eta_1)}{\theta}},
   \end{multline}
 where  $\eta_1 = \frac{d+2 -\theta (d-2)}{4}$. The right-hand side of the 
inequality \eqref{eq:Xpq} tends to $0$ as $n$ goes to infinity. 
\end{proof}

By construction,  $v_n=(u_n-u^*)-w_n$ and, in particular, $\partial_t  v_{n}=\partial_t  u_{n}- \partial_t  w_{n}$. From \eqref{eq:tn1} and~\eqref{eq.35}, we infer that
\begin{equation}
\label{eq:24}
\lim_{n\rightarrow+ \infty} \|\partial_t  v_{n}\|_{L^2((-T,T);L^2(\mathbb{R}^d))} = 0.
\end{equation}

To conclude the proof of Theorem~\ref{ThBRS1}, we turn this   $L^{2}_{t}$ averaged estimate into a uniform estimate with
 
\begin{lemma}[ \protect{\cite[Lemma~3.4]{BRS1}}]\label{lem:obs}
For any $T_0>0$, there exists a positive constant $c(T_0)>0$, independent of $n$, such that for any solution of the{\em linear} Klein Gordon equation~\eqref{eq:20},
\begin{align}
\label{eq:25}
\|\partial_t  v_{n}(0)\|_{L^2(\mathbb{R}^d)}^2\leq c(T_0)\int_{-T_0}^{T_0}
\int_{\mathbb{R}^d}|\partial_t  v_{n}|^2   dxds.
\end{align}
\end{lemma}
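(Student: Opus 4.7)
The key preliminary observation is that $\psi_n := \partial_t v_n$ is itself a solution of the free Klein-Gordon equation (differentiate~\eqref{eq:20} in time), with Cauchy data
\[
\psi_n(0) = \partial_t v_n(0) = \partial_t u_n(0), \qquad \partial_t \psi_n(0) = \partial_{tt} v_n(0) = (\Delta - I) v_n(0) = 0,
\]
since $v_n(0) = 0$. Hence~\eqref{eq:25} is equivalent to the following observability statement: for every $T_0 > 0$ there exists $c(T_0) > 0$ such that every free Klein-Gordon solution $\psi$ with $\partial_t \psi(0) = 0$ satisfies $\|\psi(0)\|_{L^2}^2 \leq c(T_0) \int_{-T_0}^{T_0} \|\psi(t)\|_{L^2}^2 \, dt$.

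The plan is to prove this by an explicit Fourier computation in $x$. Setting $\omega(\xi) := \sqrt{|\xi|^2 + 1}$, the Fourier transform of the free Klein-Gordon equation diagonalizes the evolution, so that $\widehat{\psi}(t, \xi) = \widehat{\psi(0)}(\xi) \cos(\omega(\xi) t)$ (the $\sin$ term drops thanks to $\partial_t \psi(0) = 0$). Plancherel's identity then yields
\[
\int_{-T_0}^{T_0} \|\psi(t)\|_{L^2}^2 \, dt = \int_{\R^d} |\widehat{\psi(0)}(\xi)|^2 \Big( T_0 + \frac{\sin(2\omega(\xi) T_0)}{2\omega(\xi)} \Big)  d\xi.
\]
Writing the weight as $T_0 (1 + \mathrm{sinc}(2\omega(\xi) T_0))$ with $\mathrm{sinc}(y) = \sin(y)/y$, it remains to bound it from below uniformly in $\xi$; one more application of Plancherel to $\|\widehat{\psi(0)}\|_{L^2}^2 = \|\psi(0)\|_{L^2}^2$ then produces~\eqref{eq:25}.

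The uniform lower bound rests on the elementary inequality $\inf_{y \in \R}(1 + \mathrm{sinc}(y)) > 0$. Indeed, $\mathrm{sinc}$ is continuous, has value $1$ at $0$, tends to $0$ at infinity, and attains its negative infimum $\approx -0.217$ at the first positive solution of $\tan y = y$, namely $y_0 \approx 4.49$. Therefore $1 + \mathrm{sinc}(y) \geq c_0 > 0$ for some universal constant $c_0$, which gives the bound $T_0 + \sin(2\omega(\xi) T_0)/(2\omega(\xi)) \geq c_0 T_0$ valid for all $\xi \in \R^d$. The resulting constant is $c(T_0) = 1/(c_0 T_0)$.

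This Fourier approach is entirely elementary; its main virtue is that it handles the lack of spatial compactness on $\R^d$ with no extra machinery. A compactness-uniqueness argument is conceivable but would require a profile decomposition to deal with the non-compactness of the embedding $L^2 \hookrightarrow L^2$ on $\R^d$, so I view the Fourier route as distinctly preferable, and the only nontrivial input, the positivity of $1 + \mathrm{sinc}$, as the heart of the matter.
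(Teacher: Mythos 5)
Your proof is correct, and the reduction is clean: since $v_n(0)=0$, the function $\psi_n=\partial_t v_n$ solves the free Klein--Gordon equation with $\partial_t\psi_n(0)=(\Delta-I)v_n(0)=0$, so the spatial Fourier transform gives $\widehat{\psi_n}(t,\xi)=\widehat{\psi_n(0)}(\xi)\cos(\omega(\xi)t)$ and Plancherel reduces~\eqref{eq:25} to the pointwise lower bound $T_0\bigl(1+\operatorname{sinc}(2\omega(\xi)T_0)\bigr)\ge c_0T_0$, which holds because $\inf_{y>0}\operatorname{sinc}(y)\approx -0.217>-1$. The present paper does not reproduce a proof (it defers to \cite[Lemma~3.4]{BRS1}), but the Fourier/Plancherel computation you give is the natural one for the free flow on $\R^d$ and is, to the best of my reading, the same route taken there; your identification of the strict positivity of $1+\operatorname{sinc}$ as the single nontrivial input is exactly right.
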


\subsection{Trapping the trajectory near an equilibrium for a long time}

The following lemma is the key mechanism by which a global trajectory will be trapped near an equilibrium.  This lemma requires nothing on
the dissipation other than $\alpha\ge0$. So it also holds in the conservative case. 

\begin{lemma}
\label{lem-key}
Let $\vec u$ be a global trajectory and suppose that for some  equilibrium $Q$ and for some time 
$t_0$  $$ \| \vec u(t_0) - (Q,0) \|_{\calH} <\rho,$$
where $0<\rho<1$ is arbitrary but fixed. There exists a small $\delta_0=\delta_0(\rho,d,Q,f)$ (independent of $\alpha$) with the following property if 
$\| u(t) - Q\|_{L^2} \leq \delta_0$ for all $t\in [ t_0, t_1]$ where $t_1\ge t_0$ is arbitrary, then 
\begin{equation}\label{eq:uQHclose}
\| \vec u(t) - (Q,0) \|_{\calH} \leq 2\rho  ,
\end{equation}
for all $t\in [ t_0, t_1] $. 
\end{lemma}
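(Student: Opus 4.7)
Since no dissipation rate enters, any bound we derive will automatically be independent of $\alpha(\cdot)$; the entire argument rests on the monotonicity~\eqref{Et1t2} of the energy combined with the spectral decomposition of the linearized operator around~$Q$. Setting $v = u - Q$ and using that $Q$ solves $-\Delta Q + Q - f(Q) = 0$, we start from the Taylor expansion
\[
E(\vec u) - E(Q, 0) = \tfrac12 \|u_t\|_{L^2}^2 + \tfrac12 \langle \calL v, v\rangle_{L^2} - \mathcal{N}(v),
\]
with $\calL = -\Delta + 1 - f'(Q)$ as in Proposition~\ref{lem:Lspec} and nonlinear remainder $\mathcal{N}(v) = \int (F(Q+v) - F(Q) - f(Q)v - \tfrac12 f'(Q)v^2)\,dx$ obeying $|\mathcal{N}(v)| \leq C(Q, f)(\|v\|_{H^1}^{2+\beta} + \|v\|_{H^1}^{1+\theta})$ thanks to~\ref{H2f}, Sobolev embedding, and $Q \in L^\infty$ (from elliptic regularity).

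Next, I would use Proposition~\ref{lem:Lspec} on $L^2_{\mathrm{rad}}$ to split $v = v_- + v_+$, where $v_-$ is the $L^2$-orthogonal projection $\Pi_- v$ onto the finite-dimensional span $V_-$ of the eigenfunctions of $\calL$ with eigenvalue $\leq 1/2$, and $v_+$ lies in its $L^2$-complement. Three facts then follow: (i)~$\langle \calL w, w\rangle \geq c_0 \|w\|_{H^1}^2$ for every $w \in V_+ \cap H^1$ and some $c_0 > 0$; (ii)~because $V_-$ is finite dimensional with smooth basis, $\Pi_-$ is bounded from $L^2$ to $V_- \cap H^1$, so $\|v_-\|_{H^1} \leq C_- \|v\|_{L^2} \leq C_- \delta_0$; (iii)~there is no cross term in $\langle \calL v, v\rangle$ since $v_-, v_+$ belong to $L^2$-orthogonal eigenspaces.

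Set $y(t) = \|\vec u(t) - (Q, 0)\|_{\calH}^2$. The dissipation identity~\eqref{Et1t2} together with the initial smallness yields $E(\vec u(t)) - E(Q, 0) \leq E(\vec u(t_0)) - E(Q, 0) \leq C_1 \rho^2$ for all $t \in [t_0, t_1]$. Substituting the spectral splitting into the energy expansion and using (i)--(iii) and the bound on $\mathcal{N}$, one obtains
\[
y(t) \leq K\bigl(\rho^2 + \delta_0^2\bigr) + K\bigl(y(t)^{(2+\beta)/2} + y(t)^{(1+\theta)/2}\bigr), \qquad t \in [t_0, t_1],
\]
for a constant $K = K(c_0, C_-, Q, f)$ independent of $\alpha$. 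A continuity/bootstrap argument closes the proof: set $t^* = \sup\{ t \in [t_0, t_1] : y(s) \leq 4\rho^2 \text{ for all } s \in [t_0, t]\}$. Since $y(t_0) < \rho^2 < 4\rho^2$ and $y$ is continuous, $t^* > t_0$; if $t^* < t_1$, then $y(t^*) = 4\rho^2$, so $4\rho^2 \leq K\rho^2 + K\delta_0^2 + K(4\rho^2)^{(2+\beta)/2} + K(4\rho^2)^{(1+\theta)/2}$. Since $\beta > 0$ and $\theta > 1$, the last two summands are $o(\rho^2)$ as $\rho \to 0$, and shrinking $\delta_0 = \delta_0(\rho, Q, f)$ absorbs $K\delta_0^2$, producing a contradiction.

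\emph{Main obstacle.} The most delicate point is obtaining the specific constant~$2$ in the conclusion: this requires the bootstrap constant $K$ to satisfy $K < 4$, which may force the use of a modified Lyapunov functional of the form $E(\vec u) + \mu\|u - Q\|_{L^2}^2$ (monotone up to $\delta_0$-small errors) to sharpen the coercivity of the quadratic part, or a preliminary restriction $\rho < \rho_0(Q, f)$ absorbed into the statement. The crucial feature, namely that $\delta_0$ depends only on $(\rho, d, Q, f)$ and not on $\alpha$, is automatic, since only~\eqref{Et1t2} and the spectral information from Proposition~\ref{lem:Lspec} are invoked.
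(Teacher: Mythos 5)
Your proposal does not close, and you have already put your finger on the obstruction: the bootstrap inequality
\[
y(t) \leq K(\rho^2 + \delta_0^2) + K\bigl(y(t)^{(2+\beta)/2} + y(t)^{(1+\theta)/2}\bigr)
\]
cannot yield $y(t)\le 4\rho^2$ unless $K<4$, and the constant $K$ you produce is built out of the inverse spectral gap of $\calL$ on $V_+$, the $H^1$ bound of the finite-rank projection $\Pi_-$, and the energy-to-$\rho^2$ comparison constant; none of these is small, nor can it be arranged to be. Moreover, even granting a large admissible constant $C\rho$ in place of $2\rho$, the nonlinear terms $Ky^{1+\beta/2}$ have no $\delta_0$ prefactor, so the continuity argument only closes for $\rho$ small depending on $K$, whereas the lemma allows arbitrary $\rho\in(0,1)$. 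Your suggested repair, replacing $E$ by $E + \mu\|u-Q\|_{L^2}^2$, would regain coercivity of the Hessian but simultaneously destroys the monotonicity you need, and controlling $\frac{d}{dt}\mu\|u-Q\|_{L^2}^2 = 2\mu\langle u-Q, u_t\rangle$ reopens essentially the same difficulty.

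The paper sidesteps the linearized-energy route entirely. It sets $\mu(t) = \frac12\|u(t)-Q\|_{H^1}^2 + \frac12\|u_t(t)\|_{L^2}^2$ (the squared $\calH$-distance itself, not the energy) and computes that
\[
\dot\mu(t) = -2\alpha(t)\|u_t\|_{L^2}^2 + \frac{d}{dt}\Bigl[-\langle f(Q), u(t)\rangle_{L^2} + \int_{\R^d} F(u(t))\,dx\Bigr] \le \frac{d}{dt}\Bigl[-\langle f(Q), u\rangle_{L^2} + \int F(u)\,dx\Bigr],
\]
using $(-\Delta+1)Q = f(Q)$ and $\alpha\ge 0$. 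Integrating from $t_0$ to $t$ turns every term on the right into a \emph{difference} $u(t)-u(t_0)$, which is $O(\delta_0)$ in $L^2$ by hypothesis, and then $O(\delta_0^\eta(1+\mu(t))^{(1-\eta)(\theta+1)/2})$ in $L^{\theta+1}$ by Sobolev interpolation. The upshot is $\mu(t) \le \frac12\rho^2 + C\delta_0 + C\delta_0^\eta (1+\mu(t))^{(1-\eta)(\theta+1)}$: the initial term carries coefficient exactly $1$ (no spectral constant), and every other term is multiplied by a power of $\delta_0$. The continuity argument then closes for any $\rho<1$ simply by shrinking $\delta_0$. The lesson is that one should not pass through the energy and its second variation at all; the exact-derivative structure of $\dot\mu$ is what furnishes the unit coefficient and the small prefactors that your spectral-coercivity estimate loses.
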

\begin{proof}
Define 
\begin{equation*}
\mu(t):= \frac12 \| u(t)-Q\|_{H^1}^2 + \frac12\| u_t\|_{L^2}^2.
\end{equation*}
In view of $\langle u,v\rangle_{H^1} = \langle (-\Delta+1) u,v\rangle_{L^2}$  one has 
\begin{align*}  
\dot\mu(t) &= \langle u(t), u_t\rangle_{H^1}  - \langle  Q, u_t\rangle_{H^1} +  \langle \Delta u - u,  u_t\rangle_{L^2} - 2\alpha(t) \| u_t(t)\|_{L^2}^2 + \langle f(u(t)), u_t\rangle_{L^2} \\
&\le  \frac{d}{dt} \big[ - \langle (-\Delta +1)  Q, u \rangle_{L^2}  + \int_{\R^d}  F(u(t)) dx \big],
\end{align*}
whence for all $t_0\le t\le t_1$ and with constants depending on $Q$ and the nonlinearity $f$ via the constants in~\ref{H2f}, 
\begin{align*} 
\mu(t) &\leq \mu(t_0)  - \langle f( Q), u(t)-u(t_0)  \rangle_{L^2}  +  \int_{\R^d} F(u(t)) - F(u(t_0)) dx \\
&\leq  \frac12 \rho^2 + C\delta_0  + C\int_{\R^d} ( |u(t)|+|u(t_0)| + |u(t)|^\theta + |u(t_0)|^\theta) |u(t)-u(t_0)| dx \\
&\leq \frac12 \rho^2 + C\delta_0  + C (\| u(t)\|_{\theta+1}^\theta + \| u(t_0)\|_{\theta+1}^\theta)  \| u(t)-u(t_0) \|_{\theta+1}.
\end{align*} 
Using a classical Sobolev embedding and an interpolation inequality, we may write that, for some $\eta= \eta(\theta,d) \in (0,1)$, 
\begin{equation*}
 \| u(t)-u(t_0) \|_{\theta+1} \le C  \| u(t)-u(t_0) \|_{2}^\eta   \| u(t)-u(t_0) \|_{H^1}^{1-\eta} ,
\end{equation*}
which further implies that for all $t_0\le t\le t_1$, 
\begin{equation}\label{eq:contmu}
\mu(t) \leq  \frac12 \rho^2 + C\delta_0  + C\delta_0^\eta (1+\mu(t))^{(1-\eta)(\theta+1)}.
\end{equation}
By the method of continuity, it follows that for $\delta_0$ sufficiently small (independently of $t_0$) one has $\mu(t)\le 2\rho^2$ for all $t_0\leq t \leq t_1$. 
\end{proof} 

For future reference we remark that the condition on $\delta_0$ is 
\begin{equation}\label{eq:delrho}
C\delta_0^\eta \le \rho^2,
\end{equation}
with constant $C=C(Q,d,f)$ and $\eta=\eta(d,\theta)=\eta(d,f)$, as can be seen from~\eqref{eq:contmu}. 
The following immediate corollary of Lemma~\ref{lem-key} is the most useful form of the   trapping property for our purposes. 

\begin{coro}
\label{cor:key}
Let $\vec u$ be a global trajectory and suppose that for some   equilibrium $Q$ and for some times 
$0\le t_0\le t_1$  
$$
 \| \vec u(t_0) - (Q,0) \|_{\mathcal{H}} <\rho,\qquad \| u(t_0) - Q \|_{L^2}+ \int_{t_0}^{t_1} \| u_t(t)\|_{L^2} dt \le \delta_0,
 $$
where $0<\rho<1$ is arbitrary, and $\delta_0=\delta_0(\rho,d,Q,f)$ is small. Then 
\begin{equation}\label{eq:uQHclose2}
\| \vec u(t) - (Q,0) \|_{\calH} \leq 2\rho  ,
\end{equation}
for all $t\in [ t_0, t_1] $. 
\end{coro}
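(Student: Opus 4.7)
The corollary asserts that, under the stronger smallness hypothesis on $\| u(t_0)-Q\|_{L^2}$ plus the $L^1_t L^2_x$ control of $u_t$, the trajectory remains in a $2\rho$-ball around $(Q,0)$ in $\mathcal{H}$ throughout $[t_0,t_1]$. My plan is simply to verify that the hypotheses of Lemma~\ref{lem-key} are met, namely that $\| u(t)-Q\|_{L^2}\le \delta_0$ on the whole interval, and then invoke that lemma directly.

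The verification is short. By the fundamental theorem of calculus applied componentwise in $L^2$, for every $t\in[t_0,t_1]$ one has
\begin{equation*}
u(t)-Q = (u(t_0)-Q) + \int_{t_0}^{t} u_s(s)\,ds,
\end{equation*}
so Minkowski's inequality (in the Bochner sense) gives
\begin{equation*}
\| u(t)-Q\|_{L^2} \le \| u(t_0)-Q\|_{L^2} + \int_{t_0}^{t}\| u_s(s)\|_{L^2}\,ds \le \| u(t_0)-Q\|_{L^2} + \int_{t_0}^{t_1}\| u_t(s)\|_{L^2}\,ds \le \delta_0,
\end{equation*}
where the last inequality is exactly the hypothesis of the corollary. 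Together with the initial smallness $\|\vec u(t_0)-(Q,0)\|_{\mathcal{H}}<\rho$, this is precisely the setup of Lemma~\ref{lem-key}, and the conclusion $\|\vec u(t)-(Q,0)\|_{\mathcal{H}}\le 2\rho$ on $[t_0,t_1]$ follows immediately with the same constant $\delta_0=\delta_0(\rho,d,Q,f)$.

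There is no real obstacle here; the only mild subtlety is to use the same value of $\delta_0$ in both statements and to note that the Bochner integral estimate is valid because $u\in C^1([t_0,t_1];L^2(\mathbb{R}^d))$ by the regularity properties recorded in Theorem~\ref{thm:LocalExist}.
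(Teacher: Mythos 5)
Your argument is exactly the paper's: show via the fundamental theorem of calculus and Minkowski's inequality that $\max_{t_0\le t\le t_1}\| u(t)-Q\|_{L^2}\le \| u(t_0)-Q\|_{L^2}+\int_{t_0}^{t_1}\| u_t(s)\|_{L^2}\,ds\le\delta_0$, then invoke Lemma~\ref{lem-key}. The proposal is correct and matches the paper's proof in every essential respect.
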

\begin{proof}
Lemma~\ref{lem-key} applies since 
\begin{equation*}
\max_{t_0\le t\le t_1} \| u(t) - Q\|_{L^2} \leq \| u(t_0) - Q \|_{2}+ \int_{t_0}^{t_1} \| u_t(s)\|_{L^2} ds  \leq \delta_0,
\end{equation*}
and we are done. 
\end{proof} 

Combining this corollary with Lemma~\ref{lem:suite3} yields the following result which guarantees trapping along increasingly long intervals $I_m$. 

\begin{proposition} 
\label{prop:H3estconv}
Under the hypothesis \ref{H2alpha}, let $\vec u(t)$ be a global solution of \ref{KGalpha}. Let  $n_{m}$, $m \in \N$ be the sequence obtained in   Lemma~\ref{lem:suite3}. 
Then there exists an equilibrium $(Q,0)$ of \ref{KGalpha} and, for any $0<\rho <1$  there exists $m_0$ such that 
\begin{equation}
\label{eq:H3estconv}
\|  \vec u (s ) - (Q,0) \|_{\mathcal{H}} \leq \rho, \quad \forall s \in I_{m} := 
[n_m^{3/2}+1, (n_{m}+1)^{3/2}],
\end{equation}
for all $m \geq m_0$. 
\end{proposition}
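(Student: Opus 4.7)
The plan is to combine Theorem~\ref{ThBRS1} (which produces an equilibrium $(Q,0)$ that the trajectory approaches along some sequence of times) with the trapping principle of Corollary~\ref{cor:key} (which propagates $\mathcal{H}$-closeness to $(Q,0)$ forward in time, provided the $L^1_t L^2_x$-norm of $\partial_t u$ remains small). The decisive resource will be Lemma~\ref{lem:suite3}, which on each $I_m$ gives $\int_{I_m}\|\partial_t u(s)\|_{L^2}\,ds \leq C\varepsilon_m^{1/2}\to 0$; this is precisely the budget required to apply Corollary~\ref{cor:key} with $t_0$ near the left endpoint of $I_m$ and $t_1 = (n_m+1)^{3/2}$.

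First I will locate, for each large $m$, a reference time $t_m^\star \in J_m := [n_m^{3/2}+1,\,n_m^{3/2}+2]$ at which both $K_0(u(t_m^\star))$ and $\int_{t_m^\star-r}^{t_m^\star+r}\|\partial_t u\|_{L^2}^2\,ds$ tend to zero for any fixed $r>0$. The smallness on $J_m$ follows from $\int_{J_m}\|\partial_t u\|^2\,ds \leq n_m^{-1/2}\int_{I_m}s^{1/3}\|\partial_t u\|^2\,ds \to 0$ via Lemma~\ref{lem:suite3}. For $K_0$, I will repeat the sign-change/intermediate-value argument from the proof of Lemma~\ref{lem:GEUseq}, this time applied to the unit interval $J_m$: since $\alpha(t)\to 0$ makes the dissipative correction in the evolution of $y(t)=\tfrac12\|u(t)\|_{L^2}^2$ negligible on $J_m$ for large $m$, the two convexity contradictions of that proof (the cases $K_0\geq\delta$ and $K_0\leq-\delta$) remain valid, and IVT supplies $t_m^\star\in J_m$ with $K_0(u(t_m^\star))=o(1)$. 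Next I replay the compactness argument in the proof of Theorem~\ref{ThBRS1} on the shifted solutions $\tilde u_m(\tau):=u(t_m^\star+\tau)$: Ascoli--Arzel\`a together with the compact embedding $H^1_{\mathrm{rad}}\hookrightarrow L^p(\mathbb{R}^d)$, Lemma~\ref{lem:obs} to promote $L^2_tL^2_x$-smallness of $\partial_\tau\tilde u_m$ to pointwise $L^2_x$-smallness at $\tau=0$, and the identity $\|\tilde u_m(0)\|_{H^1}^2 = K_0(\tilde u_m(0)) + \int f(\tilde u_m(0))\tilde u_m(0)\,dx$ to upgrade weak $H^1$ convergence to strong. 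Extracting a subsequence (which, since Lemma~\ref{lem:suite3} only asserts the existence of \emph{some} such sequence $\{n_m\}$, I relabel back to $\{n_m\}$) produces an equilibrium $(Q,0)$ with $\vec u(t_m^\star)\to(Q,0)$ strongly in $\mathcal{H}$.

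Given $\rho\in(0,1)$, let $\delta_0=\delta_0(\rho/2,d,Q,f)$ be as in Corollary~\ref{cor:key}. For $m$ sufficiently large,
\begin{equation*}
\|\vec u(t_m^\star)-(Q,0)\|_{\mathcal{H}}<\tfrac{\rho}{2},\qquad \|u(t_m^\star)-Q\|_{L^2}+\int_{t_m^\star}^{(n_m+1)^{3/2}}\|\partial_t u(s)\|_{L^2}\,ds \leq \tfrac{\rho}{2}+C\varepsilon_m^{1/2}<\delta_0,
\end{equation*}
so Corollary~\ref{cor:key} yields $\|\vec u(s)-(Q,0)\|_{\mathcal{H}}\leq\rho$ for every $s\in[t_m^\star,(n_m+1)^{3/2}]$. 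The leftover left interval $[n_m^{3/2}+1,t_m^\star]\subset J_m$ has length at most one and will be handled by the joint continuity of the flow in $(t,\vec u_0)$ from Theorem~\ref{thm:LocalExist}, items (1)--(2): since $(Q,0)$ is a stationary solution of~\ref{KGalpha} and $\vec u(t_m^\star)\to(Q,0)$, the backward propagation of $\vec u(t_m^\star)$ over a uniformly bounded time converges uniformly to the backward propagation of $(Q,0)$, which is $(Q,0)$ itself; hence $\|\vec u(s)-(Q,0)\|_{\mathcal{H}}\leq\rho$ on this portion as well for $m$ large.

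The main obstacle is the placement of $t_m^\star$ \emph{inside the first unit sub-interval} $J_m$ of $I_m$, rather than merely somewhere in $I_m$: this positioning is precisely what allows forward trapping from $t_m^\star$ to exhaust almost all of $I_m$ and leave only an $O(1)$-long gap for the continuity argument to close. Carrying out the convexity/sign-change reasoning of Lemma~\ref{lem:GEUseq} on the \emph{moving} interval $J_m$ requires uniformity in $m$ of its ingredients: the $L^\infty$ bound on $\|u(\cdot)\|_{L^2}$ from Proposition~\ref{propL2}, the smallness of $\int_{J_m}\|\partial_t u\|_{L^2}^2\,ds$ from Lemma~\ref{lem:suite3}, and the decay $\max_{t\in J_m}\alpha(t)\to 0$; together these turn the dissipative corrections in the proof of Lemma~\ref{lem:GEUseq} into $o(1)$-perturbations that preserve both contradictions.
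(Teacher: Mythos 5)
Your overall strategy is the same as the paper's: extract a reference time in a unit interval by replaying the proof of Theorem~\ref{ThBRS1}, and then use Corollary~\ref{cor:key} together with the $L^1_t L^2_x$ budget from~\eqref{eq:ellgamma} to propagate $\mathcal{H}$-closeness forward across $I_m$. The one substantive deviation is your choice of the unit window: you take $J_m=[n_m^{3/2}+1,\,n_m^{3/2}+2]\subset I_m$, whereas the paper takes $J_m=[n_m^{3/2},\,n_m^{3/2}+1]$, i.e.\ the unit interval \emph{immediately to the left of} $I_m$. The paper's choice is not cosmetic: with $t^{*,1}_{n_m}\in[n_m^{3/2},n_m^{3/2}+1]$, a single application of Corollary~\ref{cor:key} on $[t^{*,1}_{n_m},(n_m+1)^{3/2}]$ already covers all of $I_m$ and the proof is finished. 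Your placement leaves the segment $[n_m^{3/2}+1,\,t_m^\star]$ of length $\le 1$ uncovered, and the backward-continuity argument you invoke to close it, while salvageable, needs more than the mere joint continuity of item~(1) of Theorem~\ref{thm:LocalExist}: to get a bound uniform in $m$ you must appeal to the Lipschitz property of item~(2) together with a local existence time depending only on the $\mathcal{H}$-size (possibly iterating over $\lceil 1/T_0\rceil$ steps), using that $S_\alpha(\cdot,t_m^\star)(Q,0)=(Q,0)$ and that $\vec u(t_m^\star)\to(Q,0)$ strongly. All of this can be avoided by sliding $J_m$ one unit to the left.

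There is also a small but real inaccuracy in your verification of the budget for Corollary~\ref{cor:key}. You write
\begin{equation*}
\|u(t_m^\star)-Q\|_{L^2}+\int_{t_m^\star}^{(n_m+1)^{3/2}}\|\partial_t u(s)\|_{L^2}\,ds \leq \tfrac{\rho}{2}+C\varepsilon_m^{1/2}<\delta_0,
\end{equation*}
but $\delta_0=\delta_0(\rho/2,d,Q,f)$ is the constant of Corollary~\ref{cor:key}, which by~\eqref{eq:delrho} satisfies $C\delta_0^\eta\le(\rho/2)^2$; in particular $\delta_0\ll\rho/2$, so $\rho/2+C\varepsilon_m^{1/2}$ is \emph{never} smaller than $\delta_0$. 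The fix is immediate: since $\vec u(t_m^\star)\to(Q,0)$ strongly in $\mathcal{H}$, you actually have $\|u(t_m^\star)-Q\|_{L^2}=o(1)$ as $m\to\infty$, so the left-hand side is $o(1)+C\varepsilon_m^{1/2}\to 0<\delta_0$ for $m$ large. You should separate the two roles played by the convergence $\vec u(t_m^\star)\to(Q,0)$: it gives $\|\vec u(t_m^\star)-(Q,0)\|_{\mathcal{H}}<\rho/2$ (feeding the first hypothesis of Corollary~\ref{cor:key}) \emph{and}, independently, $\|u(t_m^\star)-Q\|_{L^2}=o(1)$ (feeding the $\delta_0$-budget).
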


\begin{proof} 
The proof of Theorem~\ref{ThBRS1} applies to any sequence $J_m\subset I_m$ of intervals of fixed length, say $|J_m|=1$.  In particular, we may take $J_m=[n_m^{3/2}, n_m^{3/2}+1]$. 
This yields a sequence of times 
$t_{n_m}^{*,1}$ for which the properties \eqref{K0n0} hold, and such
 that $\vec u(t_{n_m}^{*,1})$ converges strongly to an equilibrium point $(Q,0)$. 
 
In view of \eqref{eq:ellgamma} we may apply the previous corollary to the interval $[t_{n_m}^{*,1}, (n_{m}+1)^{3/2}]$. 
 \end{proof}

\subsection{Trapping the trajectory near an equilibrium in the case of isolated equilibria}

\begin{proposition} 
\label{prop:H4Hyperb}
Assume that the hypothesis \ref{H3alpha} holds. Let $\vec u(t)$ be a global solution of \ref{KGalpha}. Suppose moreover that all the equilibria $(Q,0)$ with energy  $E(Q,0) \leq E(\vec u(0))$ are isolated.  Let  $n_{m}$, $m \in \N$, be the sequence obtained in   Lemma~\ref{lem:suite4}. 
Then there exist an equilibrium $(Q,0)$ of \ref{KGalpha} and $\rho_0 >0$, so that
for any $0<\rho < \rho_0$, there exists $m_0>0$ such that 
\begin{equation}
\label{eq:H4Hyperbest}
\|  \vec u (s ) - (Q,0) \|_{\mathcal{H}} \leq \rho, \quad \forall s \in I_{m}^2 := 
[n_m^{2}+1, (n_{m}+1)^{2}],
\end{equation}
for all $m \geq m_0$. 
\end{proposition}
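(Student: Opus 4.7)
The plan is to mimic the strategy of Proposition \ref{prop:H3estconv}, but since only the $L^2_t$-bound of Lemma \ref{lem:suite4} survives under \ref{H3alpha} (the $L^1_t$-bound of Lemma \ref{lem:suite3} fails), one cannot invoke Corollary \ref{cor:key} directly over the full $I_m^2$; instead, the isolation hypothesis is used through a contradiction argument to enforce trapping.

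\emph{Extraction of the limit equilibrium.} I would first apply the proof of Theorem \ref{ThBRS1} to the unit subinterval $J_m := [n_m^2, n_m^2+1]$: Lemma \ref{lem:suite4} gives $\int_{J_m}\|u_t(s)\|_{L^2}^2\,ds \le \varepsilon_m \to 0$, so the construction of Lemma \ref{lem:GEUseq} (valid under \ref{H1alpha}) yields times $t_{n_m}^{*,1} \in J_m$ with $K_0(u(t_{n_m}^{*,1})) \to 0$, and the compactness scheme of Section \ref{sub:PreuveThBRS1} produces (along a subsequence) strong $\calH_{rad}$-convergence $\vec u(t_{n_m}^{*,1}) \to (Q,0)$ for some equilibrium with $E(Q,0) = E_\infty := \lim_{t\to\infty} E(\vec u(t)) \le \ell$. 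By the isolation hypothesis, one may choose $\rho_0 > 0$ so that no other equilibrium $(\tilde Q,0)\ne (Q,0)$ with $E(\tilde Q,0)\le \ell$ lies within $\calH$-distance $3\rho_0$ of $(Q,0)$.

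\emph{Contradiction argument.} Fix $\rho \in (0, \rho_0)$ and assume for contradiction that there exist $s_m \in I_m^2$ with $\|\vec u(s_m) - (Q,0)\|_\calH > \rho$; define the exit time
\[
\sigma_m := \inf\bigl\{\,t \in [t_{n_m}^{*,1}, s_m] : \|\vec u(t)-(Q,0)\|_\calH \ge \rho\,\bigr\},
\]
so that $\|\vec u(\sigma_m)-(Q,0)\|_\calH = \rho$ and $\sigma_m \to \infty$. Since $|I_m^2|\sim 2n_m$ while $\int_{I_m^2}\|u_t\|_{L^2}^2\,ds \to 0$, a Chebyshev argument locates a nearby time $\sigma_m'$ with $|\sigma_m - \sigma_m'| = o(1)$ and $\int_{\sigma_m'-1}^{\sigma_m'+1}\|u_t\|_{L^2}^2\,ds \to 0$; by H\"older-$\tfrac12$ continuity of $t\mapsto \vec u(t)$ in $\calH$ (from $\vec u \in C^0_t\calH \cap C^1_t(H^0\times H^{-1})$), one has $\|\vec u(\sigma_m')-(Q,0)\|_\calH \in [\rho/2, 2\rho]\subset (0, 3\rho_0)$ for $m$ large. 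Applying the compactness scheme of Theorem \ref{ThBRS1} to the time-shifted solutions $\vec u(\sigma_m'+\cdot)$ on $[-1,1]$ then extracts a strong $\calH_{rad}$-limit $(\tilde Q,0)$, an equilibrium of \ref{KGalpha} with $E(\tilde Q,0) = E_\infty$. Passing to the limit in the $\calH$-norm gives $\|(\tilde Q,0)-(Q,0)\|_\calH \in [\rho/2, 2\rho]\subset (0, 3\rho_0)$, a second equilibrium too close to $(Q,0)$, contradicting the isolation.

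\emph{Main obstacle.} The critical point is the strong $\calH$-convergence at $\sigma_m'$: the proof of Theorem \ref{ThBRS1} upgrades weak to strong $H^1$-convergence through $K_0(u(t_n^*))\to 0$, a property built into the definition of $t_n^*$ in Lemma \ref{lem:GEUseq} and not automatic at the exit-type time $\sigma_m'$. I would resolve this by a further mean-value adjustment based on the identity $\ddot y + 2\alpha\dot y = \|u_t\|_{L^2}^2 - K_0(u)$ (with $y(t)=\tfrac12\|u(t)\|_{L^2}^2$) from the proof of Lemma \ref{eqBlowup}: integrating over a short window around $\sigma_m'$ where $\int\|u_t\|_{L^2}^2\,ds$ is small, using Proposition \ref{propL2} to bound boundary terms and $\alpha(t)\to 0$ to absorb the damping contribution, one finds a nearby $\sigma_m''$ at which $K_0(u(\sigma_m''))\to 0$ while keeping $\|\vec u(\sigma_m'')-(Q,0)\|_\calH$ in $[\rho/2,2\rho]$; Theorem \ref{ThBRS1} then applies at $\sigma_m''$ to furnish $(\tilde Q,0)$ and close the contradiction with isolation.
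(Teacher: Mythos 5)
Your overall strategy mirrors the paper's proof: Step~1 (extract $(Q,0)$ from $J_m=[n_m^2,n_m^2+1]$ via Lemma~\ref{lem:GEUseq} and the compactness scheme of Theorem~\ref{ThBRS1}, then fix $\rho_0$ by isolation) and Step~2 (contradiction at an exit time $s_{n_m}$). Your identification of the ``main obstacle'' — that $K_0\to 0$ is not automatic at the exit time, but can be arranged at a nearby time via the convexity identity $\ddot y + 2\alpha\dot y = \|u_t\|_{L^2}^2 - K_0(u)$ — is exactly the content of Lemma~\ref{lem:GEUseq} applied to the shifted intervals $[s_{n_m}-r, s_{n_m}+r]$, which is what the paper does.

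There is, however, a genuine gap in how you control $\|\vec u(\cdot)-(Q,0)\|_{\calH}$ after the time shifts. The claimed H\"older-$\tfrac12$ continuity of $t\mapsto\vec u(t)$ in $\calH=H^1\times L^2$ does not follow from the stated regularity: from $\vec u\in C^0(\calH)$ and $\partial_t\vec u\in C^0(L^2\times H^{-1})$ one gets Lipschitz continuity in $L^2\times H^{-1}$ and, by interpolation, H\"older continuity only in intermediate spaces such as $H^{1/2}\times H^{-1/2}$ — \emph{not} in $\calH$ itself. Even if such a modulus were available, it would not cover the passage from $\sigma_m'$ to $\sigma_m''$: the mean-value argument of Lemma~\ref{lem:GEUseq} locates a good time within a unit-length window, not $o(1)$-close, so ``keeping $\|\vec u(\sigma_m'')-(Q,0)\|_{\calH}\in[\rho/2,2\rho]$'' is unjustified as written. (The Chebyshev step, incidentally, is unnecessary: since $\sigma_m\in I_m^2$, the interval $[\sigma_m-1,\sigma_m+1]$ already sits inside $[n_m^2,(n_m+1)^2+1]$, so the smallness of $\int\|u_t\|_{L^2}^2$ there is automatic from Lemma~\ref{lem:suite4}.)

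The paper's resolution bypasses trajectory continuity in $\calH$ altogether. After producing $s_{n_m}^*\in[s_{n_m}-r, s_{n_m}+r]$ with $K_0(u(s_{n_m}^*))\to 0$ and running the compactness scheme to get strong $\calH$-convergence $\vec u(s_{n_m}^*)\to (Q_2,0)$, it invokes the uniform continuity (continuous dependence on data, Theorem~\ref{thm:LocalExist}) of the evolution $S_\alpha(t,s)$ at the \emph{fixed point} $(Q_2,0)$ to transfer closeness from $s_{n_m}^*$ to the whole window $[s_{n_m}-r,s_{n_m}+r]$, in particular to the exit time $s_{n_m}$ itself. This yields $\|(Q_2,0)-(Q,0)\|_{\calH}<2\rho<\delta$, which contradicts isolation unless $Q_2=Q$, and the latter case contradicts $\|\vec u(s_{n_m})-(Q,0)\|_{\calH}=\rho$. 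You should replace the H\"older-in-$\calH$ steps with this continuous-dependence-at-an-equilibrium argument; the remainder of your plan then matches the paper.
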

\begin{proof}
The proposition is proved in two steps. \\
{\bf Step 1} 
As in the previous proposition, we remark that the proof of Theorem~\ref{ThBRS1} applies to any sequence $J_m\subset I_m^2$ of intervals of fixed length, say 
$|J_m|=1$.  In particular, we may take $J_m=[n_m^{2}, n_m^{2}+1]$. 
This yields a sequence of times 
$t_{n_m}^{*,1}$ for which the properties \eqref{K0n0} hold, and such
 that $\vec u(t_{n_m}^{*,1})$ converges strongly to an equilibrium point $(Q,0)$. 
Since $(Q,0)$ is isolated, there exists $\delta >0$ such that the ball 
$B_{\mathcal{H}}((Q,0), \delta)$ does not contain another equilibrium. We next choose 
$0 <\rho_0 < \delta/2$. By the uniform continuity of $S_{\alpha}(t,s)$ at $(Q,0)$, for any $0<\rho <\rho_0$, there exist $\eta >0$, so that, if 
 \begin{equation}
\label{eq:auxEta1}
\| (Q,0) - \vec w \|_{\mathcal{H}} \leq \eta,
\end{equation}
then
 \begin{equation}
\label{eq:auxEta2}
\| (Q,0) - S_{\alpha}(t, t_{n_m}^{*,1}) \vec w \|_{\mathcal{H}} < \rho, \quad n_m^{2}\leq t \leq n_m^{2} + 1.
\end{equation}
 Thus, since $\vec u(t_{n_m}^{*,1})$ converges to $(Q^{*},0)$, there exists $m_1$, so that, for $m \geq m_1$, we have
  \begin{equation}
\label{eq:auxEta3}
\| (Q,0) -  \vec u(t) \|_{\mathcal{H}} < \rho, \quad n_m^{2}\leq t \leq n_m^{2} + 1.
\end{equation}

 \noindent {\bf Step 2}
If the property \eqref{eq:auxEta3} does not hold for every $t \in I_m^2$, then there exists $s_{n_m} \in (n_m^2 +1,(n_{m}+1)^{2}] $ such that $\vec u(s_{n_m})$ belongs to the sphere $S((Q,0), \rho)$ of center $(Q,0)$ and radius $\rho$. 

We next choose $0 < r <1$ smaller than the local time $T>0$ of existence of solutions of \ref{KGalpha} with initial data in the ball $B((Q,0), 2\delta)$ of center $(Q,0)$ and radius $2\delta$. 

We now consider the sequence of times $s_{n_m}$. We first apply Lemma \ref{lem:GEUseq} to the sequence of intervals $[s_{n_m}-r, s_{n_m}+r]$, obtaining then a sequence of times $s_{n_m}^{*}$ for which the properties \eqref{K0n0} hold. Afterwards, we notice that the proof of Theorem~\ref{ThBRS1} also applies to the sequence of intervals $[s_{n_m}-r, s_{n_m}+r]$ and thus that $\vec u(s_{n_m}^{*})$ converges to 
an equilibrium denoted $(Q_2,0)$. Arguing as in Step 1, we show that there exists $m_2 \geq m_1$ such that, for $m \geq m_2$, we have 
  \begin{equation}
\label{eq:auxEta4}
\| (Q_2,0) -  \vec u(t) \|_{\mathcal{H}} < \rho, \quad s_{n_m}-r \leq t \leq  s_{n_m}+r,
\end{equation}
and in particular,
\begin{equation}
\label{eq:auxEta5}
\| (Q_2,0) -  \vec u(s_{n_m}) \|_{\mathcal{H}} < \rho.
\end{equation}
The property \eqref{eq:auxEta5} also implies that
 \begin{equation}
\label{eq:auxEta6}
\| (Q_2,0) -  (Q,0)) \|_{\mathcal{H}}  < 2\rho < \delta,
\end{equation}
 which leads to a contradiction, unless $Q_2 = Q$. And Proposition \ref{prop:H4Hyperb} is proved.

\end{proof}

\section{Proof of Theorem \ref{ThBRS2} if \texorpdfstring{$d>3$}{d>3} or \texorpdfstring{$d=3$}{d=3} and \texorpdfstring{$\theta \leq 4$}{theta<=4}}
\label{sec:Main1}
 
\subsection{The main functional  \texorpdfstring{$H_\nu$}{H-nu}} 

For technical reasons we first present these cases. The remaining one, i.e., 
  $d=3$ and $4<\theta<5$, is more complicated and we turn to it later. The proof proceeds by considering suitable functionals of Lyapunov type. However, the analysis is
  somewhat delicate as the functionals only give limited information (for example, only locally in time).  We will rely on the {\L}ojasiewicz-Simon inequality, which is a well-known
  method for gradient systems~ (see for example,\cite{Simon1983, HaJen99, HaJen07, HaJen13}) and used first by Simon \cite{Simon1983}.  Throughout this section $\vec u$ is a global solution, and $I_m$ is the interval from Proposition~\ref{prop:H3estconv}. 
  In particular, $(Q,0)$ will always denote the equilibrium of that proposition.   Let $\rho_0$ be given by the {\L}ojasiewicz-Simon theorem of Appendix~A, and we set $\rho=\rho_0$ in 
  Proposition~\ref{prop:H3estconv}.

\begin{definition}\label{defi:51}
Assume hypothesis~\ref{H2alpha}. Let $\vec u$ be a global trajectory, and 
let $(Q,0)$ be the equilibrium from Proposition~\ref{prop:H3estconv}.  Fix a large positive number $R_1$ so that  $R_1 \ge 8R_0$, with $R_0\ge \max(4R,2\rho_{0})$ and $R=\| Q\|_{H^1}$. 
For any $t\ge0$ let   
 \begin{equation}
\label{eq:functionH}
H_{\nu}(t):= E(\vec u(t)) - E(Q,0) + \frac{\varepsilon_0}{(1+t)^{a \nu}} \langle -\Delta u(t) +u(t)  -f(u(t)), u_t (t)\rangle_{H^{-1}},
\end{equation}
where $1\ge  \varepsilon_0>0$ will be specified later  and where $\nu >1$ is chosen so that $a\nu <\frac13$.  

 Let 
$T_{m}> n_m^{ 3/2}$ be the first time where $\| \vec u(t)\|_{\mathcal{H}}= R_1$, or $T_{m}=\infty$ if this never occurs, and define $\tilde I_{m}:=[n_m^{ 3/2}, T_m]$. 
\end{definition}

 Note that by Proposition~\ref{prop:H3estconv} and our choice of parameters, we have $T_m\ge (n_m+1)^{\frac32}$.  Thus, $\tilde I_{m}$ extends $I_{m}$ strictly to the right. 
See Lemma~\ref{lem:Hnu} for basic properties of $H_\nu$ relating to differentiability and monotonicity. 
The conditions to be satisfied by 
$\varepsilon_0$ will appear in the proof below. 
We notice that the condition $a\nu <1/3$ imposes an additional condition on $a$ only if $\nu >1$ is large. In fact, it suffices to take
$\nu >1$  close to~$1$.  The functional $H_{\nu}$ with $a= 0$ has been used by Haraux and Jendoubi in \cite{HaJen07} on  bounded domains and for globally bounded trajectories in the case where 
$1 < \theta \leq d/(d-2)$ and provided the damping is a positive constant. A functional of this type with $\nu=1$ was used by Haraux and Jendoubi in~\cite{HaJen13} for bounded trajectories for finite-dimensional gradient systems, and damping $\alpha(t) >0$ under a condition similar to~\ref{H1alpha}.
Amongst several others, our challenge is that we are dealing with potentially unbounded trajectories. 
However, we will use in a crucial way   that the trajectories are bounded on the time intervals $I_m$, with $m$ large enough. 
If we knew a priori that the trajectory is bounded, then the proof of Theorem \ref{ThBRS2} would be much simpler. 

We begin with an estimate on the nonlinear expression arising in $H_\nu'$. 

\begin{lemma} \label{lem:estmHprime}
Assume that  Hypothesis \ref{H2f} holds, with the restriction $1 <\theta \leq 4$ in the case $d=3$. Then, the following inequality holds
\begin{equation}
\label{eq:5ex3}
| \langle f'(u)u_t,u_t \rangle_{H^{-1}} | \leq c \| u_t\|_{L^2}^2 \| u\|_{H^1}^{\theta -1}.
\end{equation}
\end{lemma}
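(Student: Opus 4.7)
The plan is to identify $\langle\cdot,\cdot\rangle_{H^{-1}}$ with the weighted $L^{2}$ pairing $\langle a,b\rangle_{H^{-1}}=\int a\cdot(I-\Delta)^{-1}b\,dx$. Setting $v=(I-\Delta)^{-1}u_t$, elliptic regularity gives $v\in H^{2}(\mathbb R^d)$ with $\|v\|_{H^2}\le C\|u_t\|_{L^2}$, so the lemma reduces to controlling the trilinear integral $\int_{\mathbb R^d} f'(u)\,u_t\,v\,dx$ by $c\|u_t\|_{L^2}^2\|u\|_{H^1}^{\theta-1}$.

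The main tool is a three-factor H\"older inequality, $\|f'(u)\|_{L^{p_1}}\|u_t\|_{L^2}\|v\|_{L^{p_3}}$ with $\tfrac1{p_1}+\tfrac1{p_3}=\tfrac12$, combined with the growth bound $|f'(y)|\le C\max(|y|^{\beta},|y|^{\theta-1})$ from \ref{H2f}. I would split $|f'(u)|\le C(|u|^{\beta}+|u|^{\theta-1})$ and treat each of the two contributions separately.

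For the dominant term $|u|^{\theta-1}$, the natural choice is $p_1=\tfrac{2d}{(d-2)(\theta-1)}$, which by the critical Sobolev embedding gives $\||u|^{\theta-1}\|_{L^{p_1}}=\|u\|_{L^{2^{*}}}^{\theta-1}\le C\|u\|_{H^1}^{\theta-1}$. The conjugate is then $\tfrac1{p_3}=\tfrac12-\tfrac{(d-2)(\theta-1)}{2d}$, and the crucial check is that $p_3\in[2,p^{*}]$, with $p^{*}=\tfrac{2d}{d-4}$ (understood as $\infty$ when $d\leq 3$). A direct computation shows this holds precisely when $\theta\le\tfrac{2d-2}{d-2}$, i.e.\ $\theta\le4$ when $d=3$ and automatically from $\theta<\theta^{*}$ when $d\ge4$. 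In that range $H^2\hookrightarrow L^{p_3}$, so $\|v\|_{L^{p_3}}\le C\|u_t\|_{L^2}$ and this piece contributes $C\|u\|_{H^1}^{\theta-1}\|u_t\|_{L^2}^2$. For the subdominant term $|u|^{\beta}$, I would instead take $p_1=2/\beta$, so that $\||u|^{\beta}\|_{L^{2/\beta}}=\|u\|_{L^2}^{\beta}\le\|u\|_{H^1}^{\beta}$, with conjugate $p_3=\tfrac{2}{1-\beta}\in[2,p^{*}]$ thanks to the normalisation $\beta<2/d$. Adding the two contributions yields an intermediate estimate $c(\|u\|_{H^1}^{\beta}+\|u\|_{H^1}^{\theta-1})\|u_t\|_{L^2}^2$; since $\beta<\theta-1$ and in the intended application the solution is trapped near a non-trivial equilibrium $Q$ (so $\|u\|_{H^1}$ stays comparable to $\|Q\|_{H^1}>0$), the $\|u\|_{H^1}^{\beta}$ term is absorbed into the main one.

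The main obstacle, and the exact origin of the restriction $\theta\le4$ for $d=3$, is the endpoint $p_3=\infty$ at $\theta=4$: the scheme still works there because $H^{2}(\mathbb R^3)\hookrightarrow L^{\infty}$, but for $d=3$ and $4<\theta<5$ the conjugate $p_3$ would have to be negative and this direct pointwise-in-time H\"older approach collapses. This is precisely the reason Section~6 must replace it by a more delicate argument based on time-averaged Strichartz estimates and the observability inequality of \cite{BRS_OBS}.
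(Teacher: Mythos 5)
Your proposal is correct and is essentially the paper's argument: both reduce $\langle f'(u)u_t,u_t\rangle_{H^{-1}}$ to the trilinear form $\int f'(u)\,u_t\,(I-\Delta)^{-1}u_t\,dx$ and estimate it by H\"older plus Sobolev embeddings, with the restriction $\theta\le4$ at $d=3$ arising from the endpoint embedding $H^2(\R^3)\hookrightarrow L^\infty$. Two minor remarks. For $d\ge5$ the binding condition is $p_3\le\frac{2d}{d-4}$, which gives $\theta\le\theta^*=\frac{d+2}{d-2}$ rather than your $\theta\le\frac{2d-2}{d-2}$ (the latter is only the requirement $1/p_3\ge0$); since $\theta^*<\frac{2d-2}{d-2}$ for $d\ge5$, your conclusion that the hypothesis $\theta<\theta^*$ suffices is still right. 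Also, both your scheme and the paper's actually produce $(1+\|u\|_{H^1}^{\theta-1})\|u_t\|_{L^2}^2$ rather than the bare $\|u\|_{H^1}^{\theta-1}\|u_t\|_{L^2}^2$ of the statement; the paper explicitly records this form when citing the lemma, so your caveat about absorbing the lower-power term near the nontrivial equilibrium is exactly the paper's own reading of the estimate.
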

\begin{proof} 
{ a) The case $d=3$:} Assume that $1 < \theta \leq 4$, then we can for example  write
\begin{equation}
\label{eq:5ex00}
| \langle f'(u)u_t,u_t \rangle_{H^{-1}} | \leq \| f'(u)u_t\|_{H^{-2}} \| u_t\|_{L^2} 
\leq \sup_{\| \varphi\|_{H^2}=1} |\int_{\R^3} f'(u) u_t \varphi  dx | \| u_t\|_{L^2}.
\end{equation}
Using condition \ref{H2f} on the nonlinearity with  $\| \varphi\|_{H^2}=1$ fixed we have 
\begin{align} 
 |\int_{\R^3} f'(u) u_t \varphi  dx | &\leq \sup_{|v|\le 1} |f'(v)|  \| u_t\|_{L^2} \|\varphi\|_{L^2} + C \| |u|^{\theta-1} u_t \|_1 \|\varphi\|_\infty \\
 &\le C(1+ \| |u|^{\theta-1}  \|_{L^2} )  \|u_t\|_{L^2}, 
\end{align}
which implies, since $2(\theta-1) \leq 6$, that
\begin{equation}
\label{eq:5ex4}
| \langle f'(u)u_t,u_t \rangle_{H^{-1}} | \leq C (1+ \| u\|_{L^{2(\theta-1)}}^{\theta-1} )
 \| u_t\|_{L^2}^2 \leq C (1+\| u\|_{H^1}^{\theta-1} ) \| u_t\|_{L^2}^2,
\end{equation}
 and the estimate \eqref{eq:5ex3} is proved in this case.  
 
{b) The case $5 \leq d \leq 6$:} In this case, $H^2(\R^d)$ is embedded in $L^{2d/(d-4)}(\R^d)$. Applying the estimate \eqref{eq:5ex00} and the H\"older inequality, we get
\begin{align}
\label{eq:5ex6}
| \langle f'(u)u_t,u_t \rangle_{H^{-1}} | &\leq C\| u_t\|_{L^2}^2   \sup_{\| \varphi\|_{H^2}=1}\big( 
 \| \varphi\|_{L^{2d/ (d-4)}} \| u\|_{L^{2d/(d-2)}}^{\theta^*-1}  +    \| \varphi\|_{L^2} \big) \\
& \leq C  \| u_t\|_{L^2}^2 (1+\| u\|_{H^1}^{\theta^*-1}),
\end{align}
 and the estimate \eqref{eq:5ex3} is again proved in this case. 
 
 {c) The case $d=4$:} If $d=4$, $H^2(\R^d) \subset L^p(\R^d)$, for any $1 \leq p <  \infty$. This case is even simpler and is left to the reader.
\end{proof}
 
We can now derive basic properties of $H_\nu$. 

\begin{lemma}\label{lem:Hnu}
Assume that $\vec u \ne (Q,0)$. Then 
 $H_{\nu}$ is a decreasing $C^1$-function  as long as $\| \vec u(t)\|_{\mathcal{H}}<R_1$. In particular, $H_{\nu}$ is decreasing   on
 $\tilde I_{m}$ with  $m\geq m_0$.
\end{lemma}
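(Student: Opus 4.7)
The plan is to differentiate $H_\nu$ explicitly, exploiting the energy dissipation identity~\eqref{Et1t2} for $E(\vec u(t))$ and substituting the equation~\ref{KGalpha} written as $u_{tt} = -G - 2\alpha u_t$ in $H^{-1}$, where I set $G(t) := -\Delta u(t) + u(t) - f(u(t))$. The $C^1$ regularity should fall out automatically from the continuity of each term in the resulting formula (to make this fully rigorous for mild solutions I would first derive the derivative formula for classical $H^2 \times H^1$ data and pass to the limit by density). The target is to uncover a negative definite contribution $-\beta \|G\|_{H^{-1}}^2$ which, together with $-2\alpha\|u_t\|_{L^2}^2$ from the energy identity, dominates all error terms once $\varepsilon_0$ is chosen small.

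Writing $\beta(t) := \varepsilon_0 (1+t)^{-a\nu}$ and $\Psi(t) := \langle G, u_t \rangle_{H^{-1}}$, so that $H_\nu = E(\vec u) - E(Q,0) + \beta \Psi$, I would use the identity $\langle (-\Delta + I) v, w\rangle_{H^{-1}} = \langle v, w\rangle_{L^2}$ (valid for $v \in H^1$, $w \in L^2$) and the equation to obtain
\begin{equation*}
\Psi'(t) = \|u_t(t)\|_{L^2}^2 - \langle f'(u) u_t, u_t\rangle_{H^{-1}} - \|G(t)\|_{H^{-1}}^2 - 2\alpha(t) \Psi(t),
\end{equation*}
and therefore
\begin{equation*}
H_\nu'(t) = -(2\alpha - \beta)\|u_t\|_{L^2}^2 - \beta \|G\|_{H^{-1}}^2 - \beta \langle f'(u) u_t, u_t\rangle_{H^{-1}} + (\beta' - 2\alpha\beta)\Psi(t).
\end{equation*}
The first two summands are the intended negative definite contributions; the remaining two are errors to be absorbed.

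Two elementary inequalities will do most of the work: since $\nu > 1$, one has $\beta(t) \leq \varepsilon_0 \alpha(t)$ for every $t \geq 0$; since $a < 1$, one has $|\beta'(t)| + 2\alpha(t)\beta(t) \leq (a\nu + 2)\alpha(t)\beta(t)$. The nonlinear error is then controlled by Lemma~\ref{lem:estmHprime}: as long as $\|\vec u(t)\|_{\calH} < R_1$, one has $|\langle f'(u) u_t, u_t\rangle_{H^{-1}}| \leq c R_1^{\theta-1} \|u_t\|_{L^2}^2$ (this is precisely where the restriction $\theta \leq 4$ for $d = 3$ enters), so the third term is bounded by $O(\varepsilon_0 R_1^{\theta-1})\alpha\|u_t\|_{L^2}^2$ and is absorbed into $-\alpha\|u_t\|_{L^2}^2$ when $\varepsilon_0$ is small.

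The main obstacle is the sign-indefinite last term $(\beta' - 2\alpha\beta)\Psi(t)$. My plan is to apply Cauchy-Schwarz, $|\Psi| \leq \|G\|_{H^{-1}}\|u_t\|_{L^2}$, followed by a weighted Young inequality, distributing its bound between $\tfrac{\beta}{2}\|G\|_{H^{-1}}^2$ (absorbed into $-\beta\|G\|_{H^{-1}}^2$) and $O(\varepsilon_0)\alpha\|u_t\|_{L^2}^2$ (absorbed into $-\alpha\|u_t\|_{L^2}^2$), using crucially that $\alpha\beta \leq \varepsilon_0$. Choosing $\varepsilon_0$ small enough, depending only on $R_1$ and $f$, then produces
\begin{equation*}
H_\nu'(t) \leq -\tfrac{1}{2} \alpha(t)\|u_t(t)\|_{L^2}^2 - \tfrac{1}{2}\beta(t)\|G(t)\|_{H^{-1}}^2 \leq 0,
\end{equation*}
strictly negative unless $\vec u(t) = (Q,0)$, a possibility excluded by hypothesis (and by uniqueness of the flow). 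The second sentence of the lemma is then immediate: by definition of $T_m$ one has $\|\vec u(t)\|_{\calH} < R_1$ on $[n_m^{3/2}, T_m)$, so the bound above applies there, and by continuity of $H_\nu' \in C^0$ it extends to the closed interval $\tilde I_m$.
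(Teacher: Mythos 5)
Your argument is correct and follows essentially the same route as the paper's proof: your formula for $H_\nu'$ (obtained via $\Psi' = \|u_t\|_{L^2}^2 - \langle f'(u)u_t, u_t\rangle_{H^{-1}} - \|G\|_{H^{-1}}^2 - 2\alpha\Psi$) is precisely the paper's identity~\eqref{eq:5auxequal1}, and both proofs then absorb the error terms by combining Lemma~\ref{lem:estmHprime} with Cauchy--Schwarz, a weighted Young inequality, and smallness of $\varepsilon_0$. The only minor difference is that your pointwise inequalities $\beta \le \varepsilon_0\alpha$ and $|\beta'| \le a\nu\,\alpha\beta$ (both valid for all $t\ge 0$ because $\nu>1$ and $a<1$) make the absorption uniform in time and so avoid the paper's appeal to a ``large $t_0$'' in its smallness condition~\eqref{eq:5ex9}.
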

\begin{proof}
The functional $H_{\nu}$ is well-defined and  continuous. Indeed, it suffices to use the continuity properties of $\vec u$ and the following estimates,
 \begin{equation}
\label{eq:5ex1}
| \langle -\Delta u +u, u_t \rangle_{H^{-1}} | \leq  \| u\|_{H^1} \| u_t \|_{H^{-1}} \leq C  \| u\|_{H^1} \| u_t\|_{L^2},
\end{equation}
and, due to Hypothesis \ref{H2f},
 \begin{equation}
\label{eq:5ex2}
| \langle f(u), u_t \rangle_{H^{-1}} | \leq  \| f(u)\|_{H^{-1}} \| u_t \|_{H^{-1}}
 \leq C  ( \| u\|_{H^1} + \| u\|_{H^1}^{\theta^*})
 \| u_t\|_{L^2}.
\end{equation}
To obtain the final estimate here, we note that 
\begin{equation}
\label{eq:5ex2BIS}
\begin{split}
\| f(u)\|_{H^{-1}} &\leq C \big( \| u\|_{L^2} + \| |u|^{\theta^*}\|_{H^{-1}}\big)
\leq C \big( \| u\|_{L^2} + \| |u|^{\theta^*}\|_{L^{2d/(d+2)}}\big) \cr
&\leq C \big(\| u\|_{L^2} + \| u\|_{H^1}^{\theta^*}\big),
\end{split}
\end{equation}
since $L^{2d/(d+2)}(\R^d) \embed H^{-1}(\R^d)$ and $\theta^* \cdot\frac{2d}{d+2} = \frac{2d}{d-2}= 2^*$.
Next, one needs to show that $H_{\nu}$ is differentiable and that the derivative is continuous. To show that $H_{\nu}$ is differentiable and $C^1$ in $t$, we first assume that the initial data $\vec u(0)$ belong to $H^2(\R^d) \times H^1(\R^d)$. Then the solution $\vec u(t)$ is classical and belongs to $C^1([0,  \infty), H^1(\R^d)) \cap C^0([0,  \infty), H^2(\R^d)) \cap C^2([0, \infty), L^2(\R^d))$. In this case, one can also compute the derivative of $H_{\nu}$ in a classical way and one finds that, for $t \geq 0$,
\begin{equation*}
\begin{split}
H_{\nu}'(t) = & -2\alpha(t) \| u_t(t)\|_{L^2}^2 - 
\frac{a \nu \varepsilon_0 }{(1+t)^{a\nu +1}} \langle -\Delta u+u -f(u), u_t 
\rangle_{H^{-1}} \cr
& + \frac{\varepsilon_0}{(1+t)^{a\nu}} \langle -\Delta u + u -f(u), u_{tt} \rangle_{H^{-1}} \cr
& +  \frac{\varepsilon_0}{(1+t)^{a\nu}} \langle -\Delta u_t + u_t - f'(u)u_t,u_t 
\rangle_{H^{-1}},
\end{split}
\end{equation*}
which implies that, for $t \geq 0$,
\begin{equation}
\label{eq:5auxequal1}
\begin{split}
H_{\nu}'(t) = & -2\alpha(t) \| u_t(t)\|_{L^2}^2 - 
\frac{\varepsilon_0 \nu a}{(1+t)^{a\nu +1}} \langle -\Delta u+u -f(u), u_t 
\rangle_{H^{-1}} \cr
& - \frac{\varepsilon_0}{(1+t)^{a\nu}} \|  -\Delta u + u -f(u)\|_{H^{-1}}^2 \cr
& - \frac{2\varepsilon_0}{(1+t)^{a(\nu +1)}} \langle -\Delta u + u -f(u), u_t \rangle_{H^{-1}} \cr
& +  \frac{\varepsilon_0}{(1+t)^{a\nu}} \langle -\Delta u_t + u_t - f'(u)u_t,u_t 
\rangle_{H^{-1}}.
\end{split}
\end{equation}
In the above expression of $H_{\nu}'(t)$, the first to the fourth terms are always well-defined (see \eqref{eq:5ex1} and \eqref{eq:5ex2}) and one has good bounds even if the initial data are only in $\mathcal{H}$. It remains to prove that it is also the case for the last term.
In the previous lemma we established the bound 
\begin{equation*}
| \langle f'(u)u_t,u_t \rangle_{H^{-1}} | \leq C \| u_t\|_{L^2}^2 ( 1+ \| u\|_{H^1}^{\theta -1}),
\end{equation*}
 see \eqref{eq:5ex3}.  In addition, one has the following   estimate
\begin{equation}
\label{eq:5ex7}
|\langle -\Delta u_t + u_t, u_t \rangle_{H^{-1}}| \leq C \| u_t\|_{L^2}^2.
\end{equation}
We next apply Lemma \ref{lem:estmHprime} and the Cauchy-Schwarz inequality to \eqref{eq:5auxequal1}. Thus there exist $\varepsilon_0 >0$ small enough, and $t_0>0$ large enough, so that for all $t \geq t_0$, 
\begin{equation}
\label{eq:5ex8}
H_{\nu}'(t)  \leq -\frac{(2-\varepsilon_0 C(R_1))}{(1+t)^{a}}\| u_t\|_{L^2}^2  - \frac{\varepsilon_0}{4(1+t)^{a\nu}}
\|  -\Delta u + u -f(u)\|_{H^{-1}}^2 \leq 0,
\end{equation}
 provided that $\| \vec u(t) \|_{\mathcal{H}} \leq R_1$,
and, say, 
 \begin{equation}
\label{eq:5ex9}
2-\varepsilon_0 C(R_1)  \geq 3/2. 
\end{equation}
If $H_{\nu}'(\tilde t)=0$, then $u_{t}=0$ and $-\Delta u + u - f(u)=0$ at time $t=\tilde t$.  Therefore, $u$ is an equilibrium $(\tilde Q,0)$. By Proposition~\ref{prop:H3estconv} we then must have $Q=\tilde Q$.  Thus in fact $H_{\nu}'(t)<0$. 
These estimates are still valid when the initial data belong to $\mathcal{H}$ only. Thus, arguing by density, we can show that $H_{\nu}$ is still of class $C^1$ even if the initial data only belong to~$\mathcal{H}$.
 \end{proof}
 
We derive the following conclusions from the previous lemma: 
Assume that the trajectory $\vec u(t)$ is bounded in $\mathcal{H}$ for $t \geq 0$ by say the constant $R_1$. In Theorem \ref{ThBRS1}, we have seen that there exists a sequence of times $t_{n_m}$ such that $\vec u(t_{n_m})$ converges to $(Q,0)$ as $t_{n_m}$ goes to infinity, which means that $H_{\nu}(t_{n_m})$ goes to $0$ when $t_{n_m}$ goes to infinity. On the other hand, since $\vec u(t)$ is bounded in $\mathcal{H}$ by the constant $R_1$, the function $H_{\nu}(t)$ is decreasing for $t\geq t_0$. Since it is bounded from below, the function $H_{\nu}(t)$ converges as $t$ goes to infinity and thus we may conclude that
 \begin{equation}
\label{eq:5auxest10}
\lim_{t \to + \infty} H_{\nu}(t) =0.
\end{equation}
Thus, without loss of generality, we may assume that, by choosing the above time $t_0>0$
 large enough, $H_{\nu}(t) >0$ for $t\geq t_0$.
  
  If we do not know a priori that the trajectory $\vec u(t)$ remains bounded in $\mathcal{H}$, then we cannot conclude that $H_{\nu}(t)$ stays positive for $t\geq t_0$. The above arguments (see \eqref{eq:5ex8}) only show that, for $t \in [t_0,T_1]$, where $[t_0,T_1]$ is defined as the maximal time interval (beginning at $t_0$) on which  
$\| u(t) \|_{\mathcal{H}} \leq R_1$,  the function $H_{\nu}$ is non increasing. It could be that on this time interval the function $H_{\nu}$ becomes negative. Indeed, the function $H_{\nu}$ could become again positive later, when $\| u(t) \|_{\mathcal{H}} > R_1$. This possibility makes the proof below more delicate, as several different cases will need to be considered.

 \subsection{\texorpdfstring{$H_\nu$}{H-nu} becomes negative at some time} 
  
The main technical result in this case is the following one. 

\begin{proposition}\label{prop:H-}
Assume that there exists $t_1 \in I_{m_0}$ so that $H_{\nu}(t_1)\le 0$, with $m_{0}$ sufficiently large. Denote the right end-point of $\tilde I_{m_{0}}$ by~$T_{1}$.   Then $H_{\nu}(t)$ is decreasing and thus $H_{\nu}(t) <0$ on $(t_1,T_1]$. Furthermore, if  $\nu$ is sufficiently close to~$1$, then 
\begin{equation}\label{eq:Hnegprop}
\int_{t_1}^{T_1}\| u_t(s)\|_{L^2}    ds  \leq C  t_1^{-a(\nu -1)},
\end{equation}
where $C$ is a positive absolute constant (independent of $m_0$).
\end{proposition}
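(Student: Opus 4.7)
The monotonicity and sign claim follows immediately from Lemma~\ref{lem:Hnu}: on $\tilde I_{m_0}$ (where $\|\vec u\|_{\mathcal{H}}\le R_1$) the estimate~\eqref{eq:5ex8},
$$
-H_\nu'(t)\ge \frac{3/2}{(1+t)^a}\|u_t\|_{L^2}^2+\frac{\varepsilon_0/4}{(1+t)^{a\nu}}\|-\Delta u+u-f(u)\|_{H^{-1}}^2,
$$
is strict unless $\vec u(t)=(Q,0)$, so $H_\nu$ is strictly decreasing on $\tilde I_{m_0}$. Consequently the hypothesis $H_\nu(t_1)\leq 0$ propagates to $H_\nu(t)<H_\nu(t_1)\le 0$ on $(t_1,T_1]$.

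For the integral estimate, the plan is to combine this two-term dissipation inequality with the {\L}ojasiewicz--Simon inequality of Appendix~A, which provides $\theta_{LS}\in(0,\tfrac12]$ and $C>0$ such that $|E(\vec u)-E(Q,0)|^{1-\theta_{LS}}\le C(\|-\Delta u+u-f(u)\|_{H^{-1}}+\|u_t\|_{L^2})$ whenever $\|\vec u-(Q,0)\|_{\mathcal{H}}\le \rho_0$. Since $t_1\in I_{m_0}$, Proposition~\ref{prop:H3estconv} with the choice $\rho=\rho_0$ in Definition~\ref{defi:51} gives $\|\vec u(t_1)-(Q,0)\|_{\mathcal{H}}\le \rho_0$, and Taylor expansion around the critical point $(Q,0)$ yields $-H_\nu(t_1)\le C\rho_0^2$. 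Cauchy--Schwarz applied to the corrector term of~\eqref{eq:functionH} turns the condition $H_\nu(t)\le 0$ into
$$
E(\vec u(t))-E(Q,0)\le \frac{\varepsilon_0}{(1+t)^{a\nu}}\,\|-\Delta u+u-f(u)\|_{H^{-1}}\,\|u_t\|_{L^2},
$$
which combined with the energy monotonicity~\eqref{Et1t2} forces $E(\vec u(t))-E(Q,0)\le C\rho_0^2\varepsilon_0(1+t_1)^{-a\nu}$ throughout $[t_1,T_1]$.

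Let $\tilde T\in [t_1,T_1]$ denote the largest time for which $\|\vec u(s)-(Q,0)\|_{\mathcal{H}}\le 2\rho_0$ on $[t_1,\tilde T]$. On this interval the {\L}ojasiewicz--Simon inequality applies; differentiating $(-H_\nu)^{\theta_{LS}}$ and combining it via an AM--GM balancing of the weights $(1+t)^{-a}$ and $(1+t)^{-a\nu}$ appearing in the dissipation inequality yields, for $\nu$ sufficiently close to~$1$ so that the exponents align, a differential inequality of the shape
$$
\frac{d}{dt}\bigl[(-H_\nu(t))^{\theta_{LS}}\bigr]\gtrsim (1+t)^{a(\nu-1)}\|u_t(t)\|_{L^2}.
$$
Integrating on $[t_1,\tilde T]$, and using $(-H_\nu(t))^{\theta_{LS}}\le C$ on $\tilde I_{m_0}$ together with $(1+s)^{a(\nu-1)}\ge (1+t_1)^{a(\nu-1)}$ for $s\ge t_1$, produces~\eqref{eq:Hnegprop} on $[t_1,\tilde T]$. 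Choosing $m_0$ large enough so that the right-hand side of~\eqref{eq:Hnegprop} lies below the $\delta_0$ of Corollary~\ref{cor:key}, that corollary then prevents the trajectory from exiting $B_{\mathcal{H}}((Q,0),2\rho_0)$, so $\tilde T=T_1$ and the proof is complete.

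The main obstacle is precisely this bootstrap: a priori one only has the coarse bound $\|\vec u\|_{\mathcal{H}}\le R_1\gg \rho_0$ on $\tilde I_{m_0}$, which is too weak for {\L}ojasiewicz--Simon to be invoked. The argument must therefore be closed self-consistently by feeding the integrated bound on $\|u_s\|_{L^2}$ back through Corollary~\ref{cor:key} in order to confine the trajectory to the small ball; the condition that $\nu>1$ be close to~$1$ is dictated exactly by the need to balance the two dissipation weights against the {\L}ojasiewicz exponent $\theta_{LS}$, so that the power $(1+t)^{a(\nu-1)}$ produced in the differential inequality for $(-H_\nu)^{\theta_{LS}}$ is a (slow) growth rather than a decay.
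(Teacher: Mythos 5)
Your treatment of the monotonicity statement is correct and matches the paper: Lemma~\ref{lem:Hnu} (via~\eqref{eq:5ex8}) gives the strict decrease of $H_\nu$ on $\tilde I_{m_0}$, hence $H_\nu<0$ on $(t_1,T_1]$ once it reaches a non-positive value.

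The estimate~\eqref{eq:Hnegprop}, however, is established by a genuinely different mechanism in the paper, and the key differential inequality in your proposal does not actually follow from what you have. You claim that after invoking {\L}ojasiewicz--Simon and balancing the weights one obtains
\begin{equation*}
\frac{d}{dt}\bigl[(-H_\nu(t))^{\theta_{LS}}\bigr]\gtrsim (1+t)^{a(\nu-1)}\|u_t(t)\|_{L^2}.
\end{equation*}
Writing $\frac{d}{dt}(-H_\nu)^{\theta_{LS}}=\theta_{LS}(-H_\nu)^{\theta_{LS}-1}(-H_\nu')$ and using the only available upper bound on $-H_\nu$ in the regime $H_\nu\le 0$, namely $-H_\nu\le \frac{\varepsilon_0}{(1+t)^{a\nu}}\|{-\Delta u+u-f(u)}\|_{H^{-1}}\|u_t\|_{L^2}$, one checks that with $\theta_{LS}=\tfrac12$ (which is what Theorem~\ref{th:Loja} provides) the resulting lower bound is of the form $(1+t)^{-a/2}\bigl(\|u_t\|_{L^2}\,\|{-\Delta u+u-f(u)}\|_{H^{-1}}\bigr)^{1/2}$, not $(1+t)^{a(\nu-1)}\|u_t\|_{L^2}$. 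There is no version of the {\L}ojasiewicz--Simon inequality that controls $-H_\nu$ in the way needed here: the inequality bounds $|J(u)-J(Q)|$ from above by $\|\mathcal{M}(u)\|_{H^{-1}}^2$, whereas $-H_\nu$ is dominated by the corrector $\frac{\varepsilon_0}{(1+t)^{a\nu}}\langle\mathcal{M}(u),u_t\rangle_{H^{-1}}$, which {\L}ojasiewicz does not see. In the positive case (Section~\ref{subsec:LS}) the sign cooperates: $H_\nu\le C(\|\mathcal{M}\|^2+\|u_t\|^2)$ and $-H_\nu'\gtrsim (1+t)^{-a\nu}(\|\mathcal{M}\|^2+\|u_t\|^2)$ close a Gronwall inequality for $H_\nu$; that argument has no analogue for $-H_\nu$.

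What the paper actually does in the negative case is entirely elementary and does not invoke {\L}ojasiewicz at all. From $H_\nu\le 0$ one reads off directly (your Cauchy--Schwarz step, which is correct) that $G(t):=E(\vec u(t))-E(Q,0)$ satisfies $0<G(t)\le \frac{\varepsilon_0 C_2}{(1+t)^{a\nu}}\|u_t\|_{L^2}$, and combined with $G'=-2\alpha\|u_t\|_{L^2}^2$ this produces the Riccati inequality $G'+C_3(1+t)^{a(2\nu-1)}G^2\le 0$, which is integrated explicitly (see~\eqref{eq:5exG5}--\eqref{eq:5exG6}) to give polynomial decay of $G$. The bound on $\int\|u_t\|$ then follows by Cauchy--Schwarz and a dyadic decomposition, together with a separate (rather delicate) analysis near $t_1$ that tracks the monotonicity of $\|u_t(t)\|_{H^{-1}}$ through a recursive use of the other consequence of $H_\nu\le 0$, namely the first line of~\eqref{eq:5exG1}. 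Your plan also introduces a bootstrap $\tilde T$ and Corollary~\ref{cor:key} inside the proof of the proposition, but the proposition holds with no smallness assumption on $\|\vec u-(Q,0)\|_{\mathcal{H}}$; the trapping statement enters only afterward, in Corollary~\ref{cor:H-1}, as a consequence of~\eqref{eq:Hnegprop}. You should replace the {\L}ojasiewicz step by the Riccati argument and handle the short interval $[t_1,\min(2t_1,T_1)]$ separately along the lines of the paper.
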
 

We remark that $T_{1}$ always lies to the right of $I_{m_0}$. 
The proof of Proposition~\ref{prop:H-} is quite involved, so we first show how to deduce $T_1=\infty$ from it. This is the key step in the proof  of Theorem~\ref{ThBRS2}. 
To this end, fix a small positive constant $\delta_0 \leq \frac{\rho_0}{8}$ (the precise condition will be specified later).
By Proposition~\ref{prop:H3estconv}, there exists $m_0$, so that, for $m \geq m_0$, we have,
\begin{equation}
\label{eq:5auxdefm0}
\| (Q,0) - \vec u(s)\|_{\mathcal{H}} \leq \frac{\delta_0}{2} \leq \frac{\rho_0}{16}, \quad \forall s \in I_{m}.
\end{equation}
We also choose $m_0$ large enough so that, for $t \geq t_0 \equiv n_{m_0}^{3/2}$, the non-increasing property \eqref{eq:5ex8} of Lemma~\ref{lem:Hnu} holds, as long as $t \geq t_0$ and provided that $\| \vec u(t)\|_{\mathcal{H}} \leq R_1$. 

\begin{cor}
\label{cor:H-1}
Suppose $\delta_0>0$ is sufficiently small and $m_0$ is large. Then under the conditions of Proposition~\ref{prop:H-}, 
 \begin{equation}
\label{eq:uQclose}
 \| \vec u(t) - (Q,0)\|_{\mathcal{H}} \leq \frac{\rho_0}{2},
\end{equation}
for all times $t\ge t_1$. In particular, $T_1= \infty$. 
\end{cor}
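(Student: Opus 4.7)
The plan is to apply the trapping mechanism of Corollary~\ref{cor:key} on the interval $[t_1,T_1]$, using Proposition~\ref{prop:H-} to control the $L^1_tL^2_x$-norm of $u_t$, and then to derive a contradiction with the definition of $T_1$.

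First I would set up the parameters. Let $\rho:=\rho_0/4$ and let $\delta_0=\delta_0(\rho,d,Q,f)$ be the corresponding threshold in Corollary~\ref{cor:key}. Since $t_1\in I_{m_0}$, the bound \eqref{eq:5auxdefm0} gives $\|\vec u(t_1)-(Q,0)\|_{\mathcal{H}}\le \delta_0/2\le \rho_0/16<\rho$, which supplies both the required $\mathcal{H}$-closeness at the starting time and the $L^2$-closeness $\|u(t_1)-Q\|_{L^2}\le \delta_0/2$.

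Next I would feed in the $L^1$ estimate from Proposition~\ref{prop:H-}: choosing $\nu>1$ (and close enough to $1$ so that $a\nu<\tfrac13$), we have
\begin{equation*}
\int_{t_1}^{T_1}\|u_t(s)\|_{L^2}\,ds\;\le\; C\,t_1^{-a(\nu-1)}.
\end{equation*}
Because $t_1\ge n_{m_0}^{3/2}\to\infty$ as $m_0\to\infty$ and $a(\nu-1)>0$, enlarging $m_0$ (depending on $\delta_0$) makes the right side $\le\delta_0/2$. Combined with the bound on $\|u(t_1)-Q\|_{L^2}$, the hypothesis of Corollary~\ref{cor:key} is satisfied on $[t_1,T_1]$, so
\begin{equation*}
\|\vec u(t)-(Q,0)\|_{\mathcal{H}}\;\le\;2\rho\;=\;\rho_0/2,\qquad \forall\,t\in[t_1,T_1].
\end{equation*}

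Finally I would argue $T_1=\infty$ by contradiction. If $T_1<\infty$, then by continuity $\|\vec u(T_1)\|_{\mathcal{H}}=R_1$. But the previous bound together with $\|(Q,0)\|_{\mathcal{H}}=R$ gives $\|\vec u(T_1)\|_{\mathcal{H}}\le R+\rho_0/2$, which is strictly less than $R_1$ since $R_1\ge 8R_0\ge 32R$ and $\rho_0\le R_0$. This contradicts $T_1<\infty$, so $T_1=\infty$ and \eqref{eq:uQclose} holds on all of $[t_1,\infty)$.

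The main (and essentially only) obstacle is quantitative: ensuring that the two small-data inputs required by Corollary~\ref{cor:key} are both bounded by $\delta_0$, which forces the simultaneous choice of $\nu>1$ (so that the Proposition~\ref{prop:H-} bound actually decays), $m_0$ large (so that $t_1^{-a(\nu-1)}$ is small), and $\delta_0$ small enough that $8R_0+\rho_0<R_1$. Everything else follows from chaining the already-established ingredients.
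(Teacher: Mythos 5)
Your argument is correct and follows essentially the same route as the paper's proof: feed the $L^1_tL^2_x$ bound from Proposition~\ref{prop:H-} together with the $L^2$-closeness at $t_1$ (from~\eqref{eq:5auxdefm0}) into the trapping Corollary~\ref{cor:key}, then contradict the maximality of $T_1$ using the choice $R_1\gg R_0\ge \max(4R,2\rho_0)$. The only cosmetic difference is that you explicitly set $\rho=\rho_0/4$ to land on the stated bound $\rho_0/2$, whereas the paper absorbs this into the clause ``if $\delta_0$ is chosen small enough.''
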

\begin{proof}
We  choose $m_0$ large so that 
\begin{align}
\label{eq:5exG12}
\int_{t_1}^{T_1}\| u_t(s)\|_{L^2}   ds &\leq C   t_1^{-a(\nu -1)} \leq \frac{\delta_0}{2}, \\
\| u(t_{1})-Q\|_{L^{2}} &\leq \frac{\delta_{0}}{2}.
\end{align}
By Corollary~\ref{cor:key} for all  $t \in [t_1,T_1]$,
 \begin{equation}
\label{eq:5exG15}
 \| \vec u(t) - (Q,0)\|_{\mathcal{H}} \leq \frac{\rho_0}{2},
\end{equation}
if $\delta_0>0$ is chosen small enough.  

Recall that $[t_1, T_1]$ is the maximal interval on which 
$\| \vec u(t)\|_{\mathcal{H}} \leq R_1$.   The property \eqref{eq:5exG15} says that 
on this same time interval, cf.~Definition~\ref{defi:51}, 
$$ 
\| \vec u(t)\|_{\mathcal{H}} \leq \frac{\rho_0}{2} + R_0/4 < R_0/2 < R_1/16,
$$
which leads to a contradiction if $T_1 <  \infty$. Thus we have shown that, if there exists $t_1 \in I_{m_0}$ so that $H_{\nu}(t_1)<0$, then $\vec u(t)$, $t \geq t_1$ never leaves the ball $B_{ \mathcal{H}}((Q,0),R_1/16)$ and thus that, for $t\geq t_1$, the estimate 
\eqref{eq:5exG15} holds for all times.
\end{proof}

 \begin{proof}[Proof of Proposition~\ref{prop:H-}]
 If $H_{\nu}(t)\le 0$, then
\begin{equation}\label{eq:Hkleiner0}
0 < E( \vec u(t)) - E((Q,0)) \leq - \frac{\varepsilon_0}{(1+t)^{a \nu}} \langle -\Delta u(t) +u(t)  -f(u(t)), u_t (t)\rangle_{H^{-1}}.
\end{equation}
Therefore on this time interval, we have the following two kinds of inequalities
\begin{equation}
\label{eq:5exG1}
\begin{split}
0 < E( \vec u(t)) - E((Q,0)) &\leq \frac{2 \varepsilon_0}{(1+t)^{a (\nu +1)}}  
\| u_t(t)\|_{H^{-1}}^2 + \frac{ \varepsilon_0}{2(1+t)^{a \nu}}\frac{\partial}{\partial t}
\| u_t(t) \|_{H^{-1}}^2 ,\cr 
0 < E( \vec u(t)) - E((Q,0)) &
\leq \frac{\varepsilon_0 C_{2}}{(1+t)^{a \nu}} \|  u_t (t)\|_{H^{-1}}\leq  \frac{\varepsilon_0 C_{2}}{(1+t)^{a \nu}} \|  u_t (t)\|_{L^2}  ,
\end{split}
\end{equation}
for some $C_2=C_2(R_1)>0$.  The estimates in the first line follows from~\eqref{eq:Hkleiner0} and the PDE, whereas the second line follows from \eqref{eq:Hkleiner0} and the estimates \eqref{eq:5ex1} and \eqref{eq:5ex2}.
To bound the time integral in Proposition~\ref{prop:H-}, we introduce the  {\em positive} functional
\begin{equation}
\label{eq:5exG2}
G(t) = E( \vec u(t)) - E((Q,0)),
\end{equation}
the derivative of which is 
\begin{equation}
\label{eq:5exG3}
G'(t) = -2 \alpha(t) \| u_t(t)\|_{L^2}^2.
\end{equation}
Positivity of $G$ is guaranteed since $\vec u$ is not stationary by assumption. 
The second line of~\eqref{eq:5exG1} and the equality \eqref{eq:5exG3} imply that $G$ satisfies the differential inequality 
\begin{equation}
\label{eq:5exG4}
G'(t) + C_3 (1+ t)^{a(2\nu -1)} G^2(t) \leq 0 
\end{equation}
on $[t_1,T_1]$, 
where $C_3= 2(\varepsilon_0C_2)^{-2}$. Integrating this inequality, we find that, for $t \in 
[t_1,T_1]$,
\begin{equation}
\label{eq:5exG5}
G(t) \leq   \Big(G^{-1}(t_1) + \frac{C_3}{a(2\nu -1) +1} [(1+t)^{a(2\nu -1) +1}
- (1+t_1)^{a(2\nu -1) +1}] \Big)^{-1} ,
\end{equation}
and also, by using the inequalities \eqref{eq:5exG1} and \eqref{eq:5auxdefm0},
\begin{equation}
\label{eq:5exG6}
\begin{split}
G(t) \leq &  \Big(\frac{8(1+t_1)^{a\nu}}{C_4 \rho_0} + \frac{C_3}{a(2\nu -1) +1} [(1+t)^{a(2\nu -1) +1}- (1+t_1)^{a(2\nu -1) +1}] \Big)^{-1}\cr
\leq &  \Big(\frac{(1+t_1)^{a\nu}}{C_4 R_1} + \frac{C_3}{a(2\nu -1) +1} [(1+t)^{a(2\nu -1) +1}- (1+t_1)^{a(2\nu -1) +1}] \Big)^{-1},
\end{split}
\end{equation}
where $C_4= \varepsilon_0 C_2$. 

We next estimate the quantity $\int_{t}^{t+\tau}\| u_t(s)\|_{L^2}   ds$, for any $t\ge0$, $\tau\ge0$. 
 From \eqref{eq:5exG3} and the Cauchy-Schwarz inequality, 
\begin{equation}
\label{eq:5exG7}
\begin{split}
\int_{t}^{t+\tau}\| u_t(s)\|_{L^2}  ds \leq  & 
\big( \int_{t}^{t+\tau} (1+s)^{a}   ds \big)^{1/2} 
\big(2 \int_{t}^{t+\tau} \alpha(s) \| u_t(s)\|_{L^2}^2  ds \big)^{1/2} \cr
\leq  &  \tau^{1/2}(1 +t +\tau)^{a/2} (G(t) -G(t+\tau))^{1/2}\\
  \leq  &
 \tau^{1/2}(1 +t +\tau)^{a/2} G(t)^{1/2}. 
\end{split}
\end{equation}
In a first step, we shall assume that  $t_1 \leq t  \leq T_1$ and use~\eqref{eq:5exG6} to control $G(t)$.  Note that we do not require that $ t+\tau\leq T_1$. Indeed, the value of $G(t+\tau)$ plays no role in the previous estimate, so \eqref{eq:5exG6} only enters in the 
estimation of~$G(t)$.

\subsubsection{The case $2t_1 \leq t    \leq T_1$.}
 
 We deduce from 
\eqref{eq:5exG6} and \eqref{eq:5exG7} that,
\begin{equation}
\label{eq:5exG9}
\begin{split}
\int_{t}^{2t}\| u_t(s)\|_{L^2}   ds \leq & \frac{2^{a/2} (a(2\nu -1) +1)^{1/2}
t^{1/2} (1 + t)^{a/2}}{C_3^{1/2}  (1+t)^{(a(2\nu -1)+1)/2} 
[1- (\frac{1+ t_1}{1+2t_1})^{a(2\nu -1)+1}]^{1/2}}\cr
\leq & \frac{C_5(a(2\nu -1) +1)^{1/2}(1+t)^{-a(\nu -1)}}
{[1- (\frac{1+ t_1}{1+2t_1})^{a(2\nu -1)+1}]^{1/2}}.
\end{split}
\end{equation}
Without loss of generality, we may assume that we have chosen $m_0$ large enough so that $t_0\equiv n_{m_0}^{\gamma}$, $\gamma=\frac32$,  satisfies
$$
\frac{1+ t_1}{1+2t_1}  \leq \frac{1+ t_0}{1+2t_0}  \leq \frac{3}{4}.
$$
Thus, we deduce from the estimate \eqref{eq:5exG9} that
\begin{equation}
\label{eq:5exG10}
\int_{t}^{2t}\| u_t(s)\|_{L^2}   ds \leq C_6 (1+t)^{-a(\nu -1)},
\end{equation}
where $C_6= 2 C_5(a(2\nu -1) +1)^{1/2}$. Let $C_7= \max(C_5,C_6)$.
Since  $\nu>1$ and $a>0$,  the estimates 
\eqref{eq:5exG9} and \eqref{eq:5exG10} imply that
\begin{equation}
\label{eq:5exG11}
\int_{2t_1}^{T_1}\| u_t(s)\|_{L^2}   ds \leq  \sum_{n=1}^{\infty} \frac{C_7}{2^{an(\nu -1)}
t_1^{a(\nu -1)}} \leq \frac{C_7}{t_1^{a(\nu -1)} (1- 2^{-a(\nu -1)})} \leq C_8 
 t_1^{-a(\nu -1)}.
\end{equation}

\subsubsection{The case $t_1 \leq t  \leq \min(2t_1,T_1)$.} 
 
We set $T_2:=\min(2t_1,T_1)$. 
Going back to the inequality \eqref{eq:5exG1}, we now exploit the inequality on the first line. To do so, we need to take  the sign of  $\frac{d}{dt}\| u_t(t) \|_{H^{-1}}^2$ into account.

\noindent {\bf Case 1} $ \| u_t(t) \|_{H^{-1}}$ is nondecreasing on some interval $[t_1, t_1+\tau]\subset [t_1, T_2]$, with~$\tau>0$.

 In particular, $\frac{d}{dt}\| u_t(t) \|_{H^{-1}}\ge 0$ for all $t_1\le t\le t_1+\tau$. We now maximize $\tau$ with this property. I.e., $t^*=t_1+\tau\le T_2$ is maximal with the property that $ \| u_t(t) \|_{H^{-1}}$ is nondecreasing on $[t_1, t^*]$. Note that if $t^*<T_2$, then there exists a decreasing sequence $\{s_k \}_{k=1}^\infty$ in $[t^*, T_2]$ with 
\begin{equation}\label{eq:tstar} 
\frac{d}{dt}\| u_t(s_k) \|_{H^{-1}} <  0=\frac{d}{dt}\| u_t(t^*) \|_{H^{-1}}, \qquad s_k\to t^* .
\end{equation}
We remark that we cannot claim that   $\| u_t(t^*) \|_{H^{-1}}$ is a maximum, since $\| u_t(t) \|_{H^{-1}}$ might oscillate to the right of~$t=t^*$.
Let us recall that  
$$
\| v\|_{H^{-1}} \leq   \| v\|_{L^2}, \quad \forall v \in L^2,
$$
as can be seen by the Fourier transform and Plancherel. Indeed, 
\begin{equation*}
\| v\|_{H^{-1}} = \| (1+4\pi^2|\xi|^2)^{-\frac12} \hat{v}(\xi)\|_{L^2} \le \| \hat{v}(\xi)\|_{L^2} = \| v\|_{L^2}.
\end{equation*}
{}From the property \eqref{eq:5exG1}, we deduce that
\begin{equation}
\label{eq:5exG5A}
2 \int_{t_1}^{t^*} \alpha(s) \| u_{t}(s)\|_{L^2}^2   ds \leq \frac{\varepsilon_0 C_{1}}{(1+t_1)^{a \nu}} \|  u_t (t_1)\|_{H^{-1}}.
\end{equation}
Thus, there exists $0 \leq \tau_1 \leq t^* -t_1$ so that
\begin{equation*}
2 \alpha(t_1 +\tau_1) \| u_{t}(t_1+\tau_1)\|_{L^2}^2 \leq \frac{\varepsilon_0 C_{1}}{(t^* -t_1)(1+t_1)^{a \nu}} \|  u_t (t_1)\|_{H^{-1}},
\end{equation*} 
and hence
\begin{equation}
\label{eq:5exG5B}
 \| u_{t}(t_1+\tau_1)\|_{L^2}^2 \leq \frac{  \varepsilon_0 C_{1}}{ (t^* -t_1)(1+t_1)^{a (\nu -1)}}\|  u_t (t_1)\|_{H^{-1}}.
\end{equation}
Since by our nondecreasing assumption on $t_1$,
\begin{equation}
\label{eq:5exG5C}
 \| u_{t}(t_1)\|_{H^{-1}} \leq \|  u_t (t_1+\tau_1)\|_{H^{-1}},
\end{equation}
it follows from \eqref{eq:5exG5B} and \eqref{eq:5exG5C} that
\begin{equation*}
 \| u_{t}(t_1)\|_{H^{-1}}^2 \leq       \| u_{t}(t_1 +\tau_1)\|_{L^2}^2 \leq 
\frac{     \varepsilon_0 C_{1}}{ (t^* -t_1)(1+t_1)^{a (\nu -1)}}\|  u_t (t_1)\|_{H^{-1}},
\end{equation*} 
and 
\begin{equation}
\label{eq:5exG5D}
 \| u_{t}(t_1)\|_{H^{-1}} \leq \frac{     \varepsilon_0 C_{1}}{ (t^* -t_1)(1+t_1)^{a (\nu -1)}}.
\end{equation}
Using again the inequality \eqref{eq:5exG1}, we deduce from \eqref{eq:5exG5D} and~\eqref{eq:5exG5A} (replacing $t^{*}$ on the left-hand side by~$\infty$) that
\begin{equation}
\label{eq:5exG5E}
2\int_{t_1}^{ \infty}\alpha(s) \| u_t(s)\|_{L^2}^2   ds \leq \frac{\varepsilon_0^2     C_1^2}{(t^* -t_1)(1+t_1)^{a(2\nu -1)}}.
\end{equation}
Let $\ell$ be a fixed positive number, which will made more precise later.  It follows from \eqref{eq:5exG5E} that
\begin{equation}
\label{eq:5exG5F}
\begin{split}
\int_{t_1}^{t^*+ \ell}\| u_t(s)\|_{L^2}    ds &\leq  2^{a/2} C(\ell) (t^* -t_1)^{1/2} (1 +t_1)^{a/2}\Big( \int_{t_1}^{ \infty}\alpha(s) \| u_t(s)\|_{L^2}^2   ds \Big)^{1/2} \cr
&\leq \frac{\varepsilon_0   C_1 C(\ell)}{ (1 +t_1)^{a(\nu -1)}}. 
\end{split}
\end{equation}
Since we may take $t_1$ larger than any fixed time, it follows from \eqref{eq:5exG5F} and Corollary~\ref{cor:key} that $T_1$, and therefore $T_2$,  cannot be arbitrarily close to~$t_1$. 
More precisely, for any fixed  $L>0$ we can assume that $T_1\ge T_2 \ge t_1+L$.

It remains to estimate the term $\int_{t^*}^{T_{2}}\| u_t(s)\|_{L^2}   ds$ when 
$t^* + \ell\leq  T_{2}$.  
We  set
\begin{equation}
\label{eq:5exdefell}
\ell = \big[\frac{1 +2 a(\nu -1)}{a\nu}\big] +1,
\end{equation}
where $\big[x \big]$ denotes the integral part of $x$. We will show by recursion that there exists a finite sequence of numbers $0 \leq  \tilde{\tau}_j \leq 1$, $j=1,2, \ldots, \ell -1$, such that
\begin{equation}
\label{eq:5exG5G}
\int_{t^* +  \sum_{j=1}^{j= \ell -1}\tilde{\tau}_j}^{\infty} \frac{1}{(1+s)^{a}}
 \| u_t(s)\|_{L^2}^2   ds 
 \leq  \frac{\varepsilon_0^{\ell}  
 2^{2(\ell-1)a}}{(1+t^*)^{a(\nu+1) + (\ell -1)a\nu}} \|  u_t (t^*)\|_{H^{-1}}^2.
\end{equation}
{}From the first line of~\eqref{eq:5exG1}  and the vanishing in \eqref{eq:tstar}, we deduce that
\begin{equation}
\label{eq:5exG5H1}
\int_{t^*}^{ \infty} \frac{1}{(1+s)^{a}}  \| u_{t}(s)\|_{L^2}^2    ds \leq \frac{\varepsilon_0}
{(1+t^*)^{a (\nu+1)}} \|  u_t (t^*)\|_{H^{-1}}^2.
\end{equation}
Hence, there exists   $\tau_1$ with $0\leq \tau_1 \leq 1$ such that 
\begin{equation}
\label{eq:5exG5H2}
\| u_{t}(t^* +\tau_1)\|_{L^2}^2  \leq \frac{\varepsilon_0 (1 +t^* +\tau_1)^{a}}
{(1+t^*)^{a (\nu+1)}} \|  u_t (t^*)\|_{H^{-1}}^2 \leq 
\frac{\varepsilon_0 2^{a}}
{(1+t^*)^{a \nu}} \|  u_t (t^*)\|_{H^{-1}}^2 .
\end{equation}
If $\frac{d}{dt}\|  u_t (t^*+ \tau_1)\|_{H^{-1}}^2 \leq 0$, we set 
$\tilde{\tau}_1 = \tau_1$. If this is not the case, we may choose $0 <\tilde{\tau}_1 < \tau_1$ so that $t^* + \tilde{\tau}_1$ is the closest time on the left of $t^* +\tau_1$ so that 
$\frac{d}{dt}\|  u_t (t^*+ \tilde{\tau}_1)\|_{H^{-1}}^2 \leq 0$ (in fact, one then has $=0$ here). The existence of $\tilde{\tau}_1$ follows from the intermediate value theorem in view of the sequence $s_k$ in~\eqref{eq:tstar}. 
Consequently, 
\begin{equation}
\label{eq:5exG5H3}
\|  u_t (t^*+ \tilde{\tau}_1)\|_{H^{-1}} <  \|  u_t (t^*+ \tau_1)\|_{H^{-1}} \leq  
\|  u_t (t^*+ \tau_1)\|_{L^2}.
\end{equation}
Applying again the property \eqref{eq:5exG1}, we obtain,
 \begin{equation}
\label{eq:5exG5H4}
 \begin{split}
\int_{t^*+ \tilde{\tau}_1}^{ \infty} \frac{1}{(1+s)^{a}}  \| u_{t}(s)\|_{L^2}^2    ds & \leq
\frac{\varepsilon_0} {(1+t^* + \tilde{\tau}_1)^{a (\nu+1)}} \|  u_t (t^*+ \tilde{\tau}_1)\|_{H^{-1}}^2 \cr
&\leq \frac{\varepsilon_0    } {(1+t^* + \tilde{\tau}_1)^{a (\nu+1)}} \|  u_t (t^*+ {\tau}_1)\|_{L^2}^2 \cr
&  \leq \frac{\varepsilon_0^2     2^{a}} {(1+t^* + \tilde{\tau}_1)^{a (\nu+1)} (1+t^*)^{a \nu}}
 \|  u_t (t^*)\|_{H^{-1}}^2,
 \end{split}
\end{equation}
where the final estimate uses~\eqref{eq:5exG5H2}. 
Therefore, there exists   $\tau_2$ with $0\leq \tau_2 \leq 1$ such that 
\begin{equation}
\label{eq:5exG5H5}
\| u_{t}(t^* + \tilde{\tau}_1 +\tau_2)\|_{L^2}^2  \leq  \frac{\varepsilon_0^2     2^{2a}} 
{(1+t^* + \tilde{\tau}_1)^{a \nu} (1+t^*)^{a \nu}} \|  u_t (t^*)\|_{H^{-1}}^2.
\end{equation}
We proceed as before. If $\frac{d}{dt}\| u_t(t^* + \tilde{\tau}_1 +\tau_2)\|_{H^{-1}}^2 \leq 0$, we set 
$\tilde{\tau}_2 = \tau_2$. If this is not the case, we may choose $0 \leq \tilde{\tau}_2 < \tau_2 \leq 1$ so that $t^* + \tilde{\tau}_1 + \tilde{\tau}_2$ is the closest time on the left of $t^* +\tilde{\tau}_1 +\tau_2$ so that 
$\frac{d}{dt}\|  u_t (t^*+\tilde{\tau}_1 +\tilde{\tau}_2)\|_{H^{-1}}^2 \leq 0$.
It is important to note that we allow $\tilde\tau_{2}=0$ at this step. 
Thus, using in view of \eqref{eq:5exG1}, and~\eqref{eq:5exG5H5} we obtain,
 \begin{equation}
\label{eq:5exG5H6}
 \begin{split}
&\int_{t^*+ \tilde{\tau}_1+ \tilde{\tau}_2 }^{ \infty} \frac{1}{(1+s)^{a}}  \| u_{t}(s)\|_{L^2}^2   ds \\ & \leq
\frac{\varepsilon_0} {(1+t^* + \tilde{\tau}_1+ \tilde{\tau}_2)^{a (\nu+1)}} \|  u_t (t^*+ \tilde{\tau}_1+ \tilde{\tau}_2)\|_{H^{-1}}^2 \cr
&\leq \frac{\varepsilon_0    } {(1+t^* + \tilde{\tau}_1+ \tilde{\tau}_2)^{a (\nu+1)}} 
\|  u_t (t^*+ \tilde{\tau}_1+{\tau}_2)\|_{L^2}^2 \cr
&  \leq \frac{\varepsilon_0^3  2^{2a}} {(1+t^* + \tilde{\tau}_1+\tilde{\tau}_2)^{a (\nu+1)} 
(1+t^*+\tilde{\tau}_1)^{a \nu} (1+t^*)^{a \nu}}
 \|  u_t (t^*)\|_{H^{-1}}^2.
 \end{split}
\end{equation}
Arguing by recursion, we finally prove that there exists a sequence of  numbers $0 \leq  \tilde{\tau}_j \leq 1$, $j=1,2, \ldots, \ell -1$, such that
\begin{equation}
\label{eq:5exG5H7}
 \begin{split}
 \int_{t^* + \ell-1}^{\infty} \frac{1}{(1+s)^{a}} \| u_t(s)\|_{L^2}^2   ds &\leq 
  \int_{t^* +  \sum_{j=1}^{j= \ell -1}\tilde{\tau}_j}^{\infty} \frac{1}{(1+s)^{a}} 
  \| u_t(s)\|_{L^2}^2   ds \cr
  & \leq  \frac{\varepsilon_0^{\ell}   2^{2(\ell-1)a}}{(1+t^*)^{a(\nu+1) + (\ell -1)a\nu}} \|  u_t (t^*)\|_{H^{-1}}^2 \cr
 &\leq  \frac{ C^*(\ell) R_1^2}{(1+t^*)^{\ell a\nu + a}},
 \end{split}
\end{equation}
where $C^*(\ell)$ is a positive constant depending on $\ell$ only. 
 It follows from \eqref{eq:5exG5H7} that
\begin{equation}
\label{eq:5exG5H8}
\begin{split}
\int_{t^*+\ell}^{T_{2}}\| u_t(s)\|_{L^2}  ds &\leq 
 (2t_1)^{1/2}(1 + 2t_1)^{a/2} \Big( \int_{t^* + \ell-1}^{\infty} \frac{1}{(1+s)^{a}} \| u_{t}(s)\|_{L^2}^2   ds \Big)^{1/2} \cr
 & \leq  \frac{C^{**}(\ell) R_1}{(1+t_1)^{a(\nu -1)}}  ,
\end{split}
\end{equation}
where $C^{**}(\ell)$ is a positive constant depending on $\ell$ only.  The final estimate here uses the definition~\eqref{eq:5exdefell}. 

 The inequalities \eqref{eq:5exG11}, \eqref{eq:5exG5F}, and 
\eqref{eq:5exG5H8} imply that
\begin{equation}
\label{eq:5exG5J}
\int_{t_1}^{T_1}\| u_t(s)\|_{L^2}   ds  \leq C_8^*  t_1^{-a(\nu -1)},
\end{equation}
where $C_8^* >0$ is a positive constant (independent of $m_0$).

\noindent {\bf Case 2} $ \| u_t(t) \|_{H^{-1}}$ fails to be nondecreasing on any interval $[t_1, t_1+\tau]\subset [t_1, T_2]$, with~$\tau>0$.  

 This means that either (i) $\frac{d}{dt}\| u_t(t_{1}) \|_{H^{-1}}<0$ or (ii) $\frac{d}{dt}\| u_t(t_{1}) \|_{H^{-1}}=0$. In the latter case, there exists a decreasing sequence 
$s_k\to t_1$ as in~\eqref{eq:tstar}. Clearly, this sequence also exists if we have negativity as in (i).   Setting $t^*=t_1$ we may therefore assert that~\eqref{eq:5exG5H8} holds with $t^*=t_1$. Indeed, the recursive procedure that lead to this bound did not require vanishing in~\eqref{eq:tstar} at~$t^*$, but only nonpositivity. So the same argument goes through here, too. It remains to establish the analogue of~\eqref{eq:5exG5F} for the integral over $[t_1, t_1+\ell]$, i.e., 
\begin{equation}
\label{eq:5exG5F*}
\begin{split}
\int_{t_1}^{t_1+ \ell}\| u_t(s)\|_{L^2}   ds &\leq   
 \frac{\varepsilon_0   C_1 C(\ell)}{ (1 +t_1)^{a(\nu -1)}}. 
\end{split}
\end{equation}
 Since  $ \frac{d}{dt} \| u_t(t_1) \|_{H^{-1}} \leq 0$, we will apply the first inequality in \eqref{eq:5exG1}, which implies that
$$
\int_{t_1}^{+\infty} \alpha(s) \| u_t(s)\|_{L^2}^{2}   ds \leq  \frac{2 \varepsilon_0 C(R_1)}{(1 +t_1)^{a(\nu +1)}},
$$
and thus, by using a H\"older inequality, we obtain,
\begin{equation}
\label{eq:5exG5F*BIS}
\begin{split}
\int_{t_1}^{t_1+ \ell}\| u_t(s)\|_{L^2}    ds \leq & \Big(\int_{t_1}^{t_1+ \ell} (1+s)^{a} ds\Big)^{1/2} \Big(\int_{t_1}^{t_1 +\ell} \alpha(s) \| u_t(s)\|_{L^2}^2  ds \Big)^{1/2} \\
\leq & \frac{(2 \ell \varepsilon_0 C(R_1))^{1/2} (1+t_1 + \ell)^{a/2}}{(1+ t_1)^{a(\nu +1)/2}} \leq C(\ell, R_1) (1+ t_1)^{-a \nu/2} \\
\leq & C(\ell, R_1) (1+ t_1)^{-a (\nu -1)},
\end{split}
\end{equation}
since we could choose $\nu >1$ close to $1$ (choose $1 < \nu \leq 2$ so that $a\nu <1/3$).
\end{proof}

\subsection{\texorpdfstring{$H_\nu\ge0$}{H-nu>= 0} and   the {\L}ojasiewicz-Simon inequality.}\label{subsec:LS}

In this section we assume that $H_{\nu}(t_0) >0$, where 
$t_0= n_{m_0}^{\gamma}+1$, with $\gamma=\frac32$,   and also that $H_{\nu}(t) >0$  on the maximal time interval $[t_0, T_1]$ on which $\| \vec u(t)\|_{ \mathcal{H}} \leq R_1$. 
We recall that the functional $H_{\nu}$ can be written as 
 \begin{equation}
\label{eq:functionHBIS}
H_{\nu}(t)= J(u(t)) - J(Q) + \frac{1}{2} \| u_t(t)\|_{L^2}^2 +  \frac{\varepsilon_0}{(1+t)^{a \nu}} \langle -\Delta u(t) +u(t)  -f(u(t)), u_t (t)\rangle_{H^{-1}}.
\end{equation}
Theorem \ref{th:Loja} of Appendix~A implies that, as long as $\| u(t) -Q\|_{H^1} \leq \rho_0$ and in  particular, as long as $\| \vec u(t) - (Q,0)\|_{\mathcal{H}} \leq \rho_0$,
we have
\begin{equation}
\label{eq:5auxLoja11}
\begin{split}
H_{\nu}(t) \leq & C_0 \| -\Delta u(t) + u(t)  -f(u(t))\|_{H^{-1}}^2 
+ \frac{1}{2}\| u_t(t)\|_{L^2}^2 \cr
&+  \frac{\varepsilon_0}{2(1+t)^{a \nu}} 
\big( \| -\Delta u(t) + u(t)  -f(u(t))\|_{H^{-1}}^2 + \| u_t(t)\|_{L^2}^2\big) \cr
\leq & C_1 \big(  \| -\Delta u(t) + u(t)  -f(u(t))\|_{H^{-1}}^2 + \| u_t(t)\|_{L^2}^2\big).
\end{split}
\end{equation}
This implies in particular that $H_\nu(t) \le C(R_1)$ for all $t\in [t_0, T_1]$. 
{}From the estimates \eqref{eq:5ex8} and \eqref{eq:5auxLoja11}, we deduce that, for $t \geq t_0$, as long as $\| \vec u(t) - (Q,0)\|_{\mathcal{H}} \leq \rho_0$, we have
\begin{equation}
\label{eq:5auxLoja12}
- H_{\nu}'(t) \geq \frac{C_2 \varepsilon_0}{(1+t)^{a\nu}} H_{\nu}(t),
\end{equation}
or also
\begin{equation}
\label{eq:5auxLoja12BIS}
 H_{\nu}'(t) + \frac{C_2 \varepsilon_0}{(1+t)^{a\nu}} H_\nu(t) \leq 0 .
\end{equation}
We now set 
\begin{equation}
\label{eq:5auxdeft0*}
t_{0}^{*} = \sup \{t \geq t_0  |  \| \vec u(s) - (Q,0)\|_{ \mathcal{H}} \le  \rho_0, \quad \forall s \in [t_0,t]\}.
\end{equation}
By the choice of parameters, $t_0^*<T_1$. 
We recall that we have chosen $t_0=n_{m_0}^{\gamma}+1$, see \eqref{eq:5auxdefm0}. Hence 
$$
\| (Q,0) - \vec u(s)\|_{\mathcal{H}} \leq \frac{\delta_0}{2} \leq \frac{\rho_0}{16}, \quad \forall s \in I_{m_{0}}.
$$
This implies that $t_0 \leq (n_{m_0} +1)^{\gamma} < t_{0}^{*}$.  
Applying Lemma \ref{lem:suite3}, we can also choose $m_0$ so that
\begin{equation}
\label{eq:5auxintegralt0t0}
\int_{n_{m_0}^{\gamma}}^{(n_{m_0}+1)^{\gamma}} \| u_t(s)\|_{L^2}   ds \leq \frac{\delta_0}{2} 
\leq \frac{\rho_0}{16}.
\end{equation}
Note that 
\begin{equation}
\label{eq:5auxnm0gamma}
(n_{m_0}+1)^{\gamma}-  n_{m_0}^{\gamma} \sim n_{m_0}^{\frac12}\sim t_{0}^{\frac13}. 
\end{equation}
Note that the differential inequality \eqref{eq:5auxLoja12BIS} is only useful provided  $H_{\nu}(t) >0$. Integrating 
\eqref{eq:5auxLoja12BIS} yields, for $t_0 \leq t \leq t_0^{*}$,
\begin{equation}
\label{eq:5decroisH}
\begin{split}
H_{\nu}(t) \leq & H_{\nu}(t_0) \exp \Big(-\int_{t_0}^t \frac{C_2 \varepsilon_0}{(1+s)^{a\nu}}   ds\Big) \cr
\leq & H_{\nu}(t_0) \exp \Big(-  \frac{C_2 \varepsilon_0}{1 -a\nu} 
[ (1+ t)^{1-a\nu} -(1+ t_0)^{1-a\nu}] \Big).
\end{split}
\end{equation}
In view of  \eqref{eq:5ex8}, \eqref{eq:5ex9} 
\begin{equation}
\label{eq:5ex13}
\int_{t}^{(1+\beta)t} \frac{3}{2(1+ s)^{a}} \| u_t(s)\|_{L^2}^2   ds \leq H_{\nu}(t) 
- H_{\nu}((1+\beta)t),
\end{equation}
 for $t_0 \leq t  \leq (1+\beta) t\leq t_0^{*}$, where $0 < \beta \leq 1$. 
Thus, since $H_{\nu} (t) \ge 0$ on $t_0 \leq t\leq T_1$, we infer that
\begin{equation}
\label{eq:5ex13BIS}
\int_{t}^{(1+\beta)t} \frac{1}{(1+ s)^{a}} \| u_t(s)\|_{L^2}^2    ds \leq \frac{2H_{\nu}(t_0)}{3}
 \exp \Big(-  \frac{C_2 \varepsilon_0}{1 -a\nu} 
[ (1+ t)^{1-a\nu} -(1+ t_0)^{1-a\nu}] \Big).
\end{equation}
By Cauchy-Schwarz,   for all $t_0 \leq t  \leq (1+\beta) t\leq t_0^*$, 
\begin{multline}
\label{eq:5ex14}
\int_{t}^{(1+\beta)t}  \| u_t(s)\|_{L^2}   ds \leq \Big(\int_{t}^{(1+\beta)t} (1+ s)^{a}   ds\Big)^{1/2}
\Big(\int_{t}^{(1+\beta)t}\frac{1}{(1+ s)^{a}} \| u_t(s)\|_{L^2}^2   ds\Big)^{1/2}\cr
\leq  \beta^{1/2} t^{1/2}(1+t)^{a/2} H_{\nu}(t_0)^{1/2}
 \exp \Big(-  \frac{C_2 \varepsilon_0}{2(1 -a\nu)} 
[ (1+ t)^{1-a\nu} -(1+ t_0)^{1-a\nu}] \Big).
\end{multline}
If $t=t_0 +\tau$ where $0 < \tau \leq t_0$, we see that
$$
 \exp \Big( -  \frac{C_2 \varepsilon_0}{2(1 -a\nu)} 
[ (1+ t)^{1-a\nu} -(1+ t_0)^{1-a\nu}] \Big) = \exp \Big( - 
\frac{\tau C_2 \varepsilon_0 }{2(1+ t_0')^{a\nu}} \Big),
$$
for some $t_{0}\le t_{0}'\le t$. For the 
 right-hand side to be  small we need $\tau \geq t_0^{a\nu + \eta}$ where $\eta>0$. 
Thus, since $a\nu<\frac13$ by our assumptions, we  fix $a\nu + \eta=\frac13$. 
In view of~\eqref{eq:5auxintegralt0t0} and \eqref{eq:5auxnm0gamma} the interval  
$t - t_0 \leq t_0^{\frac13}$ has already been dealt with. 

Let us now take $t \leq t_0^{*}$ so that $ t_0 +t_0^{\frac13} \leq t \leq 2t_0$ and set $t= t_0 +t_0^{\frac13}+\tau$. Then the estimate \eqref{eq:5ex14} implies that 
\begin{multline}
\label{eq:5ex15}
\int_{t_0 +t_0^{\frac13}}^{t_0 +t_0^{\frac13}+ \tau} \| u_t(s)\|_{L^2}    ds \\ 
  \leq 2 t_0^{1/2}(1+t_0)^{a/2} H_{\nu}(t_0)^{1/2}
 \exp \Big( -  \frac{C_2 \varepsilon_0}{2(1 -a\nu)} 
[ (1+ t_0 + t_0^{\frac13})^{1-a\nu} -(1+ t_0)^{1-a\nu}] \Big)  \\
\leq 2 t_0^{1/2}(1+t_0)^{a/2} H_{\nu}(t_0)^{1/2}
\exp \Big( -  \frac{C_2 \varepsilon_0 t_0^{\frac13}}{2(1+ t_0)^{a\nu}}   \Big).
\end{multline}
Since $H_\nu$ is uniformly bounded on $[t_0, T_1]$ by $C(R_1)$,  the right-hand side of the estimate \eqref{eq:5ex15} tends to $0$ as $t_0$ goes to infinity. Therefore, we choose $m_0$ (and thus $t_0$) large enough so that
 \begin{equation}
\label{eq:5ex16}
\int_{t_0 +t_0^{\frac13}}^{t_0 +t_0^{\frac13}+ \tau} \| u_t(s)\|_{L^2}   ds
\leq C_9 t_0^{-\mu} \leq \frac{\delta_0}{2} \leq \frac{\rho_0}{16},
\end{equation}
where $\mu >0$. If $2t_0 \leq t \leq t_0^{*}$, we deduce from \eqref{eq:5ex14} that 
\begin{multline}
\label{eq:5ex17}
\int_{t}^{2t}  \| u_t(s)\|_{L^2}   ds \\
\leq 2  t^{1/2}(1+t)^{a/2} H_{\nu}(t_0)^{1/2}
 \exp \Big( -  \frac{C_2 \varepsilon_0}{2(1 -a\nu)} 
 (1+ t)^{1-a\nu} \big (1 - (\frac{1+ t_0}{1+ 2 t_0})^{1-a\nu} \big)   \Big) .
\end{multline}
As in the analysis of negative $H_\nu$ above, we may choose $m_0$ large enough so that 
$$
\frac{1+t_0}{1+2t_0}\leq \frac{3}{4}.
$$
Then, the inequality \eqref{eq:5ex17} implies that
\begin{equation}
\label{eq:5ex18}
\begin{split}
\int_{t}^{2t}  \| u_t(s)\|_{L^2}   ds 
\leq 2  t^{1/2}(1+t)^{a/2} H_{\nu}(t_0)^{1/2}
 \exp \Big( -  \frac{C_2 \varepsilon_0}{8(1 -a\nu)} 
 (1+ t)^{1-a\nu}  \Big).
\end{split}
\end{equation}
Choosing $m_0$ (that is $t_0$) large enough, which in turn implies that $t$ is large enough, we deduce from the estimate \eqref{eq:5ex18} that
\begin{equation}
\label{eq:5ex19}
\int_{t}^{2t}  \| u_t(s)\|_{L^2}   ds \leq C_{10} (1+ t)^{-\mu}.
\end{equation}
As in~\eqref{eq:5exG11}, the estimates \eqref{eq:5ex16} and \eqref{eq:5ex19} imply that, for $m_0$ large enough, we have 
\begin{equation}
\label{eq:5ex20}
\int_{t_0 + t_0^{\frac13}}^{t_0^{*}}  \| u_t(s)\|_{L^2}   ds
\leq  \sum_{n=0}^{\infty} \frac{C_{11}}{2^{n \mu}
t_0^{\mu}} \leq \frac{C_{11}}{t_0^{\mu} (1- 2^{-\mu})} \leq C_{12} t_0^{-\mu}
 ,
\end{equation}
where $C_{11}= \max(C_9,C_{10})$.  Choosing $m_0$ larger if necessary, we can ensure that  
\begin{equation} 
\label{eq:5ex21}
\int_{t_0 + t_0^{\frac13}}^{t_0^{*}}\| u_t(s)\|_{L^2}   ds \leq C_{12}  t_0^{-\mu}
 \leq \frac{\delta_0}{2}.   
 \end{equation}
By Corollary~\ref{cor:key}, we conclude that for all $t \in
 [t_0+ t_0^{\frac13}, t_0^{*}]$,
 \begin{equation}
\label{eq:5ex24}
 \| \vec u(t) - (Q,0)\|_{\mathcal{H}} \leq \frac{\rho_0}{2},
\end{equation}
if $\delta_0>0$ is chosen small enough. 
The property \eqref{eq:5ex24} leads to a contradiction if $t_0^* < \infty$. Thus 
$t_0^{*} =T_1=  \infty$.

\subsubsection{ The case where $H_{\nu}(t)$ changes sign on the interval 
$[(n_{m_0}+1)^{\gamma},T_1]$.}

The only remaining scenario left to deal with is as follows: $H_{\nu}(t)$ is never negative on $I_{m_{0}}$, but does not remain positive on all of $\tilde I_{m_{0}}$ (the extension of $I_{m_{0}}$ for which $\vec u$ stays in the $R_{1}$ ball in~$\calH$).  Recall that $H_{\nu}$ decreases on that larger interval. Say $H_{\nu}(t_{2})=0$ with $t_{2}\in \tilde I_{m_{0}}\setminus I_{m_{0}}$.  If so, then~\eqref{eq:5ex24} holds up to time $t_{2}$. After that time, Proposition~\ref{prop:H-} applies with $t_{2}$ taking the role of~$t_{1}$. The only reason that $t_{1}\in I_{m_{0}}$ in that proposition lies with the fact that it guarantees the initial closeness~\eqref{eq:5auxdefm0} to $(Q,0)$. But this is now provided by~\eqref{eq:5ex24}. Hence, the argument following Proposition~\ref{prop:H-} applies unchanged and $\vec u$ remains close to the equilibrium for all times, cf.~\eqref{eq:5exG15}. 

\subsection{Conclusion of the proof for \texorpdfstring{$d>3$}{d>3} or \texorpdfstring{$d=3$}{d=3} and \texorpdfstring{$\theta \leq 4$}{theta <=4}}
Up to this point we have only proved that  the trajectory $\vec u(t)$ is bounded in $\mathcal{H}$ and stays forever in the ball $B_{\mathcal{H}}(0,R_1)$ for $t\geq t_0$, with $t_0$ large enough. But, taking into account that $H_\nu(t_n)\to0$ for some sequence $t_n\to\infty$, we also have shown that, for $t_0$ large enough $H_{\nu}$ does not become negative. Moreover, we claim that proof in the case of positive $H_\nu$  yields that $\vec u(t) - (Q,0)$ converges to $0$ at a sub-exponential rate. Indeed, since the trajectory $\vec u(t)$  stays forever in the ball $B_{\mathcal{H}}(0,R_1)$ for $t\geq 2 t_0$, with $t_0$ large enough, we may choose $\nu =1$ and we obtain by arguing as in \eqref{eq:5ex13} that, for any $\tau > t$, 
\begin{equation}
\label{eq:5ex13Neu}
\int_{t}^{\tau} \frac{3}{2(1+ s)^{a}} \| u_t(s)\|_{L^2}^2   ds \leq H_{1}(t) ,
\end{equation}
and thus, for $t$ large enough,
\begin{equation}
\label{eq:5ex13BISNeu}
\int_{t}^{+\infty} \frac{1}{(1+ s)^{a}} \| u_t(s)\|_{L^2}^2  ds \leq K(t_0)
 \exp \Big( -   \frac{C_2 \varepsilon_0}{1 -a} (1+t)^{1-a} \Big),
\end{equation}
where $K(t_0)$ is a positive constant.  {}From \eqref{eq:5ex13BISNeu}, we deduce that, for $t \geq 2t_0$,
\begin{equation}
\label{eq:5ex13TERNeu}
E(\vec u(t)) - E (Q,0)  \leq C K(t_0)
 \exp \Big( -   \frac{C_2 \varepsilon_0}{1 -a}(1+t)^{1-a}\Big).
\end{equation}
Next, arguing as in \eqref{eq:5ex17} to \eqref{eq:5ex20}, we obtain that, for $t \geq 2t_0$, 
\begin{equation}
\label{eq:5ex21Neu}
\int_{t}^{+\infty} \| u_t(s)\|_{L^2}   ds \leq K^*(t_0) t^{-\mu}
 \exp \big( -  k (1+t)^{1-a}\big).
\end{equation}
The inequality \eqref{eq:5ex13BISNeu} implies that, for $s >t > 2t_0$, we have
 \begin{equation}
\label{eq:5ex22Neu}
\| u(s) - u(t) \|_{L^2} \leq K^*(t_0) t^{-\mu} \exp \big( -  k (1+t)^{1-a}\big).
\end{equation}
Since we alreay know that the sequence $u(t_n)$ converges to $Q$, when $n$ goes to infinity 
(see Theorem \ref{ThBRS1}), we may pass to the limit in $s= t_n$ to wit
 \begin{equation}
\label{eq:5ex24Neu}
\| u(t) - Q \|_{L^2} \leq K^*(t_0) t^{-\mu} \exp \big( - k(1+t)^{1-a}\big).
\end{equation}
Finally, from Lemma~\ref{lem-key} and~\eqref{eq:delrho} we conclude that, for $t \geq 2t_0$,
\begin{equation}
\label{eq:5ex25Neu}
\| \vec u(t) - (Q,0) \|_{\mathcal{H}} \leq K_1(t_0) \exp \big( -   \eta k   (1+t)^{1-a}\big),
\end{equation}
where  $\eta >0$ depends only on the power of the non-linearity. We have therefore established  sub-exponential   convergence of the entire trajectory to an equilibrium.

\section{Proof of Theorem \ref{ThBRS2} if \texorpdfstring{$d=3$}{d=3} and \texorpdfstring{$4<\theta < 5$}{4< theta<5}}

\subsection{Applying observation inequalities}   

 Throughout this section, we let $d=3$ and assume that the Hypothesis~\ref{H2f} holds, with  $4 <\theta < 5$. 
 
The difficulty with $d=3$ and these values of $\theta$ lies with the fact that Lemma~\ref{lem:estmHprime} fails in that case. It is not possible to make an estimate such 
as~\eqref{eq:5ex3} pointwise in time.  Rather, we will bound an averaged expression of the left-hand side of~\eqref{eq:5ex3} by means of Strichartz estimates and the  following observation inequality from  a companion paper~\cite{BRS_OBS}. 

\begin{proposition}\label{prop:obs}
Let $u$ be a solution of~\ref{KGalpha} on the interval $I=[t_{0},t_{1}]$ with $\| \vec u(t)\|_{\calH}\le M$ for $t\in I$. 
There exists $C=C(M,|I|)$ so that 
\begin{equation}\label{eq:pu main}
\| \partial_{t} u \|_{L^{\infty}(I,L^{2})}\le C(M, |I|) \| \partial_{t} u\|_{L^{2}(I, L^{2})}.
\end{equation}
\end{proposition}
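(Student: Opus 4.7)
The plan is to regard $v:=\partial_{t}u$ as a solution of a linear damped Klein-Gordon equation with time-dependent potential, and to reduce the claim to a linear (free) observation inequality combined with a Duhamel perturbation step. Differentiating \ref{KGalpha} in time gives
\begin{equation*}
v_{tt}+2\alpha(t)v_{t}-\Delta v+v-f'(u)v=-2\alpha'(t)v,
\end{equation*}
so that $v$ solves the linearization of \ref{KGalpha} around the (time-dependent) state $u(t)$ with forcing $-2\alpha'(t)v$. Since $\|\vec u\|_{L^{\infty}(I;\calH)}\le M$ and $u$ is radial, the potential $f'(u)$ and the forcing enter controlled mixed norms thanks to~\ref{H2f} and the integral Strichartz estimates of Proposition~\ref{prop:StirchKGalphalin}.

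First I would establish a \emph{free} observation inequality: for any solution $w$ of $w_{tt}-\Delta w+w=0$ on $\mathbb{R}^{3}$ and any $\tau\in I$,
\begin{equation*}
\|w_{t}(\tau)\|_{L^{2}}\le C(|I|)\,\|w_{t}\|_{L^{2}(I;L^{2})}.
\end{equation*}
Because $w_{t}$ is itself a free Klein-Gordon solution, this reduces on the Fourier side to a Plancherel computation with the unitary propagators $e^{\pm it\sqrt{1+|\xi|^{2}}}$, using that the dispersion relation is bounded below by~$1$. I would then use Duhamel's formula for $v$ together with the energy estimate of Proposition~\ref{th:StrichKGalAffine} to pass from the free to the perturbed setting, obtaining
\begin{equation*}
\|v(\tau)\|_{L^{2}}\le C(M,|I|)\bigl(\|v\|_{L^{2}(I;L^{2})}+\|f'(u)v\|_{L^{1}(I;L^{2})}+\|\alpha'v\|_{L^{1}(I;L^{2})}\bigr),
\end{equation*}
the last term being harmless since $\alpha'$ is bounded on $I$.

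The hard part, and presumably the reason the authors defer the proof to~\cite{BRS_OBS}, is handling $\|f'(u)v\|_{L^{1}(I;L^{2})}$ in the range $d=3$, $4<\theta<5$, where $f'(u)\sim|u|^{\theta-1}$ with $\theta-1\in(3,4)$ and so $f'(u)v$ cannot be placed pointwise in $L^{2}$. One would rely on space-time H\"older at the Strichartz pair $(q,p)=(\theta^{*},2\theta^{*})$, bounding $\|u\|_{L^{q}_{t}L^{p}_{x}}$ on $I$ by $C(M,|I|)$ via Proposition~\ref{prop:StirchKGalphalin}, and similarly for the Strichartz norms of $v$. The resulting inequality will still feature $\|v\|_{L^{\infty}(I;L^{2})}$ on its right-hand side with a small (but a priori nonzero) prefactor, and removing it requires either an iteration on short sub-intervals or a compactness-uniqueness argument: assume the claim fails along a normalized sequence $v_{n}$ with $\|v_{n}\|_{L^{\infty}(I;L^{2})}=1$ and $\|v_{n}\|_{L^{2}(I;L^{2})}\to 0$, extract a limit using the compact embedding $H^{1}_{\text{rad}}\hookrightarrow L^{p}$ for $2<p<2^{*}$ together with the equicontinuity inherited from the PDE, and show that the limit solves a free Klein-Gordon equation vanishing on $I$, hence is identically zero by unique continuation --- contradicting the normalization.
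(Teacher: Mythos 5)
The paper does not prove this proposition in the present source; it is quoted from the companion paper~\cite{BRS_OBS}, so there is no argument here against which to compare. Your free-observation step is correct: for a free Klein--Gordon solution $z$, Plancherel and $\langle\xi\rangle\ge1$ give $\|z(\tau)\|_{L^{2}}\le C(|I|)\|z\|_{L^{2}(I;L^{2})}$. But the perturbative step has genuine gaps.

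First, the regularity bookkeeping is off by one derivative. One has $(v,v_{t})=(u_{t},u_{tt})\in L^{2}\times H^{-1}$, not in $\calH$, so Proposition~\ref{th:StrichKGalAffine} (an estimate in $\calH$ with source in $L^{1}_{t}L^{2}_{x}$) does not apply to $v$ as you invoke it; the Duhamel source for $v$ belongs in $L^{1}_{t}H^{-1}_{x}$. This is not cosmetic: the quantity $\|f'(u)v\|_{L^{1}(I;L^{2})}$ you put on the right-hand side is not even finite in general for $d=3$, $4<\theta<5$ — that failure of pointwise-in-time $L^{2}_{x}$ control is exactly why Lemma~\ref{lem:estmHprime} breaks in this range and why the proposition is needed in the first place. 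At the $H^{-1}$ level one has $\|f'(u)v\|_{H^{-1}}\lesssim\|u\|_{L^{3(\theta-1)}_{x}}^{\theta-1}\|v\|_{L^{2}_{x}}$ via $L^{6/5}\hookrightarrow H^{-1}$, and then H\"older in time with exponents $q_{1}=\tfrac{2}{\theta-3}$, $q_{1}'=\tfrac{2}{5-\theta}$ places $u$ in the $\calH$-level Strichartz norm $L^{2(\theta-1)/(\theta-3)}_{t}L^{3(\theta-1)}_{x}$ (admissible precisely for $\theta\le 5$) and $v$ in $L^{2/(5-\theta)}_{t}L^{2}_{x}$, which interpolates $L^{\infty}_{t}L^{2}$ and $L^{2}_{t}L^{2}$ with weight $\gamma=5-\theta\in(0,1)$; Young's inequality then absorbs the $L^{\infty}_{t}L^{2}$ power of $v$ and closes the estimate directly, with no iteration. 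One must also deal with the source $-2\alpha(t)v_{t}$ produced by perturbing off $\Sigma_{0}$: its $H^{-1}$ norm is bounded via the equation but is not controlled by $\|v\|_{L^{2}(I;L^{2})}$, and needs an integration by parts in $t$ (or a perturbation off $\Sigma_{\alpha}$ instead).

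Second, neither of your two proposed ways to remove the residual $\|v\|_{L^{\infty}(I;L^{2})}$ actually closes. The short-interval iteration diverges: on $[a,a+\delta]$ the free observation constant grows like $\delta^{-3/2}$ (the worst frequency is $\langle\xi\rangle\sim1$), whereas the Strichartz smallness gained on $\|f'(u)v\|_{L^{1}_tH^{-1}_x}$ scales only like $\delta^{(5-\theta)/2}$, and their product blows up as $\delta\to0$ whenever $\theta>3$. And compactness--uniqueness cannot be made to run: $v_{n}=\partial_{t}u_{n}$ is only bounded in $L^{\infty}(I;L^{2})$, which affords no compactness, so the normalization $\|v_{n}\|_{L^{\infty}(I;L^{2})}=1$ need not pass to any weak limit; moreover $\|v_{n}\|_{L^{2}(I;L^{2})}\to0$ already forces any weak limit to vanish, so unique continuation for the limit equation proves nothing and yields no contradiction. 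The contradiction must come from the quantitative free observation inequality applied to the free component of $v_{n}$, which returns you to the direct Duhamel estimate described above.
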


By inspection, $C(M, |I|)$ is decreasing in $|I|$. 
This allows us to establish the following space-time bound analogous to Lemma~\ref{lem:estmHprime}.   

\begin{lemma} \label{lem:estH+H-}
Let $u$ be a solution of~\ref{KGalpha} on the interval $[t,t+\tau]$ where $0<\tau\le 1$ with $\| \vec u(s)\|_{\calH}\le M$ for all $t\le s\le t+\tau$. 
Then 
\begin{equation}
\label{eq:5ex3Integral}
\int_{t}^{t+\tau}| \langle f'(u)u_t,u_t \rangle_{H^{-1}} |     ds  \leq
C (M,\tau) \int_{t}^{t+\tau}  \| u_t\|_{L^2}^2   ds,
\end{equation}
where the  constant $C(M;\tau)>0$ depends on $M, \tau$ and the nonlinearity~$f$. 
\end{lemma}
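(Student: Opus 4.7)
The plan combines three ingredients: a duality bound for the $H^{-1}$-pairing, the observation inequality of Proposition~\ref{prop:obs}, and a Strichartz-type bound on~$u$ to handle the supercritical power $\|u\|_{L^{2(\theta-1)}}^{\theta-1}$, which ceases to be controlled pointwise in $t$ by the energy once $\theta>4$ in dimension $d=3$.

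First, I would argue exactly as in the proof of Lemma~\ref{lem:estmHprime}: writing
\begin{equation*}
|\langle f'(u)u_{t},u_{t}\rangle_{H^{-1}}|\le \|f'(u)u_{t}\|_{H^{-2}}\|u_{t}\|_{L^{2}},
\end{equation*}
and using the embedding $H^{2}(\R^{3})\hookrightarrow L^{\infty}(\R^{3})$ by duality together with the growth hypothesis~\ref{H2f}, I obtain the pointwise estimate
\begin{equation*}
|\langle f'(u)u_{t},u_{t}\rangle_{H^{-1}}|\le C\bigl(1+\|u(s)\|_{L^{2(\theta-1)}}^{\theta-1}\bigr)\|u_{t}(s)\|_{L^{2}}^{2}.
\end{equation*}
Integrating on $[t,t+\tau]$, factoring one $\|u_{t}\|_{L^{2}}$ out as an $L^{\infty}_{s}L^{2}_{x}$-norm, and applying~\eqref{eq:pu main} yields
\begin{equation*}
\int_{t}^{t+\tau}|\langle f'(u)u_{t},u_{t}\rangle_{H^{-1}}|\,ds\le C(M,\tau)^{2}\,\Bigl(\tau+\int_{t}^{t+\tau}\|u(s)\|_{L^{2(\theta-1)}}^{\theta-1}\,ds\Bigr)\int_{t}^{t+\tau}\|u_{t}\|_{L^{2}}^{2}\,ds.
\end{equation*}

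The remaining step, and the main place where the argument goes beyond the proof of Lemma~\ref{lem:estmHprime}, is to bound $\int_{t}^{t+\tau}\|u\|_{L^{2(\theta-1)}}^{\theta-1}\,ds$ by a constant depending only on $M$ and~$\tau$. For this I would iterate the local existence theory of Theorem~\ref{thm:LocalExist} on subintervals of length controlled by the local time of existence at energy level~$M$, obtaining $\|u\|_{L^{5}((t,t+\tau);L^{10}(\R^{3}))}\le C(M,\tau)$. Since $4<\theta<5$ forces $6<2(\theta-1)<8$, I would then interpolate between $L^{\infty}_{s}L^{6}_{x}$ (controlled by the $\calH$-bound and $H^{1}\hookrightarrow L^{6}$) and $L^{5}_{s}L^{10}_{x}$ at the exponent $\lambda\in(0,1)$ defined by $\tfrac{1}{2(\theta-1)}=\tfrac{\lambda}{10}+\tfrac{1-\lambda}{6}$, which gives $\lambda(\theta-1)=\tfrac{5(\theta-4)}{2}\le \tfrac{5}{2}<5$. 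H\"older in time then yields $\int_{t}^{t+\tau}\|u\|_{L^{2(\theta-1)}}^{\theta-1}\,ds\le C(M,\tau)$, and inserting this into the display above produces the claimed estimate.

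The main obstacle is precisely this last point: in the regime $\theta>4$ the integrand $(1+\|u\|_{L^{2(\theta-1)}}^{\theta-1})\|u_{t}\|_{L^{2}}^{2}$ cannot be bounded pointwise in~$t$ in terms of the $\calH$-norm, so the space-time Strichartz information is indispensable and must be combined with the observation inequality to convert the resulting $L^{2}_{s}$-estimate on $u_{t}$ into the $L^{\infty}_{s}$-estimate needed in the H\"older step.
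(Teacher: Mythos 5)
Your proof is correct and follows the same route as the paper: the $H^{-2}$-duality bound from the $H^{2}(\R^{3})\hookrightarrow L^{\infty}$ embedding, two invocations of the observation inequality (Proposition~\ref{prop:obs}) to absorb both powers of $\|u_{t}\|_{L^{2}}$, and a local Strichartz estimate combined with $L^{6}$--$L^{10}$ interpolation to control $\int_{t}^{t+\tau}\|u\|_{L^{2(\theta-1)}}^{\theta-1}\,ds$. The only difference is the choice of Strichartz pair: you use the energy-admissible $L^{5}_{t}L^{10}_{x}$ exponents of Theorem~\ref{thm:LocalExist} and interpolate, whereas the paper quotes endpoint estimates $L^{4}_{t}L^{8}_{x}$ (for $\theta=5$) and $L^{3}_{t}L^{6}_{x}$ (for $\theta=4$); your version is if anything a bit cleaner since $(5,10)$ is precisely the admissible pair from Proposition~\ref{prop:free1} at energy regularity.
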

\begin{proof}
By the embedding $H^{2}(\R^{3})\embed L^{\infty}(\R^{3})$ and its dual $L^{1}(\R^{3}) \embed H^{-2}(\R^{3})$ we may estimate
\begin{multline} \label{eq:U1}
 \int_{t}^{t+\tau}  |\la f'(u(s)) u_{t}(s), u_{t}(s)\ra_{H^{-1}} |   ds \\
\begin{aligned}
 &\leq C \int_{t}^{t+\tau}  \| f'(u(s)) u_{t}(s) \|_{H^{-2}} \| u_{t}(s)\|_{2}   ds \\
&\leq C \int_{t}^{t+\tau}  \| (|u(s)|^{\beta}+|u(s)|^{\theta-1})  u_{t}(s) \|_{L^{1}} \| u_{t}(s)\|_{2}   ds   \\
&\leq C \|u_{t}\|_{L^{\infty}(I, L^{2})} \int_{t}^{t+\tau}  \Big( \| u(s) \|_{L^{2\beta}}^{\beta}+ \| u(s)\|_{2(\theta-1)}^{\theta-1}\Big)   \|u_{t}(s) \|_{L^{2}}    ds,   
\end{aligned}
\end{multline}
where $I=[t,t+\tau]$. Due to the hypotheses made on $\beta$, we may bound
$\int_{t}^{t+\tau}  \Big( \| u(s) \|_{L^{2\beta}}^{\beta}+ \| u(s)\|_{2(\theta-1)}^{\theta-1}\Big)   \|u_{t}(s) \|_{L^{2}}ds$ by 
$\int_{t}^{t+\tau}  \Big( 1 +   \| u(s) \|_{L^{2}} +\| u(s)\|_{2(\theta-1)}^{\theta-1}\Big)   
\|u_{t}(s) \|_{L^{2}}ds$.
Bounding the first term by Proposition~\ref{prop:obs} we may continue bounding the above
expression by
\begin{equation}
\begin{aligned}
&\leq C  \int_{I} \|u_{t}(s)\|_{2}^{2}   ds \int_{t}^{t+\tau}   \Big(1+ \| u(s) \|_{L^{2}} + \| u(s)\|_{2(\theta-1)}^{\theta-1}\Big)      ds  \label{eq:U2} \\
&\leq  C(M,\tau) \int_{I} \|u_{t}(s)\|_{2}^{2}   ds.      
\end{aligned}
\end{equation}
To pass to the final inequality we bound $\| u(s)\|_{2}\le M$ and we use Strichartz estimates to  control 
\begin{equation}\label{eq:ST1}
\int_{t}^{t+\tau} \| u(s)\|_{2(\theta-1)}^{\theta-1}    ds \le C(M,\tau),
\end{equation}
which follows by local wellposedness, via the assumption $\|\vec u(s)\|_{\calH}\le M$.  Indeed, for the endpoint $\theta=5$ one has an $L^{4}_{t}([t,t+\delta], L^{8}_{x}(\R^{3}))$ Strichartz 
estimate with $\delta=\delta(M)>0$ and for $\theta=4$ we control the $L^{3}([t,t+\delta], L^{6}_{x}(\R^{3}))$ norm of the solution.  We can then cover the interval $[t,t+\tau]$ with $\delta$-intervals. It now follows that we can similarly handle the entire range $4<\theta<5$. 
\end{proof}

In the previous proof we applied the observation inequality twice, the second time to pass from \eqref{eq:U1} to~\eqref{eq:U2}. This was strictly speaking not necessary, as we could have bounded \eqref{eq:U1} by means of Cauchy-Schwarz. This would then lead to an $L^{8}_{t,x}$ Strichartz estimate for Klein-Gordon, which is precisely the optimal one on the scale of $L^{8}_{x}(\R^{3})$ for the wave equation with energy data.

In analogy with Lemma~\ref{lem:Hnu} we can use the previous lemma to obtain decay of $H_{\nu}$, but we can only compare times which are not closer than $\tau_{0}$. 
Recall that
\begin{equation*}
H_{\nu}(t):= E(\vec u(t)) - E(Q,0) + \frac{\varepsilon_0}{(1+t)^{a \nu}} \langle -\Delta u(t) +u(t)  -f(u(t)), u_t (t)\rangle_{H^{-1}}.
\end{equation*}
For the remainder of this section, we fix some $\tau_{0}\in (0,1]$, say $\tau_{0}=\frac12$ (the constants are then uniform in $\frac12 \le\tau_{0}\le 1$).  
We remark that~\eqref{eq:5ex3Integral} holds for any $\tau_{0}\ge \frac12$ with the same constant, provided that we remain in the interval of existence (but we are interested in global solutions only here).  This follows by adding up the individual estimates~\eqref{eq:5ex3Integral} over intervals of size~$\frac12$. 
As in the previous section, we take $0<a<\frac13$ and $\nu>1$ fixed. 

\begin{cor}
\label{cor:Hdecay2}
Let $u(t)$ be a solution of~\ref{KGalpha}  for $t_{0}\le t\le t_{1}$ where $t_{1}\ge t_{0}+1$. Assume $\|\vec u(t)\|_{\calH}\le M$ for all $t_{0}\le t\le t_{1}$. 
Then for $t_{0}\ge t_{0}^*=t_{0}^*(M)$ one has 
\begin{gather}
\label{eq-Hdec} 
H_{\nu}(t') - H_{\nu}(t) \leq 0  \qquad \forall t_{0}\le t\le t+1 \le t'\le  t_{1}\\
\int_{t}^{t'}\frac{3}{2(1+s)^{a}} \| u_t\|_{L^2}^2    ds \leq H_{\nu}(t) -
 H_{\nu}(t') . \label{eq-Hdec2} 
\end{gather}
 The inequality in~\eqref{eq-Hdec} is strict if $\vec u\ne (Q,0)$. 
\end{cor}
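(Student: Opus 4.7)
The plan is to compute $H_\nu'(s)$ pointwise as in~\eqref{eq:5auxequal1}, integrate over $[t,t']$, and absorb the non-dissipative contributions into the two manifestly good terms $-2\alpha(s)\|u_t\|_{L^2}^2$ and $-\varepsilon_0(1+s)^{-a\nu}\|\calL_f u\|_{H^{-1}}^2$, where $\calL_f u := -\Delta u + u - f(u)$. Apart from these two nonpositive contributions, the identity from the proof of Lemma~\ref{lem:Hnu} features a cross term of coefficient $O(\varepsilon_0 (1+s)^{-a\nu-a})$ paired with $\la \calL_f u, u_t\ra_{H^{-1}}$, together with $\varepsilon_0(1+s)^{-a\nu}\la -\Delta u_t + u_t - f'(u)u_t, u_t\ra_{H^{-1}}$; using $\la (1-\Delta) u_t, u_t\ra_{H^{-1}} = \|u_t\|_{L^2}^2$ the latter splits as a harmless $\varepsilon_0(1+s)^{-a\nu}\|u_t\|_{L^2}^2$ plus the potentially dangerous $-\varepsilon_0(1+s)^{-a\nu}\la f'(u)u_t, u_t\ra_{H^{-1}}$.

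The cross term is controlled by Young's inequality with the scale-dependent weight $\lambda(s) = \Lambda(1+s)^{-a}$: choosing $\Lambda$ large and $s \ge t_0^*(M)$ sufficiently large, one half is absorbed into $(\varepsilon_0/2)(1+s)^{-a\nu}\|\calL_f u\|_{H^{-1}}^2$ and the other becomes at most $\alpha(s)\|u_t\|_{L^2}^2/8$. The $\varepsilon_0(1+s)^{-a\nu}\|u_t\|_{L^2}^2$ contribution is dominated by $\alpha(s)\|u_t\|_{L^2}^2/8$ for $s$ large and $\varepsilon_0$ small, since $\nu > 1$.

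The main difficulty is the term $\varepsilon_0(1+s)^{-a\nu}\la f'(u)u_t, u_t\ra_{H^{-1}}$, for which the pointwise bound of Lemma~\ref{lem:estmHprime} fails when $d=3$ and $\theta > 4$. This is precisely why the hypothesis $t' \ge t+1$ is imposed and why Lemma~\ref{lem:estH+H-} (and ultimately the observation inequality of Proposition~\ref{prop:obs}) enters. Decomposing $[t,t']$ into consecutive subintervals of length in $[\tfrac12,1]$, applying~\eqref{eq:5ex3Integral} on each (the constant remaining uniform under such splitting, by the remark following Lemma~\ref{lem:estH+H-}), and comparing the weight $(1+s)^{-a\nu}$ at the endpoints of each subinterval (whose ratio is uniformly bounded for $s \ge 1$), one obtains
$$
\int_t^{t'} \frac{\varepsilon_0}{(1+s)^{a\nu}} \bigl|\la f'(u) u_t, u_t\ra_{H^{-1}}\bigr|\, ds \;\le\; 2\varepsilon_0 C(M,\tfrac12) \int_t^{t'} \frac{\|u_t\|_{L^2}^2}{(1+s)^{a\nu}}\, ds \;\le\; 2\varepsilon_0 C(M,\tfrac12) \int_t^{t'}\frac{\|u_t\|_{L^2}^2}{(1+s)^{a}}\, ds,
$$
again using $\nu > 1$. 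Choosing $\varepsilon_0 = \varepsilon_0(M)$ sufficiently small makes this $\le \tfrac18 \int_t^{t'} \alpha(s)\|u_t\|_{L^2}^2\, ds$.

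Combining the four absorptions, for any $t_0 \ge t_0^*(M)$ one gets
$$
H_\nu(t') - H_\nu(t) + \int_t^{t'} \frac{3}{2(1+s)^a} \|u_t\|_{L^2}^2\, ds + \frac{\varepsilon_0}{2} \int_t^{t'} \frac{\|\calL_f u\|_{H^{-1}}^2}{(1+s)^{a\nu}}\, ds \;\le\; 0,
$$
which is precisely~\eqref{eq-Hdec} together with~\eqref{eq-Hdec2}. For the strictness assertion, equality in~\eqref{eq-Hdec} would force $u_t \equiv 0$ on $[t,t']$, so that $\vec u$ reduces to a stationary equilibrium on that interval; arguing as in the proof of Lemma~\ref{lem:Hnu} and invoking Proposition~\ref{prop:H3estconv} to identify this equilibrium with $(Q,0)$ contradicts $\vec u \neq (Q,0)$. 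The principal obstacle in the whole scheme is thus the $\la f'(u) u_t, u_t\ra_{H^{-1}}$ term: its lack of a uniform pointwise bound in the range $d=3$, $4<\theta<5$ is what forces the statement into integrated form over time intervals of length at least $1$ and makes the observation inequality indispensable.
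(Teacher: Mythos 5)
Your proposal is correct and follows essentially the same route as the paper: integrate the pointwise identity for $H_\nu'$ (justified by density for classical data, which you implicitly assume when invoking~\eqref{eq:5auxequal1}), control the cross terms by Cauchy--Schwarz/Young, and — crucially — handle $\langle f'(u)u_t,u_t\rangle_{H^{-1}}$ only in time-integrated form via Lemma~\ref{lem:estH+H-} and the observation inequality, which is exactly why the hypothesis $t'\ge t+1$ appears. The one cosmetic difference is that you absorb the $O(\varepsilon_0(1+s)^{-a\nu})\|u_t\|^2$ contribution by shrinking $\varepsilon_0=\varepsilon_0(M)$, whereas the paper keeps $\varepsilon_0$ fixed and instead takes $t_0\ge t_0^*(M)$ large, exploiting the extra decay $(1+s)^{-a(\nu-1)}$ from $\nu>1$; the paper's own remark after the corollary explicitly notes both routes work. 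Your handling of the weight $(1+s)^{-a\nu}$ over subintervals of bounded length, and the strictness argument via Proposition~\ref{prop:H3estconv}, match the paper's intent.
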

\begin{proof}
Arguing by density, we first assume that the data $\vec u(0)\in H^2(\R^3) \times H^1(\R^3)$. Then the solution $\vec u(t)$ is classical, i.e., 
 $$u\in C^1([0, +\infty), H^1(\R^3)) \cap C^0([0, +\infty), H^2(\R^3)) \cap C^2([0,+\infty)), L^2(\R^3))$$
 and $H'_{\nu}$ satisfies the equality \eqref{eq:5auxequal1}. Hence,  for $t,t' \in J:=[t_{0},t_{1}]$ as above,
\begin{multline*}
H_{\nu}(t') - H_{\nu}(t) =  \smash{\int_{t}^{t'}}H_{\nu}'(s)   ds \cr \\
 =\smash{\int_{t}^{t'}} \Big[-2\alpha(s)
 \| u_t(s)\|_{L^2}^2 - \frac{\nu a \varepsilon_0 }{(1+s)^{a\nu +1}} \langle -\Delta u(s)+u(s) -f(u(s)), u_t(s) \rangle_{H^{-1}} \cr
 - \frac{\varepsilon_0}{(1+s)^{a\nu}} \|  -\Delta u (s)+ u(s) -f(u(s))\|_{H^{-1}}^2 \cr
 - \frac{2\varepsilon_0}{(1+s)^{a(\nu +1)}} \langle -\Delta u(s) + u(s) -f(u(s)), u_t(s) \rangle_{H^{-1}} \cr
 +  \frac{\varepsilon_0}{(1+s)^{a\nu}} 
\langle -\Delta u_t (s)+ u_t (s)- f'(u(s))u_t(s),u_t(s) 
\rangle_{H^{-1}}  \Big]   ds.
\end{multline*}
Applying Lemma~\ref{lem:estH+H-} and the Cauchy-Schwarz inequality (in space, not time, cf.~Lemma~\ref{lem:Hnu}), one infers that there exists some constant $C_{1}= C_1( M)$ so that 
\begin{multline}
\label{eq:5-2estaux8BIS}
H_{\nu}(t') - H_{\nu}(t) \leq \int_{t}^{t'} 
\Big[ -\frac{(2-\varepsilon_0 C_1  (1+s)^{-a(\nu-1)})}{(1+s)^{a}}
\| u_t\|_{L^2}^2 \\
 - \frac{\varepsilon_0}{2(1+s)^{a\nu}}
\|  -\Delta u + u -f(u)\|_{H^{-1}}^2 \Big]    ds \leq 0,
\end{multline}
and for $s\ge t_{0}$ large, 
 \begin{equation}
\label{eq:5-2estaux9BIS}
2-\varepsilon_0 C_1 (1+s)^{-a(\nu-1)} \geq 3/2.
\end{equation}
Using a density argument, one shows that the above estimates \eqref{eq:5-2estaux8BIS} still hold when the initial data only belong to $\mathcal{H}$. 
\end{proof}

We remark  that we may also take $\eps_{0}$ small rather than taking $t_{0}$ large. For example, if the power of the nonlinearity is at most~$3$, then 
we can set $\nu=1$ which then requires making $\eps_{0}$ small.  Next, we combine Lemma~\ref{lem:estH+H-} with the {\L}ojasiewicz-Simon inequality. 
The small parameter $\rho_{0}>0$ is from Theorem~\ref{th:Loja}. 

\begin{cor}
\label{cor:HnuLS}
Assume $u$ is a solution of \ref{KGalpha} on the interval $[t_{0},t_{1}]$ satisfying  
\begin{equation*}
\| \vec u(s) - (Q,0) \|_{\calH} \le \rho_{0},\quad H_{\nu}(s)>0,  \qquad\forall t_{0}\le s\le t_{1}.
\end{equation*} 
Then 
\begin{equation}\label{eq:Hnudec step}
H_{\nu}(t') \le H_{\nu}(t) \exp\Big( -\int_{t}^{t'} \frac{C_{2}\eps_{0}}{(1+s)^{a\nu}}   ds\Big),
\end{equation}
for all $t_{0}\le t\le t+1\le t' \le t_{1}$, where  $C_{2}$ is a constant that depends on $Q$. 
\end{cor}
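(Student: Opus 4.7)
The plan is to combine the {\L}ojasiewicz--Simon inequality from Theorem~\ref{th:Loja} with the sharper integrated energy inequality obtained inside the proof of Corollary~\ref{cor:Hdecay2} (namely~\eqref{eq:5-2estaux8BIS}), and then to extract the exponential form by a discrete Gronwall-type iteration over intervals of unit length. This mirrors the argument of Section~\ref{subsec:LS}, but with pointwise differential inequalities replaced by their integrated counterparts, which is forced on us by the failure of the pointwise bound of Lemma~\ref{lem:estmHprime} in the present regime.

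First, since $\|u(s)-Q\|_{H^1}\le \|\vec u(s)-(Q,0)\|_{\calH}\le \rho_0$ throughout $[t_0,t_1]$, the {\L}ojasiewicz--Simon inequality, together with the decomposition of $H_\nu$ in~\eqref{eq:functionHBIS} and Cauchy--Schwarz on the cross term (exactly as in~\eqref{eq:5auxLoja11}), yields
\begin{equation*}
H_\nu(s) \le C_1\bigl(\|u_t(s)\|_{L^2}^2 + \|-\Delta u(s)+u(s)-f(u(s))\|_{H^{-1}}^2\bigr),\qquad s\in[t_0,t_1],
\end{equation*}
with $C_1=C_1(Q,\rho_0,R_1)$. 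On the other hand, the proof of Corollary~\ref{cor:Hdecay2} furnishes
\begin{equation*}
H_\nu(t)-H_\nu(t') \ge \int_t^{t'}\frac{3/2}{(1+s)^a}\|u_t\|_{L^2}^2\,ds + \int_t^{t'}\frac{\varepsilon_0/2}{(1+s)^{a\nu}}\|-\Delta u+u-f(u)\|_{H^{-1}}^2\,ds,
\end{equation*}
valid for $t+1\le t'\le t_1$ (in fact for $t'-t\ge\tfrac12$, by the remark following Lemma~\ref{lem:estH+H-}). Since $\nu>1$ and $\varepsilon_0\le 1$, one has $\varepsilon_0(1+s)^{-a\nu}\le (1+s)^{-a}$, so combining the two displays produces the integrated differential inequality
\begin{equation*}
H_\nu(t)-H_\nu(t') \ge 2C_2\varepsilon_0\int_t^{t'}\phi(s)H_\nu(s)\,ds,\qquad \phi(s):=(1+s)^{-a\nu},
\end{equation*}
for all $t+\tfrac12\le t'\le t_1$ with $t_0$ sufficiently large, where $C_2=1/(4C_1)$ depends on $Q$.

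To convert this integrated bound into the claimed exponential decay I iterate on the grid $t_k:=t+k$. The step-$\tfrac12$ monotonicity of $H_\nu$ from Corollary~\ref{cor:Hdecay2} implies $H_\nu(s)\ge H_\nu(t_{k+1})$ for $s\in[t_k,t_k+\tfrac12]$, so applying the previous display with $(t,t')=(t_k,t_{k+1})$ gives
\begin{equation*}
H_\nu(t_{k+1})\Bigl(1+2C_2\varepsilon_0\int_{t_k}^{t_k+1/2}\phi(s)\,ds\Bigr)\le H_\nu(t_k).
\end{equation*}
For $t_0$ large and $\varepsilon_0$ small the quantity inside the parenthesis is of the form $1+x$ with $x\in[0,1]$, so $\log(1+x)\ge x/2$ applies. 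Taking the product over $k=0,\ldots,n-1$ with $t+n\le t'<t+n+1$, and absorbing both the factor $\tfrac12$ and the tail integrals $\int_{t_k+1/2}^{t_{k+1}}\phi$ (which are comparable to $\int_{t_k}^{t_k+1/2}\phi$ since $\phi$ is slowly varying) into a redefined constant still denoted $C_2$, one obtains $H_\nu(t+n)\le H_\nu(t)\exp\bigl(-C_2\varepsilon_0\int_t^{t+n}\phi(s)\,ds\bigr)$. The residual gap $t'\in(t+n,t+n+1)$ is handled by one final application of the integrated inequality, either directly from $t_n$ if $t'-t_n\ge\tfrac12$ or from $t_{n-1}$ otherwise.

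The main obstacle is precisely this bridging of an integrated monotonicity with the desired continuous exponential decay: unlike in Section~\ref{subsec:LS}, where the pointwise inequality $H_\nu'+c\phi H_\nu\le 0$ could be integrated directly, here one has only block-wise estimates because $\langle f'(u)u_t,u_t\rangle_{H^{-1}}$ cannot be controlled pointwise when $d=3$ and $4<\theta<5$. The careful accounting of the block-endpoint losses, combined with the slow variation of $\phi$ and the smallness of $\varepsilon_0$, is what allows the geometric product to be converted into an exponential without degradation of the rate (up to an absolute reduction of the constant $C_2$ of Step~2 by a harmless factor).
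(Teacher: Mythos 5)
Your proof is correct, but the route is genuinely different from the paper's. The paper uses an integrating-factor argument: it defines $\Omega(t,s)=\int_{t}^{s}\frac{C_{2}\varepsilon_{0}}{(1+u)^{a\nu}}\,du$, writes $\frac{d}{ds}\bigl[e^{\Omega(s)}H_{\nu}(s)\bigr]=e^{\Omega(s)}\bigl[\frac{C_{2}\varepsilon_{0}}{(1+s)^{a\nu}}H_{\nu}(s)+H_{\nu}'(s)\bigr]$, and shows the integral of the bracketed quantity over $[t,t+\tau_{0}]$ is nonpositive by combining the {\L}ojasiewicz--Simon bound with the weighted version of the integrated inequality behind Corollary~\ref{cor:Hdecay2} (the weight $e^{\Omega(s)}$ is harmless since it is uniformly bounded on an interval of length $\tau_{0}$). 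This collapses the entire iteration into a single identity, and the final exponential comes out by concatenating $\tau_{0}$-blocks without any bookkeeping of logarithms. Your approach instead works directly with the unweighted integrated inequality and the {\L}ojasiewicz--Simon bound to get $H_{\nu}(t)-H_{\nu}(t')\geq 2C_{2}\varepsilon_{0}\int_{t}^{t'}\phi H_{\nu}$, and then performs a discrete Gronwall iteration on unit blocks, exploiting the step-$\frac12$ monotonicity of $H_{\nu}$ to lower-bound $H_{\nu}(s)$ on each block by its right-endpoint value, and converting the geometric product into an exponential via $\log(1+x)\geq x/2$ and slow variation of $\phi$. Both are valid; the paper's version is more economical since the integrating factor does all the accumulation automatically, whereas yours requires the extra endpoint accounting you flag at the end (including the small case $t'\in[t+1,t+3/2)$, where one must apply the integrated inequality from $t$ itself and use $H_{\nu}(s)\geq H_{\nu}(t')$ on $[t,t'-\frac12]$ to extract a genuine decay factor, rather than just monotonicity). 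Those details work out, at the cost of degrading $C_{2}$ by an absolute factor as you acknowledge, so the proposal is sound.
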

\begin{proof}
Using Theorem~\ref{th:Loja} we obtain~the inequality \eqref{eq:5auxLoja11}, i.e., 
\begin{equation*}
H_{\nu}(s) \leq C_1 \big(  \| -\Delta u(s) + u(s)  -f(u(s))\|_{H^{-1}}^2 + 
\| u_t(s)\|_{L^2}^2\big).
\end{equation*}
We next choose a constant $C_2$, $0 < C_2 < \frac{1}{8C_1}$ so that
\begin{equation}
\label{eq:5-2auxLoja11BIS}
C_2\varepsilon_0 H_{\nu}(s)  \leq \frac{\varepsilon_0}{8}  \big(  \| -\Delta u(s) + u(s)  -f(u(s))\|_{H^{-1}}^2 + \| u_t(s)\|_{L^2}^2\big).
\end{equation}
Therefore, with $\Omega(s) =\Omega(t,s)= \int_{t}^{s} \frac{C_2\varepsilon_0}{(1+u)^{a\nu}}   du$, 
\begin{equation}
\label{eq:5-2auxLoja12TER}
\begin{split}
 e^{\Omega(t+\tau_{0})}  H_{\nu}(t+\tau_0) -  H_{\nu}(t) & =
\int_{t}^{t+\tau_0} \frac{d}{ds}\Big[ e^{\Omega(s)} H_{\nu}(s)\Big]   ds \\
&= \int_{t}^{t+\tau_0}  e^{\Omega(s)} 
\big[\frac{C_2\varepsilon_0}{(1+s)^{a\nu}} H_{\nu} (s) + H'_{\nu}(s) \big] 
  ds  \leq 0. 
\end{split}
\end{equation}
The final inequality holds due to 
\begin{multline}
\label{eq:5-2ex8TER}
 \int_{t}^{t+\tau_0} e^{\Omega(s)}  H'_{\nu}(s)   ds \leq \int_{t}^{t+\tau_0}   e^{\Omega(s)} 
 \Big[ -\frac{3}{2(1+s)^{a}}\| u_t(s)\|_{L^2}^2 \\
 - \frac{\varepsilon_0}{2(1+s)^{a\nu}}
\|  -\Delta u(s) + u(s) -f(u(s))\|_{H^{-1}}^2 \Big]    ds \leq 0,
\end{multline}
which follows by basically the same proof as that of Corollary~\ref{cor:Hdecay2} 
(note that~$e^{\Omega(s)}\le C(\tau_{0})$ for all $t\le s\le t+\tau_{0}$). 
We infer from \eqref{eq:5-2auxLoja12TER} that 
\begin{equation}
\label{eq:5-2auxLojaFinal}
H_{\nu}(t+\tau_0) \leq 
 H_{\nu}(t) \exp \Big(- \int_{t}^{t+\tau_0} \frac{C_2\varepsilon_0}{(1+s)^{a\nu}}   ds \Big) .
\end{equation}
Iterating this estimate along an arithmetic sequence $t+j\tau_{0}$, with a suitably chosen $\frac12\le\tau_{0}\le1$,  concludes the proof. 
\end{proof}

\subsection{The main argument}   

We now indicate how to adapt the proof technique of Theorem~\ref{ThBRS2}  from Section~\ref{sec:Main1} to fit this section. 
First, we deal with the case in which $H_{\nu}$ can assume negative values in the interval $I_{m_{0}}$, cf.\ Proposition~\ref{prop:H-}. 
Reflecting the fact that monotonicity of $H_{\nu}$ now holds in a slightly weaker sense, we need to modify that proposition accordingly. 

\begin{proposition}\label{prop:H-*}
Assume that there exists $t_1 \in I_{m_0}$ with $H_{\nu}(t_{1})\le 0$. Let $[t_1,T_1]$ be the maximal time interval  on which $\| \vec u(t)\|_{\mathcal{H}} \leq R_1$. Then  $H_{\nu}\le  0$ on $[t_{1}+1,T_{1}]$ provided $m_0$ is large enough. In addition, if $\nu$ is sufficiently close to~$1$, then 
\begin{equation}\label{eq:Hnegprop2}
\int_{t_1+1}^{T_1}\| u_t(s)\|_{L^2}   ds  \leq C  t_1^{-a(\nu -1)},
\end{equation}
where $C$ is a positive absolute constant (independent of $m_0$).
\end{proposition}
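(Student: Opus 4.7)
The plan is to reduce Proposition~\ref{prop:H-*} to the proof of Proposition~\ref{prop:H-}, absorbing a one-unit-of-time loss that comes from the weakened monotonicity of Corollary~\ref{cor:Hdecay2}.

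\textbf{Step 1 (nonpositivity on $[t_1+1, T_1]$).} This is an immediate consequence of Corollary~\ref{cor:Hdecay2} applied with $M = R_1$: for any $t \in [t_1+1, T_1]$, the pair $(t_1, t)$ satisfies $t \ge t_1 + 1$, so $H_\nu(t) \le H_\nu(t_1) \le 0$. The narrowing of the interval from $[t_1, T_1]$ to $[t_1+1, T_1]$ reflects exactly the mandatory unit gap $\tau_0 = 1$ in that corollary, itself a consequence of the fact that in the regime $4 < \theta < 5$ the nonlinear estimate of Lemma~\ref{lem:estH+H-} is only available after time-averaging. Taking $m_0$ large enough ensures $t_1 \ge t_0^*(R_1)$, so the corollary applies.

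\textbf{Step 2 (integral estimate).} With $H_\nu \le 0$ pointwise on $[t_1+1, T_1]$ in hand, I rerun the proof of Proposition~\ref{prop:H-} with the starting time $t_1$ replaced by $t_1+1$. The two structural inequalities in \eqref{eq:5exG1} depend only on the pointwise relation $H_\nu \le 0$, and therefore hold throughout $[t_1+1, T_1]$. Setting $G(t) = E(\vec u(t)) - E(Q,0)$, the second of these combined with $G'(t) = -2\alpha(t)\|u_t(t)\|_{L^2}^2$ yields $G'(t) + C_3 (1+t)^{a(2\nu-1)} G^2(t) \le 0$, which integrates to an explicit pointwise upper bound on $G$; Cauchy--Schwarz as in \eqref{eq:5exG7} then controls $\int_t^{t+\tau}\|u_t\|_{L^2}\,ds$ in terms of $G(t)^{1/2}$. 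Combining the dyadic summation for $t \ge 2(t_1+1)$ with the Case~1 / Case~2 analysis on $[t_1+1, 2(t_1+1)]$ as in the original proof gives
\begin{equation*}
\int_{t_1+1}^{T_1}\|u_t(s)\|_{L^2}\,ds \le C (t_1+1)^{-a(\nu-1)} \le C t_1^{-a(\nu-1)},
\end{equation*}
provided $\nu > 1$ is chosen close enough to $1$ so that $a\nu < 1/3$.

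The main point to verify is that the recursive procedure of Case~1 in the proof of Proposition~\ref{prop:H-}, which uses the first line of \eqref{eq:5exG1} together with the sign/vanishing of $\frac{d}{dt}\|u_t\|_{H^{-1}}^2$, needs only the pointwise relation $H_\nu \le 0$ and the classical differentiability of $\|u_t(t)\|_{H^{-1}}^2$ in $t$, not any pointwise differentiability or monotonicity of $H_\nu$ itself. Since Step~1 secures $H_\nu \le 0$ on all of $[t_1+1, T_1]$, this recursive argument transplants without modification, and Lemma~\ref{lem:estH+H-} has already discharged its role upstream in Corollary~\ref{cor:Hdecay2}. The loss of the unit interval $[t_1, t_1+1]$ is harmless, as the final rate $t_1^{-a(\nu-1)}$ is affected only in the implicit constant.
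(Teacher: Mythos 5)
Your proof is correct and is essentially the same as the paper's: the paper's two-sentence proof also invokes Corollary~\ref{cor:Hdecay2} (via \eqref{eq-Hdec}) to get $H_{\nu}\le 0$ on $[t_{1}+1,T_{1}]$ and then observes that the proof of Proposition~\ref{prop:H-} carries over unchanged. Your added remarks — that the inequalities \eqref{eq:5exG1} and the Case~1/Case~2 analysis require only the pointwise sign of $H_\nu$ rather than its monotonicity, and that the unit-gap loss is absorbed into the constant — are an accurate and somewhat more explicit account of why that reduction is legitimate.
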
 
\begin{proof}
By \eqref{eq-Hdec} we have that $H_{\nu}(t)\le 0$ for all $t\in [t_{1}+1,T_{1}]$. Then the exact same proof as that of Proposition~\ref{prop:H-} 
gives the desired conclusion. 
\end{proof}

Theorem~\ref{ThBRS2} follows from this result in  the same fashion as in the previous section. As already noted there,  $T_{1}$ lies arbitrarily far to the right of $I_{m_{0}}$ if $m_{0}$ is large.   Moreover, $\vec u(t_{1}+1)$ will still satisfy~\eqref{eq:H3estconv} whence Corollary~\ref{cor:H-1}  applies in this context. 

If, on the other hand, $t_{1}$ as in the previous proposition does not exist, then due to Corollary~\ref{cor:Hdecay2} the proof of Section~\ref{subsec:LS} applies verbatim, provided the assumption which we made there holds, i.e., that $H_{\nu}>0$ on $\tilde I_{m_{0}}$. Finally, if this is not the case then $H_{\nu}(t_{1})= 0$ for some $t_{1}\in \tilde I_{m_{0}}\setminus I_{m_{0}}$. Then by the monotonicity properties derived in this section, $H_{\nu}(t)<0$ for all $t\in \tilde I_{m_{0}}$ with $t\ge t_{1}+1$. Up to time $t_{1}$, Section~\ref{subsec:LS} applies, and by well-posedness the closeness~\eqref{eq:5ex24} remains correct for all $t_{1}\le t\le t_{1}+1$ ($m_{0}$ large). Hence, we can then apply the argument for negative $H_{\nu}$ to conclude as before.

\section{Proof of Theorem \ref{ThBRS3} }

Mutatis mutandis, we argue as in the proofs of Theorem \ref{ThBRS2} in Sections 5 and 6. The only changes are that we apply Lemma \ref{lem:suite4} and Proposition \ref{prop:H4Hyperb} instead of Lemma \ref{lem:suite3} and Proposition \ref{prop:H3estconv} and that we choose $\nu >1$ so that $a\nu < 1/2$.

\appendix
\section{The {\L}ojasiewicz-Simon inequality}

In this section we prove the following estimate which plays an instrumental role in our main argument. 

\begin{theorem} \label{th:Loja}
Assume that the hypotheses \ref{H1f} and \ref{H2f} hold. Let $(Q,0)$ be an equilibrium point of \ref{KGalpha} in $\mathcal{H}_{rad}$. Then there exist $0 <\rho_0 <1$ and $C_{0}>0$  such that, for any $u \in B_{H^1_{rad}}(Q,\rho_0)$, we have
\begin{equation}
\label{eq:Loja}
 | J(u) - J(Q)| \leq C_0 \| -\Delta u +u -f(u)\|_{H^{-1}(\R^d)}^{2},
\end{equation}
where $J(u)= E(u,0)$.
\end{theorem}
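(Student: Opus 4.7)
The functional $J(u)=E(u,0)$ is of class $C^{2}$ on $H^{1}_{rad}(\mathbb{R}^{d})$ under \ref{H2f} and Sobolev embedding, with Fr\'echet derivatives
\[
\nabla J(u)=-\Delta u+u-f(u)\in H^{-1}(\mathbb{R}^{d}),\qquad \nabla^{2}J(Q)=\mathcal{L}=-\Delta+1-f'(Q).
\]
Setting $v:=u-Q$ and writing $\nabla J(Q+v)=\mathcal{L}v+N(v)$ with $N(v):=-\big(f(Q+v)-f(Q)-f'(Q)v\big)$, the plan is to control $J(Q+v)-J(Q)$ by $\|\mathcal{L}v\|_{H^{-1}}^{2}$ and then to replace $\mathcal{L}v$ by $\nabla J(Q+v)$ up to the higher-order remainder $N$. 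First, combining \ref{H2f} (in particular the $\beta$-H\"older bound on $f'$), the exponential decay of $Q$ coming from elliptic regularity, and Sobolev embedding, I would establish the Taylor-type estimates
\[
\|N(v)\|_{H^{-1}}\le C\|v\|_{H^{1}}^{1+\beta},\qquad \big|J(Q+v)-J(Q)-\tfrac{1}{2}\langle\mathcal{L}v,v\rangle\big|\le C\|v\|_{H^{1}}^{2+\beta},
\]
valid for $\|v\|_{H^{1}}\le 1$.

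By Proposition~\ref{lem:Lspec}, $\mathcal{L}$ is self-adjoint on $L^{2}_{rad}$ with essential spectrum $[1,\infty)$ and simple discrete spectrum, so $\ker\mathcal{L}|_{H^{1}_{rad}}$ is either trivial or spanned by a single normalized function $\psi$. In the nondegenerate case, Fredholm theory makes $\mathcal{L}:H^{1}_{rad}\to H^{-1}_{rad}$ an isomorphism, giving $c\|v\|_{H^{1}}\le\|\mathcal{L}v\|_{H^{-1}}\le C\|v\|_{H^{1}}$. Choosing $\rho_{0}$ so small that $C\|v\|_{H^{1}}^{\beta}\le c/2$, the remainder bound yields $\|\nabla J(Q+v)\|_{H^{-1}}\ge\tfrac{c}{2}\|v\|_{H^{1}}$, and the quadratic expansion of $J$ then gives $|J(Q+v)-J(Q)|\le C\|v\|_{H^{1}}^{2}\le C_{0}\|\nabla J(Q+v)\|_{H^{-1}}^{2}$.

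In the degenerate case, I would carry out a Lyapunov-Schmidt reduction. Let $P$ denote $L^{2}$-orthogonal projection onto $\psi^{\perp}$ and write $v=\lambda\psi+w$ with $w\in\psi^{\perp}\cap H^{1}_{rad}$. Since $\mathcal{L}|_{\psi^{\perp}}:\psi^{\perp}\cap H^{1}_{rad}\to\psi^{\perp}\cap H^{-1}_{rad}$ is an isomorphism and $N$ is $C^{1}$ with $N(0)=0$, $DN(0)=0$, the implicit function theorem furnishes a $C^{1}$ branch $w=w(\lambda)$ with $\|w(\lambda)\|_{H^{1}}=O(|\lambda|^{1+\beta})$ satisfying $P\nabla J(Q+\lambda\psi+w(\lambda))=0$. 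Setting $c(\lambda):=\langle\nabla J(Q+\lambda\psi+w(\lambda)),\psi\rangle$ and $\widetilde J(\lambda):=J(Q+\lambda\psi+w(\lambda))$, a direct computation (using $w(\lambda)\perp\psi$, hence $w'(\lambda)\perp\psi$) yields $\widetilde J'(\lambda)=c(\lambda)$ together with $\|\nabla J(Q+\lambda\psi+w(\lambda))\|_{H^{-1}}\sim|c(\lambda)|$. The estimate~\eqref{eq:Loja} thus reduces to the one-dimensional statement $|\widetilde J(\lambda)-\widetilde J(0)|\le C|\widetilde J'(\lambda)|^{2}$ near $\lambda=0$.

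This finite-dimensional inequality is the main obstacle, because a direct Morse expansion degenerates to order two: one has $\widetilde J'(0)=0$ because $Q$ is critical, and $\widetilde J''(0)=0$ because $\mathcal{L}\psi=0$ forces $w'(0)=0$ upon differentiating the constraint $P\nabla J=0$. To resolve it, I would invoke the analytic {\L}ojasiewicz-Simon framework (Simon, Chill, Haraux-Jendoubi): for the admissible nonlinearities of \eqref{fExample2}, $f$ is real-analytic in $u$, so $w(\lambda)$ and $\widetilde J(\lambda)$ are real-analytic, and Łojasiewicz's classical theorem yields $|\widetilde J(\lambda)-\widetilde J(0)|^{1-\theta}\le C|\widetilde J'(\lambda)|$ for some $\theta\in(0,\tfrac{1}{2}]$. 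The sharpening to the Morse-type exponent $\theta=\tfrac{1}{2}$, which is what \eqref{eq:Loja} asserts, requires showing that either $\widetilde J$ is constant in a neighborhood of $0$ — so $\{Q+\lambda\psi+w(\lambda)\}$ is a smooth curve of equilibria of $J$ along which both sides of \eqref{eq:Loja} vanish, and \eqref{eq:Loja} then follows by applying the nondegenerate argument transversely to this curve — or that the variational origin of the reduced problem forces the leading non-vanishing Taylor coefficient of $\widetilde J-\widetilde J(0)$ to arise at an order compatible with the exponent~$2$. This last sharpening is the delicate point that Appendix~A must address.
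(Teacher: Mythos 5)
Your treatment of the nondegenerate case coincides with the paper's: Taylor-expand $J$ around $Q$, use the Fredholm isomorphism $\mathcal L : H^1_{rad}\to H^{-1}_{rad}$, and absorb the higher-order remainder using the $\beta$-H\"older control on $f'$ from \ref{H2f}. That part is fine.

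The divergence, and the gap, is in the degenerate case. The paper does \emph{not} proceed by Lyapunov--Schmidt reduction to a one-dimensional problem and does \emph{not} appeal to real-analyticity. Instead it formulates and invokes an abstract Simon/Haraux--Jendoubi theorem (Theorem~\ref{thm:SHJ}) whose hypotheses include, verbatim, the condition that when $\dim\ker L=d>0$ the zero set $\mathcal M^{-1}(0)$ is locally a smooth $d$-dimensional manifold tangent to $\ker L$. That manifold hypothesis is exactly the scenario you describe at the end of your argument (``$\widetilde J$ is constant along a smooth curve of equilibria''), and it is the only mechanism in this non-analytic framework that upgrades the generic {\L}ojasiewicz exponent to the Morse exponent $\theta=\tfrac12$ needed for~\eqref{eq:Loja}. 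The paper then proves the part of thm:SHJ concerning $\Pi+L$ being an isomorphism (to dispense with the compact embedding $V\embed H$ which fails on $\R^d$) and refers to Haraux--Jendoubi for the rest.

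Two specific problems with your route. First, invoking analyticity is unwarranted: hypotheses \ref{H1f} and \ref{H2f} only give $f\in C^1$ with $f'$ $\beta$-H\"older, and the theorem as stated must hold under those hypotheses, not only for the polynomial examples~\eqref{fExample2}. Second, even granting analyticity, your reduced one-dimensional {\L}ojasiewicz inequality $|\widetilde J(\lambda)-\widetilde J(0)|^{1-\theta}\le C|\widetilde J'(\lambda)|$ only gives some $\theta\in(0,\tfrac12]$; for $\widetilde J(\lambda)\sim\lambda^{2k}$ with $k\ge2$ the exponent is $\theta=1/(2k)<\tfrac12$, and the quadratic bound~\eqref{eq:Loja} genuinely fails. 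There is no ``variational'' mechanism that rescues $\theta=\tfrac12$ in general; the only escape is the manifold-of-equilibria alternative, which is precisely what the paper takes as a hypothesis of the abstract theorem rather than proving directly. Your proposal correctly diagnoses where the difficulty sits, but it does not actually close it, and the analyticity detour points in a direction the paper neither uses nor could use under the stated hypotheses.
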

 
Results of this type go back to  Simon~\cite[Section 3.12, pages 74]{Simon1996}. We will invoke a somewhat more abstract formulation of Simon's theorem by 
 Haraux and Jendoubi~\cite{HaJen07}.   To state it, let $V$ and $H$ be two Hilbert spaces, with $V$ dense in~$H$. In addition, \cite{HaJen07} requires the embedding $V\embed H$ to be compact. 
 But this is not necessary as we will explain below. We identify $H$ with its dual $H^*$, whence $H\embed V^*$ is bounded. Let $G\in C^1(V)$ be real-valued, and set $\calM:= \nabla G$. 
 This means that 
 \begin{equation*}
 \langle (\nabla G)(v), w\rangle = \frac{d}{dt}\Big|_{t=0} G(v+tw),\quad \forall v,w \in V.
 \end{equation*}
The pairing on the left-hand side is the duality pairing between $V$ and $V^*$.  Thus,  $\calM : V\to V^*$ as a continuous map.  

\begin{theorem}[Simon, Haraux-Jendoubi]
\label{thm:SHJ}
Assume  $\calM \in C^1(V,V^*)$. Let $\phi\in V$ be such that $\calM(\phi)=0$. Suppose $L=\calM'(\phi)$, which is a bounded linear transformation $V\to V^*$,  has the form 
\begin{equation*}
L = \Lambda + B,
\end{equation*}
where $\Lambda:V\to V^*$ is an isomorphism, and $B:V\to V^*$ is compact. Denote  $N=\ker(L)$, which is of dimension $d<\infty$. If $d>0$ we assume that $\calM^{-1}(0)$ near $\phi$ is a smooth $d$-dimensional manifold, which then has $N$ as   tangent space at $\phi$.  

Under these assumptions, there exist  $\delta>0$ and a constant $C$ such that 
\begin{equation*}
\forall  \| u-\phi\|_H <\delta , \quad |G(u)-G(\phi)|\le C\| \calM(u) \|_{V^*}^2 .
\end{equation*}
\end{theorem}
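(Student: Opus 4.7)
The plan is to prove Theorem~\ref{thm:SHJ} via Lyapunov--Schmidt reduction onto the finite-dimensional kernel $N := \ker(L)$, followed by a transverse Taylor expansion that exploits the smooth manifold assumption on $\calM^{-1}(0)$. Since $L = \Lambda + B$ is the sum of an isomorphism and a compact operator, it is Fredholm of index zero from $V$ into $V^*$, so I can choose a closed complement $W$ of $N$ in $V$ (for instance the $H$-orthogonal complement of $N$ intersected with $V$) such that the compressed operator $P_W L|_W : W \to W^*$ is an isomorphism, where $P_W$ denotes the induced projection on $V^*$.

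Writing $u = \phi + x + w$ with $x \in N$ and $w \in W$, I would apply the implicit function theorem to $P_W \calM(\phi + x + w) = 0$ in the variable $w$, obtaining a $C^1$ map $x \mapsto w(x)$ on a $V$-neighborhood of $0 \in N$ with $w(0)=0$; implicit differentiation together with $L|_N = 0$ yields $w'(0) = 0$, so $\|w(x)\|_V = o(\|x\|_V)$. The reduced functional $g(x) := G(\phi + x + w(x))$ then satisfies $g'(x) = P_N \calM(\phi + x + w(x))$, so that zeros of $g'$ in $N$ correspond bijectively (via $x \mapsto \phi + x + w(x)$) to zeros of $\calM$ near $\phi$. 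The smooth manifold hypothesis on $\calM^{-1}(0)$---a $d$-dimensional submanifold through $\phi$ tangent to $N$---forces $g' \equiv 0$ in a neighborhood of the origin, and hence $g \equiv G(\phi)$ there.

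For an arbitrary $u$ near $\phi$, I would uniquely decompose $u = \phi + x + w(x) + \eta$ with $\eta \in W$ and perform two Taylor expansions. The first, along the segment from $\phi + x + w(x)$ to $u$, uses $\calM(\phi + x + w(x)) = 0$ on the manifold to give
\begin{equation*}
|G(u) - G(\phi)| = |g(x) - G(\phi)| + O(\|\eta\|_V^2) = O(\|\eta\|_V^2).
\end{equation*}
The second, $\calM(u) = L\eta + O(\|\eta\|_V^2)$, when projected onto $W$ and combined with invertibility of $P_W L|_W$, yields $\|\eta\|_V \lesssim \|\calM(u)\|_{V^*}$ provided $\eta$ is small in $V$. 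Combining these bounds delivers the desired inequality $|G(u) - G(\phi)| \lesssim \|\calM(u)\|_{V^*}^2$.

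The main technical obstacle is reconciling the smallness condition $\|u - \phi\|_H < \delta$ in the statement with the $V$-smallness required for the implicit function theorem and the Taylor expansions above. This is precisely where the compact embedding $V \embed H$ assumed in \cite{HaJen07} entered traditional proofs, and which the paper explicitly wishes to avoid. A bootstrap should suffice: on the finite-dimensional $N$ all norms are equivalent so that $\|x\|_V \lesssim \|x\|_H \lesssim \|u - \phi\|_H$, while the coercivity $\|\eta\|_V \lesssim \|P_W L \eta\|_{W^*}$ permits upgrading control of $\eta$ from $H$- to $V$-smallness using $\|\calM(u)\|_{V^*}$ itself (which can be assumed small without loss of generality, since otherwise the inequality is trivial from boundedness of $G$ on the relevant sublevel set).
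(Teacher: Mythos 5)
Your proposal follows the classical Lyapunov--Schmidt route of Haraux and Jendoubi, whereas the paper's written proof is deliberately minimalist: it only establishes that $\calL := \Pi + L : V \to V^*$ is an isomorphism (with $\Pi$ the $H$-orthogonal projection onto $N = \ker L$), noting that $\Pi$ has \emph{finite rank} and is therefore automatically compact as a map $V\to V^*$ without any compactness of the embedding $V\embed H$, and that symmetry of $L$ (as the Hessian of $G$) then forces $\ker\calL = \{0\}$; the remainder is delegated verbatim to \cite{HaJen07}. So you are largely reproducing the referenced argument rather than the paper's own contribution, which is precisely the replacement of the compactness hypothesis by the finite-rank observation.

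There are two genuine gaps. First, you assert that the compressed operator $P_W L|_W : W \to W^*$ is an isomorphism as a consequence of the Fredholm property of $L$, but index zero alone does not give this. For instance, with $V = V^* = \R^2$ and $L = \bigl(\begin{smallmatrix} 0 & 1 \\ 0 & 0 \end{smallmatrix}\bigr)$, one has $\ker L = \mathrm{span}(e_1)$, $W = \mathrm{span}(e_2)$, yet $L(W) = \ker L$ so $P_W L|_W = 0$. What rescues your decomposition is exactly the symmetry $\langle Lv, w\rangle = \langle Lw, v\rangle$ coming from $L = \nabla^2 G(\phi)$: it forces $\mathrm{ran}(L)$ to lie in the annihilator of $N$, and then index zero gives equality and the compressed operator is invertible. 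This is the same symmetry the paper invokes (to show $\Pi v = 0$ when $\calL v = 0$), and it needs to be stated explicitly. Second, your diagnosis of where the compact embedding $V\embed H$ enters \cite{HaJen07} is off: it is used there to make a projection operator compact $V\to V^*$, not to bridge $H$- and $V$-smallness, and the paper's point is that finite rank already gives this. Your ``bootstrap'' to upgrade $H$-closeness to $V$-closeness is also circular: the inequality $\|\eta\|_V \lesssim \|\calM(u)\|_{V^*}$ is extracted from the Taylor expansion $\calM(u) = L\eta + O(\|\eta\|_V^2)$, which you can only run once $\eta$ is already small in $V$, so assuming $\|\calM(u)\|_{V^*}$ small does not break the circle. (That said, the application Theorem~A.1 and the usage in Section~5 only ever invoke the inequality for $\|u-\phi\|_{H^1} \le \rho_0$, i.e.\ $V$-smallness, so this particular difficulty lies in the way the abstract statement is phrased rather than in anything one actually needs.)
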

\begin{proof}
We present the beginning of the proof here in some detail since we do not assume that $V\embed H$ is compact. The case $d=\dim (N)=0$ is easy since we may then invert $\Lambda$ by Fredholm's theorem. 
If $d>0$, we let $\Phi$ be the orthogonal projection in $H$ onto~$N$. Now define $\calL:V\to V^*$, $\calL=\Pi+L$.  Since $\Pi$ has finite rank, it is compact as a map $V\to V^*$. We do not need compactness of $V\embed H$ for this step.  We claim that $\ker (\calL)=\{0\}$. If so, Fredholm's theorem implies that $\calL=\Lambda + \Pi+B:V\to V^*$ is an isomorphism. 

To prove the claim we need to invoke the symmetry of $L$. This means that 
\begin{equation*}
\forall v,w\in V, \quad \langle L v,w\rangle = \langle  L w, v\rangle. 
\end{equation*}
The pairing on the left is again the duality pairing between $V$ and $V^*$. This identity  reflects the symmetry of the Hessian in calculus and follows from the fact that 
\begin{align} 
\langle L v,w\rangle &= \partial_t\Big|_{t=0} \partial_s\Big|_{s=0} G(\phi+ s v + tw) \\
& =  \partial_s\Big|_{s=0} \partial_t\Big|_{t=0} G(\phi+ s v + tw)  = \langle  L w, v\rangle.
\end{align}
Now suppose $\calL v=0$ whence $Lv=-\Pi v\in H\embed V^*$. Then 
\begin{equation*}
\langle L v, \Pi v \rangle = - \langle \Pi v , \Pi v \rangle = - \| \Pi v\|_{H}^2.
\end{equation*}
The final equality here follows from Riesz representation and density of $V$ in $H$. On the other hand, 
\begin{equation*}
\langle L v, \Pi v \rangle = \langle L \Pi v,  v \rangle =0,
\end{equation*}
since $\Pi v\in N=\ker L$. Thus $\Pi v=0$ and so $Lv=0$ which means that $v\in N$. But then $v=\Pi v =0$ and the claim holds. 

The remainder of the proof is identical to Haraux-Jendoubi and we refer the reader to pages 452--55 in~\cite{HaJen07}. 
\end{proof}

The rest of this appendix is devoted to the proof of Theorem~\ref{th:Loja}.  
We first recall some well-known facts. Let us consider the solutions 
$Q \in H^1_{rad}(\R^d)$ of the elliptic equation
\begin{equation}
\label{eq:Aelliptic}
-\Delta Q +Q -f(Q) =0.
\end{equation}
By elliptic theory, see for example~\cite{BeLions83II}, the solutions of 
\eqref{eq:Aelliptic} are exponentially decaying, and lie in $C^{3,b}$ for some $b>0$. We set
 $$
 L \equiv - \Delta + I - f'(Q).
 $$
In \cite[Lemma 2.9]{BRS1}, we have stated the following properties. The operator 
$L: H^2(\R^d) \subset L^2(\R^d) \to L^2(\R^d)$ is self-adjoint with domain $H^{2}(\R^{d})$. The spectrum $\sigma(L)$ consists of an essential part $[1,\infty)$, which is absolutely continuous, and finitely many eigenvalues of finite multiplicity all of which fall into $(-\infty, 1]$. The eigenfunctions are $C^{2,b^*}$ with $b^*>0$ and the ones associated with eigenvalues below $1$ are exponentially decaying.  Over the radial functions, all eigenvalues are simple.
Notice that $L$ can be extended to a continuous operator from $H^1(\R^d)$ into 
$H^{-1}(\R^d)$ (as well as from $H^1_{rad}(\R^d)$ into $H^{-1}_{rad}(\R^d)$).

In the case where $Q  \in H^1_{rad}(\R^d)$ is a hyperbolic solution in $H^1_{rad}(\R^d)$ of the elliptic equation \eqref{eq:Aelliptic}, the proof of Theorem \ref{th:Loja}  is elementary and we recall it now.  Note that this is precisely the case $\dim N=0$ which we skipped in the proof of Theorem~\ref{thm:SHJ} above. 

\begin{proof}[Proof of \eqref{eq:Loja} when $Q$ is an hyperbolic equilbrium (in $H^1_{rad}(\R^d)$)]
Let $u=Q+v$. By Taylor's formula,
\begin{equation}
\label{eq:AJ(u)1}
\begin{aligned}
J(u) - J(Q) = &\int_{\R^d}[\frac{1}{2}( |\nabla(v+Q) |^2 -  | \nabla Q |^2 + | v+Q |^2 
-  | Q |^2 ) \\
& \ \qquad \ \qquad  \ \qquad \qquad \qquad \qquad \hfill - F(v+Q) +F(Q) ]   dx \cr
=& \frac{1}{2} \| v\|_{H^1}^2 - \int_{\R^d}(F(v+Q) -F(Q) - f(Q)v)   dx \cr
= &  \frac{1}{2} \| v\|_{H^1}^2 -  \int_{\R^d} \int_0^1 (1-s) f'(Q+sv) v^2   ds dx .
\end{aligned}
\end{equation}
Using the hypothesis \ref{H2f} and the fact that $\| v\|_{H^1} \leq 1$, we deduce from \eqref{eq:AJ(u)1} that 
\begin{equation}
\label{eq:AJ(u)2}
 | J(u) - J(Q)|   \leq C \| v\|_{H^1}^2.
\end{equation}
Next, we write 
\begin{align}
\label{eq:ALaplaceu1}
 -\Delta u + u -f(u) &=   -\Delta v + v - (f(v+Q) -f(Q)) \\ & = Lv - \int_0^1 (f'(Q+sv) - f'(Q)) v   ds.
\end{align}
Since $L$ is an isomorphism from  $H^1_{rad}(\R^d)$ into $H^{-1}_{rad}(\R^d)$, we infer from 
\eqref{eq:ALaplaceu1} that 
\begin{equation}
\label{eq:ALaplaceu2}
v  = L^{-1}( -\Delta u + u - f(u))  - L^{-1}(\int_0^1 (f'(Q+sv) - f'(Q)) v   ds).
\end{equation}
Using the continuous Sobolev embedding of $L^{q'}(\R^d)$ into $H^{-1}(\R^d)$, where 
$q' = 2d/ (d+2)$, together with Hypothesis \ref{H2f}, we obtain that
\begin{multline}
\label{eq:Ao(v)}
\Big\|  \int_0^1 (f'(Q+sv) - f'(Q)) v   ds\Big\|_{H^{-1}} \\
\leq  C  
\big( \| v\|_{L^{2d(\beta +1)/(d+2)}}^{\beta +1} 
+   \| v\|_{L^{2d \theta /(d+2)}}^{\theta}\big) \leq \tilde{C} \| v\|_{H^1}^{1+ \beta},
\end{multline}
where $0 < \beta < \theta -1$. If $\rho_0>0$ is chosen so that $1 - \tilde{C}\rho_0^{\beta} > 0$, we deduce from the inequalities \eqref{eq:AJ(u)2} to 
\eqref{eq:Ao(v)}, that there exists a positive constant $C_0$ so that the inequality 
\eqref{eq:Loja} holds.

\end{proof}

\begin{proof}[Proof of \eqref{eq:Loja} in the general case] 
We will apply Theorem~\ref{thm:SHJ}. To fix notation, we have  $H^1_{rad}(\R^d) = V$, $H= L^2_{rad}(\R^d)$ and 
$V'=H^{-1}_{rad}(\R^d)$. The functional $G(u)$ is the energy functional $J(u)$. Clearly,
$$\nabla G(u)=  \mathcal{M}(u) = -\Delta u + u - f(u). $$ In our case, $\mathcal{M}$ is in 
$C^1( H^1_{rad}(\R^d), H^{-1}_{rad}(\R^d))$ and  $\mathcal{M}'(Q) = L$. We notice that
$Lv = -\Delta v + v -f'(Q)v$ and that  $-\Delta +I $ is an isomorphism from $H^1_{rad}(\R^d)$ into $H^{-1}_{rad}(\R^d)$ and also from 
 $H^2_{rad}(\R^d)$ into $L^2_{rad}(\R^d)$. To fulfill  the hypothesis (ii) of Theorem 1.1 of \cite{HaJen07}, we need to prove that the operator of multiplication by $-f'(Q)$ is a compact operator from 
 $H^1(\R^d)$ into $H^{-1}(\R^d)$ and even from $H^1(\R^d)$ into $L^2(\R^d)$. Let $R>1$. Using Hypothesis \ref{H2f}, we can write, for any $v \in H^1(\R^d)$,
 \begin{equation}
\label{eq:AsuperieurR}
\begin{split}
\int_{ |x| >R} |f'(Q) v|^2    dx \leq C \frac{1}{R} \| v\|_{L^2}^2
 \| |x| |Q(x)|^{2(\theta -1} \|_{L^{\infty}} \leq \tilde{C} \frac{1}{R} \| v\|_{L^2}^2.
\end{split}
\end{equation}
Let next $v_n$, $n \geq 1$, be a bounded sequence in $H^1(\R^d)$ (bounded by $C_0$). Without loss of generality, we may assume that $v_n$ converges weakly in $H^1(\R^d)$ to 
$v^* \in H^1(\R^d)$. For any $\varepsilon_m= 1/m$, $m \in \N$, we can choose $R_m >1$ large enough so that, by \eqref{eq:AsuperieurR},
$$
\int_{ |x| >R_m} |f'(Q) v|^2   dx \leq \tilde{C} C_0^2 \frac{1}{R_m} \leq \frac{\varepsilon_m^2}{4}.
$$
Since the  embedding of $H^1(B_{H^1}(0,R_m))$ into $L^2(B_{H^1}(0,R_m))$ is compact, there exists a subsequence $v_{n_m}$ of $v_n$ such that $v_{n_m}$ converges to $v^*$ in $L^2((B_{H^1}(0,R_m))$. In particular, there exists $n_{m^0}$ such that for 
$m > {m^0}$,
$$
 \| v_{n_m} - v^*\|_{L^2(B_{H^1}(0,R_m))} \leq \frac{\varepsilon_m}{2}.
$$
We deduce from the above two inequalities that, for $n_m > n_{m^0}$,
$$
\| v_{n_m} - v^*\|_{L^2(\R^d)} \leq \varepsilon_m.
$$
Repeating this argument, we can construct a subsequence $v_{n_j}$ of $v_n$ such that
$v_{n_j}$ converges to $v^*$ in $L^2(\R^d)$ as $n_j \to +\infty$. Thus the multiplication operator by 
$-f'(Q)$ is a compact operator from $H^1(\R^d)$ into $L^2(\R^d)$. This implies that 
$L$ is a Fredholm operator of index $0$ from $H^2_{rad}(\R^d)$ into $L^2_{rad}(\R^d)$ or from $H^1_{rad}(\R^d)$ into $H^{-1}_{rad}(\R^d)$. 
\end{proof}


\end{document}